\tikzset{>=stealth',
     cvertex/.style={circle,draw=black,inner sep=1pt,outer sep=3pt},
     vertex/.style={circle,fill=black,inner sep=1pt,outer sep=3pt},
     star/.style={circle,fill=yellow,inner sep=0.75pt,outer sep=0.75pt},
     tvertex/.style={inner sep=1pt,font=\criptsize},
     gap/.style={inner sep=0.5pt,fill=white}}
\renewcommand{\AA}{\ensuremath{\mathbb{A}}}
\newcommand{\PP}{\ensuremath{\mathbb{P}}}
\newcommand{\ZZ}{\ensuremath{\mathbb{Z}}}
\newcommand{\CC}{\ensuremath{\mathbb{C}}}
\newcommand{\KK}{\ensuremath{K}} 
\newcommand{\QQ}{\ensuremath{\mathbb{Q}}}
\newcommand{\TT}{\ensuremath{\mathbb{T}}}
\DeclareMathOperator{\Sing}{Sing}
\DeclareMathOperator{\Spec}{Spec}
\newcommand{\gen}{{\rm gen}}
\newcommand{\prin}{{\rm prin}}
\newcommand{\univ}{{\rm univ}}
\newcommand{\cA}{\mathcal{A}}
\newcommand{\cF}{\mathcal{F}}
\newcommand{\cL}{\mathcal{L}}
\newcommand{\cQ}{\mathcal{Q}}
\newcommand{\cX}{\mathcal{X}}
\renewcommand{\c}{{\boldsymbol{c}}} 
\newcommand{\p}{\boldsymbol{p}} 
\newcommand{\s}{\boldsymbol{s}} 
\renewcommand{\t}{\boldsymbol{t}} 
\newcommand{\x}{\boldsymbol{x}} 
\newcommand{\y}{\boldsymbol{y}}
\newcommand{\z}{\boldsymbol{z}}
\newcommand{\Jac}{\operatorname{Jac}}
\newcommand{\car}{\operatorname{char}}
\newcommand{\bs}[1]{{#1}}
\newcommand{\bsOUT}[1]{} 
\newcounter{CountAlpha}
\theoremstyle{theorem}
\newtheorem{MainThm}[CountAlpha]{Theorem}  
\newtheorem{MainProp}[CountAlpha]{Proposition}
\newtheorem{Thm}{Theorem}[subsection]         
\newtheorem{lemma}[Thm]{Lemma}
\newtheorem{cor}[Thm]{Corollary}
\newtheorem{prop}[Thm]{Proposition}
\theoremstyle{definition}
\newtheorem{defi}[Thm]{Definition} 
\newtheorem{example}[Thm]{Example}
\newtheorem{Bem}[Thm]{Remark}
\newtheorem{Obs}[Thm]{Observation}
\newtheorem{Not}[Thm]{Notation}
\newcommand{\pmo}{{\ensuremath{\pm 1}}}
\newcommand{\rf}{\kappa(\eta)}
\numberwithin{equation}{subsection}
\title[Classification of singularities of cluster algebras of finite type II]{Classification of singularities of cluster algebras of finite type II: coefficients}
\author{Ang\'elica Benito}
\address{Departamento de Did\'acticas Espec\'ificas,
	Facultad de Formaci\'on de Profesorado, 
	Universidad Aut\'onoma de Madrid,
	28049 Madrid, Spain}
\email{angelica.benito@uam.es} 
\author{Eleonore Faber}
\address{
Institut f\"ur Mathematik und Wissenschaftliches Rechnen, Universit{\"a}t Graz, Heinrichstr. 36, A-8010 Graz, Austria and School of Mathematics, University of Leeds, LS2 9JT Leeds, UK
}
\email{eleonore.faber@uni-graz.at}
\author{Hussein Mourtada}
\address{Universit\'e Paris Cit\'e and Sorbonne Universit\'e, CNRS, IMJ-PRG, F-75013 Paris, France}
\email{hussein.mourtada@imj-prg.fr}
\author{Bernd Schober}
\address{
Carl von Ossietzky Universit\"at Oldenburg,
Institut f\"ur Mathematik,
Ammerl\"ander Heerstra{\ss}e 114 - 118,
26129 Oldenburg (Oldb), Germany
}
	\curraddr{None. (Hamburg, Germany).}
\email{schober.math@gmail.com}
\date{\today}
\subjclass[2020]{13F60, 14B05, 14J17, 14E15} 
\keywords{cluster algebras, continuant polynomials, singularities, resolution of singularities}
\thanks{A.B.~is partially supported by Grant PID2022-138916NB-I00 funded by MCIN/AEI/ 10.13039/501100011033 and by ERDF A way of making Europe PID2022-138916NB-I00. E.F.~was supported by the Engineering and Physical Sciences Research Council [grant number EP/W007509/1]. H.M is partially supported by the ANR SINTROP (ANR-22-CE40-0014).}
\begin{document}

\maketitle

\begin{abstract}
	We provide a complete classification of the singularities of cluster algebras of finite cluster type. 
	This extends our previous work about the case of trivial coefficients. 
	Additionally, we classify the singularities of cluster algebras for rank two.
\end{abstract}

\section{Introduction}
 
\subsection{Motivation}
Cluster algebras were introduced by Fomin and Zelevinsky \cite{FZ2002} in the context of total positivity and Lie theory, and quickly developed connections to various different disciplines including combinatorics, representation theory, algebraic geometry, group theory, dynamical systems, mathematical physics and symplectic geometry. For a survey and extensive bibliography we refer to \cite{WilliamsSurvey, KellerDerivedSurvey}.
\\
Our motivation is to study how the singularities of a variety defined by a cluster algebra are reflected in the combinatorial data of the algebra and vice versa. It is natural to begin with the algebras of finite cluster type\footnote{Sometimes these algebras are called \emph{cluster algebras of finite type}, see \cite{BFMS}. In order to avoid confusion with infinitely generated algebras we have adopted the new notation.},
which
can be classified in terms of Dynkin diagrams, see \cite{FZ2003II}.
\\
In \cite{BFMS}, we established a classification of the singularities of cluster algebras of finite cluster type with \emph{trivial coefficients} (see Section~\ref{Subsec:IntroPresent} for an explanation of this notion).
In the current paper we study how coefficients affect the singularities and extend our classification to this general case.
\\
Other results on the singularities of cluster algebras with a slightly different flavor are \cite{BMRS2015,MRZ2018},
see also \cite[Introduction]{BFMS}, where the \bs{content of these papers is} summarized. 

\subsection{Presentations of acyclic cluster algebras}
\label{Subsec:IntroPresent} 
Let us dive a bit deeper into the construction of a cluster algebra. 
\bsOUT{There are different perspectives 
(e.g., on how to encode the so called coefficients).
We recall them in Section~\ref{Subsec:GeomType_Perpectives}.}
For the moment, we choose simplicity over precision in the presentation. 

Fix a ground field $ \KK $.
The construction of a cluster algebra over $ \KK $ begins with a labeled seed $ \Sigma = (\widetilde \x, \widetilde B) $,
where $ \widetilde \x = (x_1, \ldots, x_m) $
are called the {\em cluster variables}
and $ \widetilde B = (b_{ij})_{i\in \{1, \ldots, m\}, j \in \{ 1, \ldots, n \}} $ is an $ m \times n $ integer matrix fulfilling certain technical conditions, which we will specify later in Section~\ref{Subsec:GeomType_Perpectives},
for $ n, m \in \ZZ_+ $ with $ n \leq m $. 
The matrix $ \widetilde B $ is called the {\em extended exchange matrix}.
We denote by $ \cA(\Sigma ) $ the corresponding cluster algebra.
Here, $ \x = (x_1, \ldots, x_n) $ is the given set of mutable variables of $ \cA(\Sigma) $,
while $ (x_{n+1}, \ldots, x_m) $ are connected to the coefficients (sometimes we call them frozen variables).
We follow the {\em convention} to assume that {\em coefficients are invertible}.
\\
For any $ k \in \{ 1, \ldots, n \} $,
we may mutate in direction $ k $.
In this process, the cluster variable $ x_k $ is substituted by a new cluster variable $ x_k' $ which fulfills the {\em exchange relation}
\begin{equation}
	\label{eq:intro_exchange}
		x_k x_k' = \prod_{\substack{i=1\\[3pt]b_{ik} > 0}}^m x_i^{b_{ik}} + \prod_{\substack{i=1\\[3pt]b_{ik} < 0}}^m x_i^{-b_{ik}}
		\ . 
\end{equation} 
There is also a modification of the matrix $ \widetilde B $, 
for details see Section~\ref{Subsec:GeomType_Perpectives},
but there exist simple presentations for the cluster algebras that we consider not requiring this extra technical step, see below. 
\\
By iterated mutation in every direction $ k \in \{ 1, \ldots, n \} $, we collect more and more cluster variables together with their corresponding exchange relations. 
In general, this is not a finite process, i.e., there may be infinitely many pairwise different cluster variables. 
Eventually, $ \cA(\Sigma) $ is generated by all cluster variables. 

At first sight, it seems hard to do explicit computations with cluster algebras as there might be infinitely many cluster variables and exchange relations 
(and, in fact, even if there are finitely many, their number can be rather big).
Nonetheless, we can make use of a result of Berenstein, Fomin, and Zelevinsky \cite{BFZ2005} 
(recalled in Theorem~\ref{Thm:Presentation})
which provides a simple presentation of the cluster algebra $ \cA(\Sigma) $ if $ \Sigma $ is acyclic (Definition~\ref{Def:acyclic}).
More precisely, with this additional hypothesis, 
$ \cA(\Sigma) $ is isomorphic to the algebra that we obtain after mutating $ \Sigma $ in each direction once,
i.e., on the geometric side, we have 
(if $ \Sigma $ is acyclic)
\begin{equation}
	\label{eq:intro_presentation}
	\Spec(\cA (\Sigma))
	\cong 
	V (x_k x_k' - \prod_{\substack{i=1\\[3pt]b_{ik} > 0}}^m x_i^{b_{ik}} - \prod_{\substack{i=1\\[3pt]b_{ik} < 0}}^m x_i^{-b_{ik}} \mid k \in \{ 1, \ldots, n \} ) \ , 
\end{equation}
where $ V (f_1, \ldots, f_n) $ denotes the variety associated to the ideal generated by $ f_1, \ldots, f_n $ in $\KK[x_{n+1}^{\pm 1},\ldots,x_{m}^{\pm 1}][x_1,\ldots,x_n, x_1', \ldots, x_n'] $.

In the present article, all labeled seeds that we consider in the context of singularity theory will be acyclic. 
Hence, it is sufficient for us to work with this special presentation. 
As a benefit, we reduce some of the complexity and additionally, we may even approach cluster algebras that are not of finite cluster type. 

We may rewrite the products appearing in the exchange relation \eqref{eq:intro_exchange} by \bsOUT{distinguishing
	the part coming from the chosen $ \x $ and the part determined by the coefficient cluster variables}
\bs{separating each product into a factor in $ \x = (x_1, \ldots, x_n) $ and one in} $ (x_{n+1}, \ldots , x_m) $. 
By introducing the abbreviations
\begin{equation}
	\label{eq:intro_coeff_abbrev}
	s_k := \prod_{\substack{i=n+1\\[3pt]b_{ik} > 0}}^m x_i^{b_{ik}}
	\ \ \
	\mbox{ and } 
	\ \ \ 
	t_k := \prod_{\substack{i=n+1\\[3pt]b_{ik} < 0}}^m x_i^{-b_{ik}}
\end{equation}
for the coefficients, the exchange relations \eqref{eq:intro_exchange} become 
\begin{equation}
	\label{eq:intro_exchange_abbrev}
	x_k x_k' - s_k \prod_{\substack{i=1\\[3pt]b_{ik} > 0}}^n x_i^{b_{ik}} - t_k \prod_{\substack{i=1\\[3pt]b_{ik} < 0}}^n x_i^{-b_{ik}}
	= 0 
	\ , 
	\ \ \
	\mbox{for } 
	k \in \{ 1, \ldots, n \}
	\ . 
\end{equation}
We denote by $  B $  the upper $ n \times n $ sub-matrix of $ \widetilde B $. 
Notice that the exponents appearing in the products in \eqref{eq:intro_exchange_abbrev} are the absolute values of the entries of the $ k $-th column in $ B $. 
A cluster algebra has {\em trivial coefficients} if $ s_k = t_k = 1 $ 
for all $ k \in \{ 1, \ldots , n \} $.
Forgetting that $ \s := ( s_1, \ldots , s_n),  \t := (t_1, \ldots , t_n) $ are abbreviations, 
we observe that \eqref{eq:intro_exchange_abbrev} are precisely the exchange relations of the labeled seed
\[
	\Sigma^\gen := (\widetilde \x^\gen, \widetilde B^\gen ) 
	\ , 
\]
where $ \widetilde x^\gen := (\x, \s, \t) $ and $ \widetilde B^\gen $ is the matrix that we obtain by extending $ B $ by the $ n \times n $
identity matrix $ I_n $ as well as its additive inverse $  -I_n $. 
Due to the connection \eqref{eq:intro_coeff_abbrev} between
$ \cA(\Sigma) $ and $ \cA(\Sigma^\gen) $,
we call $ \cA(\Sigma^\gen) $ the {\em cluster algebra with generic coefficients associated to $ B $}.
\\
In Section~\ref{Subsec:gen_coeff}, we elaborate more on the reason for choosing the name ``generic coefficients".
Further, we discuss the differences to the cluster algebra with universal coefficients in details. 
A notable fact is that for the universal coefficients one has to determine the so called $ g $-vectors of {\em all} cluster variables and thus this quickly becomes hard to handle as well as \bs{requiring} to deal with cluster algebras that have only finitely many cluster
variables. 
In contrast to this, cluster algebras with the generic coefficients are more feasible.

Since we assume coefficients to be invertible, we may work with $ s_k^{-1} $ or $ t_k^{-1} $. 
This allows us to draw a connection between $ \cA( \Sigma^\gen) $ and the cluster algebra with principal coefficients associated to $ (\x, B) $. 
A labeled seed $ \Sigma^\prin $ for the latter is determined by extending $ B $ by $ I_n $. 
Thus, we have
\[
	\Spec(\cA (\Sigma^\prin))
	\cong 
	V (x_k y_k - c_k \prod_{\substack{i=1\\[3pt]b_{ik} > 0}}^n x_i^{b_{ik}} - \prod_{\substack{i=1\\[3pt]b_{ik} < 0}}^n x_i^{-b_{ik}} \mid k \in \{ 1, \ldots, n \} ) \ , 
\]
where $ \c = (c_1,\ldots , c_n) $ denotes the coefficients
and the ambient space for the variety on the right hand side is the obvious one. 
Using that $ s_k, t_k $ are invertible, it is not hard to show that $ \Spec(\cA(\Sigma^\gen)) $ is isomorphic to a trivial family over $ (\KK^\times)^n $, 
where each fiber is isomorphic to $\Spec( \cA(\Sigma^\prin)) $ (Lemma~\ref{Lem:gen=prin}). 
Hence, any investigation on the singularity theory of  $ \Spec(\cA(\Sigma^\gen)) $ can be reduced to studying $\Spec( \cA(\Sigma^\prin)) $.

The goal of the present article is to extend the classification of the singularities of cluster algebras of finite cluster type to the general case with possibly non-trivial coefficients. 
Beyond that, we also take a look into the non-finite cluster type case by classifying the singularities of cluster algebras of rank two. 
As explained above, we reduce this problem to the investigation
of the cluster algebra with generic, resp.~principal, coefficients. Given $ \Sigma = ( \x, B) $ (with $ B $ of size $ n \times n $), we often write $ \cA^\gen_{\s,\t} (\Sigma) $ (resp.~$ \cA^\prin_{\c} (\Sigma) $) for the cluster algebra with
generic (resp.~principal) coefficients associated to $ \Sigma $ in order to emphasize the origins in $ \Sigma $ as well as the role of $ (\s, \t) $ (resp.~$ \c $) as coefficients.

\subsection{Notions from singularity theory}
In order to make the formulation of our main theorems meaningful,
we recall some background on simple singularities.
For a complete introduction, we refer to \cite{GreuelKroening}.

Let $ \KK $ be a field.
Let $ X \subseteq \AA_\KK^N $ be a $ 2 $-dimensional variety
with an isolated singularity at a closed point $ v \in X $, for some $ N \geq 3 $.
We say that $ X $ is locally (at $ v $) isomorphic to an isolated {\em hypersurface singularity of type $ A_k $}
if there is an isomorphism between the completion of the local ring of $ X $ at $ v $ and $ \KK[[x,y,z]]/ \langle xy + z^k \rangle $. 
If this is the case, then the singularity of $ X $ at $ v $
is resolved by $ k $ consecutive point blowups, starting with $ v $ as the first center. 
\\
Fix $ n \geq 3 $
and let $ V \subset \AA_\KK^N $ be an $ n $-dimensional variety with an isolated singularity at a closed point $ v \in V $ (where $ N > n $).
We say that $ V $ is locally (at $ v $) isomorphic to an isolated {\em hypersurface singularity of type $ A_1 $}
if the completion of the local ring of $ V $ at $ v $ is isomorphic to  $ \KK [[z_0, \ldots, z_n]]/ \langle g \rangle $,
where the element $ g $ is of the form
\[
	g = \begin{cases}
		z_0^2 + z_1 z_2 + \cdots + z_{n-1} z_n
		& \mbox{if $ n $ is even,}
		\\[3pt]
		z_0 z_1 + z_2 z_3 + \cdots + z_{n-1} z_n
		& 
		\mbox{if $ n $ is odd.}
	\end{cases}
\]
Analogous to before, such a singularity is resolved by blowing up the isolated singular point $ v $. 

Sometimes we will say that a given $ n $-dimensional singularity $ X \subseteq \AA_\KK^N $ is locally isomorphic to a {\em cylinder over a hypersurface singularity of type $ A_1 $}.
By the latter we mean that there exist an open subset $ U $ as well as an isolated hypersurface singularity $ H = V ( g(z_0, \ldots, z_m )) \subset  \AA_\KK^{m+1} $ of type $ A_1 $, for some $ m < n $,
such that
$ X $ is 
locally in $ U $
isomorphic to 
$ \Spec( \KK[z_0, \ldots, z_m,\ldots, z_n]/ \langle g(z_0, \ldots, z_m) \rangle ) $.

Since we admit ground fields that are not necessarily perfect,
we have to be a bit more careful when determining the singular locus. 
In general, one has to take the notion of $ p $-bases into account and apply Zariski's regularity criterion \cite[Theorem 11, p.~39]{Zar47}.
We overcome this in the given cluster algebras setting by first considering only the vanishing of certain minors which do not require to choose a $ p $-basis of the ground field
and afterwards we discuss the impact of the remaining minors. 
\bsOUT{In other words, we determine a relative singular locus in the first step.}

\subsection{Viewpoint on cluster algebras as families}
\label{Subsec:Intro_family}
In~\cite{BFMN_degen}, the authors advertise the perspective to make a clear distinction between so called frozen variables and coefficients in order to study cluster algebras as families with varying \bsOUT{parameters as the} coefficients.
Using this, they construct toric degenerations of cluster varieties establishing new interesting connections.
We adapt this viewpoint for the varieties determined by cluster algebras with generic $ \Spec(\cA^\gen_{\s,\t} (\Sigma)) $ 
(resp.~principal $ \Spec (\cA^\prin_{\c} (\Sigma)) $) coefficients by considering them as families with \bsOUT{parameter} \bs{coefficient} space $ \Spec ( \KK_{\s,\t} ) := \Spec ( \KK[\s^{\pm 1},\t^{\pm 1}] ) $ (resp.~$ \Spec(\KK_\c) := \Spec(\KK [\c^{\pm 1}] ) $). 
Therefore, our precise goal is to understand the singularity type of the fibers of the family  
	\[
	\phi \colon \Spec ( \cA^\prin_{\c} (\Sigma) ) \longrightarrow \Spec (\KK [\c^{\pm 1}]) =: S 
	\]
and to study how the singularity type of the fibers of this family varies.
If one does not necessarily assume that coefficients are invertible, the relative situation would be above the affine space 
$ \Spec (\KK [\c ]) $,
for example, see \cite{BFMN_degen,BMN_Families,INT_complexes}.  \bsOUT{In Example~\ref{Ex:non-invertible}, we briefly address this within the singularities context.}

\begin{Not}
	For a closed point $ \eta \in S $, 
	we denote the fiber above $ \eta $ by  
	\[
		 \Spec ( \cA^\prin_{\c} (\Sigma) )_\eta := \phi^{-1} (\eta) \ . 
	\]
	\bs{Notice that this is a variety over the residue field $ \kappa(\eta) = \mathcal{O}_{S,\eta}/ \mathfrak{m}_\eta $ at $ \eta $,
		where $\mathcal{O}_{S,\eta} $ is the local ring of $ S $ at $ \eta $
		and $ \mathfrak{m}_\eta $ is its unique maximal ideal .}
\end{Not}
 
Let us illustrate the perspective as a family \bs{by} an example:

\begin{example}[Finite cluster type $ A_3 $, cf.~Section~\ref{Subsec:An}]
	\label{Ex:Intro_A3}
	The cluster algebra with principal coefficients of finite cluster type $ A_3 $ is given by (cf.~\eqref{eq:An_exchange})
	\[
		\cA^\prin_\c (A_3) := 
		\KK_\c [\x,\y] 
		/
		\langle 
		\,
		x_1 y_1 - c_1 - x_2, 
		\, 
		x_2 y_2 - c_2 x_1 - x_3 ,
		\,
		x_3 y_3 - c_3 x_2 - 1 
		\, 
		\rangle 
		\ ,
	\] 
	where we abbreviate $ (\x,\y) := (x_1, x_2, x_3, y_1, y_2, y_3) $. 
	Using that $ c_1, c_2, c_3 $ are invertible,
	we can deduce the following isomorphism 
	$ \cA^\prin_\c (A_3)
	\cong 
	\KK_\c [z_1, \ldots, z_4] 
	/
	\langle 
	f
	\rangle $
	(cf.~Proposition~\ref{Prop:An_hypersurface}),
	where 
	\[
		f := z_1 z_2 z_3 z_4 - z_3 z_4 - z_1 z_4  -  z_1 z_2  + 1 - c_1^{-1} c_3^{-1} 
		\ . 
	\]
	The vanishing of the partial derivatives with respect to $ \z = (z_1, \ldots, z_4 ) $ 
	leads to the conclusion that 
	the fiber
	$ \Spec ( \cA^\prin_\c (A_3)_\eta ) $
	has an isolated singularity at the origin if and only if 
	\bsOUT{$ \eta_1^{-1} \eta_3^{-1} = 1 $}
	\bs{the image of $ 1 - c_1^{-1} c_3^{-1} $ in residue field $ \kappa(\eta) $ is zero}, for \bsOUT{$ \eta = (\eta_1, \eta_2, \eta_3) $} $ \bs{\eta} \in S  := \Spec (\KK_\c) $.
	Since the coefficients are integers, 
	no $ p $-basis has to be taken into account (if $ \KK $ is non-perfect of characteristic $ p > 0 $).
	\\
	In the singular case, it can be seen that we have a hypersurface singularity of type $ A_1 $ (after a suitable local coordinate change -- for details, see the proof of Theorem~\ref{Thm:A_prin}).
	Hence, considering the family defined by the fibers of	
	\[
		\phi \colon \Spec (\cA^\prin_\c (A_3)) \longrightarrow 
		S
		\ ,
	\]  
	we have the stratification 
	$ S = S_1 \sqcup S_2 $
	of the base $ S $,
	where $ S_1 := V(c_1 c_3 - 1) \subseteq S $ and $ S_2 := S \setminus S_1 $,
	such that the fibers over all points in $ S_1 $ are isomorphic 
	to a hypersurface singularity of type $ A_1 $,
	while for every point in $ S_2 $, the corresponding fiber is regular.  
	In particular, the singularity type of the fibers of the family given by $\phi$ is
fixed along each stratum $ S_i $. 
\end{example}

\subsection{Classification theorems for finite cluster type} 
In order to formulate our main results on cluster algebras of finite cluster type,
we provide an ad hoc definition of continuant polynomials and introduce a useful abbreviation.

\begin{defi}
	\label{Def:ContPoly_lamda_n}
	Let $ n \in \ZZ_{\geq 0} $. 
	\begin{enumerate}
		\item 
		We define the {\em continuant polynomial} 
		$ P_n (y_1, \ldots, y_n) \in \ZZ[y_1, \ldots, y_n] $ by 
		$ P_0 := 1 $, $ P_1 (y_1) := y_1 $, 
		and, for $ n \geq 2 $, through the recursion 
		\[  
			P_n (y_1, \ldots, y_n ) := y_1 P_{n-1} (y_2, \ldots, y_n) - P_{n-2} (y_3, \ldots, y_n) \ .  
		\]
	
		\item 
		For $ \c = (c_1, \ldots, c_n )$ invertible, 
		we define the term $ \lambda_n(\c) $ via
		\[	
		\lambda_{2s}(\c)
		:=			
		\displaystyle 
		\prod_{\alpha = 1}^{s} c_{2 \alpha}^{-1} 
		\ \ \
		\mbox{ and  }  
		\ \ \
		\lambda_{2s+1}(\c)
		:=	
		\displaystyle 
		\prod_{\alpha = 1}^{s+1} c_{2 \alpha -1 }^{-1}
		, 
		\ \ \ 
		\mbox{\bs{for $ s \in \ZZ_{\geq 0} $}}
		\ . 
		\] 
		For $ k < n $, we sometimes use the notation $ \lambda_k (\c) = \lambda_k (c_1, \ldots, c_n ) := \lambda_k (c_1, \ldots, c_k) $.    
	\end{enumerate}
\end{defi} 

In fact, the coefficients of $ P_n (y_1, \ldots, y_n ) $ are contained in $ \{-1,0,1\} $. 
In Section~\ref{Subsec:Cont}, we provide more background on continuant polynomials.

Since cluster algebras of finite cluster type can be classified by the Dynkin diagrams $ A_{n_1} , B_{n_2} , C_{n_3} , D_{n_4} , E_6, E_7, E_8, F_4, G_2 $, 
where $ n_\ell \geq \ell $ for $ \ell \in \{ 1, 2, 3, 4 \} $,
we formulate the theorems for each case separately. 
Furthermore, we use the notation $ \cA_\c^\prin ( \mathcal X) $ for the cluster algebra with principal coefficients of finite cluster type $ \mathcal X $.
\\
The key step to simplify the computation of the singular locus is to deduce a new presentation of the cluster algebra such that we obtain only a small number of relations. 

When reading the statements, we remind the reader 
of the notation $ \KK_\c = \KK [\c^{\pm 1}] $.
Furthermore, 
	the base of the respective families is $ S := \Spec (\KK_\c) $ in each case (of course, with varying $ \c $).
We point out that for every stratification appearing in the following theorems, the singularity type of the fibers of the given family is
fixed along each stratum, as in Example~\ref{Ex:Intro_A3}.
\\
\bs{For a closed point $ \eta \in S $ and $ \chi \in \mathcal{O}_S = \KK_\c $ 
	we denote by $ \chi (\eta) $ the image of $ \chi $ in the residue field $ \kappa(\eta) $ at $ \eta $.
For example, we will use the expression $ \lambda_n (\eta) $ for the image of $ \lambda_n \in \mathcal{O}_S $ (Definition~\ref{Def:ContPoly_lamda_n}(2)) in $ \kappa(\eta) $,
or for the function $ \chi \in \mathcal{O}_S $ given by $ \chi(\c)= c_1 $ we denote by $ \eta_1 $ the image in $ \kappa(\eta) $.}

\begin{MainThm}[{Proposition~\ref{Prop:An_hypersurface} and Theorem~\ref{Thm:A_prin}}]
	\label{Thm:A}
	\bs{Let $ n \geq 2 $.} 
	There is an isomorphism
	\[ 
		\cA^\prin_\c(A_n) \cong \KK_\c[z_1, \ldots, z_{n+1}]/ \langle P_{n+1} (z_1, \ldots, z_{n+1}) - \lambda_n(\c) \rangle \ . 
	\] 
	Consider the family defined by 
	$ \phi \colon \Spec ( \cA^\prin_{\c}(A_n)) \to S $.
	For a closed point $ \eta \in S $, the fiber 
	$ \Spec ( \cA^\prin_{\c}(A_n))_\eta $ is singular if and only if
	\[
	n = 2m - 1 \ \ \mbox{ and } \ \  
	\lambda_{2m-1} (\eta) = (-1)^{m} \ .
	\]  
	In the singular case, 
	$ \Spec(\cA^\prin_{\c}(A_n))_\eta  $ is isomorphic to an isolated hypersurface singularity of type $ A_1 $.
	\\
	\underline{In other words}, we have:
	\begin{enumerate}
		\item[(i)]
		If
		 $ n = 2m-1 $ is odd,
		there is a stratification
		$ S = S_1 \sqcup S_2 $ of the base $ S $, 
		where 
		\[ 
			S_1 := V(\lambda_{2m-1} (\c) - (-1)^{m}) \subset S 
		\ \ \mbox{ and } \ \  
			S_2 := S \setminus S_1  \ , 
		\] 
		such that 
		the fibers above every point of $ S_2 $ are regular and 
		those above the points of $ S_1 $ are isomorphic to a \bs{hypersurface} singularity of type $ A_1 $.

		\item[(ii)]
		If $ n $ is even, all fibers of $ \phi $ are regular. 
\end{enumerate}
\end{MainThm}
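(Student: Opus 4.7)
The plan is to begin with the hypersurface presentation $\cA^\prin_\c(A_n) \cong \KK_\c[z_1, \ldots, z_{n+1}]/\langle P_{n+1}(z) - \lambda_n(\c) \rangle$ supplied by Proposition~\ref{Prop:An_hypersurface}, so that the fiber of $\phi$ over a closed point $\eta \in S$ becomes the hypersurface $V(P_{n+1}(z) - \lambda_n(\eta)) \subset \AA^{n+1}_{\kappa(\eta)}$; since every candidate singular point will turn out to be the $\kappa(\eta)$-rational origin, the ordinary Jacobian criterion applies without any $p$-basis considerations. Using the recursion for $P_n$, I will first derive the identity
\[
\partial_{z_k} P_{n+1}(z) = P_{k-1}(z_1, \ldots, z_{k-1})\,P_{n+1-k}(z_{k+1}, \ldots, z_{n+1}) \qquad (k = 1, \ldots, n+1),
\]
and abbreviate $A_j := P_j(z_1, \ldots, z_j)$ and $B_j := P_j(z_{n+2-j}, \ldots, z_{n+1})$. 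In these terms the critical system becomes the symmetric set of equations $A_j \cdot B_{n-j} = 0$ for $j = 0, \ldots, n$. The extremal cases $A_n = B_n = 0$ combined with $A_{n+1} = \lambda_n(\eta)$ and the recursion force $A_{n-1} = B_{n-1} = -\lambda_n(\eta)$, both nonzero because $\lambda_n$ is invertible.

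Next I will show by a layered outside-in induction that any critical point must equal the origin. From $A_1 B_{n-1} = A_{n-1} B_1 = 0$ one gets $z_1 = z_{n+1} = 0$; with these substitutions $A_2 = B_2 = -1$, so $A_2 B_{n-2} = A_{n-2} B_2 = 0$ forces $A_{n-2} = B_{n-2} = 0$, and then the recursion $A_n = z_n A_{n-1} - A_{n-2}$ combined with $A_n = 0$ and $A_{n-1} \neq 0$ yields $z_n = 0$, symmetrically $z_2 = 0$; iterating this paired procedure pins every $z_i$ to $0$. Since $P_{n+1}(0, \ldots, 0) = (-1)^{(n+1)/2}$ when $n+1$ is even and vanishes when $n+1$ is odd, a fiber can contain a singular point only when $n = 2m-1$ is odd and $\lambda_n(\eta) = (-1)^m$; in particular, for even $n$ the invertibility of $\lambda_n$ yields a contradiction and proves part (ii).

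To identify the singularity type in the odd case I will expand $P_{n+1}(z) - (-1)^m$ around the origin using the standard description of continuants as a signed sum over sets of disjoint pairs of consecutive indices. Only matchings of $\{1,\ldots,n+1\}$ of size $m-1$ contribute quadratic terms, and these biject with pairs of unmatched indices $(i,j)$ with $i < j$, $i$ odd, and $j$ even, so the quadratic part equals $(-1)^{m-1}\sum_{a \leq b} z_{2a-1} z_{2b}$. The triangular linear change of coordinates $w_b := z_1 + z_3 + \cdots + z_{2b-1}$ and $v_b := z_{2b}$ for $b = 1, \ldots, m$ converts this into $(-1)^{m-1}\sum_{b=1}^m w_b v_b$, which is, up to an invertible scalar, the defining quadratic form of an $n$-dimensional hypersurface singularity of type $A_1$ (with $n = 2m-1$). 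A Morse-type splitting argument then absorbs the higher-order terms into new formal coordinates and yields the asserted isomorphism of completed local rings; this argument is valid in every characteristic because the polar bilinear form of $\sum_b w_b v_b$ is non-degenerate.

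The main obstacle will be the layered induction establishing that the origin is the unique critical point: the bookkeeping is delicate because each inductive step must simultaneously reconcile the recursion propagating from the left endpoint of the sequence with the one propagating from the right, while tracking the alternating zero/nonzero pattern of the $A_j$ and $B_j$. The subsequent identification of the $A_1$ normal form through a Morse-type splitting requires mild extra care in characteristic $2$, but goes through thanks to the hyperbolic shape of the quadratic form.
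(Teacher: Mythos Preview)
Your argument is correct, but it takes a genuinely different route from the paper. The paper's proof of the singularity statement (Theorem~\ref{Thm:A_prin}) is essentially a one-line application of Proposition~\ref{Prop:ContSing}, which is quoted as a black box from \cite{BFMS}: once the hypersurface presentation is in place, that proposition immediately gives the ``singular iff $n+1=2m$ and $\lambda_n(\eta)=(-1)^m$'' criterion together with the $A_1$ normal form. You instead reprove Proposition~\ref{Prop:ContSing} from scratch---first deriving the factorization $\partial_{z_k}P_{n+1}=P_{k-1}P_{n+1-k}$ (the paper records this as Lemma~\ref{Lem:ContFacts}(4)), then running the outside-in induction to force all $z_i=0$, then reading off $P_{n+1}(0)=(-1)^m$ versus $0$ from the parity, and finally diagonalizing the quadratic part via the triangular substitution $w_b=\sum_{a\le b}z_{2a-1}$ and invoking a Morse splitting. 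Your approach is more self-contained and makes the mechanism visible; the paper's approach is shorter and modular, since Proposition~\ref{Prop:ContSing} is reused verbatim in the $B_n$, $C_n$, $D_n$, and $E_n$ cases. One minor remark: your justification for ignoring $p$-bases (``the candidate singular point is $\kappa(\eta)$-rational'') is slightly different from the paper's (``all coefficients lie in $\{-1,0,1\}$''), but both are valid, and the paper's version generalizes more readily to the later sections where the singular locus is not a single rational point.
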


\bs{Notice that we did not include the case of finite cluster type $ A_1 $ in the statement.
	This is a particular situation where the corresponding variety is either regular or a union of two regular lines intersecting transversally in a closed point.
	See 
Observation~\ref{Obs:Special_Type_A1}.}

\begin{MainThm}[{Lemma~\ref{Lem:Bn_2_eq} and Theorem~\ref{Thm:B_prin}}]
	\label{Thm:B}
	There exists an isomorphism 
	$
	\cA^\prin_\c(B_n) \cong 
	\KK_\c[z_1, \ldots, z_{n-1}, u_1, u_2, u_3 ]/ \langle g_n, h_n \rangle $, 
	where the generators of the ideal on the right hand side are
	\[
	\begin{array}{l}
		g_n
		:= 
		\Big( u_1  u_2  - \lambda_{n}(\c) \Big)  u_3
		- \lambda_{n-1}(\c)^{-1} u_1^2 - P_{n-2}(z_1,\ldots, z_{n-2}) 
		\ ,
		
		\\[5pt]
		
		h_n := u_1 u_2  - \lambda_{n}(\c) - P_{n-1}(z_1, \ldots, z_{n-1}) 
		\ . 
	\end{array}
	\]
	The following characterization holds for the fibers of $ \phi \colon \Spec (\cA_\c^\prin (B_n)) \to S $ with $ \eta \in S $ being a closed point:
		\begin{enumerate}
			\item 
			If $ n = 2m+1 $ is odd, 
			then $ \Spec (\cA_\c^\prin (B_n))_\eta $ is singular if and only if 
			$ \lambda_{n}(\eta) = (-1)^{m+1} $.
			\item 
			If $ n = 2m $ is even,
			then $ \Spec (\cA_\c^\prin (B_n))_\eta $ is singular if and only if 
			$ \car(\KK) = 2 $ and $ \lambda_{n-1}(\eta) \in \bs{\kappa(\eta)^2} $ \bsOUT{$ \KK^2  $} is a square. 
		\end{enumerate}
		In the singular cases, 
		the singular locus is a closed point and
		$ \Spec (\cA_\c^\prin (B_n))_\eta $ is locally isomorphic to a hypersurface with an isolated singularity of type $ A_1 $.
	\\
	\underline{In other words}, the following holds:
	\begin{enumerate}
		\item[(i)]
		For
		$ n = 2m+1 $ odd,
		we have a stratification
		$ S = S_1 \sqcup S_2 $, 
		where the strata are 
		\[  
			S_1 := V(\lambda_{n} (\c) - (-1)^{m+1}) \subset S 
			\ \ \mbox{ and } \ \ 
			S_2 := S \setminus S_1 \ ,
		\] 
		such that 
		the fibers above every point of $ S_2 $ are regular, while 
		those above the points of $ S_1 $ are isomorphic to a hypersurface singularity of type $ A_1 $. 
		
		\item[(ii)]
		If $ n $ is even and $ \car(\KK) = 2 $, there is a stratification $ S = S_1 \sqcup S_2 $
		with 
		\[  
			S_1 := \Big\{ \eta \in S  \, \Big| \,  \sqrt{\lambda_{n-1}(\eta)} \in \bs{\kappa(\eta)} \bsOUT{\KK} \Big\}
			\ \ \mbox{ and } \ \  
			S_2 := S \setminus S_1 \ ,
		\]
		such that all fibers above $ S_2 $ are regular and 
		every fiber above a point of $ S_1 $ is isomorphic to a hypersurface singularity of type $ A_1 $. 
		
		\item[(iii)]
		 If $ n $ is even and $ \car(\KK) \neq 2 $, all fibers of $ \phi $ are regular. 
	\end{enumerate}	
\end{MainThm}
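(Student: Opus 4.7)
The argument splits into establishing the two-equation presentation (Lemma~\ref{Lem:Bn_2_eq}) and then carrying out the Jacobian analysis (Theorem~\ref{Thm:B_prin}).

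\textbf{From $n$ relations to two.} Starting from the exchange relations for $\cA^\prin_\c(B_n)$, the plan is to eliminate the mutated variables along the Dynkin diagram inductively, exactly as in the type $A_n$ case (Proposition~\ref{Prop:An_hypersurface}). The two-term recursion defining the continuant polynomials matches the exchange polynomials, collapsing the first $n-1$ relations into the single equation $h_n$. The final exchange relation, which distinguishes $B_n$ from $A_n$ via the doubled edge at one end of the Dynkin diagram, carries the extra exponent $2$ and contributes the squared term $u_1^2$; after clearing the invertible coefficients this becomes $g_n$.

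\textbf{Computing the singular locus.} Write $V_\eta := \Spec(\cA^\prin_\c(B_n))_\eta$, a variety of dimension $n$ in $\AA^{n+2}_{\kappa(\eta)}$. A closed point is singular iff every $2 \times 2$ minor of the Jacobian of $(g_n, h_n)$ vanishes there. Two structural features drive the analysis: (a) $h_n$ does not depend on $u_3$, so $\partial_{u_3} h_n = 0$ while $\partial_{u_3} g_n = u_1 u_2 - \lambda_n(\c)$; (b) partials of the $P_k$ are evaluated via the product formula $\partial P_k/\partial y_i = P_{i-1}(y_1,\ldots,y_{i-1}) \cdot P_{k-i}(y_{i+1},\ldots,y_k)$ recalled in Section~\ref{Subsec:Cont}. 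The minors built from the $u_3$-column are all scalar multiples of $u_1 u_2 - \lambda_n(\eta)$, so the singular locus splits into two cases: Case A, in which $u_1 u_2 = \lambda_n(\eta)$, and Case B, in which $u_1 = u_2 = 0$ and the gradient of $P_{n-1}$ vanishes at $z$.

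\textbf{Parity and characteristic.} In Case B, the parity analysis of $\partial_{z_i} P_{n-1}(0) = P_{i-1}(0)\, P_{n-1-i}(0)$ shows that $z = 0$ is a solution only when $n$ is odd, and an additional short calculation rules out any other zero of $\nabla P_{n-1}$. The evaluation $P_{n-1}(0) = (-1)^m$ then turns $h_n = 0$ into $\lambda_n(\eta) = (-1)^{m+1}$, giving part (i). In Case A, the $(u_1, u_2)$-minor evaluates to $-2 \lambda_{n-1}(\c)^{-1} u_1^2$; since $u_1 u_2 = \lambda_n(\eta)$ is invertible, $u_1 \neq 0$, so this minor vanishes only when $\car(\KK) = 2$. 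Under that assumption the remaining minors force $z = 0$ and $u_3 = 0$, pin down the parity to $n$ even, and reduce $g_n = 0$ to $u_1^2 = \lambda_{n-1}(\eta)$, solvable iff $\lambda_{n-1}(\eta)$ is a square in $\kappa(\eta)$. This accounts for parts (ii) and (iii).

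\textbf{Normal form.} To identify the singularity type at a singular closed point, I would pass to the completion and use the nonvanishing linear part of one of the two equations (the cross term $u_1 u_2$ of $h_n$ centered at the singular value in Case A, or the linear $-\lambda_n(\eta) u_3$ in $g_n$ in Case B) to eliminate one variable via the formal implicit function theorem. The surviving hypersurface has a leading quadratic form that, after diagonalization, matches the $A_1$-normal form $z_0 z_1 + z_2 z_3 + \cdots + z_{n-1} z_n$ in the odd case and $z_0^2 + z_1 z_2 + \cdots + z_{n-1} z_n$ in the even characteristic-two case, where the leading square is supplied by $u_1^2$ in $g_n$. The most delicate sub-step will be the bookkeeping of signs, parities, and cross terms emerging from the continuant expansions, and, in characteristic two, verifying that the quadratic part of $g_n$ remains nondegenerate with $u_1^2$ in the role of the leading square.
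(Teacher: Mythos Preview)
Your proposal is correct and follows essentially the same route as the paper: reduce to the two-generator presentation, split on whether $u_1 u_2 - \lambda_n(\eta)$ vanishes, use the parity of $n$ together with the characteristic to locate the unique singular point, and then eliminate one variable to reach an $A_1$ hypersurface normal form. One tactical improvement over your Case~A plan: rather than eliminating $u_1$ or $u_2$ via the cross term of $h_n$, note that $g_n$ is independent of $z_{n-1}$, so use $h_n = u_1 u_2 - \lambda_n(\eta) - z_{n-1} P_{n-2} + P_{n-3}$ (with $P_{n-2}$ a unit at the singular point) to eliminate $z_{n-1}$ instead; this leaves $g_n$ untouched and removes precisely the bookkeeping you flagged as delicate.
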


Notice that $S_1$ in Theorem~\ref{Thm:B}{\em (ii)} is a constructible subset by Chevalley's theorem. 
Moreover, if $\KK$ is \bsOUT{a perfect} \bs{an algebraically closed} field of characteristic $2$ then every element in \bsOUT{$\KK$} \bs{$ \kappa(\eta) $} is a square \bs{for every closed point $ \eta \in S $} and thus $ S_1 = S $.

\begin{MainThm}[{Lemma~\ref{Lem:Cn_hypersurface} and Theorem~\ref{Thm:C_prin}}]
	\label{Thm:C}
	The cluster algebra $ \cA^\prin_\c(C_n) $
	is isomorphic to 
	\[
	\KK_\c[z_1, \ldots, z_{n+1}]/
	\langle
	\,
	P_n(z_1, \ldots, z_n) z_{n+1}  -  P_{n-1}(z_1, \ldots, z_{n-1})^2  -
	\lambda_{n-2}(\c)  \lambda_n(\c)  
	\, 
	\rangle  \ .
	\] 
	The singularities of the fibers of $ \phi \colon \Spec (\cA_\c^\prin (C_n) ) \to S $ are characterized as follows, where $ \eta \bsOUT{= (\eta_1, \ldots, \eta_n)} \in S $ is a closed point:	
	\begin{enumerate}
		\item 
		Let $ \car(\KK) \neq 2 $.
		The fiber $ \Spec (\cA_\c^\prin(C_n))_\eta $ is singular if and only if 
		$ n $ is odd and 
		$ -\eta_n \in \bs{\kappa(\eta)^2} \bsOUT{\KK^2} $ is a square in \bs{$ \kappa(\eta) $}\bsOUT{ $ \KK $}.
		In the singular case, $ \Sing(\Spec (\cA_\c^\prin(C_n))_\eta) $ is a closed point and locally at the latter,
		$ \Spec (\cA_\c^\prin(C_n))_\eta $ is isomorphic to a hypersurface singularity of type $ A_1 $.
		
		\item 
		If $ \car(\KK) = 2 $, 
		then the fiber 
		$ \Spec (\cA_\c^\prin(C_n))_\eta $ is singular if and only if 
		$ -\eta_n \in \bs{\kappa(\eta)^2}$\bsOUT{ $\KK^2 $  is a square in $ \KK $}.
		In the singular case, let  $ \delta_n \in \bs{\rf} \bsOUT{\KK} $ be such that $ \delta_n^2 = -\eta_n^{-1} $ and 
		set $ \rho_n ( \eta) := \delta_n \lambda_{n-2} (\eta) $.  
		Then, we have:
		\begin{enumerate}
			\item 
			The singular locus is itself singular if and only if $ n-1 = 2m $ and $ \rho_n (\eta) = 1 $.
			
			\item 
			If $ n-1 = 2m $ and $ \rho_n(\eta) = 1 $,
			then $ \Sing ( \Spec (\cA_\c^\prin(C_n))_\eta ) $ has an isolated singularity at a closed point and locally at the latter, $ \Spec (\cA_\c^\prin(C_n))_\eta $ is isomorphic to the hypersurface singularity 
			
			\[
			\Spec (\bs{\rf}[x_1, \ldots, x_{2m} , y, z] / \langle yz + \Big( \sum_{i=1}^{m} x_{2i-1} x_{2i} \Big)^2 \rangle 
			\] 
			
			and $ \Sing ( \Spec (\cA_\c^\prin(C_n))_\eta )  $ identifies along this isomorphism with 
			
			\[
			V \Big( y,z, \sum_{i=1}^{m} x_{2i-1} x_{2i} \Big)
			\]

			which is isomorphic to an hypersurface of type $ A_1 $ if $ m > 1 $ and a union of two lines if $ m = 1 $.
			
			\item 
			At any point $ q $ at which $ \Sing ( \Spec (\cA_\c^\prin(C_n))_\eta )  $  is regular, $ \Spec (\cA_\c^\prin(C_n)_\eta) $ is isomorphic to a $ (n-2) $-dimensional cylinder over the hypersurface singularity 
			\[ 
			\Spec(\bs{\rf}[x,y,z]/\langle xy - z^2 \rangle ) 
			\] 
			of type $ A_1 $.
		\end{enumerate}
	\end{enumerate}
\underline{In other words}, we have the following cases of stratification:
\begin{enumerate}
	\item[(i)] 
	If $ \car(\KK) \neq 2 $ and $ n $ is odd, 
	there exists a stratification $ S = S_1 \sqcup S_2 $
	with 
	\[  
	S_1 := \Big\{ \eta \in S  \, \Big| \,  \sqrt{-\eta_n} \in \bs{\rf} \Big\}
	\ \ \mbox{ and } \ \  
	S_2 := S \setminus S_1 
	\]
	such that every fiber above a point of $ S_2 $ is regular and 
	all fibers above $ S_1 $ are isomorphic to a hypersurface singularity of type $ A_1 $. 
	
	\item[(ii)]
	For $ \car(\KK) \neq 2 $ and $ n $ even, all fibers of $ \phi $ are regular. 
	
	\item[(iii)]
	If $ \car(\KK) = 2 $ and $ n $ is \bsOUT{odd} \bs{even}, 
	there is a stratification $ S = S_1 \sqcup S_2 $,
	where 
	\[  
	S_1 := \Big\{ \eta \in S  \, \Big| \,  \sqrt{-\eta_n} \in \bs{\rf} \Big\}
	\ \ \mbox{ and } \ \  
	S_2 := S \setminus S_1 \ , 
	\]
	such that the fiber of $ \phi $ is regular above every point of $ S_2 $,
	while 
	all fibers above $ S_1 $ are isomorphic and singular.
	In the latter case, the singular locus of such a fiber is regular  
	and at every point of the \bsOUT{singular} singular locus, $ \Spec (\cA_\c^\prin(C_n))_\eta $ is isomorphic to a $ ( n- 2 ) $-dimensional cylinder over a hypersurface surface singularity of type $ A_1 $.

	\item[(iv)]
	For $ \car(\KK) = 2 $ and $ n $ \bsOUT{even} \bs{odd $( n = 2m+1)$}, 
	we have the stratification
	$ S = S_1 \sqcup S_2 \sqcup S_3 $,
	where (using the notation $ \rho_n (\eta) $ from (2))
	\[  
	S_1 := \Big\{ \eta \in S  \, \Big| \,  \sqrt{-\eta_n} \in \bs{\rf} \mbox{ and } \rho_n (\eta) = 1 \Big\} \ ,
	\]
	\[
	S_2 :=\Big\{ \eta \in S  \, \Big| \,  \sqrt{-\eta_n} \in \bs{\rf} \Big\} \setminus S_1 \ , 
	\ \ \mbox{ and } \ \  
	S_3 := S \setminus ( S_1 \cup S_2 ) \ , 
	\] 
	with the property that fibers above two points lying in the same stratum are isomorphic and  
	\begin{itemize}
		\item 
		fibers above points of $ S_3 $ are regular,
		
		\item 
		fibers above $ S_2 $ are singular with the same description of the singularities as for $ S_1 $ in (iii),
		and
		
		\item 
		if $ \eta \in S_1 $, then $ \Spec (\cA_\c^\prin(C_n))_\eta $ is singular and its singular locus is isomorphic to a hypersurface singularity of type $ A_1 $.
		Locally at the isolated singularity of the singular locus, 
		the fiber $ \Spec (\cA_\c^\prin(C_n))_\eta $ is isomorphic to
		\[
		\Spec (\bs{\rf}[x_1, \ldots, x_{2m} , y, z] / \langle yz + \Big( \sum_{i=1}^{m} x_{2i-1} x_{2i} \Big)^2 \rangle 
		\ , 
		\]  
		while at any other point of the singular locus, the description of the singularity is analogous to the one at points of $ S_1 $ in (iii).
	\end{itemize}
\end{enumerate}
\end{MainThm}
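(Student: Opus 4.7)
The argument splits into two phases: first deriving the hypersurface presentation (Lemma~\ref{Lem:Cn_hypersurface}), and then analyzing the singular locus of each fiber. For the presentation, I would start from the generic presentation~\eqref{eq:intro_presentation} associated to the standard exchange matrix of type $C_n$ (with the double edge at $\{n-1, n\}$ encoded by $|b_{n-1,n}| = 2$), and iteratively eliminate generators using the first $n-1$ exchange relations, following the pattern established for types $A$ and $B$. This expresses each $x_{k+1}$ as a continuant polynomial in $y_1, \ldots, y_k$ (and coefficients), while the doubled matrix entry produces the squared term $P_{n-1}^2$ in the last relation. After renaming and clearing denominators, the ideal collapses to the single hypersurface equation $f := P_n(z_1, \ldots, z_n)\, z_{n+1} - P_{n-1}(z_1, \ldots, z_{n-1})^2 - \lambda_{n-2}(\c)\,\lambda_n(\c)$.

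For the singularity analysis I would use the classical continuant derivative identity $\partial_{z_i} P_n = P_{i-1}(z_1,\ldots,z_{i-1})\,P_{n-i}(z_{i+1},\ldots,z_n)$ to compute
\[
    \partial_{z_{n+1}} f = P_n, \qquad \partial_{z_n} f = z_{n+1}\,P_{n-1}, \qquad \partial_{z_i} f = z_{n+1}\,\partial_{z_i} P_n - 2\,P_{n-1}\,\partial_{z_i} P_{n-1}\ (i < n).
\]
In characteristic different from $2$, the equation $P_n = 0$ combined with $f = 0$ and the invertibility of the $\lambda_k$'s forces $P_{n-1} \neq 0$, so $z_{n+1} = 0$ and $\partial_{z_i} P_{n-1} = 0$ for all $i < n$. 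Using the fundamental continuant identity $P_n(y_1,\ldots, y_n)\,P_{n-2}(y_2,\ldots,y_{n-1}) - P_{n-1}(y_1,\ldots,y_{n-1})\,P_{n-1}(y_2,\ldots,y_n) = (-1)^n$ together with a descending induction, I would show these conditions force $z_1 = \cdots = z_{n-1} = 0$; a parity check on $P_n(0, \ldots, 0, z_n)$ shows the system is consistent only when $n$ is odd. The remaining equation $P_{n-1}(0)^2 = -\lambda_{n-2}\lambda_n$ combined with $\lambda_n = c_n^{-1}\lambda_{n-2}$ reduces to $-\eta_n \in \kappa(\eta)^2$, and the Hessian of $f$ at the resulting isolated singular point has full rank, yielding the $A_1$ model of case~(i).

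In characteristic $2$, the quadratic correction $-2\,P_{n-1}\,\partial_{z_i} P_{n-1}$ vanishes identically, so $\partial_{z_i} f = z_{n+1}\,\partial_{z_i} P_n$ throughout. The singular locus lies on $\{z_{n+1} = 0,\ P_n = 0,\ P_{n-1}^2 = \lambda_{n-2}\lambda_n\}$, which admits $\kappa(\eta)$-points iff $-\eta_n = \eta_n$ is a square. After the translation $P_{n-1} \mapsto P_{n-1} + \beta$ with $\beta = \sqrt{\lambda_{n-2}\lambda_n}$, the equation $f$ becomes $z_{n+1} P_n - P_{n-1}^2$ (char $2$); at a point where $P_{n-1}$ has a nonzero linear part, this decouples as an $(n-2)$-dimensional cylinder over $yz - u^2$, producing case~(iii) and the generic stratum $S_2$ of case~(iv). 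The distinguished point in case~(iv) arises when the linear part of $P_{n-1} - 1$ vanishes, forcing $\beta = 1$, equivalently $\rho_n(\eta) = 1$; expanding the continuant polynomial around this point (specifically for $n = 2m+1$, using the one-sided recursion to reduce $P_{n-1} - 1$ to a nondegenerate quadratic form) yields the congruence $P_{n-1} - 1 \equiv \sum_{i=1}^m x_{2i-1} x_{2i} \pmod{\mathfrak{m}^3}$ after a suitable linear change of coordinates, whose square produces the local model $yz + \bigl(\sum x_{2i-1} x_{2i}\bigr)^2$.

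\textbf{Main obstacle.} The principal difficulty is the characteristic~$2$ analysis. First, one must verify that the second potential component $V(\partial_{z_1} P_n, \ldots, \partial_{z_n} P_n)\cap V(P_n, f)$ contributes no additional singular points beyond those captured by $\{z_{n+1} = 0\}$, which requires a careful simultaneous analysis of the vanishing of all partials of $P_n$ via the continuant identities. Second, establishing the explicit local model $yz + (\sum x_{2i-1} x_{2i})^2$ at the isolated singularity of $\Sing(F_\eta)$ in case~(iv)---and deducing the description of $\Sing(F_\eta)$ itself as a hypersurface singularity of type $A_1$ (or a union of two lines when $m = 1$)---requires a delicate choice of local parameters exploiting the precise shape of continuant polynomials near the distinguished singular point.
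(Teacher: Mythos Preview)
Your proposal is correct and follows essentially the same route as the paper: derive the hypersurface presentation by the iterative substitution scheme (yielding continuant polynomials), apply the Jacobian criterion via the continuant derivative formula $\partial_{z_i}P_n = P_{i-1}\,P_{n-i}$, and treat $\car(\KK)=2$ separately through the vanishing of the term $-2P_{n-1}\,\partial_{z_i}P_{n-1}$.

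Two differences are worth noting. For $\car(\KK)\neq 2$, the paper does not use the Dodgson-type identity you propose; instead it runs an explicit descending induction showing that the vanishing of $h_n,\partial_{z_{n+1}}h_n,\ldots,\partial_{z_{n-2k}}h_n$ forces $z_{n+1}=\cdots=z_{n-2k+1}=0$, $P_{n-2k-1}=(-1)^k(\pm\rho_n(\eta))$, and $P_{n-2k}=0$. Your reduction---observing that $z_{n+1}=0$ and $P_{n-1}$ invertible collapse the remaining Jacobian conditions to $\partial_{z_i}P_{n-1}=0$ for all $i<n$---is actually cleaner, and at that stage you need neither Dodgson nor an ad hoc induction: these equations say precisely that $(z_1,\ldots,z_{n-1})$ is a critical point of $P_{n-1}$, and Proposition~\ref{Prop:ContSing} already pins this down to the origin with $n-1$ even. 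Either approach then recovers the constraint $\mu_n(\eta)=1$ from $P_{n-1}(0)^2=\mu_n(\eta)$.

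Second, your stated ``main obstacle'' in characteristic~$2$---ruling out a phantom component where all $\partial_{z_i}P_n$ vanish but $z_{n+1}\neq 0$---is not in fact an obstacle. Since $\partial_{z_n}f=z_{n+1}P_{n-1}$ and $P_{n-1}^2=\mu_n(\eta)\neq 0$ at any singular point, one gets $z_{n+1}=0$ immediately; the paper dispatches this in one line. The genuine work in characteristic~$2$, which you correctly identify, is the local model at the special point in case~(iv): the paper carries this out by replacing $z_n$ with $x_n:=P_n(z_1,\ldots,z_n)$ (legitimate since $P_{n-1}$ is a unit there) and then invoking Proposition~\ref{Prop:ContSing} to put $P_{n-1}+\rho_n(\eta)$ into the normal form $\sum_i w_{2i-1}w_{2i}$.
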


In Case~(2)(b), a desingularization can be constructed via three blowups analogous to the setting with trivial coefficients \cite[Proposition~5.5(2)]{BFMS}:
First, the singularities of the singular locus are resolved by blowing up the origin.
Then, the strict transform of the singular locus is regular and thus can be blown up.
Finally, we created a component in the singular locus, which is contained in the exceptional divisor of the first blowup. 
By choosing the strict transform of this component as the center for the third blowup, we resolve the singularities of $ \Spec (\cA_\c^\prin (C_n)) $. 

\begin{MainThm}[{Lemma~\ref{Lem:Dn_2_eq} and Theorem~\ref{Thm:D_prin}}]
	\label{Thm:D}
	 The cluster algebra $ \cA^\prin_\c(D_n) $ is isomorphic to  
	$ \KK_\c[z_1, \ldots, z_{n-2}, u_1, u_2, u_3, u_4 ]/ \langle h_1, h_2 \rangle $,
	where 
	\[
	\begin{array}{l}
		h_1 := 
		u_1 u_2 - u_3 u_4  -
		\lambda_{n-1}(\c)^{-1} u_2 u_4 
		\big(
		u_1 u_3 + P_{n-3} (z_1,\ldots, z_{n-3})  \big)
		\ ,
		
		\\[5pt]
		
		h_2 := 
		u_3 u_4  - P_{n-2} (z_1,\ldots, z_{n-2}) - \lambda_{n-1}(\c)
		\ . 
	\end{array} 
	\]
	We have the following cases for the fibers $ \Spec (\cA_\c^\prin (D_n))_\eta  $ of $ \phi \colon \Spec (\cA_\c^\prin (D_n)) \to S $:
	\begin{enumerate}
		\item[(a)] 
		If $ n = 4 $ and $ \lambda_3 (\eta) = 1 $,
		then $ \Sing (\Spec( \cA_\c^\prin (D_4))_\eta) $ identifies with the six coordinate axes in $ \AA_{\bs{\rf}}^6 $ along the isomorphism of the first part of the theorem.
		
		\item[(b)] 
		If $ n-2 = 2m $ is even, $ n > 4 $, and  $ \lambda_{n-1} (\eta) = (-1)^{m+1} $,
		then $ \Sing (\Spec( \cA_\c^\prin (D_n))_\eta) $ consists of 
		four regular, irreducible components $ Y_1, \ldots, Y_4 $ of dimension one 
		and one singular, irreducible component $ Y_0 $ of dimension $ n - 3 $
		whose singular locus is a closed point 
		coinciding with the intersection $ \bigcap_{i=1}^4 Y_i $.

		\item[(c)] 
		Otherwise
		(i.e., if $ n $ is odd, or if $ n $ is even and $ \lambda_{n-1} (\eta) \neq 1 $), 
		then the singular locus of the fiber is irreducible, regular, and of dimension $ n-3 $.
	\end{enumerate}
	The singularities classify as follows:
	\begin{itemize}
		\item 
		\underline{Case (c):} 
		Along its regular irreducible singular locus,
		$ \Spec(\cA_\c^\prin (D_n))_\eta $ is isomorphic to a cylinder 
		over a $ 3 $-dimensional hypersurface singularity of type $ A_1 $.
		
		\item 
		\underline{Cases (b):} 
		Let $ 0 \in Y_0 $ be the singular point of $ Y_0 $.
		Along $ Y_0 \setminus \{ 0 \} $, the situation is the same as in Case (c).
		Further, along $ Y_i \setminus \{ 0 \} $, for $ i \in \{ 1, \ldots, 4 \} $, 
		$ \Spec(\cA_\c^\prin (D_n))_\eta $ is isomorphic to an $ n $-dimensional hypersurface singularity of type $ A_1 $. 
		Finally, locally at $ 0 $, 
		$ \Spec(\cA_\c^\prin (D_n))_\eta $ is isomorphic to the intersection of two hypersurface singularities of type $ A_1 $, while  
		$ Y_0 $ is isomorphic to a $ ( n- 3) $-dimensional hypersurface singularity of type $ A_1 $.

		\item 
		\underline{Cases (a):}
		The situation is the same as in Case (b) with the exception that $ Y_0 $ is the union of two lines here 
		and locally at $ 0 $, 
		$ \Spec(\cA_\c^\prin (D_n))_\eta $ is isomorphic to the intersection of a hypersurface singularity of type $ A_1 $ with a divisor of the form $ V (xy) $. 
	\end{itemize}

	\underline{In other words}, we have:
	\begin{enumerate}
		\item[(i)] 
		If $ n = 4 $, 
		there is a stratification $ S = S_1 \sqcup S_2 $ with
		\[
			S_1 := V (\lambda_3 (\c) - 1 ) \subseteq S
			\ \ \mbox{ and } \ \  
			S_2 := S \setminus S_1  
		\]
		such that the singularities of all fibers above points of $ S_1 $ (resp.~$ S_2 $) 
		are as described in Case~(a) (resp.~Case~(c)) above. 
		
		\item[(ii)]
		For $ n $ even \bs{$( n = 2m+2)$} and $ n > 4 $,
		we have a stratification $ S = S_1 \sqcup S_2 $, where
		\[
			S_1 := V (\lambda_{n-1} (\c) - (-1)^{m+1} ) \subseteq S
			\ \ \mbox{ and } \ \  
			S_2 := S \setminus S_1  \ , 
		\]
		such that for every $ \eta \in S_1 $ (resp.~$ \eta \in S_2 $), the singularities of the fiber $ \Spec(\cA_\c^\prin (D_n))_\eta $ are as described in Case~(b) (resp.~Case~(c)) above. 
		
		\item[(iii)]
		If $ n $ is odd, then the singularities of every fiber of $ \phi $ are classified as in Case~(c) above.
		
	\end{enumerate}
\end{MainThm}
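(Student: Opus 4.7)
The plan is to establish the stated presentation of $\cA^\prin_\c(D_n)$, compute the singular locus of its fibers by the Jacobian criterion, and identify each singularity via suitable local coordinate changes.

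For the presentation, I would start from the $n$ exchange relations defining $\cA^\prin_\c(D_n)$. The $D_n$ Dynkin diagram has a fork of two leaves attached to one end of a linear chain of length $n-2$. The $n-2$ relations coming from the linear chain can be used iteratively to eliminate cluster variables, and by the recursion in Definition~\ref{Def:ContPoly_lamda_n}(1) this produces the continuant polynomials $P_{n-2}$ and $P_{n-3}$; the combinatorics of the coefficients along the chain produce the factor $\lambda_{n-1}(\c)$. The two relations at the fork, after renaming the four surviving variables as $u_1, u_2, u_3, u_4$, yield exactly $h_1$ and $h_2$. This strategy parallels the reductions for types $B$ and $C$ in Lemmas~\ref{Lem:Bn_2_eq} and~\ref{Lem:Cn_hypersurface}.

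Granted the presentation, the singular locus of a fiber $\Spec(\cA^\prin_\c(D_n))_\eta$ is cut out by $h_1 = h_2 = 0$ together with the vanishing of the $2 \times 2$ minors of the Jacobian matrix of $(h_1, h_2)$. Since $h_2$ depends only on $z_1, \ldots, z_{n-2}, u_3, u_4$, the bottom row of this Jacobian has two trivial entries, which simplifies the minor equations. The key reduction is to use $h_2$ to substitute $u_3 u_4 = P_{n-2}(z_1, \ldots, z_{n-2}) + \lambda_{n-1}(\eta)$ inside $h_1$ and its partial derivatives, modulo the ideal. A case analysis on whether each of $u_3, u_4$ vanishes then splits the singular locus. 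On the strata where exactly one of $u_3, u_4$ vanishes one obtains the regular, irreducible, $(n-3)$-dimensional component that is always present, corresponding to Case~(c). The stratum $u_3 = u_4 = 0$ contributes additional components if and only if $P_{n-2}(\z) + \lambda_{n-1}(\eta) = 0$ admits a simultaneous critical zero, which by continuant identities from Section~\ref{Subsec:Cont} happens precisely when $n-2$ is even and $\lambda_{n-1}(\eta) = (-1)^{m+1}$. This explains Cases~(a) and~(b), with the four one-dimensional components $Y_1, \ldots, Y_4$ arising from the four independent sign-choices of $u_1, u_2$ along the vanishing locus of the fork equations at the critical point.

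For the singularity types, I would perform local coordinate changes at each stratum of the singular locus. Along the generic $(n-3)$-dimensional component (Case~(c), and also away from the distinguished point in Cases~(a)-(b)), completing squares brings $h_1$ modulo $h_2$ to the form $u_1 u_2 + q$ for a non-degenerate quadratic $q$, exhibiting the germ as a cylinder over a $3$-dimensional hypersurface $A_1$-singularity. The main obstacle will be the analysis at the distinguished point in Cases~(a) and~(b): there one must simultaneously bring $h_1$ and $h_2$ into standard $A_1$-form, identify the intersection of the two resulting hypersurfaces, and verify that $Y_0$ is itself an $(n-3)$-dimensional hypersurface $A_1$-singularity for $n>4$ and a union of two lines for $n=4$. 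The required rank of the Hessian of $P_{n-2}$ at its critical point is furnished by a continuant identity and equals $n-3$ for $n>4$, but drops when $n=4$ because $P_{n-3} = P_1$ is linear, so $Y_0$ degenerates into a union of two lines. The \emph{In other words} stratification of $S$ then reads off directly from the critical-value condition on $\lambda_{n-1}(\c)$.
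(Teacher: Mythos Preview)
Your overall strategy for the presentation is correct and matches the paper's Lemma~\ref{Lem:Dn_2_eq}: rescale to absorb the coefficients into $\lambda_{n-1}(\c)$, use the chain relations to substitute continuants, and reduce to two equations in the fork variables.

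However, your description of the singular locus is geometrically inverted, and following it would not recover the correct components. The always-present $(n-3)$-dimensional component $Y_0$ is cut out by $u_1 = u_2 = u_3 = u_4 = 0$ together with $P_{n-2}(\z) + \lambda_{n-1}(\eta) = 0$; it does \emph{not} live on the locus where exactly one of $u_3, u_4$ vanishes. Conversely, the four one-dimensional components $Y_1, \ldots, Y_4$ that appear only in the critical case are the four coordinate axes in $(u_1,u_2,u_3,u_4)$ (with all $z_i = 0$), not ``sign-choices of $u_1, u_2$''. The paper's case split (Lemma~\ref{Lem:Dn_Sing}) begins not with $u_3, u_4$ but with the minors from the $(u_1,u_3)$ and $(u_1,u_4)$ columns, which force $u_2 u_3(1 - \lambda_{n-1}^{-1} u_3 u_4) = u_2 u_4(1 - \lambda_{n-1}^{-1} u_3 u_4) = 0$; the branches are then $u_2 = 0$, or $u_3 = u_4 = 0$ with $u_2$ invertible, or $u_3 u_4 = \lambda_{n-1}$ (which leads to a contradiction via the continuant identity $P_{n-2} = P_{n-3} = 0$ being impossible). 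Your proposed case split on $u_3, u_4$ alone would miss the constraints coming from $u_1, u_2$ and would misidentify which component is generic.

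For the local classification you are on the right track: the key move along $Y_0$ is that $1 - \lambda_{n-1}^{-1} u_3 u_4$ is a unit there, so $h_1$ becomes $w_1 u_2 - u_3 u_4$ after absorbing this unit into a new variable $w_1$ replacing $u_1$; this is already the $3$-dimensional $A_1$ form, and no quadratic completion in the $z$-variables is needed in Case~(c). At the origin in Cases~(a)--(b), the same substitution works for $h_1$, while Proposition~\ref{Prop:ContSing} applied to $h_2$ provides the second $A_1$ factor and the structure of $Y_0$.
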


In Cases~(a) and~(b), a desingularization of $ \Spec (\cA_\c^\prin (D_n)) $ is obtained by two blowups that are constructed analogous to the situation with trivial coefficients \cite[Proposition~4.6(2)(c)]{BFMS}:
First, blow up the origin and then take as the second center the strict transform of $ Y_0 \cup \cdots \cup Y_4 $. 

\begin{MainThm}[{cf.~Lemma~\ref{Lem:En_2_eq} and Theorem~\ref{Thm:E_prin}}]
	\label{Thm:E}
	For $ k \in \{ 6,8\} $, 
		all fibers of the family defined by 
		$ \phi \colon \Spec ( \cA_\c^\prin (E_k) )\to S  $ are regular. 
		\\
		The cluster algebra 
		$ \cA^\prin_\c(E_7) $ is
		isomorphic to 
		$ \KK_\c[z_1, \ldots, z_{5}, u_1, \ldots, u_5 ]/ \langle h_1, h_2, h_3 \rangle $,
		where
		\[
		\begin{array}{l}
			h_1 :=
			P_{5}(z_1, \ldots, z_{5}) - u_3 P_2 (u_1,u_2)
			\ ,
			
			\\[3pt]
			
			h_2 := 
			u_3 u_4  - P_{4} (z_1,\ldots, z_{4}) - \lambda_{5}(\c)
			\ ,
			
			\\[3pt]
			
			h_3 := 
			P_{3} (u_1, u_2, u_5)  - P_{4} (z_1,\ldots, z_{4})
			\ . 
		\end{array} 
		\]
		The fibers $ Spec ( \cA_\c^\prin (E_7) )_\eta $ of the corresponding family are singular if and only if we have $ \lambda_{5} (\eta)  = -1 $.
		In the singular case, the singular locus is a regular irreducible surface $ Y $ 
		and locally at $ Y $, the variety $ \Spec ( \cA_\c^\prin (E_7) )_\eta $ is isomorphic to a cylinder over a $ 5 $-dimensional hypersurface singularity of type $ A_1 $.
		\\
		\underline{In other words}, there is a stratification
			$ S = S_1 \sqcup S_2 $, 
			where 
			\[ 
			S_1 := V(\lambda_{5} (\c) + 1) \subset S 
			\ \ \mbox{ and } \ \  
			S_2 := S \setminus S_1  \ , 
			\] 
			such that 
			the fibers above every point of $ S_2 $ are regular and 
			those above the points of $ S_1 $ are singular and locally at their regular singular loci, $ \Spec ( \cA_\c^\prin (E_7) )_\eta $ is isomorphic to a cylinder over a $ 5 $-dimensional hypersurface singularity of type $ A_1 $.  	
\end{MainThm}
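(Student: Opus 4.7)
The plan follows the template of Theorems~\ref{Thm:A}--\ref{Thm:D}: derive a compact presentation of $\cA^\prin_\c(E_k)$ by iteratively eliminating cluster variables from the acyclic exchange relations, then study the singular locus via the Jacobian criterion.

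First I would fix a bipartite orientation of the Dynkin quiver $E_k$ and write down the $k$ exchange relations from \eqref{eq:intro_exchange_abbrev}. In an acyclic presentation each mutated variable $y_i$ appears linearly in exactly one equation, so one may solve for it and substitute into the remaining relations, exactly as in the derivation of Proposition~\ref{Prop:An_hypersurface}. Performing this elimination along the longest arm of the quiver produces the continuant polynomials $P_j$ automatically. For $E_7$, choosing the trivalent vertex as (essentially) the last one to eliminate, and labelling so that $z_1,\ldots,z_5$ record the five vertices of the long arm that sit opposite to the short arm (their mutations being successively eliminated), while $u_1,\ldots,u_5$ encode the two short-arm vertices together with the trivalent vertex and two surviving mutated variables, should yield the stated three relations $h_1,h_2,h_3$. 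The factors $P_5(z_1,\ldots,z_5)$ and $P_4(z_1,\ldots,z_4)$ arise from eliminating along the long arm, while $P_2(u_1,u_2)$ and $P_3(u_1,u_2,u_5)$ come from the short arm; the mixed relation $h_2$ encodes the trivalent vertex.

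For $E_6$ and $E_8$ I would perform the analogous elimination. Here I expect the relations to assemble into either a hypersurface or a small complete intersection whose Jacobian never drops rank: at every point of the variety, some maximal minor equals a monomial in the invertible coefficients $c_i$ times a polynomial which is forced to be a unit modulo the ideal by one of the remaining exchange relations. The Jacobian criterion then establishes regularity of every fiber, giving the $E_6, E_8$ part of the statement.

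For $E_7$ the singular locus is found by computing the $3\times 3$ minors of the Jacobian of $(h_1,h_2,h_3)$ with respect to the ten remaining variables. Exploiting the continuant recursion $P_n = y_1 P_{n-1} - P_{n-2}$ together with the defining equations $h_i=0$, the vanishing of all minors should reduce to a short list of polynomial equations in which the relation $\lambda_5(\eta) = -1$ separates out, the remaining equations cutting a $2$-dimensional irreducible regular subvariety $Y$. To determine the local analytic type along $Y$, pass to the completion of the local ring at a generic point of $Y$, use two of the three generators together with two cluster variables to eliminate two coordinates (these will parametrise the cylinder direction), and then apply a Morse-type change of variables in the residual six coordinates to put the final equation into the canonical odd-dimensional $A_1$ form $z_0 z_1 + z_2 z_3 + z_4 z_5$.

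The main obstacle is the branch combinatorics of $E_7$: because the quiver is not a chain, the iterative elimination must cross the trivalent vertex, and one must carefully bookkeep how the continuant polynomials on the two arms assemble into $h_1,h_2,h_3$. Once this presentation is securely in hand, the singular-locus analysis and the local Morse reduction proceed by Jacobian computations strictly parallel to those in the proofs of Theorems~\ref{Thm:A}--\ref{Thm:D}.
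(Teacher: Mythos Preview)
Your overall strategy matches the paper's: derive a compact continuant presentation and apply the Jacobian criterion. Two points need correction.

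First, the elimination goes the other way. In the $k$-th relation $x_k y_k - c_k x_{k-1} - x_{k+1}$ you cannot solve for the mutated variable $y_k$ (its coefficient $x_k$ may vanish on the variety), but $x_{k+1}$ appears with coefficient $\pm 1$. After rescaling by $\lambda_k(\c)$ the relation becomes $\widetilde x_{k+1} = \widetilde x_k \widetilde y_k - \widetilde x_{k-1}$, which is precisely the continuant recursion and yields $x_k = P_k(x_1,y_1,\ldots,y_{k-1})$. This is how $P_{n-2},P_{n-3}$ enter $h_1,h_2,h_3$ along the long arm and $P_2,P_3$ along the short arm.

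Second, and more importantly, your handling of $E_6,E_8$ is a gap: you say you ``expect'' the Jacobian never drops rank but give no mechanism. The paper avoids separate casework by proving Lemma~\ref{Lem:En_2_eq} and Theorem~\ref{Thm:E_prin} for $E_n$ with $n\geq 6$ arbitrary. The singular locus of the fiber turns out to be
\[
Y = V\bigl(z_{n-2},\,u_3,\,u_4,\,P_3(u_1,u_2,u_5)+\lambda_{n-2}(\eta)\bigr)\ \cap\ \Sing\bigl(V(P_{n-3}(z_1,\ldots,z_{n-3})+\lambda_{n-2}(\eta))\bigr),
\]
and by Proposition~\ref{Prop:ContSing} the second factor is non-empty if and only if $n-3$ is even and $\lambda_{n-2}(\eta)=(-1)^{(n-3)/2+1}$. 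For $n\in\{6,8\}$ the integer $n-3$ is odd, so $Y=\varnothing$ automatically; for $n=7$ one has $n-3=4$ and the condition becomes $\lambda_5(\eta)=-1$. Thus the regularity of $E_6,E_8$ is not a separate computation but a parity consequence of the uniform argument.

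For the Jacobian analysis itself, the concrete observation that makes the case split tractable is that the row of $\partial h_3/\partial u_j$ in the $(u_1,\ldots,u_5)$ block is $(u_2u_5-1,\ u_1u_5,\ 0,\ 0,\ u_1u_2-1)$, and these three entries cannot vanish simultaneously. Any singular point then forces $u_3\,P_2(u_1,u_2)=0$; the branch $u_3=0$ leads to $Y$, while $P_2(u_1,u_2)=0$ with $u_3$ invertible forces $P_{n-2}(z)=P_{n-3}(z)=0$, which is impossible by the continuant recursion. Your description of the local Morse reduction along $Y$ (eliminate $z_{n-2}$ via $h_1$, eliminate one of $u_1,u_2,u_5$ via $h_3$, leaving $h_2$ as the $A_1$ hypersurface) is correct.
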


In fact, Lemma~\ref{Lem:En_2_eq} and Theorem~\ref{Thm:E_prin} are more general as we put $ E_6, E_7, E_8 $ in a unified setting in order to avoid \bs{treating} the three cases separately as in \cite[Section~4.3]{BFMS}.
Hence, Section~\ref{Subsec:En} also includes results on singularities of certain cluster algebras that are not of finite cluster type. 

\begin{MainProp}[{Lemma~\ref{Lem:F_4}}]
	\label{Prop:F}
	The cluster algebra 
	$ \cA^\prin_\c(F_4) $ is isomorphic to a trivial family over $ \KK_\c $,
	where each fiber is isomorphic to the corresponding cluster algebra
	$ \cA(F_4) $
	with trivial coefficients.
	Since
	$ \Spec (\cA (F_4)) $ is isomorphic to a regular hypersurface in $ \AA_\KK^5 $, the respective result holds for \bs{the fibers of} $ \Spec(\cA^\prin_\c(F_4)) $. 
\end{MainProp}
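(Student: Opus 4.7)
The plan is to realise $\Spec(\cA^\prin_\c(F_4))$ explicitly as a trivial base change of $\Spec(\cA(F_4))$ over $\Spec(\KK_\c)$ by a suitable monomial rescaling of the cluster variables, and then to deduce the geometric statement on the fibres from the trivial-coefficient case already treated in \cite{BFMS}.

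First I would fix an acyclic orientation of the $F_4$ Dynkin diagram, write down the corresponding $4\times 4$ exchange matrix $B$, and record the four exchange relations in the presentation \eqref{eq:intro_presentation},
\[
x_k y_k - c_k \prod_{b_{jk}>0} x_j^{b_{jk}} - \prod_{b_{jk}<0} x_j^{-b_{jk}} = 0, \qquad k = 1, \ldots, 4.
\]
Next, I would look for Laurent monomials $\alpha_1, \ldots, \alpha_4 \in \KK_\c^\times$ and perform the substitution $x_k \mapsto \alpha_k x_k$, $y_k \mapsto c_k \alpha_k^{-1} \prod_j \alpha_j^{\max(b_{jk},0)} y_k$ so that each relation becomes proportional to its trivial-coefficient analogue. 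An elementary manipulation shows that such $\alpha_k$ exist exactly when the system $c_k = \prod_j \alpha_j^{-b_{jk}}$ for $k = 1, \ldots, 4$ is solvable in $\KK_\c^\times$, which in turn holds for arbitrary $\c$ if and only if $|\det B| = 1$. For the standard $F_4$ exchange matrix this is a direct computation yielding $\det B = \pm 1$, and explicit monomials $\alpha_k$ can then be written down by inverting $-B^\top$ over $\ZZ$.

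The principal obstacle is therefore the unimodularity of $B_{F_4}$: for types like $A_n$ with $n$ odd, or $E_7$, the analogous exchange matrix fails to be unimodular, which is precisely what produces the obstructions $\lambda_n$ visible in Theorems~\ref{Thm:A}--\ref{Thm:E}. Once the rescaling is in hand, the substitution provides a $\KK_\c$-algebra isomorphism $\cA^\prin_\c(F_4) \cong \cA(F_4) \otimes_\KK \KK_\c$, which is the desired triviality statement. The remaining assertion on the fibres then follows by applying the result from \cite{BFMS} that $\Spec(\cA(F_4))$ is a regular hypersurface in $\AA_\KK^5$ and observing that this property is preserved under trivial base change to $\Spec(\KK_\c)$.
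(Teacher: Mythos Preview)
Your approach is correct and essentially the same as the paper's: both produce a $\KK_\c$-algebra isomorphism via a monomial rescaling of the cluster variables, then invoke \cite{BFMS} for the regularity of the trivial-coefficient fibre. The paper simply writes down explicit rescaling factors (e.g.\ $\widetilde x_1 = c_2 c_4^2 x_1$, $\widetilde x_2 = c_1^{-1} x_2$, etc.) and verifies by substitution that each exchange relation becomes the trivial-coefficient one up to a unit, whereas you supply the conceptual reason such a rescaling exists: the system $c_k = \prod_j \alpha_j^{-b_{jk}}$ is solvable in Laurent monomials precisely because $\det B_{F_4} = \pm 1$, so $-B^\top$ is invertible over~$\ZZ$. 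Your further remark that the failure of unimodularity in odd rank (skew-symmetric, hence singular) explains the nontrivial strata for $A_{2m-1}$ and $E_7$ is a nice observation not made explicit in the paper; it clarifies why $F_4$, $E_6$, $E_8$ behave uniformly well while $E_7$ does not.
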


\begin{MainProp}[{Proposition~\ref{Prop:G_prin}}]
	\label{Prop:G}
	There is an isomorphism 
	\[	
		\cA^\prin_\c(G_2)
	\cong
	\KK_{(c_1,c_2)}[x, y, z]/
	\langle 
	\, 
	xyz -
	y -  c_1 - x^3
	\,
	\rangle  
	\ .
	\] 
	A fiber above $ \eta  \in S $ of the family defined by 
		$ \phi \colon \Spec(\cA_\c^\prin (G_2))\to S $ is singular if and only if $ \car(\KK) = 3 $ and $ \eta_1 \in  \bs{\rf}^3 $ is a cubic element.
	In the singular case, $ \Spec(\cA_\c^\prin (G_2))_\eta $  is isomorphic to a hypersurface with an isolated singularity of type $ A_2 $ at a closed point.
		\\
	\underline{In other words}, the following holds:
	\begin{enumerate}
		\item[(i)]
		If $ \car(\KK) = 3 $, then there is a stratification $ S = S_1 \sqcup S_2 $
		with 
		\[  
		S_1 := \Big\{ \eta \in S  \, \Big| \, \sqrt[3]{\eta_1} \in \bs{\rf} \Big\}
		\ \ \mbox{ and } \ \  
		S_2 := S \setminus S_1 \ ,
		\]
		such that all fibers above $ S_2 $ are regular and 
		every fiber above a point of $ S_1 $ is isomorphic to a hypersurface singularity of type $ A_2 $. 
		
		\item[(ii)]
		If $ \car(\KK) \neq 3 $, all fibers of $ \phi $ are regular. 
	\end{enumerate}
\end{MainProp}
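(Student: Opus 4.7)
The approach has three stages: derive a hypersurface presentation of $\cA^\prin_\c(G_2)$, compute the Jacobian locus on the fibers, and identify the singularity type via a local coordinate change in the singular case. For the presentation, take the $G_2$ exchange matrix $B = \begin{pmatrix} 0 & 1 \\ -3 & 0 \end{pmatrix}$ extended by $I_2$. The two exchange relations in \eqref{eq:intro_presentation} read
\[
    x_1 y_1 - c_1 - x_2^3 = 0 \qquad \text{and} \qquad x_2 y_2 - c_2 x_1 - 1 = 0.
\]
Since $c_2$ is invertible, the second relation is linear in $x_1$ and can be used to eliminate it; after substitution and rescaling by $c_2^{-1}$ the remaining relation becomes $c_2^{-1} x_2 y_1 y_2 - c_2^{-1} y_1 - c_1 - x_2^3 = 0$, and the change of variables $(x, y, z) := (x_2, c_2^{-1} y_1, y_2)$ produces the stated presentation $xyz - y - c_1 - x^3 = 0$. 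That the induced map is a ring isomorphism follows from using the two original relations to invert the substitution in the reverse direction.

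For the singular locus, let $f := xyz - y - c_1 - x^3$. A closed point of the fiber over $\eta \in S$ is singular iff $f$ together with the partial derivatives $\partial_x f = yz - 3x^2$, $\partial_y f = xz - 1$, and $\partial_z f = xy$ all vanish. From $xz = 1$ one has $x, z \in \kappa(\eta)^\times$; then $xy = 0$ forces $y = 0$; and then $\partial_x f = -3x^2$ is nonzero unless $\car(\KK) = 3$. In characteristic $3$, the equation $f = 0$ reduces to $x^3 = -\eta_1$, which is solvable in $\kappa(\eta)$ iff $\eta_1$ is a cube there (using $-1 = (-1)^3$), and the cube root is unique by the injectivity of Frobenius. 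Hence, when nonempty, the singular locus consists of the single closed point $P_\eta = (\alpha, 0, \alpha^{-1})$ with $\alpha^3 = -\eta_1$.

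In the singular case, translate $P_\eta$ to the origin via $x = \alpha + X$, $z = \alpha^{-1} + Z$, $y = Y$. The freshman's dream in characteristic $3$ gives $(\alpha + X)^3 = -\eta_1 + X^3$, and after expanding $xyz$ and cancelling, the equation simplifies to
\[
    f = Y \bigl( \alpha Z + \alpha^{-1} X + XZ \bigr) - X^3.
\]
The element $U := \alpha Z + \alpha^{-1} X + XZ$ has nonzero linear part in $Z$, so $(X, Y, U)$ is a regular system of parameters of $\kappa(\eta)[[X, Y, Z]]$ and $f = YU - X^3$ in these parameters. Thus the completion of the local ring at $P_\eta$ identifies with $\kappa(\eta)[[X, Y, U]]/\langle YU - X^3 \rangle$, an isolated hypersurface singularity of type $A_2$, which together with the previous paragraph yields the announced stratification. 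The main technical step is this final coordinate change: one must notice that the freshman's dream bundles all coupling between $Y$ and $(X, Z)$ into a single formal coordinate while leaving $X^3$ as the only cubic contribution, a reduction unavailable in characteristics other than~$3$, consistent with the regularity of all fibers when $\car(\KK) \neq 3$.
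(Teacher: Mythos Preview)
The proposal is correct and follows essentially the same approach as the paper: eliminate $x_1$ via the second exchange relation to obtain the hypersurface presentation, apply the Jacobian criterion to locate the singular point, and then use the freshman's dream in characteristic~$3$ to reduce the local equation to the $A_2$ normal form. Your final coordinate $U = \alpha Z + \alpha^{-1} X + XZ$ is literally the translation of the paper's $u_2 := x_2 y_2 - 1$ to the singular point, so even the concluding coordinate change coincides.
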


\begin{Obs}
	\bs{
		So far, we have investigated the fibers of the map $ \Spec (\cA^\prin_\c (\Sigma)) \to S $ in Theorems~\ref{Thm:A} -- Proposition~\ref{Prop:G}. 
		In order to connect them to statements on the singularities of cluster algebras with coefficients,
		we observe the following: 
		In each result the conditions to have a singularity depend only on parts of $ c_1, \ldots, c_n $ and not all of them.
		In particular, recall the definition of $ \lambda_n (\c) $, Definition~\ref{Def:ContPoly_lamda_n}(2). 
		Therefore, we could start with a tropical semifield $ \PP $ with free set of generators $ (p_1, \ldots, p_n ) = (c_1, \ldots, c_n ) $ (see Subsection~\ref{Subsec:GeomType_Perpectives})
		and then specialize only those elements $ c_i $ which are relevant for the existence of singularities appropriately to $ +1 $ or $ -1 $ for the condition to be true. 
		As a result, we obtain a cluster algebra with non-trivial coefficients which has the described singularities. 
		 }
\end{Obs}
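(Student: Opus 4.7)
The plan is to verify this Observation by going through Theorems~\ref{Thm:A}--Proposition~\ref{Prop:G} and confirming two things: first, that the singularity condition in each stratum $S_1$ is expressible in terms of a proper subset of $\{c_1,\ldots,c_n\}$; and second, that such a condition can be realized by a cluster algebra with coefficients in a tropical semifield, with only those coefficients specialized appropriately and the remaining generators kept free. Throughout, I would use the bridge provided by Lemma~\ref{Lem:gen=prin} together with the framework of Subsection~\ref{Subsec:GeomType_Perpectives} to translate between cluster algebras with coefficients in a tropical semifield and the principal-coefficient presentations used above.

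First I would catalogue the precise coefficient dependence of each singularity criterion. By Definition~\ref{Def:ContPoly_lamda_n}(2), $\lambda_{2s}(\c) = \prod_{\alpha=1}^{s} c_{2\alpha}^{-1}$ involves only the even-indexed coefficients among $c_1,\ldots,c_{2s}$, whereas $\lambda_{2s+1}(\c) = \prod_{\alpha=1}^{s+1} c_{2\alpha-1}^{-1}$ involves only the odd-indexed ones. Hence the conditions in Theorems~\ref{Thm:A}(i), \ref{Thm:B}(i), \ref{Thm:D}(ii), and~\ref{Thm:E} each depend on at most half of the $c_i$. In Theorem~\ref{Thm:C} the criteria involve $-c_n$ and $\rho_n = \delta_n \lambda_{n-2}(\c)$, so again only $c_n$ together with a parity-restricted subfamily appears. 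Proposition~\ref{Prop:G} depends solely on $c_1$, while Proposition~\ref{Prop:F} requires nothing since its fibers are always regular.

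Second, for each stratum $S_1$ described by an equation $\chi(\c) = \pm 1$ (or by a squareness condition) I would pick a tropical semifield $\PP$ with free generators $(p_1, \ldots, p_n) = (c_1, \ldots, c_n)$ and specialize precisely those $p_i$ that occur in $\chi$ so that the defining equality holds, while leaving the other generators free. Via Lemma~\ref{Lem:gen=prin} and the explicit presentations in Lemmas~\ref{Lem:Bn_2_eq}, \ref{Lem:Cn_hypersurface}, \ref{Lem:Dn_2_eq}, and~\ref{Lem:En_2_eq}, the resulting cluster algebra carries precisely the singular locus described in the theorems while still having genuinely non-trivial coefficients, namely the non-specialized free generators of~$\PP$.

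The main obstacle is the bookkeeping required to ensure that each specialization is consistent with the exchange relations and does not accidentally trivialize all coefficients, particularly in the characteristic-$2$ cases of Theorems~\ref{Thm:B}(ii) and~\ref{Thm:C} where the conditions involve squareness in a residue field rather than a polynomial equality, so that one must distinguish the specialization of $p_i$ from a field extension of $\KK$. This amounts to a case-by-case check using the presentations already established, but requires no new ideas beyond the preceding classification.
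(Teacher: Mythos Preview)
Your proposal is correct and matches the paper's treatment: the paper presents this as an Observation without a formal proof, relying on direct inspection of the singularity criteria in Theorems~\ref{Thm:A}--Proposition~\ref{Prop:G} and the definition of $\lambda_n(\c)$, which is exactly the verification you outline. One minor point: the invocation of Lemma~\ref{Lem:gen=prin} is not really needed here, since the Observation works directly with the principal-coefficient setup (the tropical semifield on $(c_1,\ldots,c_n)$) and specializes within it, rather than passing through generic coefficients.
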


\begin{Bem}
	In a private conversation, 
	Nathan Ilten pointed out the following to the authors:
	If we restrict to (algebraically closed) base fields of characteristic zero, then any time the exchange matrix $ B $ has full rank, the families $ \Spec ( A^\prin_\c (\Sigma ) ) $ and  $ \Spec (A^\gen_{\s,\t} (\Sigma)) $ 
	admit a torus action that acts transitively on the base,
	by arguments similar to those in \cite[Section~5]{INT_complexes}.
	In particular, all fibers are isomorphic, so the corresponding stratification in such instances is trivial;
	e.g., this is reflected in Theorems~\ref{Thm:A}, \ref{Thm:B}, \ref{Thm:C} if $ n $ is even.
	\\	
	More generally, by their results in \cite{INT_complexes},
	it suffices to just add enough frozen coefficients to the cluster algebra with trivial coefficients so that the resulting cluster algebra $ \cA' $ is positively graded and has full rank. 
	Indeed, in that case, the fibers over the torus of the universal cluster algebra for $ \cA' $ are all torus translates, hence isomorphic. 
	Therefore, it suffices to understand the fibers of $ \cA' $. 
	\\
	Finally, one of their main results (e.g.~\cite[Theorem~1.3.1]{INT_complexes}) is a description of the most singular fiber over the affine space (still in characteristic zero): 
	it is the Stanley-Reisner scheme associated to the cluster complex.
\end{Bem}

\subsection{Classification for cluster algebras of rank two}
Given a labeled seed $ \Sigma = (\x, B) $,
the corresponding cluster algebra is of rank two if the exchange matrix $ B $ is of the form 
\[
	B = \begin{pmatrix}
		0 & a \\
		b & 0 
	\end{pmatrix}
	\ , 
\]
where $ a, b \in \ZZ $ are either both zero or are both non-zero and of opposite sign. 
While the situation for $ a = b = 0 $ is rather simple from the perspective of singularity theory (Lemma~\ref{Lem:rk2_zero}),
we prove the following result in the other \bs{cases}:

\begin{MainThm}[Theorem~\ref{Thm:rk2_nonzero}]
	\label{Thm:rank2}
	\bs{Let $ \Sigma_2 $ be the labeled seed of a cluster algebra of rank two and assume that $ a,b \neq 0 $ using the notation above.}
	\bs{Assume that  $ \KK $ is algebraically closed.}
	 	Let $ X_\eta := \Spec (\cA_\c^\prin (\Sigma_2))_\eta $ be the fiber above $ \eta = (\eta_1, \eta_2) \in S $ of the family determined by $ \phi \colon \Spec (\cA_\c^\prin (\Sigma_2)) \to S $. 
	\bsOUT{Assume that  $ \KK $ is algebraically closed.}
	\begin{enumerate}
		\item 
		The singular locus of $ X_{\eta}  $ consists of two disjoint components (which are not necessarily irreducible and which are possibly empty),
		$ \Sing (X_{\eta}) = Y_a \, \sqcup \,  Y_ b $,
		where	
		\[  
		\begin{array}{l} 
			Y_a :=  V (a, x_1^a + \eta_2 , x_2,  x_1 y_1 - \eta_1, y_2 )
			\ ,
			\\[3pt]
			Y_b :=
			V (b, x_1, x_2^b + \eta_1, y_1,  x_2 y_2 - \eta_2 ) 
			\ .
		\end{array}  
		\] 
		\bs{Note that, up to isomorphism, $Y_a$ and $ Y_b$ are independent of $ \eta $.} 
		Notice that $ Y_a $ or $ Y_b $ may be empty depending on the characteristic $ p := \car(\KK) $ of $ \KK $. 
		For instance, we have
		\[
		a \not\equiv 0 \mod p  \Longrightarrow Y_a = \varnothing
		\ \
		\mbox{ and } 
		\ \
		b \not\equiv 0 \mod p  \Longrightarrow 
		Y_b = \varnothing
		\ .  
		\]
		
	\item[(2)] 
		$ \Spec (\cA_\c^\prin (\Sigma_2))  $ is isomorphic to a trivial family over $ S $,
		where each fiber is isomorphic to the cluster algebra corresponding to $ \Sigma_2 $ with trivial coefficients.
		Hence, fixing $ \eta = (\eta_1, \eta_2) \in S $, we have  
		\[
		\begin{array}{rcl}
			X_\eta
			& \cong 
			& \Spec (\KK [x_1', x_2',y_1', y_2'] / \langle \, x_1' y_1' - 1 - x_2'^b  , \,  x_2' y_2' - 1 - x_1'^a \,  \rangle ) 
			\\[3pt]
			& \cong 
			& \Spec (\KK [w_1, w_2, z_1, z_2] / \langle \, w_1 z_1 - 1 + w_2^b  , \,  w_2 z_2 - 1 + w_1^a \,  \rangle )		
		\end{array} 
		\]
		\item[(3)]
		Let $ \alpha, \beta \in \ZZ_+ $ be the largest positive integers such that $ \alpha \, | \, a $ and $ \alpha \not\equiv 0 \mod p $,
		resp.~$ \beta \, | \, b $ and $ \beta \not\equiv 0 \mod p $.
		We have: 
		\[  
		\begin{array}{l}
			\displaystyle 
			Y_a \cong \bigcup_{\zeta \in \mu_\alpha (\KK)}  
			V (a, w_1 - \zeta , w_2, z_1 - \zeta^{-1}, z_2 ) 
			\ ,
			\\[5pt]
			\displaystyle   
			Y_b \cong
			\bigcup_{\xi \in \mu_\beta (\KK)}  
			V (b, w_1, w_2 - \xi , z_1,  z_2 - \xi^{-1} )
			\ .
		\end{array}
		\] 
		where the disjoint unions range over the $ \alpha $-th (resp.~$ \beta $-th) roots of unity $ \mu_\alpha (\KK) $ (resp.~$ \mu_\beta (\KK)) $ in $ \KK $. 
		
		\item[(4)] 
		If $ \Sing ( X_{\eta} ) $ is non-empty, then the singularities classify as follows:
		Let $ Y_{i,j} $ be a connected component of $ Y_i $, with $ i \in \{ a, b \} $.
		Locally, along $ Y_{i,j} $,
		the fiber $ X_\eta $ is isomorphic to a two-dimensional hypersurface singularity of type $ A_{p^m-1} $,
		where $ m := m(i) $ is the positive integer such that $ |a| = \alpha p^m  $ (if $ i = a $), resp.~$ |b| = \beta p^m $ (if $ i = b $).
	\end{enumerate} 
\end{MainThm}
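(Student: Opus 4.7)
The argument splits into four phases matching the four parts of the statement.

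\textbf{Presentation and trivialization (Part~(2)).} Applying \eqref{eq:intro_presentation} to $B = \begin{pmatrix} 0 & a \\ b & 0 \end{pmatrix}$ and absorbing signs into exponents, one obtains $\cA_\c^\prin(\Sigma_2) \cong \KK_\c[x_1, x_2, y_1, y_2]/\langle x_1 y_1 - c_1 - x_2^b, \, x_2 y_2 - c_2 - x_1^a \rangle$ with $a, b \in \ZZ_+$. For the global trivialization I invoke the torus-action argument indicated in the Ilten remark preceding this subsection: since $\det B = -ab \neq 0$ the exchange matrix has full rank, producing a torus that acts transitively on $S$ and realizing $\Spec(\cA_\c^\prin(\Sigma_2))$ as a trivial family. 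To obtain the explicit fiberwise presentations, fix a closed point $\eta = (\eta_1, \eta_2) \in S$; since $\KK$ is algebraically closed, choose $\xi_1, \xi_2 \in \KK^\times$ with $\xi_2^b = \eta_1$, $\xi_1^a = \eta_2$, and rescale $x_i \mapsto \xi_i x_i$, $y_i \mapsto \xi_i^{-1} y_i$ to absorb $\eta_1, \eta_2$. A further scaling by $b$-th (resp.~$a$-th) roots of $-1$ flips the signs of $x_2^b$ and $x_1^a$, producing the third presentation listed in Part~(2).

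\textbf{Singular locus (Part~(1)).} In the trivial-coefficient form $f_1 = x_1 y_1 - 1 - x_2^b$, $f_2 = x_2 y_2 - 1 - x_1^a$, the Jacobian criterion applies at closed points (the residue field is $\KK$, hence perfect). Thus $\Sing(X_\eta)$ is cut out by $f_1 = f_2 = 0$ together with the vanishing of all $2\times2$ minors of
\[
\begin{pmatrix} y_1 & -b x_2^{b-1} & x_1 & 0 \\ -a x_1^{a-1} & y_2 & 0 & x_2 \end{pmatrix},
\]
which simplify to $ax_1^a = bx_2^b = x_1 x_2 = x_1 y_2 = x_2 y_1 = 0$ and $y_1 y_2 = ab\, x_1^{a-1} x_2^{b-1}$. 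A short case analysis on $(a \bmod p,\, b \bmod p)$, where $p = \car(\KK)$, yields: if $p \nmid a$ and $p \nmid b$ the singular locus is empty; if $p \mid a$ but $p \nmid b$ the equations force $x_2 = y_2 = 0$, $x_1 y_1 = 1$, $x_1^a = -1$, which is exactly $Y_a$; symmetrically for $Y_b$; and if $p$ divides both then $\Sing(X_\eta) = Y_a \sqcup Y_b$, the two cases being separated by whether $x_1 = 0$ or $x_2 = 0$.

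\textbf{Decomposition and local type (Parts~(3) and (4)).} Write $a = \alpha p^m$ with $\gcd(\alpha, p) = 1$. The Frobenius identity $w_1^a - 1 = (w_1^\alpha - 1)^{p^m}$, valid in characteristic $p$, shows that as a reduced subscheme $\{w_1^a = 1\}$ equals $\mu_\alpha(\KK)$; this gives the claimed decomposition of $Y_a$, and symmetrically of $Y_b$. For Part~(4), fix a component $Y_{a,\zeta}$ sitting at $w_1 = \zeta$, $w_2 = 0$, $z_1 = \zeta^{-1}$, $z_2 = 0$. Since $w_1$ is a unit there, the relation $w_1 z_1 = 1 - w_2^b$ is solvable for $z_1$ in the complete local ring, eliminating $z_1$. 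The surviving relation $w_2 z_2 = -(w_1^\alpha - 1)^{p^m}$, combined with the factorization $w_1^\alpha - 1 = (\text{local unit})\cdot (w_1 - \zeta)$ and the substitution $u := \varepsilon(w_1 - \zeta)$ absorbing that unit and the sign, transforms the local equation into $w_2 z_2 = u^{p^m}$, the standard local model of a two-dimensional hypersurface singularity of type $A_{p^m-1}$.

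\textbf{Main obstacle.} The delicate step is Part~(4): recognizing the Frobenius factorization $w_1^a - 1 = (w_1^\alpha - 1)^{p^m}$ as the precise mechanism that concentrates all multiplicity of the vanishing at each $\zeta \in \mu_\alpha(\KK)$ into a single $A_{p^m-1}$-singularity, and verifying that the second cluster relation genuinely decouples in the completed local ring via the implicit function theorem, so that the analytic germ of $X_\eta$ along $Y_{a,\zeta}$ becomes faithfully the surface $w_2 z_2 = u^{p^m}$ without any extra contributions. In characteristic zero, or whenever $p \nmid ab$, the same Jacobian computation collapses directly to the smoothness half of Part~(1), giving a consistency check on the overall picture.
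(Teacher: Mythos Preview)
Your proof is correct and follows essentially the same route as the paper: Jacobian computation for the singular locus, explicit rescalings using roots in the algebraically closed field for the trivialization, Frobenius factorization $w_1^a - 1 = (w_1^\alpha - 1)^{p^m}$ for the decomposition, and elimination of $z_1$ followed by local unit absorption for the $A_{p^m-1}$ normal form. The only notable difference is that you run Part~(1) in the trivial-coefficient coordinates (after first establishing Part~(2)), whereas the paper computes the Jacobian directly in the original $(x_1,x_2,y_1,y_2,\eta_1,\eta_2)$-coordinates and only afterwards performs the rescaling; both orderings are fine.

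One small caveat: your appeal to the Ilten torus-action remark for the \emph{global} trivialization is not quite licensed, since that remark in the paper is stated only for characteristic zero. This does not damage your argument, because you immediately supply the explicit rescalings $x_i \mapsto \xi_i x_i$, $y_i \mapsto \xi_i^{-1} y_i$ (and the subsequent sign flip via $a$-th and $b$-th roots of $-1$), which are exactly what the paper uses and which work in arbitrary characteristic over an algebraically closed field. So the torus-action sentence is superfluous rather than wrong; you can safely delete it and let the explicit coordinate change carry Part~(2) on its own.
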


The additional hypothesis that $ \KK $ is algebraically closed is made in order to avoid a too technical statement. 
In Remark~\ref{Rk:rk2_arbi_field}, we briefly address the aspects that need to be taken into account if $ \KK $ is not necessarily algebraically closed. 

\subsection{Summary of contents}
In Section~\ref{Sec:ClusterBasics}, we recall all notions and results from the theory of cluster algebras which we need.
\bsOUT{A reader who is familiar with cluster algebras may generously skip this part and directly move on with Section~\ref{Sec:GenFamily}.}
In Section~\ref{Sec:GenFamily}, we introduce the notion of cluster algebras with generic coefficients and discuss its connection to the case of principal coefficients.
The following Section~\ref{Sec:FiniteSing} is devoted to the classification results in the finite cluster type case and their proofs. 
In particular,
we recall the required facts on continuant polynomials (Section~\ref{Subsec:Cont}), which are a key tool for our investigations.
Finally we address the singularity theory of cluster algebras of rank two in Section~\ref{Sec:Rk2}.

\subsection*{Acknowledgments}
Major parts of the research on this project have been realized during a two-week-long stay at the International Centre of Mathematical Sciences (ICMS), Edinburgh, UK,
within their ``Research in Groups" program.
We are appreciative for the opportunity to work at the Maxwell House with its stimulating and carefree environment.
We thank the institute for their hospitality. 
In particular, we are grateful to the staff whose efforts made the stay a rewarding experience. 
\\
\bs{The authors thank the anonymous referees whose useful comments helped to improve the article.}

\section{Cluster algebras basics}
\label{Sec:ClusterBasics}

Let us recall some basics on cluster algebras.
In contrast to the section on cluster basics in our previous work \cite[Section~2]{BFMS}, 
we also take the case of non-trivial coefficients into account. 
\bs{For full details on the theory of cluster algebras, we refer to} \bsOUT{We follow mainly \cite[Section~2]{FZ2007} and to some extent} \cite{FZ2002,FZ2003II,BFZ2005,FZ2007}.

Let $ (\PP, \oplus, \cdot ) $ be a semifield, i.e., 
it fulfills the same axioms as a field with the possible exception for the existence of neutral element with respect to $ \oplus $ and for the existence of an inverse element with respect to $ \oplus $.

Fix a positive integer $ n \in \ZZ_+ $ and a field $ \KK $. 
The {\em ambient field} for a cluster algebra $ \cA $ of rank $ n $ is a field $ \cF$ isomorphic to the field of rational functions in $ n $ independent variables with coefficients in $ \KK \PP $. 
\bs{Recall that in the ring structure of $ \KK\PP $, the operation $ \oplus$ plays no role.} 
\\
Let us point out that \cite{FZ2007} treat the more general case over $ \ZZ \PP $ instead of $ \KK \PP $.
Since we are interested in problems in the context of algebraic geometry, we restrict ourselves to the case over a field $ \KK $ and whenever we refer to \cite{FZ2007}, we provide the adapted variant for our setting. 
On the other hand, in contrast to many references in the literature, 
we want to allow fields $ \KK $ different from $ \QQ $ or $ \CC $; 
in particular, we take fields of positive characteristic into account.

\subsection{Cluster algebras of geometric type \bsOUT{and a review on different perspectives}}
\label{Subsec:GeomType_Perpectives}

\bs{A cluster algebra is of {\em geometric type} if the coefficient semifield $ \PP $
is a tropical semifield (\cite[Definition~2.12]{FZ2007}).}  
Let us \bs{recall} \bsOUT{first remind the reader of} the definition of a tropical semifield.
 
\begin{defi}[{\cite[Definition~2.2]{FZ2007}}] 
	Let $ ( \PP, \cdot ) $ be a multiplicative free abelian group with a finite \bs{free} set of generators 
	$ (p_1, \ldots, p_\ell) $.
	We endow $ \PP $ additionally with the auxiliary addition $ \oplus $ given by
	\[
	\prod_{i=1}^\ell p_i^{a_i} \oplus \prod_{i=1}^\ell p_i^{b_i} := \prod_{i=1}^\ell p_i^{\min\{ a_i, b_i\}}.
	\]
	Using this addition, $ (\PP, \oplus, \cdot ) $ is a semifield,
	which is usually denoted by $ \operatorname{Trop}(p_1, \ldots , p_\ell ) $, the {\em tropical semifield}. 
\end{defi}

Recall that an $ n $-tuple $ \x = (x_1, \ldots, x_n) $ of elements in $ \cF $ is called a {\em free generating set} if 
$ x_1, \ldots, x_n $ are algebraically independent over $ \KK\PP $ and $ \cF= \KK\PP(x_1, \ldots, x_n) $. 
\\
Moreover, recall that an $ n \times n $ integer matrix $ B = ( b_{ij} ) $ is called  {\em skew-symmetrizable}
if there exist positive integers $ d_1, \ldots, d_n $ such that, for all $ i, j \in \{ 1, \ldots, n \} $, we have
\[
	d_i b_{ij} = - d_j b_{ji} \ . 
\] 
In particular, a skew-symmetrizable is sign-skew-symmetric.

\bs{ 
	In this article, we will work with the definition of cluster algebras via extended exchange matrices, cf.~{\cite[after Definition~2.12]{FZ2007}}.
	For this let
	\begin{itemize}
		\item 
		$ \y = (y_1, \ldots, y_n) $ be an $ n $-tuple of elements in $ \PP $, called the {\em coefficient tuple},
		
		\item 
		$ B = (b_{ij}) $ be a skew-symmetrizable $ n \times n $ integer matrix,
		called the {\em exchange matrix},
		
		\item 
		$ \x = (x_1, \ldots, x_n) $ be an $ n $-tuple of elements in $ \cF $ forming a free generating set, called the {\em cluster}. 
	\end{itemize}
	
	Rename the generators $ \p = (p_1, \ldots, p_\ell) $ of the tropical semifield of $ \PP $ by calling them $ x_{n+1}, \ldots, x_{m} $, where $ m := n + \ell $,
	and set 
	\[
	\widetilde{\x} := (x_1, \ldots, x_n, \ldots, x_m). 
	\]
	Since the coefficients $ y_1, \ldots, y_n $ are Laurent monomials in $ x_{n+1}, \ldots, x_m $, we may define the integers 
	\[
	b_{ij} \in \ZZ,
	\ \ \
	\mbox{for } j \in \{ 1, \ldots, n \} 
	\mbox{ and } i \in \{ n+1, \ldots, m \}
	\]
	via 
	\begin{equation}
		\label{eq:y_j}
		y_j = \prod_{i=n+1}^m  x_i^{b_{ij}} \ . 
	\end{equation}
	
	In other words,
	we are extending the $ n \times n $ exchange matrix $ B $ to an {\em extended exchange matrix $ \widetilde B $} of size $ m \times n $,
	where 
	\[
	\widetilde B := ( b_{ij})_{ i \in \{ 1, \ldots, m \}, j \in \{1, \ldots, n\}} \ .
	\]
	Thus, the new entries below $ B $ (i.e., those $ b_{ij} $ with $ i \geq n + 1 $) are determined by the exponents of $ x_i $  in \eqref{eq:y_j}. 
	The pair $ \Sigma = (\widetilde{\x},\widetilde{B}) $ is called a {\em labeled seed}.
	
	A key tool for the construction of a cluster algebra is the notion of mutation. 
	
	\begin{defi}
		\label{Def_missing}
		Let $ \Sigma = (\widetilde \x, \widetilde B) $ be a labeled seed and $ k \in \{ 1, \ldots, n \} $.  
		The {\em seed mutation} $ \mu_k $ in direction $ k $ transforms $ (\widetilde \x, \widetilde B) $ into the labeled seed $ \mu_k (\widetilde \x, \widetilde B)  := (\widetilde \x', \widetilde B') $,
		where:
		\begin{enumerate}
			\item 
			a new cluster $ \x' = (x_1', \ldots, x_n') $ is introduced via $ x_j' := x_j $ if $ j \neq k $ 
			and $ x_k' \in \cF $ is defined by the {\em exchange relation}:
			\begin{equation}
				\label{eq:exchange_TildeB_1}
				x_k x_k' 
				= 
				\prod\limits_{\substack{i=1 \\ b_{ik} > 0 }}^m x_i^{b_{ik}} + \prod\limits_{\substack{i=1 \\ b_{ik} < 0 }}^m x_i^{-b_{ik}} 
				\ .
			\end{equation}
			Notice that the appearing exponents are the entries of the $ k $-th column of $ \widetilde{B} $.
			
			\item 
			the new exchange matrix 
			$ \widetilde B' = (b_{ij}') $ is of the same size as $ \widetilde B $ and its entries are determined by
			\[
			b_{ij}' := 
			\begin{cases}
				- b_{ij} & \mbox{if } i = k \mbox{ or } j = k \ ,
				\\
				b_{ij} + \operatorname{sgn}(b_{ik}) [ b_{ik} b_{kj} ]_+
				& \mbox{otherwise} \ .
			\end{cases}
			\]	
		\end{enumerate}
		Since we are only allowed to mutate in directions $ \{ 1, \ldots, n \} $,
		the variables $ (x_1, \ldots, x_n) $ are sometimes called the {\em mutable variables} of $ (\widetilde{\x}, \widetilde{B}) $,
		while $ (x_{n+1}, \ldots, x_m) $ are called the {\em frozen variables} or {\em coefficient variables} of $ (\widetilde{\x}, \widetilde{B}) $, cf.~\cite[Definition~3.1.1]{FWZ2016}, for example.
	\end{defi}

Note that the seed mutation is well-defined, i.e., $ \mu_k (\Sigma) $ is again a labeled seed in $ \cF $.
Furthermore, $ \mu_k $ is an involution, i.e., we have $ \mu_k ( \mu_k (\Sigma) ) =  \Sigma $.

Let us point out that a key idea in \cite{BFMN_degen} is to introduce a distinction between the notion of coefficient variables 
and that of frozen variables.
This allows them to consider cluster algebras in a relative situation with respect to the coefficient ring, 
which is then leading to families of cluster algebras in the universal setting. 
\\
While we are also working with families of cluster algebras arising from a relative situation, 
we do not have to make a distinction between frozen and coefficient variables.
For more on this, we refer to Section~\ref{Sec:GenFamily}.
}

\bsOUT{ 
\begin{defi}[{\cite[Definition~2.3]{FZ2007}}]
	\label{Def:seed}
	A {\em labeled $ Y $-seed} $ (\y, B) $ in $ \PP $ consists of
	\begin{itemize}
		\item an $ n $-tuple $ \y = (y_1, \ldots, y_n) $ of elements in $ \PP $
		and
		
		\item 
		a skew-symmetrizable $ n \times n $ integer matrix $ B = (b_{ij}) $.
	\end{itemize}

	A {\em labeled seed} $ (\x,\y,B) $ in $ \cF $ consists of 
	\begin{itemize}
		\item a labeled $ Y $-seed $ (\y, B) $ in $ \PP $ and
		
		\item 
		an $ n $-tuple $ \x = (x_1, \ldots, x_n) $ of elements in $ \cF $ forming a free generating set. 
	\end{itemize}
	
	The tuple $ \x $ is called the {\em cluster},
	the tuple $ \y $ is called the {\em coefficient tuple},
	and the matrix $ B $ is called the {\em exchange matrix} of $ (\x,\y, B) $.
\end{defi}

A key tool for the construction of a cluster algebra is seed mutation, which creates a new labeled seed from a given one.
More precisely, mutation is given as follows using the notation 
\[ 
	[a]_+ := \max \{ a, 0 \} \ , 
	\ \ \ 
	\mbox{ for } a \in \ZZ.
\]

\begin{defi}[{\cite[Definition~2.4]{FZ2007}}]
	\label{Def:seed_mutation}
	Let $ (\x,\y,B) $ be a labeled seed in $ \cF $
	and choose a $ k \in \{ 1, \ldots, n \} $.
	The {\em seed mutation} $ \mu_k $ in direction $ k $ transforms $ (\x,\y, B) $ into the labeled seed $ \mu_k (\x,\y,B)  := (\x',\y',B') $,
	where: 
	\begin{enumerate}
		\item 
		The $ n \times n $ matrix $ B' = (b_{ij}') $ is determined by
		\[
			b_{ij}' := 
			\begin{cases}
				 - b_{ij} & \mbox{if } i = k \mbox{ or } j = k \ ,
				 \\
				 b_{ij} + \operatorname{sgn}(b_{ik}) [ b_{ik} b_{kj} ]_+
				 & \mbox{otherwise} \ .
			\end{cases}
		\]
		
		\item
		The coefficient tuple $ \y' = (y_1', \ldots, y_n') $ is defined via
		\[
		y_j' := 
		\begin{cases}
		y_k^{-1} & \mbox{if } j = k \ ,
		\\
		y_{j} y_k^{[b_{kj}]_+} (y_k \oplus 1)^{-b_{kj}}
		& \mbox{if } j \neq k \ .
		\end{cases}
		\]
		 
		\item 
		The cluster $ \x' = (x_1', \ldots, x_n') $ is given by $ x_j' := x_j $ if $ j \neq k $ 
		and $ x_k' \in \cF $ is defined by the {\em exchange relation}
		\begin{equation}
		\label{eq:exchange_y}
			x_k' := \frac{\displaystyle  y_k \prod\limits_{i=1}^n x_i^{[b_{ik}]_+} + \prod\limits_{i=1}^n x_i^{[-b_{ik}]_+} }{\displaystyle  (y_k \oplus 1) x_k }
			\ .
		\end{equation} 
	\end{enumerate} 
\end{defi}

Note that the seed mutation is well-defined, i.e., $ \mu_k (\x,\y,B) = (\x', \y', B') $ fulfills the properties of a labeled seed in $ \cF $, Definition~\ref{Def:seed}.
Furthermore, $ \mu_k $ is an involution, i.e., we have $ \mu_k ( \mu_k (\x,\y,B) ) =  (\x,\y,B) $.

As pointed out in \cite[Remark~2.7]{FZ2007}, the coefficient tuple $ \y $ may be encoded as a $ 2n $-tuple $ \p = ( p_1^\pm, \ldots, , p_n^\pm ) $ of elements in $ \PP $ which are normalized in the sense that $ p_j^+ \oplus p_j^- = 1 $ for every $ j \in \{ 1, \ldots, n \} $. 
This is the original definition of a coefficient tuple, 
as in \cite[Definition~5.3]{FZ2002} or \cite[\S~1.2]{FZ2003II},
and the equivalence is explained in \cite[(5.2), (5.3)]{FZ2002}. 
More precisely, for $ j \in \{ 1, \ldots, n \} $, we have 
\[
	p_j^+ = \frac{y_j}{y_j \oplus 1} \ ,
	\ \ \
	p_j^- = \frac{1}{y_j \oplus 1} \ ,
	\ \ \mbox{ and, on the other hand, }
	\ \ y_j = \frac{p_j^+}{p_j^-} \ .
\]
The exchange relation \eqref{eq:exchange_y} can be rephrased in terms of $ \p $ as
\begin{equation}
	\label{eq:exchange_p}
	x_k x_k' = p_k^+ \prod\limits_{i=1}^n x_i^{[b_{ik}]_+} + p_k^- \prod\limits_{i=1}^n x_i^{[-b_{ik}]_+} 
	\ .
\end{equation}
We sometimes use the abbreviation 
\begin{equation}
	\label{eq:exchange_rhs}
	P_k(\x) :=  p_k^+ \prod\limits_{i=1}^n x_i^{[b_{ik}]_+} + p_k^- \prod\limits_{i=1}^n x_i^{[-b_{ik}]_+} 
	\ .
\end{equation}

\bigskip 
}

Recall that the {\em $ n $-regular tree} $ \TT_n $ is the infinite simply laced tree such that every vertex $ v $ has $ n $ edges and the latter are labeled with the numbers $ 1, \ldots, n $.

\begin{defi}[{\cite[Definition~2.9]{FZ2007}}]
	A {\em cluster pattern} is an assignment of a labeled seed $ \Sigma_t = \bs{( \widetilde \x_t, \widetilde B_t)} \bsOUT{( \x_t, \y_t, B_t)} $ to every vertex $ t \in \TT_n $ such that the following property holds:
	\\
	For every pair of vertices $ t, t' \in \TT_n $ which are joined by an edge, say with label $ k $, we have 
	$ \mu_k (\Sigma_t) = \Sigma_{t'} $. 
\end{defi}

Note that we also have $ \mu_k (\Sigma_{t'}) = \Sigma_{t} $ in the above situation since $ \mu_k $ is an involution.

\begin{defi}[{\cite[Definition~2.11]{FZ2007}}] 
	\label{Def:ClusterAlgebra}
	For a cluster pattern determined by the seeds $ \Sigma_t = \bs{( \widetilde \x_t, \widetilde B_t)} \bsOUT{( \x_t, \y_t, B_t)}  $, $ t \in \TT_n $, 
	we set
	\[
		\cX := \bigcup_{t \in \TT_n } \x_t = \{ x_{i,t} \mid t \in \TT_n, i \in \{ 1 , \ldots, n \} \},
	\] 
	where $ \x_t = (x_{1,t}, \ldots, x_{n,t} ) $.
	We call $ x_{i,t} \in \cX $ the {\em cluster variables}.
	\\
	The {\em cluster algebra} $ \cA $ associated to the given cluster pattern is the $ \KK\PP $-subalgebra of the ambient field $ \cF $ generated by all cluster variables,
	\[
		\cA = \KK\PP [\cX] \ . 	
	\]
	For a labeled seed $ \Sigma = \bs{ (\widetilde \x , \widetilde B) = ( \widetilde \x_t, \widetilde B_t)} \bsOUT{( \x, \y, B) = ( \x_t, \y_t, B_t)}  $ with $ t \in \TT_n $ a fixed value,
	we also use the notation
	\[
		\cA = \cA (\Sigma) = \bs{\cA(\widetilde \x, \widetilde B)} \bsOUT{\cA (\x, \y, B)} \ . 
	\]
\end{defi}

\medskip 

By the last part, we may define a cluster algebra also by giving a labeled seed $ \Sigma \bsOUT{ =  (\x, \y, B)} $ in $ \cF $ and to create then a cluster pattern by repeatedly mutating in all possible directions.

\bsOUT{The setting in which we are interested in are cluster algebras of geometric type.

\begin{defi}[{\cite[Definition~2.12]{FZ2007}}]
	\label{Def:geomType}
	A cluster algebra (resp.~a cluster pattern) is of {\em geometric type} if the coefficient semifield $ \PP $
	is a tropical semifield.  
\end{defi}

From the perspective on coefficients of the labeled seeds $ (\x_t,\y_t, B_t) $ of a cluster pattern
as $ 2n $-tuple  $ \p_t = ( p_{1,t}^\pm, \ldots, , p_{n,t}^\pm ) $ of elements in $ \PP = \operatorname{Trop}(p_1, \ldots, p_\ell) $, 
the corresponding cluster algebra is of geometric type if $ \p_t $ is normalized, that is, $p_{j,t}^+ \oplus p_{j,t}^-=1$, 
and if for every $ i \in \{ 1, \ldots, n \} $ and $ t \in \TT_n $, 
$ p_{i,t}^+ $ and $ p_{i,t}^- $ are monomials in $ p_1, \ldots, p_\ell $ with {\em non-negative} exponents. 
For more on this, we refer to the discussion following \cite[Definition~2.12]{FZ2007}.

\begin{Bem}[Extended exchange matrices, cf.~{\cite[after Definition~2.12]{FZ2007}}]
	\label{Rk:coeff_in_matrix}
	If we have a cluster pattern of geometric type, 
	it is possible to rephrase the coefficients 
	by extending the exchange matrices $ B_t $ suitably.
	Let us explain this for a fixed seed $ (\x, \y, B ) $ in order to avoid carrying the index $ t \in \TT_n $ along the way.
	\\
	Recall that $ \x = (x_1, \ldots, x_n) $.
	Rename the generators $ \p = (p_1, \ldots, p_\ell) $ of the tropical semifield of $ \PP $ by calling them $ x_{n+1}, \ldots, x_{m} $, where $ m := n + \ell $. 
	Since the coefficients $ y_1, \ldots, y_n $ are Laurent monomials in $ x_{n+1}, \ldots, x_m $, we may define the integers 
	\[
	b_{ij} \in \ZZ,
	\ \ \
	\mbox{for } j \in \{ 1, \ldots, n \} 
	\mbox{ and } i \in \{ n+1, \ldots, m \}
	\]
	via 
	\begin{equation}
	\label{eq:y_j_old}
	y_j = \prod_{i=n+1}^m  x_i^{b_{ij}} \ . 
	\end{equation}
	
	In other words,
	we are extending the $ n \times n $ exchange matrix $ B $ to an {\em extended exchange matrix $ \widetilde B $} of size $ m \times n $,
	where 
	\[
	\widetilde B := ( b_{ij})_{ i \in \{ 1, \ldots, m \}, j \in \{1, \ldots, n\}} \ .
	\]
	Thus, the new entries below $ B $ (i.e., those $ b_{ij} $ with $ i \geq n + 1 $) are determined by the exponents of $ x_i $  in \eqref{eq:y_j}. 
	\\
	Since $ y_j = p_j^+/ p_j^- $ and the exponents of $ p_j^+ $ and $ p_j^- $ are non-negative (as we are in the geometric type),
	\eqref{eq:y_j} and \eqref{eq:exchange_p} provide that the exchange relation can be written as
	\begin{equation}
	\label{eq:exchange_TildeB_1_old}
	x_k x_k' = \prod\limits_{i=1}^m x_i^{[b_{ik}]_+} + \prod\limits_{i=1}^m x_i^{[-b_{ik}]_+} 
	= 
	\prod\limits_{i:b_{ik} > 0 } x_i^{b_{ik}} + \prod\limits_{i: b_{ik}<0} x_i^{-b_{ik}}, \ \  \text{ for } k \in \{ 1, \ldots, n \}
	\ .
	\end{equation}
	Notice that the index $ i $ of each product ranges in $ \{ 1, \ldots, n , \ldots , m \} $.
	Hence, the appearing exponents are the entries of the $ k $-th column of $ \widetilde{B} $.
	\\
	By \cite[Proposition~5.8]{FZ2002}, the mutation rule for the coefficients in Definition~\ref{Def:seed_mutation} (2)
	takes the same form as mutation rule for the entries of the matrix $ B $ in Definition~\ref{Def:seed_mutation} (1) by applying it in the case $ i \geq n+1 $.
	In particular, if we set $ \widetilde x := (x_1, \ldots, x_n, \ldots, x_m) $,
	then $ (\widetilde \x, \widetilde B) $ together with all its mutations in directions $ \{ 1, \ldots, n \} $ 
	determines the same data as the seed $ (\x,\y,B) $. 

	Since we are only allowed to mutate in directions $ \{ 1, \ldots, n \} $,
	the variables $ (x_1, \ldots, x_n) $ are sometimes called the {\em mutable variables} of $ (\widetilde{\x}, \widetilde{B}) $,
	while $ (x_{n+1}, \ldots, x_m) $ are called the {\em frozen variables} or {\em coefficient variables} of $ (\widetilde{\x}, \widetilde{B}) $, cf.~\cite[Definition~3.1.1]{FWZ2016}, for example.
\end{Bem} 
 	
	Let us point out that a key idea in \cite{BFMN_degen} is to introduce a distinction between the notion of coefficient variables 
	and that of frozen variables.
	This allows them to consider cluster algebras in a relative situation with respect to the coefficient ring, 
	which is then leading to families of cluster algebras in the universal setting. 
	\\
	While we are also working with families of cluster algebras arising from a relative situation, 
	we do not have to make a distinction between frozen and coefficient variables.
	For more on this, we refer to Section~\ref{Sec:GenFamily}.
}

\begin{Bem}[Quiver perspective in the case of skew-symmetric exchange matrices]
	\label{Rk:Quiver_view}
	Let $ \Sigma  \bsOUT{= (\x,\y,B )} $ be a labeled seed. 
	If the exchange matrix $ B $ is skew-symmetric, 
	there is an interpretation of $ \Sigma $ in terms of quivers. 
	\bsOUT{To explain this, we use the variant of Remark~\ref{Rk:coeff_in_matrix} $ (\widetilde{\x}, \widetilde{B}) $ to encode the seed solely with mutable and frozen variables $ \widetilde{\x} $ as well as an $ m \times n $ matrix $ \widetilde{B} $.} 
	\\
	The quiver $ \cQ(\widetilde{B}) $ associated to $ \widetilde{B} = (b_{ij} ) $ is the finite directed graph 
	with $ m $ vertices, labeled by $ 1, \ldots, m $
	and such that,
	for $ i \geq j $ there are $ |b_{ij}| \in \ZZ_{\geq 0} $ many pairwise different arrows going from the vertex $ i $ to the vertex $ j $, if $ b_{ij} \geq 0 $, 
	respectively, going from $ j $ to $ i $, if $ b_{ij} < 0 $.
	In order to distinguish mutable and frozen variables in $\cQ(\widetilde{B})$, the first are marked with circles, while the latter are marked with squares.
	\\
	For example, if the extended exchange matrix is 
	\[
	\widetilde{B} = 
	\begin{pmatrix}
	0 & 1 & 0 & 0 \\
	-1 & 0 & 1 & 0 \\
	0 & -1 & 0 & 1 \\
	0 & 0 & -1 & 0 \\
	\end{pmatrix}
	\]
	then 
	the resulting quiver  $ \cQ(\widetilde{B}) $ is the $ A_4 $-quiver,
	\begin{center} 
		\begin{tikzpicture}[->,>=stealth',shorten >=1pt,auto,node distance=1.75cm, thick,main node/.style={circle,draw, minimum size=8mm},frozen node/.style={rectangle,draw=black, minimum size=8mm}
		]

		\node[main node] (1) {$\, 1 \,$};

		\node[main node] (2) [right of=1] {$\, 2 \,$};

		\node[main node] (k) [right of=2] {$\, 3 \,$};

		\node[main node] (n) [right of=k] {$\, 4 \,$};

		\path 
		(1) edge (2)
		(2) edge (k)
		(k) edge (n);
		
		\end{tikzpicture}
		\ .
	\end{center}
	On the other hand, we obtain for the extended exchange matrix 
	\[
	\widetilde{B} = 
	\begin{pmatrix}
	0 & 2  \\
	-2 & 0 \\
	1 & 1 \\
	0 & - 1 
	\end{pmatrix}
	\] 
	the following quiver $ \cQ(\widetilde{B}) $: 
	\begin{center} 
		\begin{tikzpicture}[->,>=stealth',shorten >=1pt,auto,node distance=1.75cm, thick,main node/.style={circle,draw, minimum size=8mm},frozen node/.style={rectangle,draw=black, minimum size=8mm}
		]

		\node[main node] (1) at (0,0) {$ 1 $};
		\node[main node] (2) at (3,0) {$ 2 $};
		\node[frozen node] (3) at (1.5,2) {$ 3 $};
		\node[frozen node] (4) at (6,0) {$ 4 $};
		
		\draw[->] (0.4,0.1) -- (2.62,0.1);
		\draw[->] (0.4,-0.1) -- (2.62,-0.1);
		
		\path 
		(3) edge (1);

		\path 
		(3) edge (2)
		(2) edge (4);

		\end{tikzpicture}
		\ .
	\end{center}
	
	The exchange relation \eqref{eq:exchange_TildeB_1} then translates to	
	\[ 
	x_k x_k' = 	
	\prod\limits_{i \to k } x_i^{b_{ik}} + \prod\limits_{k \to i} x_i^{-b_{ik}} 
	\ .
	\]
	Furthermore, there is also a mutation rule for the quiver $ \cQ(\widetilde{B}) $, which is compatible with the mutation of the matrix described in Definition~\ref{Def_missing}(2). 
	Since this is not relevant for the present article,
	we refer to \cite[Section~2]{BFMS} for more details and examples.
\end{Bem}

\subsection{Presentations of cluster algebras arising from acyclic seeds}
\bsOUT{From now on, we assume that every cluster algebra considered is of geometric type. }

Let us recall a result on finding a presentation of a cluster algebra  $ \cA = \cA(\bs{\Sigma)} \bsOUT{\x,\y,B)} $ without having to determine all cluster variables.
For this, we need to recall the following notions.

\begin{defi}[{\cite[Definitions~1.14]{BFZ2005}}]
	\label{Def:acyclic} 
	Let $ \Sigma = \bs{(\widetilde \x, \widetilde B)} \bsOUT{(\x,\y,B)} $ be a labeled seed $ \cF $.
	
	\begin{enumerate}
		\item 
		The graph $ \Gamma(B) $ is defined as the simply laced directed graph encoding the sign pattern of the matrix $ B = (b_{ij}) $,
		i.e., $ \Gamma(B) $ has $ n $ vertices $ 1 , \ldots, n $ and there is an edge from $ i $ to $ j $ 
		whenever $ b_{ij} > 0  $.	
		Sometimes, we also write $ \Gamma(\Sigma) $ instead of $ \Gamma(B) $.
		
		\item 
		The seed $ \Sigma $ is called {\em acyclic} if there exists no oriented cycle in $ \Gamma(\Sigma) $.
	\end{enumerate}
\end{defi}

The following result is a consequence of \cite[Theorem~1.20/Corollary~1.21]{BFZ2005}.

\begin{Thm}[{\cite{BFZ2005}}]
	\label{Thm:Presentation}
	Let  $ \Sigma = \bs{(\widetilde \x, \widetilde B)} \bsOUT{(\x,\y,B)} $ be an acyclic labeled seed 
	and let $ \cA = \cA(\Sigma) $ be the corresponding cluster algebra. 
	For $ k \in \{ 1, \ldots, n \} $, let $ x_k' $ be the cluster variable which we obtain after mutating $ \Sigma $ in direction $ k $, see \bs{\eqref{eq:exchange_TildeB_1}} \bsOUT{\eqref{eq:exchange_y}},
	and let $ Q_k(\x) $ be the binomial given by the right hand side of the exchange relation \bs{\eqref{eq:exchange_TildeB_1}} \bsOUT{\eqref{eq:exchange_rhs}}.
	\\
	We have:
	\[
		\cA \cong \KK\PP [x_1, \ldots, x_n, x_1', \ldots, x_n']/
		\langle \,
		x_1 x_1' - Q_1(\x), \, \ldots, \, x_n x_n' - Q_n(\x)
		\, \rangle 
		\ .
	\]
	Moreover, $ ( x_1 x_1' - Q_1(\x) , \ldots,  x_n x_n' - Q_n(\x) ) $ is a Gr\"obner basis with respect to any term order in which $ x_1', \ldots, x_n' $ are much more expensive than $ x_1, \ldots, x_n $. 
\end{Thm}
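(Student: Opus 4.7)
The plan is to show that the natural $\KK\PP$-algebra homomorphism
\[
\phi \colon R := \KK\PP[x_1,\ldots,x_n,x_1',\ldots,x_n']/\langle x_k x_k' - Q_k(\x) \mid 1 \leq k \leq n\rangle \longrightarrow \cA(\Sigma),
\]
sending each generator to the cluster variable of the same name, is an isomorphism. Well-definedness is immediate because the exchange relations \eqref{eq:exchange_TildeB_1} hold in $\cA(\Sigma)$. The argument then splits into three parts: (a) establish the Gröbner basis claim and extract a normal form; (b) deduce injectivity of $\phi$ from the Laurent phenomenon; (c) prove surjectivity, which is the substantive step and depends on acyclicity.

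For (a), under any term order in which each $x_k'$ outweighs every monomial in $x_1,\ldots,x_n$, the leading term of $x_k x_k' - Q_k(\x)$ is the monomial $x_k x_k'$. These $n$ leading monomials pairwise involve disjoint variables and are therefore coprime, so Buchberger's product criterion applies: each $S$-polynomial reduces to zero automatically, with no further computation. Consequently the stated generators form a Gröbner basis of the defining ideal, and every element of $R$ has a unique normal form as a $\KK\PP$-linear combination of monomials $\prod_i x_i^{a_i}\prod_j (x_j')^{b_j}$ with $a_k b_k = 0$ for all $k$.

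For (b), I invoke the Laurent phenomenon of \cite{FZ2002}: every cluster variable lies in $\KK\PP[x_1^{\pm 1},\ldots,x_n^{\pm 1}]$, and in particular $\phi(x_k') = Q_k(\x)/x_k$ there. Substituting this into a normal form monomial produces a genuine Laurent polynomial in $\x$ whose Newton polytope is controlled by the exponent pattern $(a_i,b_j)$. The plan is then to check that distinct normal forms yield $\KK\PP$-linearly independent Laurent polynomials, using the explicit binomial structure of each $Q_k(\x)$ together with the fact that acyclicity produces a strict sign pattern on the $b_{ij}$'s that pins down the leading and trailing monomials of the $Q_k$'s.

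For (c), the main obstacle, I would invoke the upper-and-lower-bound machinery of Berenstein--Fomin--Zelevinsky. By definition, the lower bound $\overline{\mathcal{L}}(\Sigma)$ is the $\KK\PP$-subalgebra of $\cF$ generated by $\{x_i\}_i \cup \{x_k'\}_k$, so $\phi(R) = \overline{\mathcal{L}}(\Sigma)$, and surjectivity of $\phi$ is therefore equivalent to the equality $\cA(\Sigma) = \overline{\mathcal{L}}(\Sigma)$, which is the content of \cite[Theorem~1.20]{BFZ2005}. The genuine work there is showing that every cluster variable obtained by an arbitrarily long mutation path in $\TT_n$ already lies in $\overline{\mathcal{L}}(\Sigma)$; this is the step that essentially requires the acyclicity of $\Sigma$ (without it, additional generators and relations would appear) and is proved in BFZ by sandwiching $\overline{\mathcal{L}}(\Sigma) \subseteq \cA(\Sigma) \subseteq \overline{\mathcal{U}}(\Sigma)$ and then establishing that both bounds coincide under the acyclicity hypothesis.
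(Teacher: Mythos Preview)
The paper does not give its own proof of this theorem: it simply records the statement as a consequence of \cite[Theorem~1.20/Corollary~1.21]{BFZ2005}, together with the remark that skew-symmetrizable seeds are automatically totally mutable so that the BFZ hypotheses are met. There is thus no in-paper argument to compare against; what you have written is an outline of the BFZ2005 proof itself.

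As such an outline, your approach is correct and matches BFZ. Part~(a) is exactly how Corollary~1.21 of \cite{BFZ2005} is obtained: the leading monomials $x_k x_k'$ are pairwise coprime, so Buchberger's product criterion gives the Gr\"obner basis and the standard-monomial normal form for free. Part~(c) is precisely the identification $\phi(R)=\cL(\Sigma)$ together with \cite[Theorem~1.20]{BFZ2005}, and you correctly flag that the equality of lower and upper bounds under acyclicity is where the real work lies. The one place your sketch is thin is part~(b): the linear independence of the standard monomials, viewed as Laurent polynomials in $\x$, is the substance of \cite[proof of Corollary~1.21, resting on the standard monomial basis of Theorem~7.3]{BFZ2005}. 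Your heuristic that ``acyclicity pins down leading and trailing monomials of the $Q_k$'' points in the right direction, but turning it into an actual proof requires fixing a linear order on $\{1,\ldots,n\}$ compatible with the acyclic orientation and then carrying out a careful lexicographic leading-term analysis; as written, (b) is a plan rather than an argument.
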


In other words, given an acyclic labeled seed $ \Sigma $, 
we obtain a presentation for the cluster algebra $ \cA(\Sigma) $ by mutating in each direction once. 
This is connected to the notion of the lower bound cluster algebra $ \cL (\Sigma) $ \cite[Definition~1.10]{BFZ2005}, 
which is a lower approximation for $ \cA(\Sigma) $ in general.

The attentive reader may observe that 
there is the extra assumption on the seed to be totally mutable
in \cite[Theorem~1.20/Corllary~1.21]{BFZ2005}.
The reason for this is that \cite[beginning of Subsection~1.1]{BFZ2005} requires the matrix $ B $ in the seed only to be sign-skew-symmetric.
In general, this property is not stable along the mutation rule for the matrix as in Definition~\ref{Def_missing}.
Hence, they define a seed (with sign-skew-symmetric matrix) to be {\em totally mutable} if it admits unlimited mutations in all directions, 
which means that after any number of mutations the resulting matrix is again sign-skew-symmetric matrix. 
\\
By \cite[Proposition~4.5]{FZ2002}, the mutation of a skew-symmetrizable matrix provides again a skew-symmetrizable matrix. 
In particular, the labeled seeds as considered in the present article, i.e., whose exchange matrices are skew-symmetrizable, are always totally mutable.

\medskip 

\subsection{Classification of cluster algebras of finite cluster type}
\label{Subsec:Class_fin}
Two labeled seeds $ \Sigma^{(1)} $ and $ \Sigma^{(2)} $ in $ \cF $ are {\em mutation-equivalent} if there exists a finite sequence of mutations transforming $ \Sigma^{(1)} $ into $ \Sigma^{(2)} $ (up to permutation of the cluster variables).

\begin{defi} 
	\label{Def:fin_type}
	Let $ \Sigma =  \bs{(\widetilde \x, \widetilde B)} \bsOUT{(\x,\y,B)}  $ be a labeled seed in $ \cF $. 
	The corresponding cluster algebra $ \cA(\Sigma) $ is of {\em finite cluster type}
	if the set of seeds is finite.
\end{defi}

In the literature, cluster algebras of finite cluster type are sometimes just called cluster algebras of finite type (e.g.~\cite{FZ2003II,FWZ2017,BFMS}).
In order to make a distinction to the notion of algebras of finite type from commutative algebra, 
we follow the convention to use the expression cluster algebras of finite cluster type. 

By~\cite[Theorem~1.4]{FZ2003II},
cluster algebras of finite cluster type can be identified with Cartan matrices of finite type.
For a brief review on this in the case of trivial coefficients, we refer to \cite[Section~2.1]{BFMS}.
Examples for detailed references are \cite{FZ2003II}, \cite[Chapter~5]{FWZ2017}, or \cite[5.1]{Marsh13}.
\\
On the other hand, Cartan matrices of finite type are classified by the Dynkin diagrams
$ A_{n_1}, B_{n_2}, C_{n_3}, D_{n_4}, E_6, E_7, E_8, F_4, G_2 $, 
where $ n_\ell \geq \ell $ for $ \ell \in \{ 1, 2,3,4\} $,
see \cite[Section~6.4]{Carter2005} or \cite[Theorem~5.2.6]{FWZ2017}.

This allows us to reformulate the classification theorem of cluster algebra of finite cluster type (\cite[Theorem 1.4]{FZ2003II}) in the following way.
For the connection between the exchange matrices and the Dynkin diagrams, 
we refer to \cite[Section~5.2]{FWZ2017}.

\begin{Thm}
	\label{Thm:FinTypClass}
	Let $ \Sigma_t =  \bs{( \widetilde \x_t, \widetilde B_t)} \bsOUT{( \x_t, \y_t, B_t)}   $, $ t \in \TT_n $, be a cluster pattern in $ \cF $
	and let $ \cA $ be the corresponding cluster algebra. 
	Then, $ \cA $ is of finite cluster type 
	if and only if 
	there exists some $ t_0 \in \TT_n $ such that 
	the exchange matrix $ B_{t_0} $
	is one of the matrices in the following list:
	\begin{itemize}
		\item[$(A_n)$] 
		\
		$ 
		\begin{pmatrix} 
		0 & 1 & \\
		-1 & \ddots & \ddots   \\
		& \ddots & 0 & 1  \\
		&  & -1 & 0 &
		\end{pmatrix}  
		$ 
		\ ($ n \times n  $ matrix), 
		\ for $ n \geq 1 $ ;
		
		\bigskip 
		
		\item[$(B_n)$] 
		\
		$ 
		\begin{pmatrix} 
		0 & 1 & \\
		-1 & \ddots & \ddots   \\
		 & \ddots & 0 & 1  \\
		   &  & -1 & 0 & 1  \\
		   &  &  & -2  & 0 \\
		\end{pmatrix}  
		$ 
		\ ($ n \times n  $ matrix), 
		\ for $ n \geq 2 $ ;
		
		\bigskip 
		
		\item[$(C_n)$] 
		\
		$
		\begin{pmatrix} 
		0 & 1 & \\
		-1 & \ddots & \ddots   \\
		& \ddots & 0 & 1  \\
		&  & -1 & 0 & 2  \\
		&  &  & -1  & 0 \\
		\end{pmatrix}  
		$
		\ ($ n \times n  $ matrix),  
		\ for $ n \geq 3 $ ;

		\bigskip 
		
		\item[$(D_n)$] 
		\ 
		$ 
		\begin{pmatrix} 
		0 & 1 & \\
		-1 & \ddots & \ddots   \\
		& \ddots & 0 & 1  \\
		&  & -1 & 0 & 1 & 1\\
		&  &  & -1 & 0 & 0  \\
		&  &  & -1 & 0  & 0 \\
		\end{pmatrix}  
		$ 
		\ ($ n \times n  $ matrix), 
		\ for $ n \geq 4 $ ;

		\bigskip 
		
		\item[$(E_k)$] 
		\ 
		$ 
		\begin{pmatrix} 
		0 	& 1  & \\
		-1 	& \ddots  & \ddots   \\
		& \ddots 	& 0  & 1   \\
		&& -1 &  0 & 1  & 1  & 0 \\
		&&    & -1 & 0  & 0  & 0\\
		&&    & -1 & 0  & 0  & 1  \\
		&&    &  0 & 0  & -1 & 0 \\
		\end{pmatrix}  
		$ 
		\ ($ k \times k $ matrix), 
		\ for $ k \in \{ 6,7,8 \} $ ;

		\bigskip 
		
		\item[$(F_4)$] 
		\ 
		$ 
		\begin{pmatrix} 
		0 & 1 \\ 
		-1 & 0 &1 \\ 
		& -2 & 0 & 1\\  
		& & -1 & 0 
		\end{pmatrix} $ ;
		
		\bigskip
		
		\item[$(G_2)$] 
		\ 
		$ 
		 \begin{pmatrix} 
		 0 & 1 \\ 
		 -3 & 0 
		 \end{pmatrix} $ .
	\end{itemize}
	Here, all non-specified entries are zero.  
\end{Thm}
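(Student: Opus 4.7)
The plan is to deduce the statement from two ingredients that are essentially classical at this stage: Fomin--Zelevinsky's characterisation of cluster algebras of finite cluster type via Cartan matrices, together with the Cartan--Killing classification of finite type Cartan matrices by Dynkin diagrams.

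First, I would recall the construction assigning to any sign-skew-symmetric integer matrix $B = (b_{ij})$ the generalised Cartan matrix $A(B) = (a_{ij})$ with $a_{ii} = 2$ and $a_{ij} = -|b_{ij}|$ for $i \neq j$. The main result of \cite{FZ2003II} states that a cluster algebra $\cA$ is of finite cluster type if and only if there exists some vertex $t_0 \in \TT_n$ in its cluster pattern such that $A(B_{t_0})$ is a Cartan matrix of finite type. Moreover, the mutation class of $B_{t_0}$ depends only on the Cartan type. This reduces the problem to producing, for each finite-type Cartan matrix, a distinguished representative of its mutation class with the specific shape listed.

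Next, I would invoke the Cartan--Killing classification: irreducible Cartan matrices of finite type fall into the families $A_n$, $B_n$, $C_n$, $D_n$ and the exceptional cases $E_6, E_7, E_8, F_4, G_2$ (e.g.~\cite[Section~6.4]{Carter2005} or \cite[Theorem~5.2.6]{FWZ2017}). For each such matrix $A$, the standard recipe yields a sign-skew-symmetric $B$ by choosing an acyclic orientation of the associated Dynkin diagram and recording the edge multiplicities with appropriate signs. Concretely, for the families $A_n$, $B_n$, $C_n$, $D_n$, a linear orientation of the diagram (with the branching vertex of $D_n$ oriented outwards, and the branching of $E_k$ chosen as shown) produces exactly the matrices displayed in the statement. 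A direct computation verifies that $A(B) = A$ in each case, so these matrices indeed lie in the mutation class associated to the corresponding Dynkin type.

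The main obstacle is of course the underlying Fomin--Zelevinsky theorem from \cite{FZ2003II}, whose proof requires substantial machinery (almost positive roots, denominator vectors, the fact that the number of seeds is bounded by the size of the root system, etc.); I would simply quote it. Once it is available, the remaining content is bookkeeping: (i) checking that each listed $B$ is acyclic and skew-symmetrisable, so that $A(B)$ is a well-defined finite-type Cartan matrix, (ii) matching the associated Dynkin diagram with the label in the list, and (iii) observing that different choices of acyclic orientation for the same Dynkin diagram are mutation-equivalent, so that the listed representative is unique up to mutation and relabelling of indices. The one genuine subtlety lies in types $B_n$ and $C_n$: since both have the same underlying unoriented Dynkin diagram but different Cartan matrices, one must verify that the entries $\pm 2$ are placed in the column versus row position corresponding to the correct long/short root convention, which is a direct check from the skew-symmetrisability relation $d_i b_{ij} = -d_j b_{ji}$.
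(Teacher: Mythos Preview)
Your proposal is correct and matches the paper's approach: the paper does not prove this theorem independently but presents it as a reformulation of \cite[Theorem~1.4]{FZ2003II} combined with the Cartan--Killing classification, referring to \cite[Section~5.2]{FWZ2017} for the explicit correspondence between exchange matrices and Dynkin diagrams. Your outline spells out this deduction in somewhat more detail than the paper does, but the underlying argument is the same.
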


\begin{Bem}
	\label{Rk:FinTypPresentation}
	Every matrix in the list of Theorem~\ref{Thm:FinTypClass}
	provides an acyclic labeled seed (Definition~\ref{Def:acyclic}).
	In particular, Theorem~\ref{Thm:Presentation} can be applied to find a presentation of the corresponding cluster algebra.
\end{Bem}

\section{Cluster algebra with generic coefficients}
\label{Sec:GenFamily}

In this section, we introduce the main object for our considerations on the singularities of cluster algebras, a generic family from which we may deduce any other cluster algebra of the same type.  
After its definition, we recall the notion of the cluster algebra with principal coefficients associated to a given cluster algebra. 
Along this, we outline some of the differences between those two algebras.

\bsOUT{Starting from this section, we describe all labeled seeds $ \Sigma $ in $ \cF $ in terms of extended exchange matrices
as described in Remark~\ref{Rk:coeff_in_matrix}
since this will simplify our presentation.}

\medskip 

\subsection{Setup}

We fix a labeled seed $ \Sigma = (\widetilde{\x},\widetilde{B}) $ (as always, of geometric type), 
where $ \widetilde{\x}  = (x_1, \ldots, x_m) $, 
$ \widetilde{B} = (b_{ij} ) $ is an $ m \times n $ matrix such that $m \geq n$ are positive integers and the top square matrix $ B = (b_{ij})_{i,j \in \{1, \ldots, n \}} $ is skew-symmetrizable. 
\\
The mutation of $ \Sigma $ in some direction $ k \in \{ 1, \ldots, n \} $ is given by $ \mu_k (\Sigma) = (\widetilde{\x}', \widetilde{B}' ) $,
where
the cluster variables $ \widetilde{\x}' = (x_1', \ldots, x_m') $ 
and the extended exchange matrix $ \widetilde{B}' = (b_{ij}') $ obey the following transformation rules 
(
see \eqref{eq:exchange_TildeB_1} and {\cite[Definition~2.4]{FZ2007}})
\begin{equation}
	\label{eq:mutate_B}
		b_{ij}' := 
		\begin{cases}
		- b_{ij} & \mbox{if } i = k \mbox{ or } j = k \ ,
		\\
		b_{ij} + \operatorname{sgn}(b_{ik}) [ b_{ik} b_{kj} ]_+
		& \mbox{otherwise} \ ,
	\end{cases}
\end{equation}
\begin{equation}
	x_j' := x_j
	\ \ \ \mbox{ if } j \in \{ 1 , \ldots, m \} \setminus \{ k \} 
	\ ,
\end{equation}
\begin{equation}
\label{eq:exchange_TildeB}
x_k x_k' = \prod\limits_{i=1}^m x_i^{[b_{ik}]_+} + \prod\limits_{i=1}^m x_i^{[-b_{ik}]_+} 
= 
\prod\limits_{i:b_{ik} > 0 } x_i^{b_{ik}} + \prod\limits_{i: b_{ik}<0} x_i^{-b_{ik}} 
\ .
\end{equation}
The cluster algebra $ \cA = \cA(\Sigma) $ corresponding to $ \Sigma $ is obtained via repeated mutation in all directions $ k \in \{ 1, \ldots, n \} $ (Definition~\ref{Def:ClusterAlgebra}).
Moreover, if $ \Sigma $ is acyclic (Definition~\ref{Def:acyclic}),
Theorem~\ref{Thm:Presentation} provides the presentation
\begin{equation}
	\label{eq:presentation}
	\cA \cong \KK\PP [x_1, \ldots, x_n, x_1', \ldots, x_n']/
	\langle \,
	x_1 x_1' - Q_1(\widetilde{\x}), \, \ldots, \, x_n x_n' - Q_n(\widetilde{\x})
	\, \rangle 
	\ ,
\end{equation} 
where $ Q_k(\widetilde{\x}) := 
\prod_{b_{ik} > 0 } x_i^{b_{ik}} + \prod_{b_{ik}<0} x_i^{-b_{ik}}  $
and $ \PP = \operatorname{Trop}(x_{n+1}, \ldots , x_m ) $.

\subsection{Cluster algebra with generic coefficients}
\label{Subsec:gen_coeff}
\begin{defi}
	\label{Def:gen_coeff}
	Let $ \Sigma = (\widetilde{\x}, \widetilde{B}) $ be a labeled seed as above.
	Let $ \t = (t_1, \ldots, t_n) $ and $ \s = (s_1, \ldots, s_n) $ be \bs{additional algebraically} independent \bs{variables.} \bsOUT{parameters with values $ t_k, s_k \in \KK $, 
	for $ k \in \{ 1, \ldots, n \} $.} 
	We define the {\em cluster algebra with generic coefficients} $ \cA^\gen := \cA^\gen_{\s,\t} :=  \cA^\gen_{\s,\t} (\Sigma) $ associated to $ \Sigma $ 
	as the cluster algebra given by the labeled seed $ \Sigma^\gen := \Sigma^\gen_{\s,\t} := (\widetilde{\x}^\gen, \widetilde{B}^\gen) $,
	where 
	\[
		\widetilde{\x}^\gen := (\x,\s,\t) = (x_1, \ldots, x_m, s_1, \ldots, s_n, t_1, \ldots, t_n) 
	\] 
	and $ \widetilde{B}^\gen $ is the $ (m+2n) \times n $ matrix
	which we obtain by expanding $ \widetilde{B} $ by the $ n \times n $ unit matrix $ I_n $ as well as its additive inverse $ -I_n $ as additional rows, 
	\[
		\widetilde{B}^\gen 
		:= \begin{pmatrix}
		\widetilde{B} \\ I_n \\ -I_n
		\end{pmatrix}
		\ .
	\] 
\end{defi}

We often drop the reference to the \bsOUT{parameters} \bs{coefficients} $ \s, \t $, 
if there is no confusion possible. 

\begin{Bem}
	\phantomsection \label{Rk:gen_a_lot}
	\begin{enumerate}
		\item 
		By Theorem~\ref{Thm:Presentation}, 
		if $ \Sigma $ is acyclic, then
		we have that $ \cA^\gen_{\s,\t} $ is isomorphic to
		\[
			\KK\PP [\x, x_1', \ldots, x_n']/
			\langle \,
			x_k x_k' - s_k \prod\limits_{b_{ik} > 0 } x_i^{b_{ik}} - t_k \prod\limits_{b_{ik}<0} x_i^{-b_{ik}} 
			\mid k \in \{ 1, \ldots, n \} 
			\, \rangle 
			\ .
		\]
		We observe that if $ s_k = t_k = 1 $ for all $ k \in \{ 1, \ldots, n \} $, 
		we regain the original cluster algebra $ \cA = \cA (\Sigma) $.
		
		\smallskip  
		
		\item 
		Let us stay in the acyclic setting.
		If we denote by $ B $ the skew-symmetrizable $ n \times n $ matrix on top of $ \widetilde{B} $, 
		then we see that 
		it is possible to obtain $ \cA (\widetilde{\x}, \widetilde{B}) $ from 
		$ \cA^\gen_{\s,\t} (\x, B) $ by substituting
		\begin{equation}
		\label{eq:specialization} 
			s_k = \prod\limits_{i > n  : b_{ik} > 0 } x_i^{b_{ik}}
			\ \ \ \mbox{ and } \ \ \ 
			t_k = \prod\limits_{i > n  : b_{ik} < 0 } x_i^{-b_{ik}} 
			\ .
		\end{equation}
		Because of this connection, we choose the name cluster algebra with generic coefficients. Let us point out that this substitution is not a coefficient specialization in general.
		\\
		Furthermore, 
		since all seeds which we consider starting from Section~\ref{Sec:FiniteSing} are acyclic, this allows us to reduce our considerations to the case where we only have generic coefficients, 
		i.e., where we start with a seed $ (\widetilde{\x}, \widetilde{B}) = (\x, B) $ for $ \x = (x_1, \ldots, x_n) $ all mutable and $ B $ a skew-symmetrizable $ n \times n $ matrix.  
		
		\smallskip

		\item 
		The idea for the cluster algebra with generic coefficients can already be found in a special case in \cite[Example~2.4]{FZ2002} with $ (\s, \t) $ taking the role of generators for the tropical semi-field $ \PP $.
		\\
		As we will see later in Lemma~\ref{Lem:gen=prin}
		for acyclic seeds, 
		if we restrict the \bsOUT{parameters} \bs{coefficients} to the torus $ (\KK^\times)^{2n} $, 
		then  
		$ \bs{\Spec ( \cA^\gen_{\s,\t} ) } $ can be identified with a trivial family over $ (\KK^\times)^n $, where each fiber is \bs{the spectrum of} the cluster algebra with principal coefficients $ \bs{\Spec ( \cA^\prin )} $ associated to $ \widetilde{B} $.
		We recall the definition of $ \cA^\prin $ in Definition~\ref{Def:prin} below.
		
		\smallskip  
		
		\item 
		If we have a seed $ (\widetilde{\x}, \widetilde{B}) $, whose exchange matrix $ B $ is skew-symmetric, 
		the seed corresponds to a quiver $ \cQ  $ with mutable and frozen vertices, as explained in Remark~\ref{Rk:Quiver_view}.
		The construction of the cluster algebra with generic coefficients translates in this setting to defining a new quiver $ \cQ^\gen $
		which is obtained as follows from $ \cQ $:
		For every mutable vertex $ k $ of $ \cQ $, 
		we introduce two new frozen vertices, say $ k_s $ and $ k_t $, 
		with the property that there is a single arrow going from $ k_s $ to $ k $ and another one going from $ k $ to $ k_t $. 
		The corresponding matrix coincides with $ \widetilde{B}^\gen $ of Definition~\ref{Def:gen_coeff}. 
		\\
		For example, if $ \cQ $ is the $ A_4 $-quiver (see Remark~\ref{Rk:Quiver_view}), then $ \cQ^\gen $ is pictured as:  
		\begin{center} 
			\begin{tikzpicture}[->,>=stealth',shorten >=1pt,auto,node distance=1.75cm, thick,main node/.style={circle,draw, minimum size=8mm},frozen node/.style={rectangle,draw=black, minimum size=8mm}
			]

			\node[main node] (1) {$\, 1 \,$};
			\node[frozen node] (s1) [above of=1] {$\, 1_s \,$};
			\node[frozen node] (t1) [below of=1] {$\, 1_t \,$};

			\node[main node] (2) [right of=1] {$\, 2 \,$};
			\node[frozen node] (s2) [above of=2] {$\, 2_s \,$};
			\node[frozen node] (t2) [below of=2] {$\, 2_t \,$};

			\node[main node] (k) [right of=2] {$\, 3 \,$};
			\node[frozen node] (sk) [above of=k] {$\, 3_s \,$};
			\node[frozen node] (tk) [below of=k] {$\, 3_t \,$};

			\node[main node] (n) [right of=k] {$\, 4 \,$};
			\node[frozen node] (sn) [above of=n] {$\, 4_s \,$};
			\node[frozen node] (tn) [below of=n] {$\, 4_t \,$};

			\path
			(s1) edge (1)
			(1) edge (t1);
			
			\path 
			(s2) edge (2)
			(2) edge (t2);

			\path 
			(sk) edge (k)
			(k) edge (tk);

			\path 
			(sn) edge (n)
			(n) edge (tn);

			\path 
			(1) edge (2)
			(2) edge (k)
			(k) edge (n);
			
			\end{tikzpicture}
		\end{center} 
	This also explains our choice for the names of the \bsOUT{parameters} \bs{coefficients} since $ k_s $ corresponds to a new arrow with a frozen vertex as \underline{s}ource,
	while $ k_t $ is associated to an arrow whose \underline{t}arget is frozen. 
	\end{enumerate}
\end{Bem}

\subsection{Cluster algebra with principal coefficients and its connection to the cluster algebra with generic coefficients}

We recall the notion of cluster algebras with principal coefficients and 
relate them to cluster algebras with generic coefficients.
As in~\cite[Section~4.1]{INT_complexes},
we consider the slightly more general case,
where we start with an extended exchange matrix $ \widetilde{B} $ 
instead of only an exchange matrix $ B  $ as in~\cite[Definition~3.1 and Remark~3.2]{FZ2007}. 
\bsOUT{The motivation for this is the perspective as family analogous to Remark~\ref{Rk:gen_a_lot}(4).} 

\begin{defi}[]
	\label{Def:prin}
	Let $ \Sigma = ( \widetilde{\x}, \widetilde{B} ) $ be a labeled seed,
	where $ \widetilde{B} $ has size $ m \times n $ with $m \geq n$. 
	Define 
	\[ 
		\Sigma^\prin := ( \widetilde{\x}^\prin ,\widetilde{B}^\prin) 
	\]
	to be the labeled seed
	with cluster variables 
	$ \widetilde{\x}^\prin  = (\widetilde{x}, c_1, \ldots, c_n ) $ for new frozen variables $ (c_1, \ldots, c_n) $
	and 
	with extended exchange matrix
	$ \widetilde{B}^\prin $ the $ (m+n) \times n $ matrix obtained by expanding $ \widetilde{B} $ by the $ n \times n $ unit matrix $ I_n $ as additional rows,
	\[
		\widetilde{B}^\prin := \begin{pmatrix}
		\widetilde{B} \\ I_n
		\end{pmatrix}.
	\]  
	The corresponding cluster algebra $ \cA^\prin := \cA^\prin (\Sigma ) := \cA (\Sigma^\prin)  $ is called the {\em cluster algebra with principal coefficients} associated to $ \Sigma $.	
\end{defi}

Sometimes we also write $ \cA^\prin_\c $ or $ \cA^\prin_\c (\Sigma ) $ if we want to emphasize the role of the new frozen variables $ \c = (c_1, \ldots, c_n) $ as \bsOUT{parameters} \bs{coefficients}.

In the next lemma, we show how the cluster algebra with generic resp.~principal coefficients may be identified for acyclic labeled seeds 
if we restrict their \bsOUT{parameter} \bs{coefficient} spaces to the torus, 
where all \bsOUT{parameters} \bs{coefficients} are invertible.

\begin{lemma}
	\label{Lem:gen=prin}
	Let $ \Sigma $ be an acyclic labeled seed. 
	\bsOUT{If} 
	\bs{Since} we assume that the 
	\bsOUT{parameters} 
	\bs{coefficients}
	$ (\s,\t) $ of $ \cA^\gen_{\s,\t}(\Sigma) $ and
	\bsOUT{the parameters} 
	$ \c $ of 
	$ \cA^\prin_\c (\Sigma) $ are all invertible,
	\bsOUT{then} 
	there exists an isomorphism
	\[ 
		\bs{\Spec} ( \cA^\gen_{\s,\t}(\Sigma) ) \cong \bs{\Spec} (\cA^\prin_\c (\Sigma)) \times (\KK^\times)^n  ,
	\]  
	\bs{where, on the level of rings, the isomorphism is induced by the morphism 
		\[
			\KK[\c^\pmo][\x, y_1, \ldots, y_n] \otimes \KK[\t^\pmo]
			\rightarrow
			\KK[\s^\pmo, \t^\pmo] [\x, x_1', \ldots, x_n']
		\]
		determined by extending the map $ \KK[\x] \otimes \KK[\t^\pmo] \to \KK[\s^\pmo, \t^\pmo] [\x, x_1', \ldots, x_n'] , f\otimes g \mapsto f \cdot g $
		through
		$ y_k \otimes 1 \mapsto t_k^{-1} x_k' $
		and
		$ c_k \otimes 1 \mapsto t_k^{-1} s_k $
		for $ k \in \{ 1, \ldots, n \} $. 
	}
\end{lemma}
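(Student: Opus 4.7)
The plan is to invoke Theorem~\ref{Thm:Presentation}, which applies since $\Sigma$ (and hence also $\Sigma^\gen$ and $\Sigma^\prin$, whose extended exchange matrices share the same top $n \times n$ block with $\widetilde B$) is acyclic. This gives explicit finite presentations
\[
\cA^\gen_{\s,\t}(\Sigma) \cong \KK[\s^\pmo,\t^\pmo][\x, x_1', \ldots, x_n']/\langle f_1^\gen, \ldots, f_n^\gen \rangle,
\]
with $f_k^\gen := x_k x_k' - s_k \prod_{b_{ik}>0} x_i^{b_{ik}} - t_k \prod_{b_{ik}<0} x_i^{-b_{ik}}$, and analogously
\[
\cA^\prin_\c(\Sigma) \cong \KK[\c^\pmo][\x, y_1, \ldots, y_n]/\langle f_1^\prin, \ldots, f_n^\prin \rangle,
\]
with $f_k^\prin := x_k y_k - c_k \prod_{b_{ik}>0} x_i^{b_{ik}} - \prod_{b_{ik}<0} x_i^{-b_{ik}}$. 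After this reduction, the claim becomes a purely algebraic comparison of two quotient rings.

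The second step is to verify that the map $\Phi$ described in the statement (extending $f \otimes g \mapsto f\cdot g$ by $y_k \otimes 1 \mapsto t_k^{-1} x_k'$ and $c_k \otimes 1 \mapsto t_k^{-1} s_k$) sends each $f_k^\prin$ to $t_k^{-1} f_k^\gen$; this is an immediate substitution. Since $t_k$ is invertible in the target ring, $\Phi$ respects the defining ideals and hence descends to a well-defined map on the quotients. Symmetrically, define a candidate inverse $\Psi$ via $x_k' \mapsto t_k y_k$ and $s_k \mapsto t_k c_k$ (identity on $\t$), and check that $\Psi(f_k^\gen) = t_k f_k^\prin$. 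The composites $\Phi \circ \Psi$ and $\Psi \circ \Phi$ act as the identity on generators by direct inspection, so both maps are mutually inverse ring isomorphisms, yielding
\[
\cA^\gen_{\s,\t}(\Sigma) \cong \cA^\prin_\c(\Sigma) \otimes_\KK \KK[\t^\pmo].
\]

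Dualizing to spectra gives the product decomposition with torus factor $\Spec(\KK[\t^\pmo]) = (\KK^\times)^n$, as asserted. I do not expect any genuine obstacle: once Theorem~\ref{Thm:Presentation} is in play, the whole argument reduces to a direct substitution in the explicit generators, and the only essential ingredient is the invertibility of the $t_k$, which is the standing hypothesis on coefficients. The only small point to note is that acyclicity is inherited from $\Sigma$ by $\Sigma^\gen$ and $\Sigma^\prin$, because the graph $\Gamma$ of Definition~\ref{Def:acyclic} depends only on the top $n \times n$ block of the extended exchange matrix.
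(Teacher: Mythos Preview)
Your proof is correct and follows essentially the same approach as the paper: both invoke Theorem~\ref{Thm:Presentation} to obtain the explicit presentations, then perform the substitution $y_k \leftrightarrow t_k^{-1} x_k'$, $c_k \leftrightarrow t_k^{-1} s_k$ to match the defining relations up to the unit $t_k$. Your version is slightly more explicit in writing down the inverse map $\Psi$ and checking the composites, whereas the paper simply introduces the new variables and concludes, but the content is identical.
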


\begin{proof}
	Since $ \Sigma $ is acyclic, 
	Theorem~\ref{Thm:Presentation}  
	provides the presentations 
	\[
	\cA^\gen_{\s,\t} (\Sigma)
	\cong 
	\KK_{\s,\t} [\x, x_1', \ldots, x_n']/
	\langle \,
	x_k x_k' - s_k \prod\limits_{b_{ik} > 0 } x_i^{b_{ik}} - t_k \prod\limits_{b_{ik}<0} x_i^{-b_{ik}} 
	\mid k \in \{ 1, \ldots, n \} 
	\, \rangle 
	\]
	and
	\[
	\cA^\prin_\c (\Sigma)
	\cong 
	\KK_\c [\x, y_1, \ldots, y_n]/
	\langle \,
	x_k y_k - c_k \prod\limits_{b_{ik} > 0 } x_i^{b_{ik}} - \prod\limits_{b_{ik}<0} x_i^{-b_{ik}} 
	\mid k \in \{ 1, \ldots, n \} 
	\, \rangle 
	\ ,
	\]
	where we abbreviate $ \KK_{\s,\t} := \KK[\s^\pmo, \t^\pmo] $ and $ \KK_\c :=  \KK[\c^\pmo] $. 
	
	Consider $ \cA^\gen_{\s,\t} (\Sigma) $.
	Since $ s_k $ and $ t_k $ are invertible, for $ k \in \{ 1, \ldots, n \} $,
	we may multiply 
	the exchange relation
	$ x_k x_k' - s_k \prod\limits_{b_{ik} > 0 } x_i^{b_{ik}} - t_k \prod\limits_{b_{ik}<0} x_i^{-b_{ik}}  = 0
	$ by $ t_k^{-1} $.
	Afterwards, we introduce the new \bsOUT{variable} \bs{coefficients} $ y_k' := t_k^{-1} x_k' $ 
	and \bsOUT{the new variable} $ c_k' := s_k t_k^{-1} $ \bs{for $ k \in \{ 1, \ldots, n \} $}. 
	Thus, the relation becomes
	\[
		x_k y_k' - c_k' \prod\limits_{b_{ik} > 0 } x_i^{b_{ik}} - \prod\limits_{b_{ik}<0} x_i^{-b_{ik}} = 0
		\ .
	\]
	As this holds for every $ k \in \{ 1, \ldots, n \} $, 
	we obtain precisely the exchange relations defining $ \cA^{\prin}_\c (\Sigma) $
	\bs{by} identifying $ y_k' = y_k $ and $ c_k' = c_k $.
	The assertion follows.
\end{proof}

\begin{Bem}[Cluster algebras with universal coefficients]
	\label{Rk:Univ} 
	Fixing a labeled seed $ (\widetilde{\x}, \widetilde{B}) $ in $ \cF $, 
	the Laurent phenomenon \cite[Theorem~3.1]{FZ2002} states
	that every cluster variable obtained by iterated mutation 
	can be expressed as a Laurent polynomial in $ \widetilde{\x} $ with coefficients in $ \ZZ\PP $.
	\\
	Using the notion of cluster algebras with principal coefficients, 
	it is possible to introduce the cluster algebra with universal coefficients $ \cA^{\univ} (\Sigma) = \cA ( \Sigma^\univ ) $ \cite[Section~12]{FZ2007}.
	The latter is universal in the sense that any cluster algebra of the same mutation type as $ \Sigma $ can be obtained from $ \cA^{\univ} (\Sigma) $  via a coefficient specialization. 
	For more details, we refer to the literature,
	e.g.~\cite{FZ2007,Read}. 
	\\
	Let us only briefly recall that the extended exchange matrix of $ \Sigma^\univ $ is obtained by expanding $ \widetilde{B} $ with suitable rows.
	The latter are coming from the 
	$ g $-vectors of the cluster algebra with principal coefficients associated to 
	the transpose $ B^T $ of the exchange matrix.
	Here, the $ g $-vectors are determined by the expression of the cluster variables in terms of the initial variables $ \widetilde{\x} $.
	\\
	Notice that the number of $ g $-vectors is equal to the number of cluster variables.
	Hence, we only obtain polynomial exchange relations for $ \cA^{\univ} (\Sigma) $ 
	if $ \cA(\Sigma) $ is of finite cluster type,
	see also Remark~\ref{Rk:non-polynomial}. 
	
	Suppose that $ \cA(\Sigma) $ is of finite cluster type.
	As discussed in Remark~\ref{Rk:gen_a_lot}(2) \bsOUT{and in particular \eqref{eq:specialization}}, 
	it is possible to obtain $ \cA^{\univ} (\Sigma) $ from $ \cA^{\gen}_{\s,\t} (\Sigma) $ \bs{as a commutative algebra via \eqref{eq:specialization}}.
	Hence, a reader interested in understanding the cluster algebra with universal coefficients may take \eqref{eq:specialization} as definition of an abbreviation to simplify computations
	so that one has to perform a substitution in the final results in order to get the counterpart for universal coefficients. Once again, we remind the reader that this substitution is not a coefficient specialization, in general.
	\\
	Nonetheless, let us provide the warning that the substitution requires to determine all $ g $-vectors of a given cluster algebra of finite cluster type, which is not an easy task in general. 
	For example, see \cite[Section~9]{Read}, where the rank-2 case is discussed.
\end{Bem}

\section{Classification of the singularities in the finite cluster type case}
\label{Sec:FiniteSing}

We come to the core of the present work.
Via a case-by-case study, we classify the singularities of cluster algebras of finite cluster type with generic coefficients.
Following Remark~\ref{Rk:gen_a_lot}(2), we start with labeled seeds of the form $ \Sigma = (\x,B) $,
where $ \x = (x_1, \ldots, x_n) $ and $ B $ is a skew-symmetrizable $ n \times n $-matrix.
\\
As explained before, we stick to the situation where the generic coefficients are invertible.  
Since all labeled seeds corresponding to the matrices listed in the classification of finite cluster type (Theorem~\ref{Thm:FinTypClass}) are acyclic, we can apply Theorem~\ref{Thm:Presentation} to write down presentations of the associated cluster algebras. 
Furthermore, since we assume the coefficients to be invertible, 
Lemma~\ref{Lem:gen=prin} provides that we can restrict ourselves to cluster algebras with principal coefficients.
The extension to generic coefficients of the results proven for principal coefficients in this section is straightforward and thus,
	we obtain Theorems~\ref{Thm:A} -- \ref{Thm:E}.
\\
In conclusion, the algebra associated to $  \Sigma $ which we are working with is of the form 
\[ 
	\cA^\prin_\c(\Sigma) \cong
	\KK_\c [x_1, \ldots, x_n,  y_1, \ldots, y_n]/
	\langle \,
	x_k y_k - c_k \prod\limits_{b_{ik} > 0 } x_i^{b_{ik}} - \prod\limits_{b_{ik}<0} x_i^{-b_{ik}} 
	\mid k \in \{ 1, \ldots, n \} 
	\, \rangle 
	\ ,
\]
where we use the abbreviation $ \KK_\c :=  \KK[\c^\pmo] = \KK[c_1, c_1^{-1}, \ldots, c_n, c_n^{-1}] $.

\subsection{Facts about continuants polynomials}
\label{Subsec:Cont}
As in \cite{BFMS}, continuant polynomials are the key tool for us to deduce a new presentation of $ \cA^\prin_\c(\Sigma) $ which is more suitable for the study of the singularities. 
Continuant polynomials have recently appeared in other work related to cluster algebras, e.g.~in \cite{MG-Ovsienko, LeclereMorierGenoud}.
\\
Let us recall some facts on continuant polynomials from \cite[Section~3]{BFMS}. 
This also includes results whose proofs can be found in \cite{Muir}.

\begin{defi}
	Let $ n \in \ZZ_+ $.
	The determinant of a tri-diagonal matrix
	\[
		\begin{pmatrix}
		y_1 & b_1 & \\
		c_1 & y_2 & b_2 \\
			& \ddots & \ddots & \ddots \\
			&	& c_{n-2} & y_{n-1} & b_{n-1} \\
			&	&	& c_{n-1} & y_n
		\end{pmatrix}
	\]
	(where all non-specified entries are zero)
	is called a {\em continuant} of order $ n $.
	
	We use the notation $ P_n (y_1, \ldots, y_n) $
	for the continuant which we obtain if $ b_i = c_i = 1 $ for all $ i \in \{ 1, \ldots, n- 1 \} $.
	We make the convention to put $ P_0 := 1 $. 
\end{defi}

The particular continuants $ P_n (y_1, \ldots, y_n) $ are those which are relevant for our work here.
Hence, we restrict our attention to them even though several of the mentioned results may be proven in a more general setting.

One verifies that $ P_1 (y_1) = y_1, 
P_2 (y_1, y_2) = y_1 y_2 - 1, P_3 (y_1,y_2, y_3) = y_1 y_2 y_3 - y_1 - y_3 $.

The following statements follow from the definition of the continuant respectively can be found in 
\cite[Numbers~545, 547(3), 561(4)]{Muir}.

\begin{lemma}
	\phantomsection
	\label{Lem:ContFacts}
	\begin{enumerate}
		\item 
		All terms of $ P_n (y_1, \ldots, y_n) $
		can be obtained from the monomial  
		$ y_1 \cdots y_n $ 
		by replacing every pair of consecutive $ y_i $ by $ - 1 $.
		\\ 
		This implies that $ P_n (y_1, \ldots, y_n)_{\leq 2 } $, i.e., the terms of order at most $ 2 $ of $ P_n (y_1, \ldots, y_n) $
		are of the following form, for $ k \in \ZZ_+ $:

		\begin{enumerate}
			\item 
			$\displaystyle P_{4k}(y_1, \ldots, y_{4k})_{\leq 2} = 
			1 - \sum_{\ell=1}^{2k} \sum_{m=\ell}^{2k}
			y_{2\ell - 1} y_{2m} =
			$
			
			\medskip 
			
			$ = 1 - y_1 y_2 - y_1 y_4 - \cdots - y_1 y_{4k}
			- y_3 y_4 - \cdots  y_3 y_4 - \cdots - y_{4k-1} y_{4k} $ \ ,
			
			\medskip 
			\smallskip 
			
			\item 
			$ \displaystyle P_{4k+1}(y_1, \ldots, y_{4k+1})_{\leq 2 } 
			=  \sum_{\ell=1}^{2k+1} y_{2\ell - 1} 
			= y_1 + y_3 + \cdots + y_{4k+1} $ \ ,

			\medskip 
			\smallskip 
				
			\item 
			$ \displaystyle P_{4k+2}(y_1, \ldots, y_{4k+2})_{\leq 2 } 
			= -1 + \sum_{\ell=1}^{2k+1} \sum_{m=\ell}^{2k+1} y_{2\ell-1} y_{2m} = $
			
			\medskip

			$ = -1 + y_1 y_2 + y_1 y_4 + \cdots + y_1 y_{4k+2} + y_3 y_4 + \cdots + y_{4k+1} y_{4k+2} $ \ ,

			\medskip 
			\smallskip 
			
			\item 
			$ \displaystyle P_{4k+3}(y_1, \ldots, y_{4k+3})_{\leq 2 } 
			=  - \sum_{\ell=1}^{2k+2} y_{2\ell - 1} 
			= - y_1 - y_3 - \cdots - y_{4k+3} $ \ .

		\end{enumerate} 
		 
		\medskip 
		 
		\item 
		The continuant polynomial $ P_n(y_1, \ldots, y_n) $ is symmetric,
		\[ 
			P_n (y_1, \ldots, y_n) = P_n (y_n, \ldots, y_1 ) 
			\ . 
		\] 
		
		\medskip 
		
		\item 
		For $ k \in \{ 1, \ldots, n -1 \} $, 
		the recursion 
		\[
			P_n(y_1, \ldots, y_n) 
			= P_k (y_1, \ldots, y_k) P_{n-k} (y_{k+1}, \ldots, y_n) 
			- 
			P_{k-1} (y_1, \ldots, y_{k-1}) P_{n-k-1} (y_{k+2}, \ldots, y_n) 
		\]
		holds.
		In particular, we get for $ k = 1 $,
		\[ 
			P_n(y_1, \ldots, y_n) = y_1 P_{n-1} (y_2, \ldots, y_n)  -  P_{n-2}(y_3, \ldots, y_n) 
		\]
		
		\medskip 
		
		\item 
		The derivatives are,
		for $ k \in \{ 1, \ldots, n \} $,
		\[
			\frac{\partial P_n(y_1, \ldots, y_n)}{\partial y_k}
			=
			P_{k-1} (y_1, \ldots, y_{k-1}) P_{n-k} (y_{k+1}, \ldots, y_n) 
		\] 
		In particular, we have for $ k = 1 $,
		\[
			\frac{\partial P_n(y_1, \ldots, y_n)}{\partial y_1}
			=
			P_{n-1} (y_{2}, \ldots, y_n) 
		\] 
	\end{enumerate}
\end{lemma}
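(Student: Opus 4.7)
My plan is to prove all four parts by induction on $n$, using the defining recursion
$P_n(y_1,\ldots,y_n) = y_1 P_{n-1}(y_2,\ldots,y_n) - P_{n-2}(y_3,\ldots,y_n)$
from Definition~\ref{Def:ContPoly_lamda_n}, with the monomial-level combinatorial reformulation of part~(1) as the main technical tool for the remaining parts.

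For part~(1), the precise statement I would establish by induction is the following bijection: the monomials of $P_n$ are indexed by subsets $S\subseteq\{1,\ldots,n-1\}$ containing no two consecutive integers, and the monomial attached to $S$ is $(-1)^{|S|}\prod_{i\notin S\cup(S+1)} y_i$. This exactly encodes the informal rule \emph{replace pairs of consecutive $y_i$'s by $-1$} in the statement. The induction step matches the recursion perfectly: a valid $S\subseteq\{1,\ldots,n-1\}$ either does not contain $1$, in which case it restricts to a valid subset of $\{2,\ldots,n-1\}$ and contributes to $y_1\cdot P_{n-1}(y_2,\ldots,y_n)$; or it contains $1$, hence not $2$, so it is determined by a valid subset of $\{3,\ldots,n-1\}$ indexing a monomial of $P_{n-2}(y_3,\ldots,y_n)$, with the extra $-1$ in front matching the minus sign in the recursion. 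The four explicit sub-formulas (a)--(d) then follow by keeping only subsets $S$ with $|S|\geq \lceil(n-2)/2\rceil$; I would verify them by a single uniform induction on $n$ using the truncated recursion $P_n^{\leq 2} = y_1\,P_{n-1}^{\leq 1}(y_2,\ldots,y_n) - P_{n-2}^{\leq 2}(y_3,\ldots,y_n)$, after checking the base cases $n\in\{2,3,4,5\}$ and observing that each of (a)--(d) propagates into the next along this step.

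Symmetry (part~(2)) is then immediate from the combinatorial description, since $S\mapsto\{n-s\mid s\in S\}$ is a bijection between non-consecutive subsets of $\{1,\ldots,n-1\}$ that preserves cardinality and exchanges the associated monomial with the one having indices reversed. For the general recursion~(3), I would split the non-consecutive subsets of $\{1,\ldots,n-1\}$ according to whether they meet $\{k-1,k\}$: those disjoint from $\{k-1,k\}$ produce products of a monomial of $P_k(y_1,\ldots,y_k)$ and a monomial of $P_{n-k}(y_{k+1},\ldots,y_n)$ (giving the first summand), while those containing exactly one of $k-1$ or $k$ fall into a second bijection producing $P_{k-1}\cdot P_{n-k-1}$, with the extra minus sign provided by the removed element. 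Alternatively, (3) is simply the block Laplace expansion of the tri-diagonal determinant along the $(k,k+1)$-boundary. Finally, the derivative formula in~(4) follows from part~(2) applied to the defining recursion, giving the symmetric version $P_k(y_1,\ldots,y_k) = y_k\,P_{k-1}(y_1,\ldots,y_{k-1}) - P_{k-2}(y_1,\ldots,y_{k-2})$, and then differentiating~(3) with respect to $y_k$: the $P_{k-1}P_{n-k-1}$ summand is independent of $y_k$, so $\partial P_n/\partial y_k = (\partial P_k/\partial y_k)\cdot P_{n-k} = P_{k-1}\cdot P_{n-k}$.

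The main obstacle I anticipate is the careful bookkeeping in the explicit low-order formulas (1)(a)--(d), whose four-fold periodicity in $n$ reflects the interplay of two parities: that of $n$, which dictates whether the extremal monomials have even or odd degree, and that of $|S|$, which controls the overall sign $(-1)^{|S|}$. Isolating the truncated recursion for $P_n^{\leq 2}$ mentioned above is what makes this tractable, since it propagates each of (a)--(d) into the next without the need to run four parallel inductions and removes the temptation to treat the residues of $n$ modulo $4$ case by case.
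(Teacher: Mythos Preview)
The paper does not actually prove this lemma; it simply records that the statements ``follow from the definition of the continuant respectively can be found in \cite[Numbers~545, 547(3), 561(4)]{Muir}.'' Your proposal therefore supplies a self-contained argument where the paper gives only a citation, and the combinatorial framework you set up (indexing monomials by non-consecutive subsets $S\subseteq\{1,\ldots,n-1\}$) is the standard and correct way to make part~(1) precise; the induction you describe matches the defining recursion exactly, and your treatment of (2) and (4) is clean and correct.

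One small imprecision: in your combinatorial argument for the general recursion~(3), the split should be on whether $k\in S$, not on whether $S$ meets $\{k-1,k\}$. If $k-1\in S$ but $k\notin S$, that subset still contributes to the product $P_k(y_1,\ldots,y_k)\,P_{n-k}(y_{k+1},\ldots,y_n)$ (it corresponds to a monomial of $P_k$ that omits $y_{k-1}$ and $y_k$), not to the $P_{k-1}P_{n-k-1}$ term. The correct dichotomy is: if $k\notin S$ then $S$ factors as a pair indexing $P_k\cdot P_{n-k}$; if $k\in S$ then necessarily $k-1,k+1\notin S$ and the remainder indexes $-P_{k-1}\cdot P_{n-k-1}$. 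Your alternative via block Laplace expansion along the $(k,k+1)$ off-diagonal entries is correct and is in fact the argument closest to what Muir does, so the gap is cosmetic rather than substantive.
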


From the perspective of singularity theory,
the following result is proven in \cite{BFMS}.

\begin{prop}[{\cite[cf.~Lemma~3.7 and Proposition~3.8]{BFMS}}] 
	\label{Prop:ContSing}
	\bs{Let $ \kappa $ be any field.}
	For $ n \in \ZZ_+ $ and a parameter $ \lambda $ taking values in $ \bs{\kappa} $,
	we define the following family $ X_{n,\lambda} $ of varieties over $ \bs{\kappa} $
	\[
		X_{n, \lambda } := \Spec ( \KK [y_1, \ldots, y_n]/ \langle P_n(y_1, \ldots, y_n) + \lambda \rangle ). 
	\]
	We have that  $ X_{n,\lambda} $ is singular if and only if $ n = 2m $ is even and $ \lambda = (-1)^{m+1} $.  
	\\
	Moreover, if $ X_{n,\lambda} $ is singular, then it is has only an isolated singularity of type $ A_1 $ at the origin. 
	In particular, the singularities are resolved \bs{by} the blowup with center the origin. 
\end{prop}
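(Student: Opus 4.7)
The plan is to analyze the singular locus directly via partial derivatives. By Lemma~\ref{Lem:ContFacts}(4), $\partial P_n/\partial y_k = L_{k-1} R_{n-k}$, where I abbreviate $L_j := P_j(y_1,\ldots,y_j)$ and $R_j := P_j(y_{n-j+1},\ldots,y_n)$ with $L_0 = R_0 = 1$. Thus $p \in X_{n,\lambda}$ is singular precisely when $P_n(p) = -\lambda$ and $L_{k-1}(p) R_{n-k}(p) = 0$ for every $k \in \{1,\ldots,n\}$.

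The combinatorial heart of the argument is to determine which $L$'s and which $R$'s can vanish simultaneously. Setting $I := \{j : L_j(p) = 0\}$ and $J := \{j : R_j(p) = 0\}$, both subsets of $\{0,\ldots,n-1\}$, the recursion $L_k = y_k L_{k-1} - L_{k-2}$ (which I derive from Lemma~\ref{Lem:ContFacts}(2),(3)) rules out two consecutive vanishings, and similarly for $R$; moreover $0 \notin I$ and $0 \notin J$ because $L_0 = R_0 = 1$. The singularity condition rewrites as the covering condition $I \cup (n-1-J) = \{0,\ldots,n-1\}$, forcing $|I| + |J| \geq n$. A standard bound on subsets of $\{1,\ldots,n-1\}$ without two consecutive integers gives $|I|, |J| \leq \lfloor n/2 \rfloor$, so for $n$ odd one has $|I| + |J| < n$, a contradiction; hence $X_{n,\lambda}$ is smooth in that case. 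For $n = 2m$ even, equality is forced throughout, and a short counting argument then shows that $I = J = \{1, 3, \ldots, 2m-1\}$ is the unique admissible subset.

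Next I would unwind the recursion on this unique $I$: from $L_1(p) = 0$ one reads $y_1 = 0$, whence $L_2(p) = -1$, $L_3(p) = -y_3 = 0$ forces $y_3 = 0$, and so on, giving iteratively $y_1 = y_3 = \cdots = y_{2m-1} = 0$ and $L_{2k}(p) = (-1)^k$; the symmetric analysis with $R$'s forces $y_2 = y_4 = \cdots = y_{2m} = 0$. Hence the only candidate singular point is the origin, at which $P_n(0) = L_n(0) = (-1)^m$; so the origin is singular exactly when $\lambda = (-1)^{m+1}$, proving the first assertion.

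To finish, I would verify the type-$A_1$ claim via the Hessian. By Lemma~\ref{Lem:ContFacts}(1)(a),(c), the quadratic part of $P_n + \lambda$ at $0$ is $\pm \sum_{\ell \leq m''} y_{2\ell-1} y_{2m''}$. Ordering coordinates by (odd indices first, then even indices), the associated bilinear form is block anti-diagonal with off-diagonal $m \times m$ block upper-triangular of $\pm 1$'s on and above the diagonal, hence invertible. Consequently, after a linear change of coordinates pairing odd and even variables, the quadratic part becomes the standard hyperbolic form $z_0 z_1 + z_2 z_3 + \cdots + z_{n-2} z_{n-1}$, matching the definition of an $A_1$ hypersurface singularity of dimension $n-1$ in $\AA^n$ in any characteristic. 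Resolution by one point blowup at the vertex of a non-degenerate quadric cone is then the classical computation. The main obstacle I anticipate is the combinatorial uniqueness of the pair $(I,J)$, together with the bookkeeping that turns "$L_{\text{odd}}(p) = 0$" into "$y_{\text{odd}} = 0$"; once this is in hand, the Hessian computation and the blowup step are essentially routine.
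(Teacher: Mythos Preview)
The paper does not give a proof of this proposition; it is quoted from \cite[Lemma~3.7 and Proposition~3.8]{BFMS}. Your argument is therefore a self-contained substitute, and it is correct.

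The combinatorial core is clean and characteristic-free: the no-two-consecutive constraint on $I$ and $J$ (from the recursion $L_k=y_kL_{k-1}-L_{k-2}$) together with the covering condition $I\cup(n-1-J)=\{0,\dots,n-1\}$ and $0\notin I,J$ forces $n=2m$ and $I=J=\{1,3,\dots,2m-1\}$ exactly as you say, and unwinding the recursion pins the unique singular candidate to the origin with $P_n(0)=(-1)^m$. The reduction of the quadratic part to the hyperbolic form via the unipotent substitution $U_{m'}=\sum_{\ell\le m'}y_{2\ell-1}$ is valid over any field, including characteristic $2$, so your ``Hessian'' step is really a linear change of variables and not an appeal to nondegeneracy of a symmetric bilinear form.

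One point you should make explicit: knowing that the \emph{quadratic part} of $P_n+\lambda$ equals the hyperbolic form is not yet the definition of an $A_1$ singularity; you must absorb the higher-order terms into the quadratic part in the completed local ring. This is the formal Morse lemma for a sum of hyperbolic planes $\sum z_{2i-1}z_{2i}$, which holds in every characteristic (iteratively complete the square in each pair), so the step is routine --- but since the statement is over an arbitrary field you cannot tacitly invoke the classical Morse lemma, and the sentence ``matching the definition'' currently elides this.
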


While the above statement is characteristic-free,
we warn the reader that the condition $ \lambda = (-1)^{m+1} $ varies with the characteristic.

\begin{Obs} 
	We consider the family 
	$ 
		\phi \colon  X_{n,\lambda} \to S
	$ 
	over the base $ S := \Spec(\bs{\kappa}[\lambda]) = \AA_{\bs{\kappa}}^1 $.
	Proposition~\ref{Prop:ContSing} implies that there is a stratification $ S = S_0 \sqcup S_1 $,
	where
	\[
		S_0 := \AA_{\bs{\kappa}}^1 \setminus V(\lambda - (-1)^{m+1} ) 
		\ \ \ 
		\mbox{ and } 
		\ \ \ 
		S_1 := V(\lambda - (-1)^{m+1} )
		\ ,
	\]
	such that the singularity type of the fibers of the family given by $\phi$ is
fixed along $ S_1 $ and $ S_0 $, respectively. 
\end{Obs}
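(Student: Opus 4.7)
The plan is to derive this Observation as a direct bookkeeping consequence of Proposition~\ref{Prop:ContSing}, so essentially nothing substantive beyond that Proposition is needed.

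First I would translate Proposition~\ref{Prop:ContSing} into the language of the family $ \phi \colon X_{n,\lambda} \to S $. For any closed point $ \eta \in S = \AA^1_{\kappa} $ the fiber $ \phi^{-1}(\eta) $ is obtained by specializing $ \lambda $ to its image $ \lambda(\eta) \in \kappa(\eta) $, so it coincides with $ X_{n, \lambda(\eta)} $ considered over $ \kappa(\eta) $. Proposition~\ref{Prop:ContSing} is characteristic-free and valid over arbitrary base fields, hence applies verbatim after base change to $ \kappa(\eta) $: the fiber is singular exactly when $ n = 2m $ is even and $ \lambda(\eta) = (-1)^{m+1} $ in $ \kappa(\eta) $. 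This locus is the zero set of the polynomial $ \lambda - (-1)^{m+1} \in \kappa[\lambda] $, i.e.\ the closed subscheme $ S_1 := V(\lambda - (-1)^{m+1}) \subset S $; its complement is the open set $ S_0 $ on which every fiber is regular. This already produces the asserted decomposition $ S = S_0 \sqcup S_1 $.

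Next I would verify the constancy of the singularity type along each stratum. Over $ S_0 $ there is nothing to check, since every fiber is regular. Over $ S_1 $, which is a single reduced closed point (defined by a linear equation with coefficients in $ \ZZ $), Proposition~\ref{Prop:ContSing} tells us that the unique fiber is an isolated hypersurface singularity of type $ A_1 $ situated at the origin of $ \AA^n $, and this isomorphism class is the same for every closed point of $ S_1 $ (of which there is only one). Hence the singularity type is literally constant on each stratum.

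The main — and really only — obstacle is the mild subtlety that Proposition~\ref{Prop:ContSing} must be applied uniformly across varying residue fields $ \kappa(\eta) $; this is harmless because the polynomial $ P_n $ and the constant $ (-1)^{m+1} $ live over $ \ZZ $, so the hypothesis and conclusion of that Proposition transport unchanged to any field extension. Thus no further work is required beyond invoking the Proposition.
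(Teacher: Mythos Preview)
Your proposal is correct and matches the paper's treatment: the paper states this as an Observation with no separate proof, since it is an immediate bookkeeping consequence of Proposition~\ref{Prop:ContSing}. Your remark that $S_1$ consists of a single point and that the integer coefficients of $P_n$ and $(-1)^{m+1}$ make the argument uniform across residue fields is exactly the (trivial) content being recorded.
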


\subsection{Type $ \boldsymbol{A_n} $}
\label{Subsec:An}
	We begin our study with the singularities of cluster algebras of finite cluster type $ A_n $.
	
	\begin{Obs}
		\label{Obs:Special_Type_A1}
		\bs{For $ n = 1 $, we have $ \cA^\prin_\c(A_1) \cong \KK_{c_1} [x_1, y_1] / \langle x_1 y_1 - c_1 - 1 \rangle $ by \eqref{eq:An_exchange}. 
			We observe that 
			$ x_1 y_1 - c_1 - 1 = P_2 (x_1, y_1) - c_1 $.
		For the family 
	$ \phi \colon \Spec ( \cA^\prin_{\c}(A_1)) \to S $
	a closed point $ \eta \in S $ is singular if and only if the image of $ c_1+1 $ is zero in $ \kappa(\eta) $. 
	In the singular case, the fiber is $ V (x_1 y_1) \subseteq \AA_{\kappa(\eta)}^2 $ which is a union of two lines meeting transversally in the unique intersection point.
	Blowing up the intersection point resolves the singularities.
} 
	\end{Obs}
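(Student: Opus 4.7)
The plan is to work directly with the hypersurface presentation $\cA^\prin_\c(A_1) \cong \KK_{c_1}[x_1,y_1]/\langle f \rangle$ with $f := x_1 y_1 - c_1 - 1$ and to apply the Jacobian criterion fiber-by-fiber. Because the defining polynomial has coefficients in $\ZZ[c_1^{\pm 1}]$, specializing $c_1$ to its image in $\kappa(\eta)$ produces a polynomial whose only non-constant coefficients lie in the prime subfield of $\kappa(\eta)$, so no $p$-basis considerations are required and the standard Jacobian criterion determines the singular locus in every characteristic.

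First I would compute the two partial derivatives $\partial f/\partial x_1 = y_1$ and $\partial f/\partial y_1 = x_1$. The singular locus of the fiber $\phi^{-1}(\eta)$ is then cut out by $x_1 = y_1 = 0$ together with $f = 0$, and the last equation forces $-c_1(\eta)-1 = 0$ in $\kappa(\eta)$, i.e., the image of $c_1+1$ vanishes at $\eta$. Conversely, if $c_1(\eta) = -1$ then the origin lies on the fiber and is a singular point; otherwise the Jacobian is nowhere zero on the fiber and the fiber is regular. This establishes the singular/regular dichotomy.

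To finish, I would specialize $c_1 = -1$ to identify the singular fiber: $f$ becomes $x_1 y_1$, so the fiber is exactly $V(x_1 y_1) \subseteq \AA^2_{\kappa(\eta)}$, the union of the two coordinate axes meeting transversally at the origin, as claimed. For the resolution, blowing up the unique singular point $(0,0) \in \AA^2_{\kappa(\eta)}$ separates the two components: in each of the two standard charts of the blowup, the strict transform of $V(x_1 y_1)$ is the union of one coordinate axis with a chart of the exceptional divisor, and these are two disjoint regular curves. Thus a single point blowup resolves the singularity. The statement is elementary and I do not anticipate any genuine obstacle beyond making the Jacobian computation explicit.
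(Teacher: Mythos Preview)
Your proof is correct and matches the paper's approach: the observation in the paper is essentially self-contained and implicitly relies on exactly the Jacobian computation you spell out. Your description of the blowup chart is slightly loose (what you describe is the total transform in each chart, not the strict transform), but the conclusion that one point blowup yields a regular strict transform is correct and is all that is being asserted.
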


	Fix $ n \geq \bs{2} $. 
	Following Theorem~\ref{Thm:FinTypClass}, 
	we choose in the initial labeled seed $ \Sigma = (\x, B ) $
	the $ n \times n $ matrix
	\[
	B =  
	\begin{pmatrix} 
	0 & 1 & \\
	-1 & \ddots & \ddots   \\
	& \ddots & 0 & 1  \\
	&  & -1 & 0 &
	\end{pmatrix} 
	\ . 
	\]
	The corresponding cluster algebra with principal coefficients $ \c = (c_1, \ldots, c_n ) $ (with values in $ \KK^\times $) 
	is 
	\begin{equation}
	\label{eq:An_exchange} 
		\cA^\prin_\c(\Sigma) \cong
		\KK_\c [\x, \y]
		\hspace{-0.15cm}\raisebox{-0.2cm}{$\bigg/$}
		\hspace{-0.2cm}\raisebox{-0.5cm}%
		{$\Bigg\langle \,
		\begin{minipage}{3.4cm}
		$ x_1 y_1 - c_1 -  x_2 
		\\[2pt]
		x_k y_k - c_k x_{k-1} -  x_{k+1} 
		\\[2pt]
		x_n y_n - c_n x_{n-1} -  1 $
		
		\end{minipage}
		\, \Bigg| \,\, k \in \{ 2, \ldots, n-1 \} 
		\, \Bigg\rangle
		\ ,
		$} 
	\end{equation} 	
	where $ \x = (x_1, \ldots, x_n ) $ and $ \y = (y_1, \ldots, y_n ) $.
	
	We often also write $ \cA^\prin_\c(A_n)  $ instead of $ \cA^\prin_\c(\Sigma) $ in order to emphasize that we are considering the finite cluster type $ A_n $.
	
	Following the strategy applied in \cite{BFMS},
	we first determine a new presentation
	which has \bs{fewer} relations than the one just given.
	This will reveal a close connection to the case of trivial coefficients and in particular to the singularity theory of continuant polynomials as discussed in Proposition~\ref{Prop:ContSing}. 
	
	Recall that we introduced in Definition~\ref{Def:ContPoly_lamda_n}(2) the expressions
	\[ 
		\lambda_{2s}(\c)
		=			
		\displaystyle 
		\prod_{\alpha = 1}^{s} c_{2 \alpha}^{-1} 
		\ \ \
		\mbox{ and  }  
		\ \ \
		\lambda_{2s+1}(\c)
		=	
		\displaystyle 
		\prod_{\alpha = 1}^{s+1} c_{2 \alpha -1 }^{-1}
		\ ,
		\ \ \
		\mbox{ for } \c = (c_1, \ldots, c_n) \mbox{ invertible} \ .
	\] 
	
	\begin{prop}
		\label{Prop:An_hypersurface} 
		\bs{Let $ n \geq 2 $.} 
		The cluster algebra $ \cA^\prin_\c(A_n) $
		is isomorphic to $ \KK_\c[z_1, \ldots, z_{n+1}]/ \langle f_n \rangle $
		\bs{(indeed, it is an isomorphism over $ \KK_\c$)}
		with 
		\[
			f_n(z_1, \ldots, z_{n+1})
			:= 
			P_{n+1} (z_1, \ldots, z_{n+1}) - \lambda_n(\c)
			\ ,
		\]
		where $P_{n+1}(z_1, \ldots, z_{n-1}) $ is the continuant polynomial defined in Section~\ref{Subsec:Cont}.
	\end{prop}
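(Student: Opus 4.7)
The plan is to reduce to the trivial-coefficient case by a rescaling that absorbs the $c_k$'s into the definitions of the $x_k, y_k$, except for one ``leftover'' coefficient which will ultimately play the role of $\lambda_n(\c)$. Define recursively $\gamma_0 := 1$, $\gamma_1 := 1$, and $\gamma_{k+1} := c_k \gamma_{k-1}$ for $k\geq 1$; so each $\gamma_k$ is an invertible monomial in $\c$ and, by a direct induction, $\gamma_{2s} = c_1 c_3 \cdots c_{2s-1}$, $\gamma_{2s+1} = c_2 c_4 \cdots c_{2s}$. Introduce new variables $\widetilde x_k := \gamma_k^{-1} x_k$ and $\widetilde y_k := \gamma_k \gamma_{k+1}^{-1} y_k$ for $k \in \{1,\ldots,n\}$. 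Since every $\gamma_k \in \KK_\c^\times$, this is a $\KK_\c$-linear change of coordinates on the ambient polynomial ring, so we have an isomorphism of $\KK_\c$-algebras $\KK_\c[\x,\y] \cong \KK_\c[\widetilde\x, \widetilde\y]$.

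Substituting in the defining relations of \eqref{eq:An_exchange} and using $\gamma_k \cdot (\gamma_{k+1}/\gamma_k) = \gamma_{k+1} = c_k\gamma_{k-1}$, every interior relation $x_k y_k - c_k x_{k-1} - x_{k+1} = 0$ becomes, after dividing by $\gamma_{k+1}$, the trivial-coefficient relation $\widetilde x_k \widetilde y_k - \widetilde x_{k-1} - \widetilde x_{k+1} = 0$; the first relation becomes $\widetilde x_1 \widetilde y_1 - 1 - \widetilde x_2 = 0$ (after dividing by $\gamma_2 = c_1$); and the last relation $x_n y_n - c_n x_{n-1} - 1 = 0$ becomes $\widetilde x_n \widetilde y_n - \widetilde x_{n-1} - \gamma_{n+1}^{-1} = 0$. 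Thus the only surviving coefficient is the single constant $\gamma_{n+1}^{-1}$ appearing in the last equation.

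Next, I would inductively eliminate $\widetilde x_2, \ldots, \widetilde x_n$. Using the first $n-1$ relations one proves by induction on $k$ that
\[
\widetilde x_{k+1} = P_k(\widetilde y_1, \ldots, \widetilde y_k)\, \widetilde x_1 - P_{k-1}(\widetilde y_2, \ldots, \widetilde y_k), \qquad k = 1, \ldots, n-1,
\]
where the induction step is exactly the continuant recursion of Lemma~\ref{Lem:ContFacts}(3) (applied separately to the coefficient of $\widetilde x_1$ and to the constant part, in the symmetric form $P_k(u_1,\ldots,u_k) = u_k P_{k-1}(u_1,\ldots,u_{k-1}) - P_{k-2}(u_1,\ldots,u_{k-2})$). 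Substituting $k = n-1$ into the last exchange relation yields the single hypersurface equation
\[
P_n(\widetilde y_1, \ldots, \widetilde y_n)\, \widetilde x_1 - P_{n-1}(\widetilde y_2, \ldots, \widetilde y_n) = \gamma_{n+1}^{-1}.
\]
Renaming $z_1 := \widetilde x_1$ and $z_i := \widetilde y_{i-1}$ for $i = 2, \ldots, n+1$ and applying the recursion once more, the left-hand side is precisely $P_{n+1}(z_1, \ldots, z_{n+1})$. A Gr\"obner-basis argument (identical to the one in Theorem~\ref{Thm:Presentation}, with $\widetilde x_2, \ldots, \widetilde x_n$ treated as the ``expensive'' variables) shows that no further relations are needed, giving the desired presentation.

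It remains to match the leftover constant with $\lambda_n(\c)$. Reading off $\gamma_{n+1}$ from the closed forms above: if $n = 2s$ then $\gamma_{n+1}^{-1} = (c_2 c_4 \cdots c_{2s})^{-1} = \lambda_{2s}(\c)$, and if $n = 2s+1$ then $\gamma_{n+1}^{-1} = (c_1 c_3 \cdots c_{2s+1})^{-1} = \lambda_{2s+1}(\c)$, matching Definition~\ref{Def:ContPoly_lamda_n}(2) on the nose. The main bookkeeping obstacle is checking that the rescaling is compatible with \emph{all} of the relations simultaneously with the same family of $\gamma_k$'s; the recursion $\gamma_{k+1} = c_k\gamma_{k-1}$ is forced precisely so that the coefficients $c_k$ disappear from the interior equations, and it is a small but pleasant coincidence that this same recursion produces exactly $\lambda_n(\c)$ in the last slot.
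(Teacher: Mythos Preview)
Your proof is correct and follows essentially the same route as the paper: the rescaling $\widetilde x_k = \gamma_k^{-1} x_k$, $\widetilde y_k = \gamma_k\gamma_{k+1}^{-1} y_k$ is exactly the paper's change of variables $\widetilde x_k = \lambda_{k-1}(\c)\,x_k$, $\widetilde y_k = \lambda_k(\c)\lambda_{k-1}(\c)^{-1}\,y_k$ (since $\gamma_{k+1}^{-1} = \lambda_k(\c)$), and your elimination of $\widetilde x_2,\ldots,\widetilde x_n$ via the continuant recursion is precisely the paper's substitution $\widetilde x_k = P_k(\widetilde x_1,\widetilde y_1,\ldots,\widetilde y_{k-1})$, just with the role of $\widetilde x_1$ made explicit. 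The Gr\"obner-basis remark is harmless but unnecessary: each eliminated relation is monic of degree~$1$ in the variable being removed, so the substitution already gives an isomorphism of $\KK_\c$-algebras.
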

	
	\begin{proof}
		\bsOUT{For $ n = 1 $, we have $ \cA^\prin_\c(A_1) \cong \KK_{c_1} [x_1, y_1] / \langle x_1 y_1 - c_1 - 1 \rangle $ by \eqref{eq:An_exchange}. 
		We observe that 
		$ x_1 y_1 - c_1 - 1 = P_2 (x_1, y_1) - c_1 $ 
		and the assertion follows in this special case.}

		\bsOUT{Let $ n \geq 2 $.} 
		Consider the presentation \eqref{eq:An_exchange} of $ \cA^\prin_\c(A_n) $.
		The exchange relations are 
		\[ 
		x_1 y_1 - c_1  - x_2 
		\ = \   
		x_k y_k -  c_k x_{k-1} -  x_{k+1} 
		\ = \ 
		x_n y_n - c_n x_{n-1} - 1  
		\ =  \ 0
		\ ,  
		\]
		where $ k \in \{ 2, \ldots, n-1 \} $.
		Since the coefficients $ \c = (c_1, \ldots, c_n) $ are invertible, 
		we may introduce the change of variables 
		\begin{equation}
			\label{eq:new_var} 
			\left\{ \ \ \ 
		\begin{array}{ll} 
			\displaystyle 
			\widetilde x_{2 \ell}  := 
			x_{2\ell} \prod_{\alpha = 1}^\ell c_{2 \alpha -1 }^{-1}
			\ ,   
			
			&
			
			\displaystyle
			\widetilde x_{2\ell - 1} := 
			x_{2\ell - 1} \prod_{\alpha = 1}^{\ell-1} c_{2 \alpha}^{-1} 
			\ ,

			\\[15pt] 
			
			\displaystyle 
			\widetilde y_{2 \ell}  := 
			y_{2\ell} \prod_{\alpha = 1}^\ell c_{2 \alpha -1 }  c_{2 \alpha}^{-1}
			\ ,   
			
			\hspace{15pt}
			
			&
			
			\displaystyle
			\widetilde y_{2\ell - 1} := 
			y_{2\ell - 1} \prod_{\alpha = 1}^{\ell} c_{2 \alpha -1 }^{-1}  \prod_{\alpha = 1}^{\ell-1}  c_{2 \alpha}
			\ ,
			
		\end{array}
		\right.  
		\end{equation} 
		for $ 2\ell, 2\ell-1 \in \{ 1, \ldots, n \} $.
		Note that $ \widetilde x_1 = x_1 $ and $ \widetilde y_1 = y_1 c_1^{-1} $. 
		In Remark~\ref{Rk:new_var}(1), we provide the idea how this change of variables arises. 
		\\
		By substitution, we obtain
		\begin{equation} 
			\label{eq:An_afterchange} 
			\left\{			
			\ 
			\begin{array}{ll} 
				
				x_1 y_1 - c_1 - x_2  =
				c_1 ( \widetilde x_1 \widetilde y_1  - 1  - \widetilde x_2 )
				
				\\[15pt]
				
				x_k y_k - c_k x_{k-1} -  x_{k+1} =
				\lambda_k(\c)^{-1} 
				\big( 
				\widetilde x_{k} \widetilde y_{k}  
				- \widetilde x_{k-1}  
				- \widetilde x_{k+1} 
				\big) 
				\ , 
				
				&
				
				 k \in \{ 2, \ldots, n-1 \}

				\\[15pt]

				x_n y_n - c_n x_{n-1} - 1  
				= 
				\lambda_n(\c)^{-1}
				\bigg( 
					\widetilde x_{n} \widetilde y_{n}  
					- \widetilde x_{n - 1 } 
					- \lambda_n(\c)  
				\bigg) 
								
			\end{array}
		\right.   
		\end{equation} 
		where $ \lambda_n (\c) $ is the term defined in the statement of the proposition,
		\[   
		\lambda_k(\c)^{-1}
		= 
		\begin{cases} 
			\displaystyle 
			\prod_{\alpha = 1}^{\ell} c_{2 \alpha}
			\ ,
			& 
			\mbox{ if } k = 2\ell \ , 
			\\[15pt]
			\displaystyle 
			\prod_{\alpha = 1}^{\ell} c_{2 \alpha-1}
			\ ,
			& 
			\mbox{ if } k = 2\ell - 1 \ . 
			
		\end{cases}  
	\]
	Hence, up to multiplication by an invertible factor, the exchange relations are
	\begin{equation}
		\label{eq:An_ex_new} 
		\widetilde x_1 \widetilde y_1  - 1  - \widetilde x_2
		\ = \ 
		\widetilde x_{k} \widetilde y_{k}  
		- \widetilde x_{k-1}  
		- \widetilde x_{k+1} 
		\ = \
		\widetilde x_{n} \widetilde y_{n}  
		- \widetilde x_{n - 1 }
		- \lambda_n(\c)  
		\ = \ 0 ,
	\end{equation}
	for $ k \in \{ \bs{2}, \ldots, n-1 \} $. 	
	Set
	$ f_n := 
	f_n ( x_{n-1}, x_n, y_n ):=
	 \widetilde x_{n} \widetilde y_{n}  
	 - \widetilde x_{n - 1 }
	 - \lambda_n(\c)  
	$.
	
	The remainder of the proof is almost identical to the proof of \cite[Lemma~4.1]{BFMS}.
	The only difference is that the remaining term is $ 1 $ in loc.~cit., while it is $ \lambda_n(\c) $ here.
	Let us outline the arguments.
	\\
	We abuse notation and drop the tilde, 
	i.e., we write $ x_k $ instead of $ \widetilde x_k $ and $ y_k $ instead of $ \widetilde y_k $. 
	Using Lemma~\ref{Lem:ContFacts}(3), we deduce recursively from \eqref{eq:An_ex_new}
	that the \bs{$ (k-1) $-st} exchange relation can be rewritten as
	\[
	x_k = P_k(x_1, y_1, \ldots, y_{k-1})
	\ \ \ 
	\mbox{ for } k \in \{ 2, \ldots, n \} .
	\]
	We substitute the variables $ x_k $ and thus we get rid of the corresponding exchange relation in \eqref{eq:An_exchange}.
	We are left with $ f_n = 0 $ only.
	If we express $ f_n $ as a polynomial in the variables
	$ (x_1, y_1, \ldots, y_n) $, 
	we get (again using Lemma~\ref{Lem:ContFacts}(3)) that
	\[
	f_n(x_1, y_1, \ldots, y_{n})
	:= 
	P_{n+1} (x_1, y_1, \ldots, y_{n}) - \lambda_n(\c)
	\ .
	\] 
\end{proof}

	\begin{Bem}
		\label{Rk:new_var}
		\begin{enumerate}

			\item 
			If we express \eqref{eq:new_var} 
			in terms of the factors $ \lambda_k(\c) $
			we obtain:
			\begin{equation}
				\label{eq:new_var_lambda}
				\widetilde x_k = x_k \lambda_{k-1}(\c)
				\hspace{10pt}
				\mbox{ and } 
				\hspace{10pt}
				\widetilde y_k = y_k \lambda_k(\c) \lambda_{k-1}(\c)^{-1},
				\hspace{10pt}
				\mbox{for } 
				k \in \{ 1, \ldots, n \} \ ,
			\end{equation} 
			where we remind the reader that
			$ \lambda_{2s}(\c)
			=			
			\displaystyle 
			\prod_{\alpha = 1}^{s} c_{2 \alpha}^{-1} 
			$
			and 
			$ \lambda_{2s+1}(\c)
			=	
			\displaystyle 
			\prod_{\alpha = 1}^{s+1} c_{2 \alpha -1 }^{-1}
			$.
			 
			\smallskip 
			
			\item 
			Via a suitable change of variables using Lemma~\ref{Lem:ContFacts}(1),
			we may deduce from Proposition~\ref{Prop:An_hypersurface} for $ n = 2s $ even another presentation whose relation is independent of $ \c $, 
			namely $ \cA_\c^\prin(A_{2s}) \cong \KK_\c [\widetilde \z]/ \langle P_{2s+1}(\widetilde \z) - 1 \rangle $.
			The reason behind this is that the constant term of the corresponding continuant polynomial $ P_{2s+1}(\z) $ in Proposition~\ref{Prop:An_hypersurface} is zero.
			As this is not relevant for our further investigations, we do not go into details here.

	\end{enumerate}
	\end{Bem}

	Let us reformulate the missing part of Theorem~\ref{Thm:A}. 
	Its proof is a consequence of Propositions~\ref{Prop:ContSing} and~\ref{Prop:An_hypersurface}.
	Recall that we always have $ S := \Spec (\KK_\c) $.
	
	\begin{Thm}
		\label{Thm:A_prin}
		\bs{Let $ n \geq 2 $.}
		Consider the family defined by 
		$ \phi \colon \Spec ( \cA^\prin_{\c}(A_n)) \to S $.
		For a closed point $ \eta \in S $, the fiber 
		$ \Spec ( \cA^\prin_{\c}(A_n))_\eta $ is singular if and only if
		\[
		n = 2m - 1 \ \ \mbox{ and } \ \  
		\lambda_{2m-1} (\eta) = (-1)^{m} \ .
		\]  
		In the singular case, the fiber is isomorphic to an isolated hypersurface singularity of type $ A_1 $.  
	\end{Thm}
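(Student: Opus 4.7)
The plan is to combine Propositions~\ref{Prop:An_hypersurface} and~\ref{Prop:ContSing} directly; no independent computation is required beyond carefully tracking a sign.

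First, I would invoke Proposition~\ref{Prop:An_hypersurface} to replace $ \cA^\prin_\c(A_n) $ by the hypersurface ring $ \KK_\c[z_1, \ldots, z_{n+1}]/\langle P_{n+1}(\z) - \lambda_n(\c) \rangle $. Since this isomorphism is over $ \KK_\c $, taking the fiber at a closed point $ \eta \in S $ is base change along $ \Spec(\kappa(\eta)) \to \Spec(\KK_\c) $, giving
\[
\Spec(\cA^\prin_\c(A_n))_\eta \cong \Spec\bigl(\kappa(\eta)[z_1, \ldots, z_{n+1}]/\langle P_{n+1}(z_1,\ldots,z_{n+1}) - \lambda_n(\eta) \rangle\bigr),
\]
where $ \lambda_n(\eta) \in \kappa(\eta) $ denotes the image of $ \lambda_n(\c) \in \KK_\c $.

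Second, I would apply Proposition~\ref{Prop:ContSing} (which is characteristic-free and holds over any field $ \kappa $) with $ N := n+1 $ and $ \lambda := -\lambda_n(\eta) $. That proposition characterizes $ X_{N,\lambda} $ as singular iff $ N = 2m $ is even and $ \lambda = (-1)^{m+1} $. Translating, the fiber is singular iff $ n+1 = 2m $ is even, i.e.\ $ n = 2m-1 $ is odd, and $ -\lambda_n(\eta) = (-1)^{m+1} $, which is equivalent to $ \lambda_n(\eta) = (-1)^m $, matching the claimed condition. In the singular case, the same proposition asserts that $ X_{N,\lambda} $ has an isolated hypersurface singularity of type $ A_1 $ at the origin, yielding the remaining claim.

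Finally, I would remark that no $ p $-basis considerations are needed here: the defining polynomial $ P_{n+1}(\z) - \lambda_n(\eta) $ has coefficients in the prime subfield of $ \kappa(\eta) $ together with the element $ \lambda_n(\eta) $, and the singular locus of a hypersurface is cut out by the vanishing of the single-row Jacobian with respect to $ z_1, \ldots, z_{n+1} $. Thus the subtleties addressed by Zariski's regularity criterion for non-perfect ground fields are already absorbed into Proposition~\ref{Prop:ContSing}, and no genuine obstacle remains beyond the sign flip between $ P_{n+1}(\z) + \lambda $ (as stated in Proposition~\ref{Prop:ContSing}) and $ P_{n+1}(\z) - \lambda_n(\c) $ (as it appears in Proposition~\ref{Prop:An_hypersurface}).
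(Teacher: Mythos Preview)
Your proposal is correct and follows essentially the same approach as the paper's proof: both invoke Proposition~\ref{Prop:An_hypersurface} to reduce to the continuant hypersurface, apply Proposition~\ref{Prop:ContSing} with the parity/sign translation $n+1=2m$, $-\lambda_n(\eta)=(-1)^{m+1}$, and then observe that no $p$-basis considerations are needed (the paper phrases this as the coefficients lying in $\{-1,0,1\}$ in the singular case, while you phrase it as the issue being absorbed into Proposition~\ref{Prop:ContSing}).
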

	
	\begin{proof}
		By Proposition~\ref{Prop:An_hypersurface}, we have 
		\[ 
			\cA^\prin_\c(A_n) \cong 
			\KK_\c[z_1, \ldots, z_{n+1}]/ \langle 
			P_{n+1} (z_1, \ldots, z_{n+1}) - \lambda_n(\c)
			\rangle   
			\ , 
		\]
		where 
		$ \lambda_{2s}(\c) =			
			\displaystyle 
			\prod_{\alpha = 1}^{s} c_{2 \alpha} 
		$
		and 
		$ \lambda_{2s+1}(\c)
			 =	
			\displaystyle 
			\prod_{\alpha = 1}^{s+1} c_{2 \alpha -1 }
		$.
		\\
		By Proposition~\ref{Prop:ContSing}, the fiber
		$ \Spec (\bs{\kappa(\eta)}[z_1, \ldots, z_{n+1}]/ \langle 
		P_{n+1} (z_1, \ldots, z_{n+1}) - \lambda_n(\eta)
		\rangle) $
		is singular if and only if $ n+1 = 2m $ and 
		$ \lambda_n (\eta) = \lambda_{2m-1} (\eta)  = (-1)^m $.
		Furthermore, Proposition~\ref{Prop:ContSing} implies if the fiber above $\eta$ in $ \Spec(\cA^\prin_\c(A_n)) $ is singular, then it has an isolated singularity of type $ A_1 $ at the origin.

		Note that the coefficients of the polynomial $ P_{n+1} (z_1, \ldots, z_{n+1}) - \lambda_n(\c) $ of the presentation for $ \cA^\prin_\c(A_n)) $ (Proposition~\ref{Prop:An_hypersurface}) 
		are contained in $ \{ -1 , 0 , 1 \} $ in the singular case. 
		Therefore, there is no dependence on the ground field.
		In particular, we do not have to take $ p $-bases into account and the computations in the relative setup \bs{(i.e., the computation of the singular locus relative to $ \Spec (\kappa(\eta)) $)} already provide the singular locus.	
	\end{proof}

\subsection{Type $ \boldsymbol{B_n} $}

Following the alphabetical ordering, we continue with cluster algebras with principal coefficients of finite cluster type $ B_n $, for $ n \geq 2 $.

By Theorem~\ref{Thm:FinTypClass}, the exchange matrix of the initial labeled seed $ \Sigma  = (\x, B) $ can be taken as the $ n \times n $ matrix 
\[ 
	B
	= 
	\begin{pmatrix} 
	0 & 1 & \\
	-1 & \ddots & \ddots   \\
	& \ddots & 0 & 1  \\
	&  & -1 & 0 & 1  \\
	&  &  & -2  & 0 \\
	\end{pmatrix}  
\]
and thus, by Theorem~\ref{Thm:Presentation}, the corresponding cluster algebra with principal coefficients has the presentation 
\begin{equation}
	\label{eq:Bn_exchange} 
	\cA^\prin_\c(B_n) \cong
	\KK_\c [\x, \y]
	\hspace{-0.15cm}\raisebox{-0.2cm}{$\bigg/$}
	\hspace{-0.25cm}\raisebox{-0.8cm}%
	{$\left\langle \,
		\begin{minipage}{4.4cm}
			$ x_1 y_1 - c_1 -  x_2 
			\\[2pt]
			x_k y_k - c_k x_{k-1} -  x_{k+1} 
			\\[2pt]
			x_{n-1} y_{n-1} - c_{n-1} x_{n-2} -  x_{n}^2 
			\\[2pt]
			x_n y_n - c_n x_{n-1} -  1 $
			
		\end{minipage}
		\, \Bigg| \,\, k \in \{ 2, \ldots, n-2 \} 
		\, \right\rangle
		\ ,
		$} 
\end{equation} 
where $(\c,\x,\y) = (c_1, \ldots, c_n; x_1, \ldots, x_n; y_1, \ldots, y_n) $ 
and similar as before, 
we use the notation $ \cA^\prin_\c (B_n) :=  \cA^\prin_\c (\Sigma) $ to indicate that we are in the finite cluster type $ B_n $. 

We deduce the following new presentation, which simplifies the computations for the Jacobian criterion. 
The methods for the proof are closely related to those applied in Proposition~\ref{Prop:An_hypersurface}.

\begin{lemma}
	\label{Lem:Bn_2_eq} 
	There exists an isomorphism \bs{over $ \KK_\c$}
	\[
		\cA^\prin_\c(B_n) \cong 
		\KK_\c[z_1, \ldots, z_{n-1}, u_1, u_2, u_3 ]/ \langle g_n, h_n \rangle 
		\ ,
	\]
	where the generators of the ideal on the right hand side are 
	\[
	\begin{array}{l}
	g_n
	:= 
	\Big( u_1  u_2  - \lambda_{n}(\c) \Big)  u_3
	 - \lambda_{n-1}(\c)^{-1} u_1^2 - P_{n-2}(z_1,\ldots, z_{n-2}) 
	\ ,
	
	\\[15pt]
	
	h_n := u_1 u_2  - \lambda_{n}(\c) - P_{n-1}(z_1, \ldots, z_{n-1}) 
	\ , 
	\end{array}
	\]
	for $ \lambda_k(\c) $ as defined in Proposition~\ref{Prop:An_hypersurface}
	(i.e., 
	$ \lambda_{2s}(\c)
	=			
	\displaystyle 
	\prod_{\alpha = 1}^{s} c_{2 \alpha}^{-1} 
	$ and $ \lambda_{2s+1}(\c)
	=	
	\displaystyle 
	\prod_{\alpha = 1}^{s+1} c_{2 \alpha -1 }^{-1} $)
	and, as before, $ P_{n-2} $ and $ P_{n-3} $ are the continuant polynomials discussed in Section~\ref{Subsec:Cont}.
\end{lemma}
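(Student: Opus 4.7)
The plan is to mirror the strategy used in the proof of Proposition~\ref{Prop:An_hypersurface}: perform a change of variables that turns the first $n-2$ exchange relations into the clean type-$A$ recursion, then use these relations to eliminate $x_2, \ldots, x_{n-1}$ via continuants and study what remains of the last two relations.

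First I would apply the substitution \eqref{eq:new_var_lambda} to the pairs $(x_k, y_k)$ for $k \in \{1, \ldots, n-1\}$, namely $\widetilde{x}_k := x_k\, \lambda_{k-1}(\c)$ and $\widetilde{y}_k := y_k\, \lambda_k(\c)\, \lambda_{k-1}(\c)^{-1}$. This is invertible over $\KK_\c$ and, exactly as in the proof of Proposition~\ref{Prop:An_hypersurface}, transforms the first $n-2$ exchange relations into $\widetilde{x}_1 \widetilde{y}_1 - 1 - \widetilde{x}_2 = 0$ and $\widetilde{x}_k \widetilde{y}_k - \widetilde{x}_{k-1} - \widetilde{x}_{k+1} = 0$ for $k \in \{2, \ldots, n-2\}$, up to invertible scalar factors. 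For the remaining two relations I would use the identities $c_{n-1} = \lambda_{n-3}(\c)/\lambda_{n-1}(\c)$ and $c_n = \lambda_{n-2}(\c)/\lambda_n(\c)$, which follow directly from Definition~\ref{Def:ContPoly_lamda_n}(2); after scaling by $\lambda_{n-1}(\c)$ and $\lambda_n(\c)$ respectively, the $(n-1)$-th and $n$-th exchange relations take the forms
\begin{align*}
	\widetilde{x}_{n-1}\, \widetilde{y}_{n-1} - \widetilde{x}_{n-2} - \lambda_{n-1}(\c)\, x_n^2 &= 0, \\
	\lambda_n(\c)\, x_n y_n - \widetilde{x}_{n-1} - \lambda_n(\c) &= 0.
\end{align*}

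Next, I would introduce variables $z_1 := \widetilde{x}_1$, $z_k := \widetilde{y}_{k-1}$ for $k \in \{2, \ldots, n-1\}$, together with $u_1 := \lambda_{n-1}(\c)\, x_n$, $u_2 := \lambda_n(\c)\, \lambda_{n-1}(\c)^{-1}\, y_n$, and $u_3 := \widetilde{y}_{n-1}$, once again an invertible change of variables over $\KK_\c$. Iterating Lemma~\ref{Lem:ContFacts}(3) on the first $n-2$ relations, exactly as in the $A_n$ argument, allows elimination of $\widetilde{x}_2, \ldots, \widetilde{x}_{n-1}$ via $\widetilde{x}_k = P_k(z_1, \ldots, z_k)$. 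Substituting these identifications together with $\lambda_{n-1}(\c)\, x_n^2 = \lambda_{n-1}(\c)^{-1}\, u_1^2$ and $\lambda_n(\c)\, x_n y_n = u_1 u_2$, the $(n-1)$-th relation becomes
\[
	P_{n-1}(z_1, \ldots, z_{n-1})\, u_3 - P_{n-2}(z_1, \ldots, z_{n-2}) - \lambda_{n-1}(\c)^{-1}\, u_1^2 = 0,
\]
and the $n$-th relation becomes $u_1 u_2 - \lambda_n(\c) - P_{n-1}(z_1, \ldots, z_{n-1}) = h_n = 0$. Since the first relation equals $g_n - u_3 \cdot h_n$, the two relations generate the same ideal as $\langle g_n, h_n \rangle$, which yields the claimed presentation.

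The main obstacle is purely bookkeeping, namely keeping track of the $\lambda_k(\c)$ factors introduced by the substitutions and verifying the identities that relate them to the original coefficients $c_{n-1}$ and $c_n$. No conceptually new input beyond the $A_n$ proof is needed: the appearance of $x_n^2$ in the $(n-1)$-th exchange relation is precisely what produces the quadratic term $\lambda_{n-1}(\c)^{-1}\, u_1^2$ in $g_n$ and prevents $\cA^\prin_\c(B_n)$ from being a hypersurface as in the $A_n$ case.
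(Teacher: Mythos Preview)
Your proof is correct and follows essentially the same approach as the paper: apply the change of variables \eqref{eq:new_var_lambda}, eliminate $\widetilde{x}_2,\ldots,\widetilde{x}_{n-1}$ via continuants, and replace one of the two remaining generators by an appropriate $\KK_\c[\z,u]$-linear combination to obtain $g_n$. The only cosmetic differences are that the paper applies the substitution to all indices $k\in\{1,\ldots,n\}$ at once (your $u_1,u_2$ coincide with the paper's $\widetilde{x}_n,\widetilde{y}_n$), and the paper performs the combination $g_n = (\text{$(n-1)$-th relation}) + \widetilde{y}_{n-1}\,h_n$ before the continuant substitution rather than after.
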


\begin{proof}
	As in the proof of Proposition~\ref{Prop:An_hypersurface}, 
	we perform the change of variables \eqref{eq:new_var_lambda},
	\[
		\widetilde x_k := x_k \lambda_{k-1}(\c)
		\hspace{10pt}
		\mbox{ and } 
		\hspace{10pt}
		\widetilde y_k := y_k \lambda_k(\c) \lambda_{k-1}(\c)^{-1},
		\hspace{10pt}
		\mbox{for } \
		k \in \{ 1, \ldots, n \} \ ,	
	\]
	and it follows that the first $ k - 2 $ exchange relations in \eqref{eq:Bn_exchange} can be rewritten as 
	(up to multiplication by an invertible factor)
	\begin{equation} 
		\label{eq:Bn_afterchange_1} 
			\widetilde x_1 \widetilde y_1  - 1  - \widetilde x_2 
			\ = \ 
			\widetilde x_{k} \widetilde y_{k}  
			- \widetilde x_{k-1}  
			- \widetilde x_{k+1} 
			\ = \ 
			0
			\ , 
			\hspace{10pt}
			\mbox{ for } k \in \{ 2, \ldots, n -2 \}
			\ . 
	\end{equation} 
	On the other hand, applying the above substitution to the polynomials of the remaining relations,
	we see that the missing relations become 
	(up to multiplication by an invertible factor)   
	\begin{equation}
		\label{eq:Bn_afterchange_2}
		\widetilde x_{n-1} \widetilde  y_{n-1} - \widetilde x_{n-2} - \lambda_{n-1}(\c)^{-1} \widetilde  x_{n}^2
		 \ = \ 
		 \widetilde x_n \widetilde y_n - \widetilde  x_{n-1} - \lambda_{n}(\c)
		 \ = \ 
		 0	
		 \ .			
	\end{equation} 
	We proceed analogous to the proof of Proposition~\ref{Prop:An_hypersurface} resp.~as in the proof of \cite[Lemma~5.1]{BFMS}.
	We abuse notation and drop the tilde in $ \widetilde \x $ and $ \widetilde \y $.
	The relations~\eqref{eq:Bn_afterchange_1} provide the substitutions
	\[
		x_k = P_k(x_1, y_1, \ldots, y_{k-1}) 
		\ \ \ 
		\mbox{ for }
		k \in \{ 2 , \ldots, n-1 \} 
		\ .
	\]	
	Consider $ h_n := x_n y_n - x_{n-1} - \lambda_{n}(\c) $
	and 
	\[ 
	\begin{array}{rcl}
		g_n 
		& := 
		& ( x_{n-1} y_{n-1} - x_{n-2} - \lambda_{n-1}(\c)^{-1} x_{n}^2 ) + y_{n-1} h_n =
		
		\\[10pt]
		
		& = 
		& 
		\Big( x_n  y_n  - \lambda_{n}(\c) \Big)  y_{n-1}
		- x_{n-2} - \lambda_{n-1}(\c)^{-1} x_{n}^2 
		\ .
	\end{array} 
	\]
	Applying the substitution for $ x_2, \ldots, x_{n-1} $
	and renaming the variables leads to the statement of the lemma. 
\end{proof}

Taking the proof of~\cite[Proposition~5.2]{BFMS} as guideline, we can show the remaining part of Theorem~\ref{Thm:B}.

\begin{Thm}
	\label{Thm:B_prin}
	The following characterization holds for the fibers of $ \phi \colon \Spec (\cA_\c^\prin (B_n))\to S $ with $ \eta \in S $ being a closed point:
	\begin{enumerate}
		\item 
		If $ n = 2m+1 $ is odd, 
		then  the fiber $\Spec (\cA_\c^\prin (B_n))_\eta $ of $\phi$ above $ \eta $ is singular if and only if 
		$ \lambda_{n}(\eta) = (-1)^{m+1} $.
		\item 
		If $ n = 2m $ is even,
		then the fiber of $\phi$ above $ \eta $ is singular if and only if 
		$ \car(\KK) = 2 $ and $ \lambda_{n-1}(\eta) \in \bs{\kappa(\eta)}^2  $ is a square. 
	\end{enumerate}
	In the singular cases, 
	the singular locus is a closed point and
	the fiber $\Spec (\cA_\c^\prin (B_n))_\eta$ is locally isomorphic to a hypersurface with an isolated singularity of type $ A_1 $.
\end{Thm}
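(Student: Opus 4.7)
The plan is to work directly with the presentation of Lemma~\ref{Lem:Bn_2_eq}. Fix a closed point $\eta \in S$, write $\kappa := \kappa(\eta)$, and set $X_\eta := \Spec(\cA^\prin_\c(B_n))_\eta$. On $X_\eta$ the relation $h_n = 0$ gives $u_1 u_2 - \lambda_n(\eta) = P_{n-1}(z)$, so $g_n$ is equivalent modulo $h_n$ to
\[
\widetilde g_n := P_{n-1}(z)\, u_3 - \lambda_{n-1}(\eta)^{-1}\, u_1^2 - P_{n-2}(z).
\]
The Jacobian of $(g_n, h_n)$ with respect to $(z_1, \ldots, z_{n-1}, u_1, u_2, u_3)$ admits, modulo $h_n$, the three distinguished $2 \times 2$ minors
\[
M_{u_1, u_2} = -2\lambda_{n-1}(\eta)^{-1}\, u_1^2, \quad M_{u_i, u_3} = -u_{3-i}\, P_{n-1}(z), \quad M_{z_{n-1}, u_3} = P_{n-1}(z)\, P_{n-2}(z),
\]
for $i \in \{1,2\}$, using $\partial_{z_{n-1}} P_{n-1} = P_{n-2}$ from Lemma~\ref{Lem:ContFacts}(4). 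These are the minors that drive the case analysis.

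In characteristic $\neq 2$, $M_{u_1,u_2} = 0$ forces $u_1 = 0$; inspection of $M_{u_1, u_3}$ then forces $u_2 = 0$, and the remaining minors $M_{z_i, u_3}$ together with $h_n = 0$ yield $P_{n-1}(z) = -\lambda_n(\eta)$ and $\nabla_z P_{n-1}(z) = 0$. So $z$ is a singular point of $V(P_{n-1}(z) + \lambda_n(\eta)) \subseteq \AA^{n-1}_\kappa$, whose existence by Proposition~\ref{Prop:ContSing} requires $n - 1 = 2m$ even and $\lambda_n(\eta) = (-1)^{m+1}$, the point being the origin. In characteristic $2$, $M_{u_1,u_2}$ vanishes identically, and I would branch according to whether the $h_n$-row of the Jacobian is itself zero: the ``zero row'' branch reproduces the previous analysis (forcing $n$ odd and $\lambda_n(\eta) = (-1)^{m+1} = 1$), while in the ``proportional rows'' branch a column-by-column comparison yields $u_3 = 0$, $u_1 u_2 = \lambda_n(\eta)$, $P_{n-1}(z) = 0$, and $\nabla_z P_{n-2}(z) = 0$, whereupon $g_n = 0$ becomes $u_1^2 = \lambda_{n-1}(\eta)\, P_{n-2}(z)$. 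Applying Proposition~\ref{Prop:ContSing} to $V(P_{n-2}(z) - P_{n-2}(0))$ shows that critical points of $P_{n-2}$ exist only when $n - 2$ is even, in which case they reduce to the origin with $P_{n-2}(0) = (-1)^{m-1}$. Hence in characteristic $2$ singularities occur exactly when $\lambda_{n-1}(\eta) \in \kappa^2$, and the singular point is unique because the square root is unique in characteristic $2$.

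For the local isomorphism type in case (1) ($n = 2m+1$ odd), $P_{n-1}(z_*) = -\lambda_n(\eta) \neq 0$ is a unit near the singular point, so I would use $\widetilde g_n = 0$ to eliminate $u_3$ and reduce $X_\eta$ locally to the hypersurface $V(u_1 u_2 - \lambda_n(\eta) - P_{n-1}(z))$ in $\AA^{n+1}_\kappa$; Proposition~\ref{Prop:ContSing} produces a local coordinate change converting $P_{n-1}(z) + \lambda_n(\eta)$ into the standard $A_1$-form in $n-1$ variables, and the hyperbolic pair $u_1 u_2$ then yields the odd-dim $A_1$-form in $n+1$ variables. In case (2) ($\car(\KK) = 2$, $n = 2m$ even), $(u_1^*)^2 = \lambda_{n-1}(\eta) \neq 0$, so I would instead eliminate $u_2$ via $h_n = 0$ and work with $V(\widetilde g_n)$ in the coordinates $(z, u_1, u_3)$. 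Translating $u_1 = u_1^* + v$ (so in characteristic $2$, $u_1^2 = \lambda_{n-1}(\eta) + v^2$) and using Lemma~\ref{Lem:ContFacts}(1) to expand
\[
P_{n-1}(z) = L(z) + O(|z|^3), \qquad P_{n-2}(z) = 1 + Q(z) + O(|z|^3),
\]
where $L(z) = z_1 + z_3 + \cdots + z_{n-1}$ is the linear part of $P_{n-1}$ and $Q(z)$ is the quadratic part of $P_{n-2}$, the leading quadratic contribution of $\widetilde g_n$ is $L(z)\, u_3 + \lambda_{n-1}(\eta)^{-1}\, v^2 + Q(z)$. The substitution $\widetilde z_{n-1} := L(z)$ converts $L(z)\, u_3$ into a hyperbolic pair, and the telescoping substitution $\widetilde y_\ell := z_{2\ell} + z_{2\ell+2} + \cdots + z_{n-2}$ diagonalizes $Q$ as $\sum_{\ell=1}^{m-1} z_{2\ell-1}\, \widetilde y_\ell$, a sum of $m-1$ hyperbolic pairs. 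The resulting quadratic form in $n + 1 = 2m + 1$ variables has one pure square $\lambda_{n-1}(\eta)^{-1} v^2$ together with $m$ hyperbolic pairs; it is nondegenerate in the sense appropriate for characteristic $2$ and matches the even-dimensional $A_1$-form. A direct computation on the completion of the local ring (formal splitting lemma) then identifies $X_\eta$ locally with an $A_1$ hypersurface singularity.

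The hardest step is the characteristic $2$, $n$ even case: the identical vanishing of $M_{u_1, u_2}$ removes the direct route to constraining $u_1$, so the singular locus must be extracted by the careful column-by-column proportionality argument outlined above. Showing nondegeneracy of the quadratic form and extracting its canonical ``one pure square plus hyperbolic pairs'' shape depends on the precise combinatorial description of the low-degree parts of $P_{n-1}$ and $P_{n-2}$ from Lemma~\ref{Lem:ContFacts}(1), together with the two explicit coordinate changes above.
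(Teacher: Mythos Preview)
Your proposal is correct and follows the same overall strategy as the paper: work with the two-generator presentation of Lemma~\ref{Lem:Bn_2_eq}, analyze $2\times 2$ minors of the Jacobian to locate the singular locus, then reduce locally to a single hypersurface and identify it as $A_1$ via Proposition~\ref{Prop:ContSing}. The minor organizational differences are worth noting. In characteristic~$2$ you split according to whether the $h_n$-row of the Jacobian vanishes, whereas the paper splits on the dichotomy $u_1=0$ versus $u_1u_2=\lambda_n(\eta)$ coming directly from the minors $u_i(u_1u_2-\lambda_n(\eta))=0$; these partitions coincide. For the local type in case~(2), the paper eliminates $z_{n-1}$ (using that $P_{n-2}$ is a unit at the singular point) and then changes $(u_1,u_2)$ to $(v_1,v_2)=(u_1+\rho_{n-1}(\eta),\,u_1u_2+\lambda_n(\eta))$, after which the identification with $A_1$ is read off by invoking Proposition~\ref{Prop:ContSing} for $P_{n-2}+1$ as a black box. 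You instead eliminate $u_2$, keep $\widetilde g_n$ in $(z,u_1,u_3)$, and carry out the quadratic-form normalization by hand using the explicit low-degree expansions from Lemma~\ref{Lem:ContFacts}(1). Your route is more computational but self-contained; the paper's route is shorter because it recycles Proposition~\ref{Prop:ContSing}. One small omission in your write-up: in the characteristic~$\neq 2$ branch you should record that $u_3=0$ at the singular point as well (this follows from $\widetilde g_n=0$ once $z=0$ and $P_{n-2}(0)=0$), so that the singular locus is genuinely a closed point before you proceed to the local analysis.
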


\begin{proof}
	We use the presentation $\cA^\prin_\c(B_n) \cong 
	\KK_\c[z_1, \ldots, z_{n-1}, u_1, u_2, u_3 ]/ \langle g_n, h_n \rangle 
	$
 	deduced in Lemma~\ref{Lem:Bn_2_eq}.
	Since $ \dim ( \Spec (\cA_\c^\prin (B_n))_\eta) = n $, 
	we have to consider the $ 2 $-minors of the Jacobian matrix 
	\[
		\Jac(B_n) := \Jac (g_n, h_n; z_1, \ldots, z_{n-1}, u_1, u_2, u_3 ) 		
	\]
	in order to determine the singular locus. 
	The columns corresponding to the derivatives by $ (u_1, u_2, u_3 ) $ are 
	\[
		\begin{pmatrix}
			u_2 u_3 -  2 \lambda_{n-1}(\eta)^{-1} u_1  \ \
			& u_1 u_3 \ \ 
			& u_1 u_2 - \lambda_{n}(\eta)
			\\
			u_2 
			& u_1 
			& 0  
		\end{pmatrix}
	\]
	and the vanishing of the maximal minors of this sub-matrix provide the following relations for the singular locus
	(taking into account that $ \lambda_{n-1}(\eta) $ is invertible)
	\begin{equation}
		\label{eq:Sing_Bn_1}
		2  u_1^2 
		\ = \ 
		u_2 (u_1 u_2 - \lambda_{n}(\eta))
		\ = \ 
		u_1 (u_1 u_2 - \lambda_{n}(\eta))
		\ = \ 
		0
		\ . 
	\end{equation} 
	First, suppose that $ \car(\KK) \neq 2 $. 
	Then, \eqref{eq:Sing_Bn_1} is equivalent to $ u_1 = u_2 =  0 $.
	This provides that $ g_n = h_n = 0 $ is equivalent to 
	\begin{equation}
		\label{eq:Sing_Bn_2} 
		\lambda_{n}(\eta)  u_3 + P_{n-2}(z_1,\ldots, z_{n-2})  
		\ = \  
		P_{n-1}(z_1, \ldots, z_{n-1}) + \lambda_{n}(\eta)  
		\ = \ 0 	
	\end{equation} 
	and that the transpose of the last column in $ \Jac(B_n) $ becomes $ (- \lambda_{n}(\eta), \, 0 ) $.
	Therefore, the inclusion 
	\[
		\Sing (\Spec (\cA_\c^\prin (B_n))_\eta) \subseteq 
		\Sing ( V (P_{n-1}(z_1, \ldots, z_{n-1}) + \lambda_{n}(\eta)) )
		\ 
	\]
	holds. 
	By Proposition~\ref{Prop:ContSing}, the singular locus on the right hand side is non-empty if and only if $ n-1 = 2m $ is even and $ \lambda_{n}(\eta) = (-1)^{m+1} $.
	In particular, $ \Spec (\cA_\c^\prin (B_n))_\eta $ is regular if $ n $ is even or if $ n = 2m + 1 $ and $ \lambda_{n}(\eta) \neq (-1)^{m+1} $.   
	\\
	Hence, assume $ n = 2m + 1 $ is odd and $ \lambda_{n}(\eta) = (-1)^{m+1} $. 
	Then, the hypersurface determined by the polynomial $ P_{n-1}(z_1, \ldots, z_{n-1}) + \lambda_{n}(\eta)  $ has an isolated singularity at $ V (z_1, \ldots, z_{n-1} ) $.
	Especially, its derivative by $ z_{n-1} $ has to vanish;
	in other words, we have 
	$ P_{n-2} (z_1, \ldots, z_{n-2} ) = 0 $ by Lemma~\ref{Lem:ContFacts}(4). 
	Using this, the first equality of \eqref{eq:Sing_Bn_2} is equivalent to $ u_3 = 0 $.
	Therefore, we deduce that
	\[ 
		\Sing(\Spec (\cA_\c^\prin (B_n))_{\eta}) = V (z_1, \ldots, z_{n-1}, u_1, u_2, u_3 ) 
		\hspace{1.5cm} 
		(\mbox{if } \car(\KK) \neq 2) 
		\ .
	\]
	Let us determine the singularity type. 
	Locally at the origin $ u_1 u_2 - \lambda_n (\eta) $ is a unit and thus $ g_n = 0 $ can be used to eliminate the variable $ u_3 $.
	As the latter does not appear in $ h_n $, the elimination of $ u_3 $ has no effect on $ h_n $. 
	By Proposition~\ref{Prop:ContSing}, 
	the hypersurface defined by $ P_{2m}(z_1, \ldots, z_{n-1}) + (-1)^{m+1} \in \bs{\kappa(\eta)}[z_1, \ldots, z_{n-1}] $ has an isolated singularity of type $ A_1 $ at the origin. 
	It follows from that $ \Spec( \cA_\c^\prin (B_n))_\eta $  is locally isomorphic to a hypersurface singularity with an isolated singularity of type $ A_1 $ at the origin if $ \car(\KK) \neq 2 $.
	
 	It remains to handle the case $ \car(\KK) = 2 $. 
 	Here, \eqref{eq:Sing_Bn_1} only provides
	\begin{equation}
		\label{eq:Sing_Bn_1b}
		u_2 (u_1 u_2 - \lambda_{n}(\eta))
		\ = \ 
		u_1 (u_1 u_2 - \lambda_{n}(\eta))
		\ = \ 
		0
		\ . 
	\end{equation} 
	We make a case distinction for $ u_1 (u_1 u_2 - \lambda_{n}(\eta)) = 0 $.  
	If $ u_1 = 0 $, then the same arguments as in $ \car(\KK) \neq 2 $ lead to an isolated singularity at the origin, 
	which is locally isomorphic to a hypersurface of type $ A_1 $,
	if $ n = 2m +1 $ and $ \lambda_n (\eta) = (-1)^{m+1} $. 
	\\
	Suppose that $ u_1 u_2 - \lambda_{n}(\eta) = 0 $.
	Since $ \lambda_n(\eta) $ is invertible, so are $ u_1 $ and $ u_2 $.
	We have that $ g_n = h_n = 0 $ is equivalent to 
	\begin{equation}
		\label{eq:Sing_Bn_2b}
		\lambda_{n-1}(\eta)^{-1} u_1^2 + P_{n-2}(z_1,\ldots, z_{n-2}) 
		\ = \ 
		P_{n-1}(z_1, \ldots, z_{n-1}) 
		\ = \ 
		0 
		\ . 
	\end{equation}
	In particular, $ P_{n-2}(z_1,\ldots, z_{n-2}) $ is invertible. 
	The vanishing of the minor of $ \Jac(B_n) $ corresponding to $ ( z_{n-1}, u_2) $ is equivalent to $ u_3 = 0 $. 
	\\
	Taking the latter into account, the transpose of the column of $ \Jac(B_n) $ given by the derivatives by $ u_2 $ is $ ( 0, \, u_1 ) $. 
	By considering the minors coming from the columns determined by $ (z_i, u_2) $,
	for $ i \in \{ 1, \ldots, n -2 \} $, 
	 we get 
	 \[
	 \begin{array}{c} 
	 	\Sing (\Spec (\cA_\c^\prin (B_n))_\eta) \setminus V(z_1, \ldots, z_{n-1}, u_1, u_2, u_3)  = 
	 	\\[5pt]
	 	=
	 	V(u_1 u_2 - \lambda_{n}(\eta), \, 
	 	u_3, \, z_{n-1} ) \cap 
	 	\Sing ( V (P_{n-2}(z_1, \ldots, z_{n-2}) + \lambda_{n-1}(\eta)^{-1} u_1^2 )
	 	\ ,	 
 	\end{array} 
	 \]
	 where we use $ P_{n-2} (z_1, \ldots, z_{n-2}) $ invertible, 
	 $ P_{n-3}(z_1, \ldots, z_{n-3}) = \frac{\partial P_{n-2}(z_1, \ldots, z_{n-2}) }{\partial z_{n-2}} = 0 $,
	 and
	 $ P_{n-1} (z_1, \ldots, z_{n-1}) =  P_{n-2} (z_1, \ldots, z_{n-2}) z_{n-1} - P_{n-3}(z_1, \ldots, z_{n-3}) $ 
	 in order to deduce the condition $ z_{n-1} = 0 $. 
	 
	 As before, 
	 by Proposition~\ref{Prop:ContSing},  
	 $ \Sing ( V (P_{n-2}(z_1, \ldots, z_{n-2}) + \lambda_{n-1}({\eta})^{-1} u_1^2 ) $  is non-empty if and only if $ n -2 = 2m $ is even and $ \lambda_{n-1}({\eta})^{-1} u_1^2  = 1 $ 
	 (using $ \car(\KK) = 2 $).
	 Notice that $ u_1^2  = \lambda_{n-1}({\eta})  $ is only possible to hold if $ \lambda_{n-1}({\eta}) \in \bs{\kappa(\eta)}^2  $.
	 If $ \lambda_{n-1}({\eta}) \in \bs{\kappa(\eta)}^2  $ and let $  \rho_{n-1}({\eta}) \in \bs{\kappa(\eta)} $ {be} such that $ \lambda_{n-1}({\eta}) = \rho_{n-1}({\eta})^2 $. 
	 Then we have a singularity at the closed point
	 \[  
	 q := V(u_1 + \rho_{n-1}({\eta}), \, 
	 u_2 + \rho_{n-1}({\eta})^{-1} \lambda_{n}({\eta}), 
	 u_3, z_1, \ldots, z_{n-1} )
	 \ \ \ \mbox{ if } n - 2 = 2m .
	 \] 
	 For the singularity type,
	 we have already seen that $ h_n $ is equal to a unit times $ z_{n-1} $ plus some term independent of $ z_{n-1} $, locally at $ q $. 
	 Hence, we may perform a substitution to eliminate $ z_{n-1} $. 
	 Since $ g_n $ is independent of $ z_{n-1} $, the latter has no effect on it. 
	 Furthermore, 
	 if we introduce the local variables $ v_1 := u_1 + \rho_{n-1}({\eta}) $ 
	 and $ v_2 := u_1 u_2 + \lambda_n ({\eta}) $ at $ q $,
	 we get 
	 (using $ \car(\KK) = 2 $ and $ n = 2m-2 $)
	 \[
	 	\begin{array}{rl} 
	 		g_n
	 		&= 
	 		( u_1  u_2  + \lambda_{n}({\eta}) )  u_3
	 		+ \lambda_{n-1}({\eta})^{-1} u_1^2 + P_{n-2}(z_1,\ldots, z_{n-2}) 
	 		= 
	 		\\[5pt]
	 		& 
	 		= v_2 u_3 +  ( \rho_{n-1}({\eta})^{-1} v_1)^2 + 1 +  P_{2m}(z_1,\ldots, z_{2m})
	 		\ .
	 	\end{array} 
	 \]
	 Since $ P_{2m}(z_1, \ldots, z_{2m}) + 1 = 0 $ (considered  as hypersurface in $ \Spec (\bs{\kappa(\eta)}[z_1, \ldots, z_{2m}] $)
	 has an isolated singularity of type $ A_1 $ at the origin
	 (Proposition~\ref{Prop:ContSing}), 
	 we conclude that, 
	 locally at $ q $, 
	the variety $ \Spec (\cA_\c^\prin (B_n))_{\eta} $ is isomorphic to a hypersurface with an isolated singularity of type $ A_1 $ at $ q $.
	
	Since hypersurface singularities of type $ A_1 $ have integer coefficients,
	we obtain that the relative singular locus (which we determined via the derivatives with respect to the variables $ (z_1, \ldots, z_{n-1}, u_1, u_2, u_3) $ only) already provides the singular locus 
	and it is not necessary to take a $ p $-basis of \bsOUT{the ground field} $\bs{\kappa(\eta)}$ into account. 
	This ends the proof of Theorem~\ref{Thm:B_prin}. 
\end{proof}

\subsection{Type $ \boldsymbol{C_n} $}

Next, we consider cluster algebras with principal coefficients of finite cluster type $ C_n $, where $ n \geq 3 $.  

The exchange matrix for the initial labeled seed of our choice 
is the $ n \times n $ matrix
\[ 
	B = 
	\begin{pmatrix} 
	0 & 1 & \\
	-1 & \ddots & \ddots   \\
	& \ddots & 0 & 1  \\
	&  & -1 & 0 & 2  \\
	&  &  & -1  & 0 \\
	\end{pmatrix}  
\]
(by Theorem~\ref{Thm:FinTypClass}),
which leads to the presentation 
\begin{equation}
	\label{eq:Cn_exchange} 
	\cA^\prin_\c(C_n) \cong
	\KK_\c [\x, \y]
	\hspace{-0.15cm}\raisebox{-0.2cm}{$\bigg/$}
	\hspace{-0.2cm}\raisebox{-0.7cm}%
	{$\left\langle \,
		\begin{minipage}{3.4cm}
			$ x_1 y_1 - c_1 -  x_2 
			\\[2pt]
			x_k y_k - c_k x_{k-1} -  x_{k+1} 
			\\[2pt]
			x_n y_n - c_n x_{n-1}^2 -  1 $
			
		\end{minipage}
		\, \Bigg| \,\, k \in \{ 2, \ldots, n-1 \} 
		\, \right\rangle
		\ ,
		$} 
\end{equation} 
(by Theorem~\ref{Thm:Presentation})
with $(\c,\x,\y) = (c_1, \ldots, c_n; x_1, \ldots, x_n; y_1, \ldots, y_n) $
and using the notation $ \cA^\prin_\c (C_n) $ 
analogous to the previous cases.

\begin{lemma}
	\label{Lem:Cn_hypersurface}
	The cluster algebra $ \cA^\prin_\c(C_n) $
	is isomorphic \bs{over $ \KK_\c$} to 
	\[
		\KK_\c[z_1, \ldots, z_{n+1}]/
		\langle
		\,
		P_n(z_1, \ldots, z_n) z_{n+1}  -  P_{n-1}(z_1, \ldots, z_{n-1})^2  -
		\lambda_{n-2}(\c)  \lambda_n(\c)  
		\, 
		\rangle  
	\] 
	with 
	$ \lambda_{n-2}(\c), \lambda_n(\c) $ as defined in Definition~\ref{Def:ContPoly_lamda_n}(2) and 
	$ P_{n-1} $, $ P_n $ the continuant polynomials defined in Section~\ref{Subsec:Cont}.
\end{lemma}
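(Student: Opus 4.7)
My plan is to follow the strategy used in the proof of Proposition~\ref{Prop:An_hypersurface} very closely, since the first $n-1$ exchange relations in \eqref{eq:Cn_exchange} are exactly of the type $A$ shape, and only the last relation differs. I will apply the $\KK_\c$-algebra change of variables \eqref{eq:new_var_lambda}, $\widetilde x_k := x_k \lambda_{k-1}(\c)$ and $\widetilde y_k := y_k \lambda_k(\c) \lambda_{k-1}(\c)^{-1}$, which is an automorphism of the ambient Laurent polynomial ring since the $\lambda_j(\c)$ are units in $\KK_\c$. As in Proposition~\ref{Prop:An_hypersurface}, this normalises the first $n-1$ exchange relations (up to invertible scalars) to
\[
\widetilde x_1 \widetilde y_1 - 1 - \widetilde x_2 = 0, \qquad \widetilde x_k \widetilde y_k - \widetilde x_{k-1} - \widetilde x_{k+1} = 0 \quad (k = 2, \ldots, n-1),
\]
and the recursion in Lemma~\ref{Lem:ContFacts}(3) then allows me to eliminate $\widetilde x_2, \ldots, \widetilde x_n$ through the continuant substitution $\widetilde x_k = P_k(\widetilde x_1, \widetilde y_1, \ldots, \widetilde y_{k-1})$.

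The novel step is to track the last exchange relation $x_n y_n - c_n x_{n-1}^2 - 1 = 0$ under the same change of variables. A direct substitution followed by multiplication by $\lambda_n(\c)$ turns it into
\[
\widetilde x_n \widetilde y_n - c_n \lambda_n(\c) \lambda_{n-2}(\c)^{-2} \, \widetilde x_{n-1}^2 - \lambda_n(\c) = 0.
\]
A short parity case analysis using Definition~\ref{Def:ContPoly_lamda_n}(2) produces the key identity $c_n \lambda_n(\c) \lambda_{n-2}(\c)^{-2} = \lambda_{n-2}(\c)^{-1}$, valid regardless of whether $n$ is even or odd. I expect this bookkeeping --- correctly tracking the cumulative $\lambda$-factors through each substitution and verifying the above collapse --- to be essentially the only obstacle; everything else is formally parallel to the type $A$ argument.

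After substituting $\widetilde x_n = P_n$ and $\widetilde x_{n-1} = P_{n-1}$, multiplying the resulting equation by $\lambda_{n-2}(\c)$, and performing the final $\KK_\c$-algebra rescaling $z_{n+1} := \lambda_{n-2}(\c)\, \widetilde y_n$ together with the relabelling $z_1 := \widetilde x_1$, $z_{i+1} := \widetilde y_i$ for $i = 1, \ldots, n-1$, I arrive at the single hypersurface equation
\[
P_n(z_1, \ldots, z_n)\, z_{n+1} - P_{n-1}(z_1, \ldots, z_{n-1})^2 - \lambda_{n-2}(\c)\, \lambda_n(\c) = 0,
\]
which is the presentation claimed in the lemma. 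Since every transformation used above is a $\KK_\c$-algebra isomorphism, the isomorphism is one of $\KK_\c$-algebras as stated.
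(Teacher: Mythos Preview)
Your proof is correct and follows essentially the same approach as the paper: the same change of variables \eqref{eq:new_var_lambda}, the same continuant substitution via Lemma~\ref{Lem:ContFacts}(3), and the same final rescaling $z_{n+1} = \lambda_{n-2}(\c)\,\widetilde y_n$ (which the paper writes as $\widetilde{\widetilde y}_n := \lambda_{n-2}(\c)\,\widetilde y_n$). Your identity $c_n \lambda_n(\c) \lambda_{n-2}(\c)^{-2} = \lambda_{n-2}(\c)^{-1}$ is exactly the paper's observation $\lambda_n(\c)^{-1} = c_n \cdot \lambda_{n-2}(\c)^{-1}$, rearranged.
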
 

\begin{proof}
	The proof is almost the same as the one for Proposition~\ref{Prop:An_hypersurface}
	with the only difference that we have $ x_n^2 $ instead of $ x_n $ in the last exchange relation.
	
	We use the change of variables \eqref{eq:new_var_lambda},
	\[
		\widetilde x_k = x_k \lambda_{k-1}(\c)
		\hspace{10pt}
		\mbox{ and } 
		\hspace{10pt}
		\widetilde y_k = y_k \lambda_k(\c) \lambda_{k-1}(\c)^{-1},
		\hspace{10pt}
		\mbox{for } 
		k \in \{ 1, \ldots, n \} \ ,	
	\]
	so that the first $ k-1 $ exchange relations of \eqref{eq:Cn_exchange} become after multiplication with an invertible factor
	\begin{equation} 
		\label{eq:Cn_afterchange_1} 
		\widetilde x_1 \widetilde y_1  - 1  - \widetilde x_2 
		\ = \ 
		\widetilde x_{k} \widetilde y_{k}  
		- \widetilde x_{k-1}  
		- \widetilde x_{k+1} 
		\ = \ 
		0
		\ , 
		\hspace{10pt}
		\mbox{ for } k \in \{ 2, \ldots, n - 1 \}
		\ . 
	\end{equation} 
	Furthermore, if we define $ \widetilde{ \widetilde y}_n := \lambda_{n-2}(\c) \widetilde y_n $ 
	(which has no effect on \eqref{eq:Cn_afterchange_1}),
	we get 
	\[
		x_n y_n - c_n x_{n-1}^2 -  1
		=
		\lambda_{n-2}(\c)^{-1} \lambda_{n}(\c)^{-1}
		\Big( 
			\widetilde x_n  \widetilde{\widetilde y}_n  -  \widetilde x_{n-1}^2 -  \lambda_{n-2}(\c)  \lambda_n(\c) 
		\Big)
		\ .
	\]
	Here, we use that $ \lambda_{n}(\c)^{-1} =  c_n  \cdot \lambda_{n-2}(\c)^{-1} $, which follows from its definition  (Definition~\ref{Def:ContPoly_lamda_n}(2)).
	
	Again, we abuse notation and do not write the tilde anymore. 
	Using \eqref{eq:Cn_afterchange_1}, we can substitute $ x_k = P_k(x_1, y_1, \ldots, y_{k-1}) $ for $ k \in \{ 2, \ldots, n \} $
	and the claim of the lemma follows.
\end{proof}

Using the proofs of \cite[Propositions~5.4 and~5.5]{BFMS} as blueprint, 
we finish the proof of Theorem~\ref{Thm:C}.
The missing part is

\begin{Thm}
	\label{Thm:C_prin}
	The singularities of the fibers of the family $ \phi \colon \Spec (\cA_\c^\prin (C_n) ) \to S $ are characterized as follows, where $ \eta \bsOUT{ = (\eta_1, \ldots, \eta_n )} \in S $ is a closed point:	
	\begin{enumerate}
		\item 
		Let $ \car(\KK) \neq 2 $.
		The fiber $ \Spec (\cA_\c^\prin(C_n))_{\eta} $ is singular if and only if 
		$ n = 2m+1 $ and 
		$ -{\eta}_n \in \bs{\kappa(\eta)}^2 $ is a square in $ \bs{\kappa(\eta)} $.
		In the singular case, $ \Sing(\Spec (\cA_\c^\prin(C_n))_{\eta}) $ is a closed point and locally at the latter,
		$ \Spec (\cA_\c^\prin(C_n))_{\eta} $ is isomorphic to a hypersurface singularity of type $ A_1 $.
		
		\item 
		If $ \car(\KK) = 2 $, 
		then 
		$ \Spec (\cA_\c^\prin(C_n))_{\eta} $ is singular if and only if 
		$ -\eta_n \in \bs{\kappa(\eta)}^2 $ is a square in $ \bs{\kappa(\eta)} $.
		In the singular case, let  $ \delta_n \in \bs{\kappa(\eta)} $ be such that $ \delta_n^2 = -\eta_n^{-1} $ and 
		set $ \rho_n ( \eta ) := \delta_n \lambda_{n-2} (\eta) $.
		We have:
		\begin{enumerate}
			\item 
			The singular locus is itself singular if and only if $ n-1 = 2m $ and $ \rho_n (\eta) = 1 $.
			
			\item 
			If $ n-1 = 2m $ and $ \rho_n(\eta) = 1 $,
			then $ \Sing ( \Spec (\cA_\c^\prin(C_n))_{\eta} ) $ has an isolated singularity at a closed point and locally at the latter $ \Spec (\cA_\c^\prin(C_n))_{\eta} $ is isomorphic to the hypersurface singularity 
			
			\[
			\Spec (\bs{\kappa(\eta)}[x_1, \ldots, x_{2m} , y, z] / \langle yz + \Big( \sum_{i=1}^{m} x_{2i-1} x_{2i} \Big)^2 \rangle 
			\] 
			
			\medskip 
			
			and  $ \Sing ( \Spec (\cA_\c^\prin(C_n))_{\eta} )  $ identifies along this isomorphism with 
			
			\[
			V(y,z, \sum_{i=1}^{m} x_{2i-1} x_{2i} )
			\]
			
			\medskip 
			
			which is isomorphic to an hypersurface of type $ A_1 $ if $ m > 1 $ and a union of two lines if $ m = 1 $.
			
			\item 
			At any point $ q $ at which $ \Sing ( \Spec (\cA_\c^\prin(C_n))_{\eta} )  $  is regular, $ \Spec (\cA_\c^\prin(C_n)_{\eta}) $ is isomorphic to a $ (n-2) $-dimensional cylinder over the hypersurface singularity 
			\[ 
				\Spec(\bs{\kappa(\eta)}[x,y,z]/\langle xy - z^2 \rangle ) 
			\] 
			of type $ A_1 $.
		\end{enumerate}
	\end{enumerate}
\end{Thm}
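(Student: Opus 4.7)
My plan is to start from the hypersurface presentation in Lemma~\ref{Lem:Cn_hypersurface} and apply the Jacobian criterion to the single defining polynomial $f=P_n(z_1,\ldots,z_n)z_{n+1}-P_{n-1}(z_1,\ldots,z_{n-1})^2-\lambda_{n-2}(\c)\lambda_n(\c)$. Using Lemma~\ref{Lem:ContFacts}(4) I will record the partials
\[
\tfrac{\partial f}{\partial z_{n+1}}=P_n,\qquad \tfrac{\partial f}{\partial z_n}=P_{n-1}z_{n+1},\qquad \tfrac{\partial f}{\partial z_j}=\tfrac{\partial P_n}{\partial z_j}z_{n+1}-2P_{n-1}\tfrac{\partial P_{n-1}}{\partial z_j}\ \ (j<n).
\]
A short check using Definition~\ref{Def:ContPoly_lamda_n}(2) gives the crucial identity $\lambda_{n-2}(\c)\lambda_n(\c)=c_n^{-1}\lambda_{n-2}(\c)^2$. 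Combining the vanishing of $P_n$ and $P_{n-1}z_{n+1}$ with the equation $f=0$, and invoking invertibility of $\lambda_{n-2}$, rules out $P_{n-1}=0$ on the singular locus. Consequently $z_{n+1}=0$, and $f=0$ forces $P_{n-1}^2=-\eta_n^{-1}\lambda_{n-2}(\eta)^2$, proving that singularities exist only when $-\eta_n$ is a square in $\kappa(\eta)$.

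In characteristic $\neq 2$ the remaining partials reduce to $-2P_{n-1}\cdot\partial P_{n-1}/\partial z_j=0$, so $\partial P_{n-1}/\partial z_j=0$ for all $j\le n-1$; by Proposition~\ref{Prop:ContSing} this system has a solution only when $n-1=2m$ is even, it cuts out the origin of $(z_1,\ldots,z_{n-1})$, and there $P_{n-1}(0)=(-1)^m$. So the singular locus is a single closed point (requiring $n$ odd and $-\eta_n\in\kappa(\eta)^2$). To identify the singularity type, I use the unit $P_{n-1}$ to make $u:=P_n$ a local coordinate in place of $z_n$ (possible because $\partial P_n/\partial z_n=P_{n-1}$), rewriting $f=uz_{n+1}-P_{n-1}(z_1,\ldots,z_{n-1})^2-\lambda_{n-2}\lambda_n$. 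Expanding $P_{n-1}$ around the origin gives $P_{n-1}=(-1)^m+Q+O(3)$ with $Q$ the explicit quadratic form from Lemma~\ref{Lem:ContFacts}(1), and the constant terms cancel thanks to $1+\lambda_{n-2}\lambda_n=0$ there. The quadratic part of $f$ is $uz_{n+1}-2(-1)^m Q$, a non-degenerate form of rank $n+1$ once $Q$ is brought to a standard pairing form; this is precisely a hypersurface singularity of type $A_1$.

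In characteristic $2$ all the cross terms $2P_{n-1}\cdot\partial P_{n-1}/\partial z_j$ vanish, so the Jacobian condition reduces to $P_n=0$, $z_{n+1}=0$ and $f=0$, which becomes $P_{n-1}^2=\eta_n^{-1}\lambda_{n-2}^2=(\delta_n\lambda_{n-2})^2$; taking the unique square root gives $P_{n-1}=\delta_n\lambda_{n-2}$. Because $P_{n-1}\neq 0$, the relation $P_n=z_n P_{n-1}-P_{n-2}$ lets me solve for $z_n$, so the singular locus is isomorphic to the hypersurface $V(P_{n-1}-\delta_n\lambda_{n-2})\subset\mathbb A^{n-1}_{\kappa(\eta)}$. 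Proposition~\ref{Prop:ContSing} then tells me exactly when this hypersurface is singular, namely $n-1=2m$ with $\delta_n\lambda_{n-2}=\rho_n(\eta)=1$, proving (a). At any regular point of $\Sigma:=\Sing(X_\eta)$, pick the local coordinate $v:=P_{n-1}-\delta_n\lambda_{n-2}$; then in characteristic two $P_{n-1}^2=v^2+(\delta_n\lambda_{n-2})^2$ and, again using $u:=P_n$ as a local coordinate, the equation simplifies to $f=uz_{n+1}-v^2$, giving the claimed $(n-2)$-dimensional cylinder over the $A_1$-singularity of (2)(c).

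For the remaining case (2)(b), i.e.\ at the isolated singularity of $\Sigma$ when $\rho_n=1$, I combine the two reductions: first apply the coordinate change from $\cite[\S3]{BFMS}$ that transforms $P_{n-1}-\delta_n\lambda_{n-2}$ into the pure quadratic $\sum_{i=1}^m x_{2i-1}x_{2i}$ (after a polynomial substitution absorbing all higher-order terms, which is possible because the leading quadratic form of $P_{2m}-(-1)^m$ is non-degenerate by Lemma~\ref{Lem:ContFacts}(1)); then, still with $u=P_n$ and $z:=z_{n+1}$ as independent local coordinates, the characteristic-two identity $P_{n-1}^2=(\delta_n\lambda_{n-2})^2+(\sum x_{2i-1}x_{2i})^2$ lets me rewrite $f$ as $uz+(\sum x_{2i-1}x_{2i})^2$, which is the claimed normal form. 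The identification of $\Sigma$ locally with $V(y,z,\sum x_{2i-1}x_{2i})$ follows by computing the Jacobian of this normal form in characteristic two (the square is annihilated by every $\partial_{x_j}$), and one reads off directly that this scheme is isomorphic to a hypersurface singularity of type $A_1$ when $m>1$ and to the union of two lines when $m=1$. The main obstacle I anticipate is the coordinate change in characteristic two that takes $P_{n-1}-\delta_n\lambda_{n-2}$ into exactly $\sum x_{2i-1}x_{2i}$ with no remainder inside the square, since in char $2$ the usual completing-the-square tricks do not eliminate all higher-order contributions; resolving it will require the explicit continuant identity $(\sum x_{2i-1}x_{2i})^2=\sum x_{2i-1}^2 x_{2i}^2$ and the recursive structure of Lemma~\ref{Lem:ContFacts}(3) to absorb the cubic-and-higher terms of $P_{n-1}^2$ into the square.
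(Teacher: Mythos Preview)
Your proposal is correct and, in its overall architecture, matches the paper: start from the hypersurface presentation of Lemma~\ref{Lem:Cn_hypersurface}, use the Jacobian criterion together with the identity $\lambda_{n-2}\lambda_n=c_n^{-1}\lambda_{n-2}^2$ to force $P_{n-1}\neq 0$ and hence $z_{n+1}=0$ on the singular locus, and then split into $\operatorname{char}\neq 2$ and $\operatorname{char}=2$.

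The one substantive difference is in the $\operatorname{char}(\KK)\neq 2$ step. The paper pins down the singular point via an explicit induction (auxiliary conditions $(\mathrm a_k),(\mathrm b_k),(\mathrm c_k)$ derived from successive partials of $h_n$) modelled on \cite[Proposition~5.4]{BFMS}. Your route is shorter: once $z_{n+1}=0$, the remaining conditions $\partial f/\partial z_j=-2P_{n-1}\,\partial P_{n-1}/\partial z_j=0$ are precisely the critical equations of $P_{n-1}$, and Proposition~\ref{Prop:ContSing} immediately says this locus is empty unless $n-1=2m$, in which case it is the origin with $P_{n-1}(0)=(-1)^m$. This bypasses the inductive bookkeeping entirely and delivers the same conclusion (including the compatibility check that $h_n(0)=0$ forces $\mu_n(\eta)=1$). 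In characteristic $2$ your argument and the paper's are essentially identical: the singular locus is $V(z_{n+1},P_n,P_{n-1}-\rho_n)$, projected isomorphically onto the continuant hypersurface in $\mathbb A^{n-1}$, and Proposition~\ref{Prop:ContSing} controls its regularity.

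The ``obstacle'' you flag at the end is not one. That $V(P_{2m}+1)$ has an isolated $A_1$-singularity at the origin (Proposition~\ref{Prop:ContSing}) means \emph{by definition} that a formal change of the variables $(z_1,\ldots,z_{2m})$ takes $P_{2m}+1$ exactly to $\sum_{i=1}^m x_{2i-1}x_{2i}$, with no remainder; this is the formal Morse lemma, valid in characteristic $2$ for hyperbolic quadratic parts. Since $u=P_n$ has already replaced $z_n$ as a free coordinate and $P_{n-1}$ does not involve $z_n$, this change leaves $u\,z_{n+1}$ untouched, and the Frobenius identity $P_{n-1}^2=1+(P_{n-1}+1)^2$ then gives the normal form $u\,z_{n+1}+(\sum x_{2i-1}x_{2i})^2$ directly --- which is exactly the paper's computation as well.
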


\begin{proof}
	Consider the presentation $ \cA_\c^\prin(C_n )  \cong \KK_\c[z_1, \ldots, z_{n+1}]/
	\langle h_n \rangle $, 
	for
	\[
		h_n := P_n(z_1, \ldots, z_n) z_{n+1}  -  P_{n-1}(z_1, \ldots, z_{n-1})^2  +  \mu_n (\c)
	\ ,   
	\] 
	where we abbreviate $ \mu_n (\c) :=
	-\lambda_{n-2}(\c)  \lambda_n(\c) $
	(Lemma~\ref{Lem:Cn_hypersurface}). 
	\\
	As the derivative of $ h_n $ by $ z_{n+1} $ has to vanish for a singularity, we get the condition $ P_n(z_1, \ldots, z_n)  = 0 $. 
	Using the latter, $ h_n = 0 $ is equivalent to (for the fiber) 
\begin{equation}
		\label{eq:Sing_Cn_1}
		P_{n-1}(z_1, \ldots, z_{n-1})^2  -  \mu_n ({\eta}) = 0 
		\ .
	\end{equation} 
	Since $ \mu_n({\eta}) = -{\eta}_n^{-1} \lambda_{n-2}({\eta})^2  $  
	(by the definition of $ \lambda_n({\eta}) $, Definition~\ref{Def:ContPoly_lamda_n}(2)),
 	the equation \eqref{eq:Sing_Cn_1} can only be fulfilled if $ -{\eta}_n \in \bs{\kappa(\eta)}^2 $ is a square.
 	\\
 	Suppose there exists $ {\delta}_n \in \bs{\kappa(\eta)} $ such that $ {\delta}_n^2 = -{\eta}_n^{-1} $.
 	Recall that we defined in the statement $ \rho_n ({\eta}) = {\delta}_n \lambda_{n-2}({\eta}) $ 
 	so that $ \mu_n({\eta}) = \rho_n({\eta})^2 $.
 	Then, \eqref{eq:Sing_Cn_1} can be rewritten as 
 	\begin{equation}
 		\label{eq:Sing_Cn_2}
 		P_{n-1}(z_1, \ldots, z_{n-1})  = \pm \rho_n ({\eta})
 		\ .
 	\end{equation} 
 	Taking the last equality and $ \rho_n ({\eta}) $ being invertible into account provides that the vanishing of the derivative of $ h_n $ by $ z_n $ is equivalent to 
 	$ z_{n+1} = 0 $. 
 	
 	Assume $ \car(\KK) \neq 2 $. 
 	By induction on $ k $,
 	one can show that
 	\[
 		h_n = \frac{\partial h_n}{\partial z_{n+1}} = \cdots = \frac{\partial h_n}{\partial z_{n-2k}} = 0 
 	\]
 	lead to 
 	\begin{enumerate}
 		\item[(a$_k$)]
 		$ z_{n+1} \, =  \, \cdots \, = \, z_{n - 2k + 1 } \, = \, 0 \, $;
  		
  		\smallskip 
 		
 		\item[(b$_k$)]
 		$ P_{n-2k-1} (z_1, \ldots, z_{n-2k-1}) \, = \, (-1)^k (\pm \rho_n ({\eta})) \, $;
 		
 		\smallskip 
 		
 		\item[(c$_k$)] 
 		$ P_{n-2k} (z_1, \ldots, z_{n-2k} ) \, = \, 0 \, $.
 	\end{enumerate}
 
 	This follows the same way as the analogous statements in the proof of \cite[Proposition~5.4]{BFMS}.
 	Thus, we only outline the arguments.
 	The case $ k = 0 $ was shown just before. 
 	For the induction steps we have:
 	
 	\begin{itemize} 
 		\item 
 		(a$_k$) and Lemma~\ref{Lem:ContFacts}(1) provide 
 		\begin{equation}
 			\label{eq:Sing_Cn_3}
 			P_{2k}(z_{n-2k}, \ldots, z_{n-1}) = \pm 1 \ . 
 		\end{equation} 
 		Then (a$_0$), (b$_0$), applying Lemma~\ref{Lem:ContFacts}(4) for $ \frac{\partial h_n}{\partial z_{n-2k-1} } $, and $ \car(\KK) \neq 2 $ imply (c$_{k+1}$).

 		\smallskip

 		\item 
 		Lemma~\ref{Lem:ContFacts}(3) applied to $ P_{n-2k}(z_1, \ldots, z_{n-2k}) $, (b$_k$), and (c$_{k+1}$) yield $ z_{n-2k} = 0 $.
 		Then, Lemma~\ref{Lem:ContFacts}(3) applied to $ P_{n-2k-1}(z_1, \ldots, z_{n-2k-1}) $, (b$_k$), and (c$_{k+1}$) provide (b$_{k+1}$).  
 		
 		\smallskip 
 		
 		\item
 		Lemma~\ref{Lem:ContFacts}(4) for $ \frac{\partial h_n}{\partial z_{n-2k-2} } $, (b$_0$), (b$_{k+1}$), and $ P_{2k+1} (z_{n-2k-1}, 0, \ldots , 0 ) = \pm z_{n-2k-1} $ (via Lemma~\ref{Lem:ContFacts}(1)) finally imply $ z_{n-2k-1} = 0 $ and thus (a$_{k+1} $).  
 	\end{itemize} 

	For the assertion on the singular locus we have to distinguish two cases depending on whether $ n $ is even or odd. 
	First, let us look at $ n = 2m $. 
	To have a singularity, we have to have $ \frac{\partial h_n}{\partial z_1} = 0 $, i.e.,
	\[
		0 
		= P_{n-1}(z_2, \ldots, z_n) z_{n+1} - 2 P_{n-1}(z_1, \ldots, z_{n-1}) P_{n-2}(z_2,\ldots, z_{n-1})   
		=  -2(\pm \rho_n ({\eta})) (\pm 1)
		\ , 
	\]
	where the last equality uses (a$_0$), (b$_0$), and \eqref{eq:Sing_Cn_3} for $ k = m - 1 $. 
	Since the term on the right hand side is invertible, we arrived to a contradiction,
	i.e., 
	$ \Spec(\cA_\c^\prin (C_n))_{\eta} $ is regular if $ n = 2m $ and $ \car(\KK) \neq 2 $. 
	\\
	Suppose that $ n = 2m + 1 $. 
	First, (a$_m$) states that $ z_2 = \cdots = z_{n+1} = 0 $
	and (c$_m$) yields $ z_1 = 0 $. 
	Therefore,  $ \Spec(\cA_\c^\prin (C_n)_{\eta}) $ either has an isolated singularity at the origin or is regular. 
	Moreover, (b$_{m-1}$) and $ z_2 = 0 $ provide $ \pm \rho_n({\eta}) = (-1)^m $
	and hence $ \mu_n({\eta}) = \rho_n({\eta})^2  = 1 $. 
	Therefore, we get
	\[
		h_n = P_{2m+1}(z_1, \ldots, z_{2m+1}) z_{2m+2}  -  (P_{2m}(z_1, \ldots, z_{2m}) + 1)(P_{2m}(z_1, \ldots, z_{2m}) - 1) 
		\ . 
	\]  
	Let us look at the local situation at the origin. 
	By Lemma~\ref{Lem:ContFacts}(1), 
	$ P_{2m}(z_1, \ldots, z_{2m}) $ is a unit and 
	hence, it makes sense to define
	$ w_{2m+1} := P_{2m+1}(z_1, \ldots, z_{2m+1}) $ 
	as local variable replacing $ z_{2m+1} $ 
	using Lemma~\ref{Lem:ContFacts}(3).
	\\
	If $ m $ is odd, then $ P_{2m}(z_1, \ldots, z_{2m}) - 1 $ is a unit (Lemma~\ref{Lem:ContFacts}(1))
	and 
	the exists a local coordinate change $ (w_1, \ldots, w_{2m}) $ 
	in $ (z_1, \ldots, z_{2m}) $ such that 
	$ P_{2m}(z_1, \ldots, z_{2m}) + 1 = \sum_{i=1}^m w_{2i-1} w_{2i} $
	(Proposition~\ref{Prop:ContSing}).
	\\
	On the other hand, if $ m $ is even, 
	$ P_{2m}(z_1, \ldots, z_{2m}) + 1 $ is a unit
	and, for a suitable choice of local variables, we have $ P_{2m}(z_1, \ldots, z_{2m}) - 1 = \sum_{i=1}^m w_{2i-1} w_{2i}  $.
	\\
	In conclusion, we get $ h_n = w_{2m+1} z_{2m+2} - \epsilon \sum_{i=1}^m w_{2i-1} w_{2i} $, for some unit $ \epsilon $,
	which implies that $ \Spec(\cA_\c^\prin (C_n)_{\eta}) $ has an isolated singularity of type $ A_1 $ at the origin
	(under the hypotheses $ \car(\KK) \neq 2 $,  $ -{\eta}_n \in \bs{\kappa(\eta)}^2 $, and $ n = 2m +1 $).
	
	It remains to handle, the case $ \car(\KK) = 2 $. 
	Recall from the beginning of the proof that in order to have a singularity, the conditions
	$ -{\eta}_n \in \bs{\kappa(\eta)}^2 $,
	$ z_{n+1} = 0 $,
	$ P_n (z_1, \ldots, z_n) = 0 $,
	and $ P_{n-1} (z_1, \ldots, z_{n-1}) + \rho_n({\eta}) = 0 $ (cf.~\eqref{eq:Sing_Cn_2} and use $ \car(\KK) = 2 $)
	must hold.  
	Moreover, we have 
	\[
		h_n = P_n(z_1, \ldots, z_n) z_{n+1}  + \Big(  P_{n-1}(z_1, \ldots, z_{n-1}) +  \rho_n ({\eta}) \Big)^2 
	\ .
	\] 
	Notice that the vanishing of $ z_{n+1} $ implies that all partial derivatives $ \frac{\partial h_n}{\partial z_k} $ for $ k \in \{ 1, \ldots, n \} $ automatically are zero since $ \car(\KK) = 2 $.
	Hence, we get
	\begin{eqnarray}\label{singularlocus}
		\Sing (\Spec (\cA_\c^\prin (C_n))_\eta) 
		& = V (\, z_{n+1}, \, 
		P_n (z_1, \ldots, z_n) , \,
		P_{n-1} (z_1, \ldots, z_{n-1}) + \rho_n(\eta) \, ) \nonumber
		\\[8pt]
		& =
		V (\, z_{n+1}, \, 
		w_n , \,
		P_{n-1} (z_1, \ldots, z_{n-1}) + \rho_n(\eta) \, ) 
		=: D \ , 
	\end{eqnarray} 
	where $ w_n := \rho_n(\eta) z_n +  P_{n-2}(z_1, \ldots, z_{n-2}) $ is a variable that can be used to replace $ z_n $
	since $ \rho_n (\eta) $ is invertible.  
	\\
	By Proposition~\ref{Prop:ContSing}, $ D $ is singular if and only if $ n-1 = 2m $ is even and $ \rho_n(\eta) = 1 $ 
	(again take $ \car(\KK) = 2 $ into account).
	Moreover, if $ \Sing(D) \neq \varnothing $, then $ D $ has an isolated singularity at the origin and $ D $ is isomorphic to a hypersurface singularity of type $ A_1 $. 
	In particular, locally at the origin, 
	$ P_{n-1} (z_1, \ldots, z_{n-1} ) $ is a unit and 
	we may choose local variables 
	$ (w_1, \ldots, w_{n-1}) $ coming from $ (z_1, \ldots, z_{n-1}) $ 
	as well as $ x_n := P_n (z_1, \ldots, z_n) = P_{n-1}(z_1, \ldots, z_{n-1}) z_n + P_{n-2} (z_1, \ldots, z_{n-2}) $ replacing $ z_n $
	such that 
	\[
		h_n =
		x_n z_{n+1} + \Big(  \sum_{i=1}^m w_{2i-1} w_{2i} \Big)^2 
		\ .
	\]
	On the other hand, locally at a point of $ D $ which is different from the origin, the variety 
	$ \Spec (\cA_\c^\prin (C_n)_\eta) $ is isomorphic to a cylinder over a hypersurface singularity of type $ A_1 $, $ V(x_n z_{n+1}  + x^2  ) \subseteq \Spec(\bs{\kappa(\eta)}[x_n, z_{n+1}, x] ) $,
	where we choose $ x :=  P_{n-1} (z_1, \ldots, z_{n-1}) + \rho_n(\eta) $ as local variable. 
	\\
	Finally, if $ \Sing(D) = \varnothing $, then at every point of $ D $, the situation is the same as in the previous paragraph.

	As in the proof of Theorem~\ref{Thm:A_prin}, we deduced local hypersurface presentations for which the respective defining polynomial has integer coefficients.
	Hence, we do not have to take a $ p $-basis of $ \bs{\kappa(\eta)} $ into account.  	
	This finishes the proof of Theorem~\ref{Thm:C_prin}.
\end{proof}

\subsection{Type $ \boldsymbol{D_n} $}

We turn our attention to cluster algebras with principal coefficients of finite cluster type $ D_n $
with $ n \geq 4 $. 

We choose the initial labeled seed $ \Sigma = (\x, B ) $ with exchange matrix $ B $ given by 
Theorem~\ref{Thm:FinTypClass}
as the $ n \times n $ matrix
\[ 
	B = 
	\begin{pmatrix} 
	0 & 1 & \\
	-1 & \ddots & \ddots   \\
	& \ddots & 0 & 1  \\
	&  & -1 & 0 & 1 & 1\\
	&  &  & -1 & 0 & 0  \\
	&  &  & -1 & 0  & 0 \\
	\end{pmatrix}  
	\ .
\] 
The corresponding presentation of the cluster algebra $ \cA_\c^\prin (D_n) := \cA_\c^\prin (\Sigma ) $ 
resulting from Theorem~\ref{Thm:Presentation} is
\begin{equation}
	\label{eq:Dn_exchange} 
	\cA^\prin_\c(D_n) \cong
	\KK_\c [\x, \y]
	\hspace{-0.15cm}\raisebox{-0.2cm}{$\bigg/$}
	\hspace{-0.25cm}\raisebox{-0.85cm}%
	{$\left\langle \,
		\begin{minipage}{5.2cm}
			$ x_1 y_1 - c_1 -  x_2 
			\\[2pt]
			x_k y_k - c_k x_{k-1} -  x_{k+1} 
			\\[2pt]
			x_{n-2} y_{n-2} - c_{n-2} x_{n-3} -  x_{n-1} x_{n} 
			\\[2pt]
			x_{\ell} y_{\ell} - c_{\ell} x_{n-2} -  1 $
		\end{minipage}
		\, \Bigg| \,\, 
		\begin{minipage}{3cm}
			$ k \in \{ 2, \ldots, n-3 \} 
			\\ 
			\ell \in \{ n-1, n \} $
		\end{minipage}
		\, \right\rangle
		$} 
\end{equation} 
with $(\c,\x,\y) = (c_1, \ldots, c_n; x_1, \ldots, x_n; y_1, \ldots, y_n) $.

\begin{lemma}
	\label{Lem:Dn_2_eq} 
	There exists an isomorphism \bs{over $ \KK_\c$}
	\[
	\cA^\prin_\c(D_n) \cong 
	\KK_\c[z_1, \ldots, z_{n-2}, u_1, u_2, u_3, u_4 ]/ \langle h_1, h_2 \rangle 
	\ ,
	\]
	where
	\[
	\begin{array}{l}
		h_1 := 
		u_1 u_2 - u_3 u_4  -
		\lambda_{n-1}(\c)^{-1} u_2 u_4 
		\big(
		u_1 u_3 + P_{n-3} (z_1,\ldots, z_{n-3})  \big)
		\ ,
		
		\\[8pt]
		
		h_2 := 
		u_3 u_4  - P_{n-2} (z_1,\ldots, z_{n-2}) - \lambda_{n-1}(\c)
		\ . 
	\end{array} 
	\]
	for $ \lambda_{n-1}(\c) $ as defined in
	Definition~\ref{Def:ContPoly_lamda_n}(2)
	and $ P_{n-2}, P_{n-3} $ the continuant polynomials of Section~\ref{Subsec:Cont}.
\end{lemma}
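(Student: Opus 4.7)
The plan is to parallel the three-stage template used in the proofs of Proposition~\ref{Prop:An_hypersurface} and Lemma~\ref{Lem:Bn_2_eq}, suitably adapted to the trivalent structure of the $D_n$ Dynkin diagram.

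First, I would apply the change of variables \eqref{eq:new_var_lambda} to the pairs $(x_k, y_k)$ for $k \in \{1,\ldots,n-2\}$, augmented by an asymmetric rescaling of the two leaf pairs $(x_{n-1},y_{n-1})$ and $(x_n,y_n)$. The goal is that, up to multiplication by invertible factors, the first $n-3$ ``chain'' exchange relations take the trivial-coefficient form $\widetilde x_k \widetilde y_k - \widetilde x_{k-1} - \widetilde x_{k+1} = 0$ (with $\widetilde x_0 := 1$), the trivalent relation becomes $\widetilde x_{n-2} \widetilde y_{n-2} - \widetilde x_{n-3} - \widetilde x_{n-1}\widetilde x_n = 0$, and one of the two leaf relations takes the normalized form $\widetilde x_{n-1} \widetilde y_{n-1} - \widetilde x_{n-2} - \lambda_{n-1}(\c) = 0$. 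Because the identity $\lambda_{n-1}(\c) = \lambda_{n-3}(\c)/c_{n-1}$ has no analog with $c_{n-1}$ replaced by $c_n$, the second leaf relation cannot simultaneously be normalized and will retain a residual factor of $\lambda_{n-1}(\c)^{-1}$. This residue will ultimately produce the coefficient of the quartic term in $h_1$.

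Second, as in Proposition~\ref{Prop:An_hypersurface}, the continuant recursion Lemma~\ref{Lem:ContFacts}(3) identifies $\widetilde x_k = P_k(\widetilde x_1, \widetilde y_1, \ldots, \widetilde y_{k-1})$ for $k \in \{2,\ldots,n-2\}$, eliminating these intermediate cluster variables together with the $n-3$ normalized chain relations. After renaming $(z_1,\ldots,z_{n-2}) := (\widetilde x_1, \widetilde y_1, \ldots, \widetilde y_{n-3})$, three relations remain: the images of the trivalent relation (which is the only one involving $\widetilde y_{n-2}$) and of the two leaf relations. I would identify $h_2$ with the image of the normalized leaf relation, after choosing $(u_3,u_4)$ to be its two cluster variables; then derive $h_1$ by combining the trivalent relation with the residual leaf relation, using the trivalent relation to replace $\widetilde x_{n-1}\widetilde x_n + P_{n-3}(z_1,\ldots,z_{n-3})$ by $P_{n-2}(z_1,\ldots,z_{n-2})\cdot\widetilde y_{n-2}$ and incorporating the $\lambda_{n-1}(\c)^{-1}$ factor retained from Stage~1, to obtain the quartic expression $u_1 u_2 - u_3 u_4 - \lambda_{n-1}(\c)^{-1} u_2 u_4 (u_1 u_3 + P_{n-3})$ with $(u_1,u_2)$ the two cluster variables at the second leaf. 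A Gr\"obner basis argument analogous to Theorem~\ref{Thm:Presentation} would then confirm that $(h_1, h_2)$ generates the same ideal as the three starting relations.

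The hard part will be in the third stage: calibrating the asymmetric scalings of Stage~1 so that, after elimination, the combination of the trivalent and second-leaf relations produces exactly $h_1$ in the stated form --- with the factor $\lambda_{n-1}(\c)^{-1}$ in the correct place and the grouping $u_1 u_3 + P_{n-3}$ emerging cleanly from the trivalent relation. This is essentially a bookkeeping task, but it demands care because of the mismatch between the parities of the indices of $c_{n-1}$ and $c_n$ in the definition of the $\lambda_k(\c)$, which is precisely why the resulting presentation is asymmetric between the two leaves even though the underlying $D_n$ Dynkin diagram is symmetric.
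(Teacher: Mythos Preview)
Your overall three-stage template is the right one, and your first two stages match the paper closely. The genuine gap is in how you pass from three relations to two. You propose to take $(u_1,u_2,u_3,u_4)=(\widetilde x_n,\widetilde y_n,\widetilde x_{n-1},\widetilde y_{n-1})$ and to ``combine the trivalent relation with the residual leaf relation'' to produce $h_1$. But with this choice the variable $\widetilde y_{n-2}$ cannot be eliminated polynomially: the only relation in which $\widetilde y_{n-2}$ appears is the trivalent one, where its coefficient is $\widetilde x_{n-2}=P_{n-2}(z)$, which is not a unit. Consequently your proposed map $\KK_\c[z,u]\to\cA^\prin_\c(D_n)$ is not surjective, and no Gr\"obner argument can repair this.

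The paper's proof handles this by two linked moves you are missing. First, it replaces the trivalent relation $g_3$ by $g_3':=g_3+\widetilde y_{n-2}\,g_2$; this cancels the term $\widetilde x_{n-2}\widetilde y_{n-2}$ and leaves $\widetilde y_{n-2}$ with the \emph{invertible} coefficient $-\lambda_{n-1}(\c)$, so that $g_3'=0$ solves for $\widetilde y_{n-2}$ polynomially. Second, this very manipulation forces the grouping $x_n-\widetilde y_{n-2}\widetilde y_{n-1}$ to appear, and the paper takes \emph{this} shifted quantity as $u_1$ (not the cluster variable $\widetilde x_n$). The factor $\lambda_{n-1}(\c)^{-1}$ in $h_1$ then arises from dividing by the coefficient of $\widetilde y_{n-2}$ in this step, not from an un-normalizable leaf relation as you suggest; in fact both leaf relations can be brought to the form $\bullet-\widetilde x_{n-2}-\mathrm{const}$, and $h_1$ comes from their \emph{difference} $g_1-g_2$ after substituting the expression for $\widetilde y_{n-2}$. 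Your diagnosis of the asymmetry between the two leaves is therefore not the operative mechanism.
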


\begin{proof}
	Analogous to before, we introduce 
	\[
	\widetilde x_k = x_k \lambda_{k-1}(\c)
	\hspace{10pt}
	\mbox{ and } 
	\hspace{10pt}
	\widetilde y_k = y_k \lambda_k(\c) \lambda_{k-1}(\c)^{-1},
	\hspace{10pt}
	\mbox{for } 
	k \in \{ 1, \ldots, n-2 \} \ ,	
	\]
	so that the first $ k - 3 $ exchange relations in \eqref{eq:Dn_exchange} become 
	(after multiplication by an invertible factor)
	\begin{equation} 
		\label{eq:Dn_afterchange_1} 
		\widetilde x_1 \widetilde y_1  - 1  - \widetilde x_2 
		\ = \ 
		\widetilde x_{k} \widetilde y_{k}  
		- \widetilde x_{k-1}  
		- \widetilde x_{k+1} 
		\ = \ 
		0
		\ , 
		\hspace{10pt}
		\mbox{ for } k \in \{ 2, \ldots, n -3 \}
		\ . 
	\end{equation} 
	 Notice that we left $ x_{n-1}, x_n, y_{n-1}, y_n $ unchanged so far. 
	 For the polynomial determining the remaining exchange relations, we get 
	 \[
	 \begin{array}{l}
	 	x_{n-2} y_{n-2} - c_{n-2} x_{n-3} -  x_{n-1} x_{n} 
	 	= 
	 	\lambda_{n-2}(\c)^{-1} 
	 	\big( 
	 		\widetilde x_{n-2} \widetilde y_{n-2} 
	 	- \widetilde x_{n-3} 
	 	- \lambda_{n-2}(\c) x_{n-1} x_n 
	 	\big) 
	 	\ ,
	 	
	 	\\[5pt]
	 	x_{n-1} y_{n-1} - c_{n-1} x_{n-2} -  1 = \lambda_{n-1}(\c)^{-1} 
	 	\big(
	 		\lambda_{n-1}(\c) x_{n-1} y_{n-1} - \widetilde x_{n-2} - \lambda_{n-1}(\c)
	 	\big)
	 	\ ,
	 	
	 	\\[5pt]
	 	x_{n} y_{n} - c_{n} x_{n-2} -  1 
	 	= 
	 	\lambda_{n-1}(\c)^{-1}
	 	\big( 
	 		\lambda_{n-1}(\c) x_{n} y_{n} 
	 		- \widetilde x_{n-2} 
	 		- \lambda_{n-1}(\c) 
	 	\big)
	 	\ . 	
	 \end{array} 
	 \]
	 We introduce 
	 \[ 
	 	\widetilde x_{n-1} :=  \lambda_{n-2}(\c) x_{n-1}
	 	\ ,
	 	\ \ \ \ \
	 	\widetilde y_{n-1} :=  \lambda_{n-2}(\c)^{-1} \lambda_{n-1}(\c) y_{n-1} 
	 	\ ,
	 	\ \ \ \ \
	 	\widetilde y_n := \lambda_{n-1}(\c) y_{n} 
	 \]
	 so that the last three exchange relations can be replaced by 
	 \begin{equation}
	 	\left\{ 
	 	 \ \ 
	 	\begin{array}{l} 
	 	g_3 := 
	 	\widetilde x_{n-2} \widetilde y_{n-2} 
	 	- \widetilde x_{n-3} 
	 	- \widetilde x_{n-1} x_n 
	 	=  0 
	 	\ , 
	 	
	 	\\[8pt]  
	 	
	 	g_2 := 
	 	\widetilde x_{n-1} \widetilde y_{n-1} - \widetilde x_{n-2} - \lambda_{n-1}(\c)
	 	=  0 
	 	\ , 
	 	
	 	\\[8pt]  
	 	
	 	g_1 := 
	 	x_{n} \widetilde y_{n} 
	 	- \widetilde x_{n-2} 
	 	- \lambda_{n-1}(\c)
	 	= 0 
	 	\ . 
	 	\end{array} 
 		\right. 
	 \end{equation} 
	We abuse notation and drop the tilde. 
	Furthermore, we may replace $ g_1 $ and $ g_3 $ by 
	\[
	g_1' := g_1 - g_2 = 
	x_n y_n - x_{n-1} y_{n-1}   
	\]
	\[
		g_3' := g_3 + y_{n-2} g_2 
		= 
		- x_{n-3} 
		- x_{n-1} (x_n  - y_{n-2} y_{n-1} )
		- 
		 \lambda_{n-1}(\c) y_{n-2}
	\]
	respectively. 
	By introducing 
	\[
		(u_1, u_2, u_3, u_4)
		:= 
		( \, x_n  - y_{n-2} y_{n-1},
		\, 
		y_n, 
		\, 
		x_{n-1}, 
		\, 
		y_{n-1} 
		\, ) 
		\ , 
	\]
	we see that $ g_3' = 0 $ is equivalent to 
	\begin{equation}
		\label{eq:Dn_sub_y_n-2}
		y_{n-2} = -\lambda_{n-1}(\c)^{-1} 
		\big(
			x_{n-3} + u_1 u_3 
		\big)
	\end{equation} 
	and that $ g_1' $ becomes
	(using the relation \eqref{eq:Dn_sub_y_n-2}) 
	\[
	\begin{array}{rcl} 
		g_1' 
		& = & 
		x_n y_n - x_{n-1} y_{n-1}   
		\ = \  
		u_1 u_2 - u_3 u_4  + u_2 u_4 y_{n-2}
		\ = \ 
		\\[8pt]
		& = &
		u_1 u_2 - u_3 u_4  -
		\lambda_{n-1}(\c)^{-1} u_2 u_4 (u_1 u_3 + x_{n-3} )
		\ . 
	\end{array} 
	\]
	As before, \eqref{eq:Dn_afterchange_1} leads to the substitution 
	$ x_k = P_k (x_1, y_1, \ldots, y_{k-1}) $, 
	for $ k \in \{ 2, \ldots, n-2 \} $. 
	This provides
	\[
		\begin{array}{l}
			h_1 := g_1' 
			=
			u_1 u_2 - u_3 u_4  -
			\lambda_{n-1}(\c)^{-1} u_2 u_4 (u_1 u_3 + P_{n-3} (x_1,y_1,\ldots, y_{n-4})  )
			\ ,
			
			\\[8pt]
			
			h_2 := g_2 
			=
			u_3 u_4  - P_{n-2} (x_1,y_1,\ldots, y_{n-3}) - \lambda_{n-1}(\c)
			\ . 
		\end{array} 
	\]
	Taking into account \eqref{eq:Dn_sub_y_n-2}, 
	finishes the proof. 
\end{proof}

As in \cite[Lemma~4.5]{BFMS}, we first determine the singular locus. 

\begin{lemma}
	\label{Lem:Dn_Sing}
	The singular locus of the fiber $\Spec (\cA_\c^\prin (D_n) )_\eta$ (for $ \eta \in S $) of the family defined by
	$ \phi \colon \Spec ( \cA^\prin_\c(D_n)) \to S $ is 
	$ Y_0 \cup Y_1 \cup Y_2 \cup Y_3 \cup Y_4 $,
	where the components identify along the presentation of Lemma~\ref{Lem:Dn_2_eq} with
	\[
		\begin{array}{l} 
		Y_0 := V (u_1, u_2, u_3, u_4 , P_{n-2} (z_1, \ldots, z_{n-2}) + \lambda_{n-1}(\eta)) 
		\,
		\\[10pt]
		Y_1 := V(u_2, u_3, u_4) \cap \Sing (V ( P_{n-2} (z_1, \ldots, z_{n-2}) + \lambda_{n-1}(\eta) ) )
		\ ,
		\\[10pt]
		Y_2 :=  V(u_1, u_3, u_4) \cap \Sing (V ( P_{n-2} (z_1, \ldots, z_{n-2}) + \lambda_{n-1}(\eta) ) )
		\ ,
		\\[10pt]
		Y_3 :=  V(u_1, u_2, u_4) \cap \Sing (V ( P_{n-2} (z_1, \ldots, z_{n-2}) + \lambda_{n-1}(\eta) ) )
		\ ,
		\\[10pt]
		Y_4 :=  V(u_1, u_2, u_3) \cap \Sing (V ( P_{n-2} (z_1, \ldots, z_{n-2}) + \lambda_{n-1}(\eta) ) )
		\ . 
	\end{array} 	
	\]
	Observe that $ \Sing (Y_0) = \bigcap_{i=1}^4 Y_i $ 
	and that $ Y_i \neq \varnothing $ for $ i \in \{ 1, \ldots 4 \} $ if and only if $ n - 2 = 2m $ is even and $ \lambda_{n-1}(\eta) = (-1)^{m+1} $, 
	by Proposition~\ref{Prop:ContSing}.	
\end{lemma}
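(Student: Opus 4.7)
The plan is to apply the Jacobian criterion to the presentation $\cA^\prin_\c(D_n) \cong \KK_\c[z_1, \ldots, z_{n-2}, u_1, u_2, u_3, u_4]/\langle h_1, h_2\rangle$ established in Lemma~\ref{Lem:Dn_2_eq}. Since the fiber $\Spec(\cA_\c^\prin(D_n))_\eta$ has dimension $n$ inside an ambient affine space of dimension $n+2$, the singular locus is cut out by $h_1 = h_2 = 0$ together with the vanishing of all $2 \times 2$ minors of the $2 \times (n+2)$ Jacobian $J := \Jac(h_1, h_2; z_1, \ldots, z_{n-2}, u_1, u_2, u_3, u_4)$. Three structural observations simplify the analysis: $h_1$ is independent of $z_{n-2}$, so $\partial h_1 /\partial z_{n-2} = 0$; $h_2$ is independent of $u_1, u_2$, so $\partial h_2/\partial u_1 = \partial h_2/\partial u_2 = 0$; and for $i \le n-3$ one has $\partial h_1/\partial z_i = -\lambda^{-1} u_2 u_4 \,\partial P_{n-3}/\partial z_i$, where I abbreviate $\lambda := \lambda_{n-1}(\c)$.

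First I would isolate the behaviour in the $u$-variables. A short computation with the six minors built from the columns $u_1, u_2, u_3, u_4$ yields, for instance,
\[
u_2 u_4 (1 - \lambda^{-1} u_3 u_4) = 0, \ \ u_2 u_3 (1 - \lambda^{-1} u_3 u_4) = 0, \ \ \lambda^{-1} u_2 u_4 P_{n-3} = 0,
\]
\[
u_4 \bigl(u_1 - \lambda^{-1} u_4 (u_1 u_3 + P_{n-3})\bigr) = 0, \ \ u_3 \bigl(u_1 - \lambda^{-1} u_4 (u_1 u_3 + P_{n-3})\bigr) = 0
\]
(the sixth minor is trivially zero). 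The hypothetical case $u_3 u_4 = \lambda$ would force $P_{n-2}(z) = 0$ via $h_2 = 0$; then $u_4 \ne 0$ and the fourth relation above give $P_{n-3}(z) = 0$; substituting back into $h_1 = 0$ collapses it to $-\lambda = 0$, which is impossible since $\lambda$ is invertible. So $1 - \lambda^{-1} u_3 u_4 \ne 0$ at every singular point, whence the first two relations force $u_2 u_3 = u_2 u_4 = 0$, and a short case split using $h_1 = 0$ and the remaining $u$-minors shows that at any singular point either all four $u_i$ vanish, or exactly one of them is non-zero.

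When all $u_i = 0$, the equation $h_2 = 0$ becomes $P_{n-2}(z) + \lambda = 0$, so the point lies on $Y_0$; direct inspection confirms that row~1 of $J$ vanishes identically along $Y_0$, placing it entirely in the singular locus. When exactly one $u_j \ne 0$, both $h_1 = 0$ and $h_2 = 0$ again reduce to $P_{n-2}(z) + \lambda = 0$, and the non-zero entries of row~1 of $J$ sit in the $u$-columns while the (possibly) non-zero entries of row~2 sit only in the $z$-columns. Since these supports are disjoint, rank~$\le 1$ forces row~2 to vanish identically; that is, $\partial P_{n-2}/\partial z_i = 0$ for every~$i$, placing $(z_1, \ldots, z_{n-2})$ in $\Sing(V(P_{n-2}(z_1, \ldots, z_{n-2}) + \lambda_{n-1}(\eta)))$. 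The locus of such singular points is thus exactly $Y_1 \cup Y_2 \cup Y_3 \cup Y_4$, where $Y_j$ is the line in the direction of $u_j$ emanating from the singular point of $V(P_{n-2} + \lambda_{n-1}(\eta))$.

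Finally, the observation $\Sing(Y_0) = \bigcap_{i=1}^{4} Y_i$ follows because both sides reduce to the locus where the singular point of $V(P_{n-2} + \lambda_{n-1}(\eta))$ is attained and all $u$-coordinates vanish; and $Y_i \ne \varnothing$ for $i \ge 1$ is equivalent to $\Sing(V(P_{n-2} + \lambda_{n-1}(\eta)))$ being non-empty, which by Proposition~\ref{Prop:ContSing} holds iff $n-2 = 2m$ is even and $\lambda_{n-1}(\eta) = (-1)^{m+1}$. The main obstacle I foresee is the careful case analysis ruling out the degenerate configuration $u_3 u_4 = \lambda$ and the associated bookkeeping of all $2 \times 2$ minors, to ensure that no component of the singular locus is missed; the reverse inclusion (that each $Y_i$ is contained in the singular locus) is then a short direct verification.
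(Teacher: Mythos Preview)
Your approach is the same as the paper's---apply the Jacobian criterion to the two-generator presentation and organise the case analysis via the $u$-minors---and your handling of the degenerate case $u_3u_4=\lambda$ is in fact slightly cleaner than the paper's (you obtain the contradiction directly from $h_1=-\lambda$, whereas the paper argues that $P_{n-2}=P_{n-3}=0$ is impossible). Two small inaccuracies are worth correcting. First, the claim that the $u$-minors alone (together with $h_1=0$) force ``all four $u_i$ vanish or exactly one is non-zero'' is not quite complete: in the sub-case $u_2=u_3=0$, $u_4\neq 0$, relation~4 only gives $u_1=\lambda^{-1}u_4P_{n-3}$, and you need the minor from columns $(z_{n-2},u_3)$ (which equals $-u_4P_{n-3}$) to conclude $P_{n-3}=0$ and hence $u_1=0$. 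Second, the ``disjoint supports'' argument is correct for $j=1,2$ but not for $j=3,4$: when only $u_3$ (resp.~$u_4$) is non-zero, row~2 has the entry $u_3$ (resp.~$u_4$) in column $u_4$ (resp.~$u_3$), so the supports overlap; the conclusion still holds because row~2 must then be proportional to row~1 and row~1 has no $z$-entries, forcing $\partial P_{n-2}/\partial z_i=0$ for all~$i$. Finally, the paper closes with a remark on why no $p$-basis of $\kappa(\eta)$ needs to be taken into account; you may want to add a sentence on this (on $Y_0$ row~1 vanishes identically, and on $Y_1,\ldots,Y_4$ one has $\lambda_{n-1}(\eta)=\pm 1$ in the prime field).
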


\begin{proof}
	As before, we apply the Jacobian criterion. 
	We use the presentation,
	which we deduced in Lemma~\ref{Lem:Dn_2_eq}, 
	$ 
	\cA^\prin_\c(D_n) \cong 
	\KK_\c[z_1, \ldots, z_{n-2}, u_1, u_2, u_3, u_4 ]/ \langle h_1, h_2 \rangle 
	$.
	We have to consider the $ 2 $-minors of the Jacobian matrix of the fiber above $ \eta $
	\[
		\Jac(D_n) := \Jac( h_1, h_2; z_1, \ldots, z_{n-2}, u_1, \ldots, u_4 ) . 
	\]
	The minors corresponding to the derivatives with respect to $ (u_1, u_4) $ resp.~$ (u_1,u_3 ) $ provide the equations 
	\begin{equation}
		\label{eq:Dn_Sing_1}
		u_2 u_3 ( 1 - \lambda_{n-1}(\eta)^{-1} u_3 u_4 )
		= 
		u_2 u_4 ( 1 - \lambda_{n-1}(\eta)^{-1} u_3 u_4 ) = 0 
	\end{equation}
	for the singular locus.
	\\
	First, assume $ u_2 = 0 $. 
	Then $ h_1 = h_2 = 0 $ is equivalent to 
	\[
		u_3 u_4 = P_{n-2} (z_1, \ldots, z_{n-2}) + \lambda_{n-1}(\eta) = 0 
	\]
	and $ \Jac(D_n) $ becomes
	\[
	\begin{pmatrix}
			- \lambda_{n-1}^{-1} u_2 u_4 \frac{\partial P_{n-3}}{\partial z_k}
		& 0
		& 0
		& u_1 - \lambda_{n-1}^{-1} u_4 (u_1 u_3 + P_{n-3}) 
		& - u_4  
		&  - u_3  
		\\[10pt]
		- \frac{\partial P_{n-2}}{\partial z_k}
		& - P_{n-3}
		& 0
		& 0 
		& u_4 
		& u_3 
	\end{pmatrix}
	\ , 
	\]
	where the first column has to be considered as many columns with
	$ k \in \{ 1, \ldots, n- 3 \} $
	and
	we abbreviate $ P_{n-2} := P_{n-2} (z_1, \ldots, z_{n-2}) , 
	P_{n-3} := P_{n-3} (z_1, \ldots, z_{n-3}) $, and $ \lambda_{n-1} := \lambda_{n-1}(\eta) $.  	
	The minors with respect to $ (u_2, u_3) $ resp.~$ (u_2, u_4) $ provide the additional equations
	\[
		u_3 (u_1 - \lambda_{n-1}^{-1} u_4 (u_1 u_3 + P_{n-3}))
		= 
		u_4 (u_1 - \lambda_{n-1}^{-1} u_4 (u_1 u_3 + P_{n-3})) = 0 
	\]
	for the singular locus.
	\\
	If we consider the case $ u_3 = u_4 = 0 $ (additional to $ u_2 = 0 $), 
	then we find the two components
	\[
		\begin{array}{l} 
		Y_0 = V (u_1, u_2, u_3, u_4 , P_{n-2} (z_1, \ldots, z_{n-2}) + \lambda_{n-1}(\eta)) 
		\\[10pt]
		Y_1 = V(u_2, u_3, u_4) \cap \Sing (V ( P_{n-2} (z_1, \ldots, z_{n-2}) + \lambda_{n-1}(\eta) ) )
		\end{array} 
	\]
	of $ \Sing (\Spec (\cA_\c^\prin (D_n))_\eta) $. 	
	On the other hand, since $ u_3 u_4 = 0 $, at most one of $ u_3, u_4 $ can be invertible. 
	This leads to the next two components of the singular locus, 
	\[
	\begin{array}{l}
		Y_3 =  V(u_1, u_2, u_4) \cap \Sing (V ( P_{n-2} (z_1, \ldots, z_{n-2}) + \lambda_{n-1}(\eta) ) )
		\ ,
		\\[10pt]
		Y_4 =  V(u_1, u_2, u_3) \cap \Sing (V ( P_{n-2} (z_1, \ldots, z_{n-2}) + \lambda_{n-1}(\eta) ) )
		\ . 
	\end{array} 	
	\]
	This ends the  first case ($ u_2 = 0 $) which emerged from \eqref{eq:Dn_Sing_1}.
	
	Next, suppose that $ u_3 = u_4 = 0 $ and that $ u_2 $ is invertible 
	(since we want to avoid seeing components that we already determined). 
	Under these hypotheses, $ h_1 = h_2 = 0 $ is equivalent to 
	\[
		u_1 = P_{n-2} (z_1, \ldots, z_{n-2} ) + \lambda_{n-1} (\eta) = 0 
		\ . 
	\]
	Thus, $ \Jac(D_n) $ becomes
	\[
	\begin{pmatrix}
		0
		& 0
		& u_2
		& 0 
		& 0  
		&  -\lambda_{n-1}(\eta)^{-1} u_2 P_{n-3} (z_1,\ldots, z_{n-3}) 
		\\[10pt]
		- \frac{\partial P_{n-2} (z_1,\ldots, z_{n-2})}{\partial z_k}
		& - P_{n-3} (z_1,\ldots, z_{n-3})
		& 0
		& 0 
		& 0 
		& 0 
	\end{pmatrix}
	\ , 
	\]
	and from this, we get the fifth component of the singular locus,
	\[
		Y_2 = V(u_1, u_3, u_4) \cap  \Sing (V ( P_{n-2} (z_1, \ldots, z_{n-2}) + \lambda_{n-1}(\eta) ) )
		\ . 
	\]
	Finally, we consider the third case arising from \eqref{eq:Dn_Sing_1},
	where $ 1 - \lambda_{n-1} (\eta)^{-1} u_3 u_4 = 0 $. 
	Then $ h_2 = 0 $ is equivalent to $ P_{n-2} (z_1, \ldots, z_{n-2}) = 0 $. 
 	On the other hand, the $ 2 $-minor of $ \Jac(D_n) $ coming from the derivatives by $ (z_{n-2}, u_2 ) $ provides that we have to have $ P_{n-3} (z_1, \ldots, z_{n-3}) = 0 $. 
 	A simple induction on $ n $ (using the recursion in Lemma~\ref{Lem:ContFacts}(3) for $ k = n- 1 $) 
 	shows that it is impossible to have 
 	$ P_{n} (z_1, \ldots, z_{n}) = 0 $
 	and $ P_{n-1} (z_1, \ldots, z_{n-1}) = 0 $ at the same time. 
 	This implies that we do not get any further components for the singular locus.
 	
 	Notice that at the singularities are contained in $ V (u_2, u_4) $
 	and that all entries of the row of $ \Jac(D_n) $ corresponding to $ h_1 $ vanish at a singular point. 
 	Therefore, even if $ \lambda_{n-1} (\eta) $ is part of a $ p $-basis for $ \bs{\kappa(\eta)}  $, 
 	its derivatives have no impact on the computations. 
 	In other words, our considerations in the relative setting determined the singular locus of $ \Spec (\cA_\c^\prin (D_n) )_\eta $.  
\end{proof}

For a better clarity, we unpack Lemma~\ref{Lem:Dn_Sing} into cases, for which we provide a concrete description of the singular locus.
As an immediate consequence of Lemma~\ref{Lem:Dn_Sing}, we have:

\begin{cor}
	\label{Cor:Dn_Sing}
	We have the following cases for $ \Sing ( \Spec(\cA_\c^\prin (D_n))_\eta) $:
	\begin{enumerate}
		\item[(a)] 
		If $ n = 4 $ and $ \lambda_3 (\eta) = 1 $,
		then $ \Sing ( \Spec(\cA_\c^\prin (D_4))_\eta) $ identifies with the six coordinate axes in $ \AA_{\bs{\kappa(\eta)}}^6 $ along the isomorphism of Lemma~\ref{Lem:Dn_2_eq}.
		
		\item[(b)] 
		If $ n-2 = 2m $ is even, $ n > 4 $, and  $ \lambda_{n-1} (\eta) = (-1)^{m+1} $,
		then $ \Sing ( \Spec(\cA_\c^\prin (D_n))_\eta) $ consists of 
		four regular, irreducible components $ Y_1, \ldots, Y_4 $ of dimension one 
		and one singular, irreducible component $ Y_0 $ of dimension $ n - 3 $
		whose singular locus is a closed point 
		coinciding with the intersection $ \bigcap_{i=1}^4 Y_i $.

		\item[(c)] 
		Otherwise
		(i.e., if $ n $ is odd, or if $ n $ is even and $ \lambda_{n-1} (\eta) \neq 1 $),
		then the singular locus of the fiber, $ \Sing ( \Spec(\cA_\c^\prin (D_n))_\eta) \cong Y_0 $, is irreducible, regular, and of dimension $ n-3 $.
	\end{enumerate}
\end{cor}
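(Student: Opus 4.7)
The plan is to deduce the corollary directly from Lemma~\ref{Lem:Dn_Sing} and Proposition~\ref{Prop:ContSing}, by analyzing when and how the pieces $Y_0, Y_1, \ldots, Y_4$ of the singular locus contribute. Throughout, let $H_\eta := V(P_{n-2}(z_1,\ldots,z_{n-2}) + \lambda_{n-1}(\eta))$, the hypersurface appearing in the description of each $Y_i$. By Proposition~\ref{Prop:ContSing}, $\Sing(H_\eta) \ne \varnothing$ if and only if $n-2 = 2m$ is even and $\lambda_{n-1}(\eta) = (-1)^{m+1}$, and in that case $H_\eta$ has a unique isolated singularity of type $A_1$ at the origin.

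First, I would dispatch case~(c). If $n$ is odd, or $n$ is even with $\lambda_{n-1}(\eta) \ne (-1)^{m+1}$, then $\Sing(H_\eta) = \varnothing$, so $Y_1 = \cdots = Y_4 = \varnothing$. Hence $\Sing(\Spec(\cA_\c^\prin(D_n))_\eta) = Y_0$, and from the defining equations $Y_0 \cong H_\eta$, which is regular of dimension $n-3$ (here $H_\eta$ is irreducible for $n > 4$ since $P_{n-2} + \lambda_{n-1}(\eta)$ is irreducible for such $n$ under the given hypotheses; this can be verified via the explicit form given by Lemma~\ref{Lem:ContFacts}(1)).

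Next, case~(b): $n > 4$, $n-2 = 2m$, and $\lambda_{n-1}(\eta) = (-1)^{m+1}$. Then $\Sing(H_\eta)$ is the origin, so for $i \in \{1,2,3,4\}$ the component $Y_i$ is cut out by setting three of the $u_j$ to zero and intersecting with $V(z_1,\ldots,z_{n-2})$; thus $Y_i$ is the single coordinate line in the remaining $u$-variable, hence regular, irreducible, and of dimension one. For $Y_0$, it is defined by $u_1 = u_2 = u_3 = u_4 = 0$ and $P_{n-2}(z_1,\ldots,z_{n-2}) + \lambda_{n-1}(\eta) = 0$, so $Y_0 \cong H_\eta$, which is an irreducible hypersurface of dimension $n-3$ with a unique singularity of type $A_1$ at the origin (Proposition~\ref{Prop:ContSing}). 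Finally, at the origin all four lines $Y_1,\ldots,Y_4$ meet $Y_0$, and this intersection point is precisely $\Sing(Y_0)$.

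For case~(a), $n=4$, so $P_{n-2}(z_1,z_2) = P_2(z_1,z_2) = z_1 z_2 - 1$, and the hypothesis $\lambda_3(\eta) = 1$ gives $P_2(z_1,z_2) + \lambda_3(\eta) = z_1 z_2$. Hence $\Sing(H_\eta) = V(z_1,z_2)$, so each $Y_i$ (for $i=1,\ldots,4$) becomes the $u_i$-coordinate axis of $\AA^6_{\kappa(\eta)}$. On the other hand, $Y_0 = V(u_1,u_2,u_3,u_4, z_1 z_2)$ is the union of the $z_1$-axis and the $z_2$-axis. Assembling the components yields exactly the six coordinate axes of $\AA^6_{\kappa(\eta)}$, as claimed.

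The only subtle point is verifying the irreducibility and precise geometry of $Y_0$ in case~(b) for $n > 4$ and in case~(c); but both follow immediately from the fact that $P_{n-2}(z_1,\ldots,z_{n-2}) + \lambda_{n-1}(\eta)$ defines an irreducible hypersurface in these ranges, which is automatic once $n - 2 \ge 3$ from the Newton polytope/degree considerations underlying Proposition~\ref{Prop:ContSing}. The rest is a direct unpacking of Lemma~\ref{Lem:Dn_Sing}.
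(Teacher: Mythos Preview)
Your proposal is correct and takes essentially the same approach as the paper, which simply presents the corollary as ``an immediate consequence of Lemma~\ref{Lem:Dn_Sing}'' without further argument; you have merely spelled out that unpacking explicitly. The one point you flag as subtle---the irreducibility of $P_{n-2}(z)+\lambda_{n-1}(\eta)$ for $n-2\ge 3$---is also left implicit in the paper, and can be justified cleanly by observing that $P_{n-2}+\lambda$ is linear in $z_{n-2}$ with coprime coefficients $P_{n-3}$ and $-P_{n-4}+\lambda$ (via the recursion of Lemma~\ref{Lem:ContFacts}(3)).
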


Let us recall and prove the remaining part of Theorem~\ref{Thm:D}.
	
\begin{Thm}
	\label{Thm:D_prin} 
		We uses the Cases (a)--(c) of Corollary~\ref{Cor:Dn_Sing} to formulate the classification of the singularities. 
		\begin{itemize}
			\item 
			\underline{Case (c):} 
			Along its regular irreducible singular locus,
			$ \Spec(\cA_\c^\prin (D_n))_\eta $ is isomorphic to a cylinder 
			over a $ 3 $-dimensional hypersurface singularity of type $ A_1 $.
			
			\item 
			\underline{Cases (b):} 
			Let $ 0 \in Y_0 $ be the singular point of $ Y_0 $.
			Along $ Y_0 \setminus \{ 0 \} $, the situation is the same as in Case (c).
			Further, along $ Y_i \setminus \{ 0 \} $, for $ i \in \{ 1, \ldots, 4 \} $, 
			$ \Spec(\cA_\c^\prin (D_n))_\eta $ is isomorphic to an $ n $-dimensional hypersurface singularity of type $ A_1 $. 
			Finally, locally at $ 0 $, 
			$ \Spec(\cA_\c^\prin (D_n))_\eta $ is isomorphic to the intersection of two hypersurface singularity of type $ A_1 $ and  
			$ Y_0 $ is isomorphic to a $ ( n- 3) $-dimensional hypersurface singularity of type $ A_1 $.

			\item 
			\underline{Cases (a):}
			The situation is the same as in Case (b) with the exception 
			that $ Y_0 $ is the union of two lines here 
			and locally at $ 0 $, 
			 $ \Spec(\cA_\c^\prin (D_n))_\eta $ is isomorphic to the intersection of a hypersurface singularity of type $ A_1 $ with a divisor of the form $ V (xy) $.
		\end{itemize} 
\end{Thm}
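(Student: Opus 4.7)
The plan is to analyse the fiber $\Spec(\cA^\prin_\c(D_n))_\eta$ locally at each stratum of the singular locus described in Lemma~\ref{Lem:Dn_Sing} (organised into Cases (a), (b), (c) by Corollary~\ref{Cor:Dn_Sing}), working throughout from the presentation $\cA^\prin_\c(D_n) \cong \KK_\c[z_1,\ldots,z_{n-2},u_1,\ldots,u_4]/\langle h_1,h_2\rangle$ of Lemma~\ref{Lem:Dn_2_eq}. The strategy parallels the one used in the proofs of Theorems~\ref{Thm:A_prin}, \ref{Thm:B_prin}, and~\ref{Thm:C_prin}, the new feature being that now two defining relations interact, so the singularity type at each stratum must be read off from both simultaneously.

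The technical lynchpin is a local coordinate change that brings $h_1$ to a clean normal form. Setting
\[
\tilde u_1 := u_1 \bigl(1 - \lambda_{n-1}(\eta)^{-1} u_3 u_4\bigr) - \lambda_{n-1}(\eta)^{-1} u_4 \, P_{n-3}(z_1,\ldots,z_{n-3}),
\]
the partial derivative $\partial \tilde u_1/\partial u_1 = 1 - \lambda_{n-1}(\eta)^{-1} u_3 u_4$ is a unit at every point at which $u_3 u_4 \neq \lambda_{n-1}(\eta)$, in particular along every component of the singular locus. Hence $\tilde u_1$ is a valid local coordinate replacing $u_1$, and rearranging the terms of $h_1$ by pulling $u_2$ out yields $h_1 = \tilde u_1 u_2 - u_3 u_4$. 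The ideal $\langle h_1, h_2\rangle$ is therefore generated equivalently by $(\tilde u_1 u_2 - u_3 u_4,\, u_3 u_4 - P_{n-2}(z) - \lambda_{n-1}(\eta))$, or, adding $h_2$ into $h_1$, by $(\tilde u_1 u_2 - P_{n-2}(z) - \lambda_{n-1}(\eta),\, u_3 u_4 - P_{n-2}(z) - \lambda_{n-1}(\eta))$.

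With this preparation the case analysis proceeds as follows. At a regular point of $Y_0$ (covering Case~(c) entirely and the open locus $Y_0 \setminus \{0\}$ in Cases~(a) and~(b)), the polynomial $P_{n-2}(z) + \lambda_{n-1}(\eta)$ has nonvanishing gradient by Lemma~\ref{Lem:ContFacts}(4), so it may serve as a local coordinate $w$ replacing some $z_i$; then $h_2 = u_3 u_4 - w$ is linear in $w$ and may be eliminated. The residual equation $\tilde u_1 u_2 - u_3 u_4 = 0$ exhibits the fiber locally as an $(n-3)$-dimensional cylinder over the three-dimensional hypersurface $A_1$ singularity $V(\tilde u_1 u_2 - u_3 u_4) \subset \AA^4_{\rf}$. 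Along $Y_i \setminus \{0\}$ for $i \in \{1,2,3,4\}$ in Case~(b), one of $u_1, u_2, u_3, u_4$ is a unit at the base point; this unit allows one to eliminate a further variable from $\tilde u_1 u_2 - u_3 u_4 = 0$, after which Proposition~\ref{Prop:ContSing} brings $P_{n-2}(z) + \lambda_{n-1}(\eta)$ into the $A_1$ normal form $\sum_{i=1}^{m} w_{2i-1} w_{2i}$, and the remaining single equation describes an $n$-dimensional hypersurface singularity of type $A_1$.

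The main obstacle is the analysis at the origin in Cases~(a) and~(b), where no $u_j$ is a unit and $P_{n-2}(z) + \lambda_{n-1}(\eta)$ is itself singular. In Case~(b), invoking the $A_1$ normal form of Proposition~\ref{Prop:ContSing} transforms the ideal into
\[
\Bigl(\tilde u_1 u_2 - \sum_{i=1}^m w_{2i-1} w_{2i}, \ u_3 u_4 - \sum_{i=1}^m w_{2i-1} w_{2i}\Bigr),
\]
each generator being an $A_1$ normal form in the ambient $(n+2)$-dimensional space; the fiber at the origin is thereby exhibited as the scheme-theoretic intersection of two hypersurface $A_1$ singularities, and restricting to $u_1=u_2=u_3=u_4=0$ identifies $Y_0$ with the $(n-3)$-dimensional $A_1$ hypersurface $V(\sum w_{2i-1} w_{2i}) \subset \AA_{\rf}^{n-2}$. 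In Case~(a), with $n=4$, Lemma~\ref{Lem:ContFacts}(1) gives $P_2(z) + \lambda_3(\eta) = z_1 z_2$ already in the literal product form $xy$, so no preparatory coordinate change on $z$ is required: the fiber at the origin reads $V(\tilde u_1 u_2 - u_3 u_4,\, u_3 u_4 - z_1 z_2)$, i.e.\ the intersection of the $A_1$ hypersurface $V(\tilde u_1 u_2 - u_3 u_4)$ with the divisor $V(u_3 u_4 - z_1 z_2)$; on the $A_1$ this second equation reads $\tilde u_1 u_2 - z_1 z_2 = 0$ and carries the advertised $V(xy)$-structure, while $Y_0$ is correspondingly identified with the union of two lines $V(z_1 z_2) \subset \AA_{\rf}^2$. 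As in the earlier proofs in this section, all normal forms obtained have integer coefficients, so the relative singular locus computation already yields the absolute singular locus without having to consider a $p$-basis of $\rf$.
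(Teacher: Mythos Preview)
Your proof is correct and follows essentially the same approach as the paper: the same local coordinate change $\tilde u_1$ (your formula is in fact the correct one---the paper's displayed $w_1$ has a spurious extra $u_2$), the same stratum-by-stratum analysis, and the same appeal to Proposition~\ref{Prop:ContSing} for the $A_1$ normal form of $P_{n-2}(z)+\lambda_{n-1}(\eta)$. Your use of the alternative generating set $(\tilde u_1 u_2 - P_{n-2}(z) - \lambda_{n-1}(\eta),\ u_3 u_4 - P_{n-2}(z) - \lambda_{n-1}(\eta))$ makes the ``intersection of two $A_1$'s'' at the origin in Case~(b) more transparent than the paper's write-up, and your uniform treatment of $Y_1,\ldots,Y_4$ via ``one of $u_1,u_2,u_3,u_4$ is a unit'' is cleaner than the paper's separate handling of $Y_3,Y_4$; the only loose end is that your sentence ``carries the advertised $V(xy)$-structure'' in Case~(a) does not actually exhibit a divisor of the form $V(xy)$---but the paper's own proof is equally terse on this point, simply asserting that the normal form implies the statement.
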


\begin{proof}
	By Lemma~\ref{Lem:Dn_Sing},
	it remains to classify the singularities. 
	We continue to use the presentation deduced in Lemma~\ref{Lem:Dn_2_eq}, as in the proof of Lemma~\ref{Lem:Dn_Sing}. 
	\\
	First, we assume $ \Sing(Y_0) = \varnothing $ (Case (c)).
	Then, $ V ( P_{n-2} (z_1, \ldots, z_{n-2}) + \lambda_{n-1}(\eta) )  $ is regular. 
	This implies that the hypersurface $ H := V (h_2) = V ( u_3 u_4 - P_{n-2} (z_1, \ldots, z_{n-2}) - \lambda_{n-1}(\eta) ) $ is regular as well. 
	On the other hand, we have 
	\[
		h_1 = u_2 \Big( u_1  (1 - \lambda_{n-1}(\eta)^{-1} u_3 u_4 ) 
		- \lambda_{n-1}(\eta)^{-1} u_2 u_4  P_{n-3} (z_1,\ldots, z_{n-3}) \Big) - u_3 u_4 
	\] 
	and since $ Y_0 = V (u_1, u_2, u_3, u_4 , P_{n-2} (z_1, \ldots, z_{n-2}) + \lambda_{n-1}(\eta)) $
	the factor $ 1 - \lambda_{n-1}(\eta)^{-1} u_3 u_4 $ is a unit (locally at a point of $ Y_0 $)
	so that we may introduce the new local variable 
	\[
		w_1 := u_1  (1 - \lambda_{n-1}(\eta)^{-1} u_3 u_4 ) 
		- \lambda_{n-1}(\eta)^{-1} u_2 u_4  P_{n-3} (z_1,\ldots, z_{n-3})
		\ .  
	\]
	Therefore, locally at $ Y_0 $, 
	we have that $ \Spec(\cA_\c^\prin (D_n) )_\eta $ is isomorphic to cylinder over the $ A_1 $-hypersurface singularity $ V (u_1 w_1 - u_3 u_4 ) \subseteq \AA_\KK^4 $.
	
	Suppose we are in Cases (a) or (b), $ \Sing(Y_0) \neq \varnothing $ (i.e., $ n-2 = 2m $ and $ \lambda_{n-1} (\eta) = (-1)^{m+1} $). 
	Along $ Y_0 \setminus \{ 0 \} $, the situation is the same as in the case $ \Sing (Y_0) = \varnothing $.
	\\
	By Proposition~\ref{Prop:ContSing}, 
	$ \Sing (V ( P_{n-2} (z_1, \ldots, z_{n-2}) + \lambda_{n-1}(\eta) ) ) = V (z_1, \ldots, z_{n-2} ) $
	and locally at $ z_1 = \ldots = z_{n-2} = 0 $, there exist local variables $ (t_1, \ldots, t_{n-2} ) $
	such that 	
	\[
		P_{n-2} (z_1, \ldots, z_{n-2}) + \lambda_{n-1}(\eta) = \sum_{i=1}^m t_{2i-1} t_{2i} 
		\ .
	\]
	Further, locally at $ 0 $, 
		we may introduce 
		\[ 
			w_1 := u_1 ( 1 - \lambda_{n-1}(\eta)^{-1} u_3 u_4 ) - \lambda_{n-1}(\eta)^{-1} u_2 u_4 P_{n-3} (z_1,\ldots, z_{n-3}) 
		\] 
		as local variable substituting $ u_1 $
		such that we get $ h_1 = w_1 u_2 - u_3 u_4  $.
	This implies the statement on the classification of the singularities of $ \Spec (\cA_\c^\prin (D_n))_\eta $ and $ Y_0 $ locally at $ 0 $.
	\\
	On the other hand, 
	$ Y_i $ is the $ u_i $-axis, for $ i \in \{ 1, \ldots, 4 \} $. 
	Locally at a point of $ Y_1 \setminus \{ 0 \} $, 
	the factor $ \epsilon_1 := u_1 -
	\lambda_{n-1}(\eta)^{-1} u_2 u_4 
	\big(
	u_1 u_3 + P_{n-3} (z_1,\ldots, z_{n-3})  \big) $ is a unit and 
	we have 
	$
		h_1 = 
		\epsilon_1 u_2 - u_3 u_4  = 0 
	$.
	Thus, 
	we may eliminate $ u_2 $ by substituting $ \epsilon_1 u_2 = u_3 u_4 $
	and (locally at $ Y_1 \setminus \{ 0 \} $) 
	$ \Spec (\cA_\c^\prin (D_n))_\eta $ is isomorphic to the hypersurface $ h_2 = u_3 u_4 -  \sum_{i=1}^m t_{2i-1} t_{2i}  = 0 $, which has a singularity of type $ A_1 $. 
	\\
	The analogous argument applies for $ Y_2 \setminus \{ 0 \} $. 
	\\
	At the situation along $ Y_3 \setminus \{ 0 \} $, we have that $ u_3 $ is invertible and by introducing the local variables $ w_1 := u_1 u_3, w_2 := u_2 u_3^{-1}, w_3 := u_3 u_4 $,
	one may deduce that the variable $ w_4 $ may be eliminated via the relation $ h_1 = 0 $ analogous to the discussed case at $ Y_1 \setminus \{ 0 \} $ and the same arguments imply the assertion along the $ u_3 $-axis excluding the origin.
	\\
	The missing case of the singularities at $ Y_4 \setminus \{ 0 \} $ goes analogous to  $ Y_3 \setminus \{ 0 \} $. 	 
\end{proof}

\subsection{Type $ \boldsymbol{E_6, E_7, E_8} $}
\label{Subsec:En}
Our journey continues with the exceptional cases $ E_6, E_7, E_8 $.
In contrast to \cite[Section~4.3]{BFMS}, we do not treat them one-by-one, but set them in a more general context. 
Let $ n \geq 6 $ and consider the $ n \times n $ exchange matrix 
\[
	B = 
	\begin{pmatrix} 
		0 	& 1  & \\
		-1 	& \ddots  & \ddots   \\
		& \ddots 	& 0  & 1   \\
		&& -1 &  0 & 1  & 1  & 0 \\
		&&    & -1 & 0  & 0  & 0\\
		&&    & -1 & 0  & 0  & 1  \\
		&&    &  0 & 0  & -1 & 0 \\
	\end{pmatrix} 
	\ . 
\]
We use the notation $ \cA_\c^\prin (E_n ) $ for the resulting cluster algebra with principal coefficients. 
Notice that it is not a common notation to write $ E_n $ for $ n \geq 9 $, but we use it here for simplicity. 
The tree for $E_n$ corresponds to the Dynkin diagram of the tree singularity $T_{2,3,n-3}$ according to Gabrielov's classification \cite{Gabrielov74}.  
By Theorem~\ref{Thm:FinTypClass}, the cases of finite cluster type are included when $ n \in \{ 6, 7, 8 \} $.
\\
For $ n \geq 9 $, the cluster algebra is not of finite cluster type, but it is part of a generalization of star cluster algebras (as introduced in~\cite[Section~6]{BFMS})
since the quiver corresponding to $ B $ (Remark~\ref{Rk:Quiver_view}) is a star with three rays of different lengths.  
The full generalizations of star cluster algebras and the investigation of their singularity theory is another interesting topic,
but due to its complexity, it will be the topic of future work. 

Following Theorem~\ref{Thm:Presentation}, there is the following presentation of $ \cA_\c^\prin (E_n) $:
\begin{equation}
	\label{eq:En_pres}
	\cA_\c^\prin (E_n)
	\cong \KK_\c [\x,\y] / I \ , 
\end{equation}
where $ I $ is the ideal generated by 
\begin{equation}
	\label{eq:En_pres_gen}
	\left\{ 
		\begin{array}{rll}
			g_1 := & x_1 y_1 - c_1 - x_2
			\ ,
			\\[3pt]
			g_k := & x_k y_k - c_k x_{k-1} -  x_{k+1}
			\ ,
			& k \in \{ 2, \ldots, n-4\} 
			\ ,	
			\\[3pt]
			g_{n-3} := & x_{n-3} y_{n-3} - c_{n-3} x_{n-4} - x_{n-2} x_{n-1} 
			\ ,
			\\[3pt]
			g_{n-2} := & x_{n-2} y_{n-2} - c_{n-2} x_{n-3} - 1 
			\ ,	
			\\[3pt]
			g_{n-1} := & x_{n-1} y_{n-1} - c_{n-1} x_{n-3} - x_n
			\ , 
			\\[3pt]
			g_{n} := & x_{n} y_{n} - c_n x_{n-1} - 1 
			\ ,			
		\end{array}
	\right. 
\end{equation}
where $(\c,\x,\y) = (c_1, \ldots, c_n; x_1, \ldots, x_n; y_1, \ldots, y_n) $.

\begin{lemma}
	\label{Lem:En_2_eq} 
	There is an isomorphism \bs{over $ \KK_\c$}
	\[
	\cA^\prin_\c(E_n) \cong 
	\KK_\c[z_1, \ldots, z_{n-2}, u_1, \ldots, u_5 ]/ \langle h_1, h_2, h_3 \rangle 
	\ ,
	\]
	where
	\[
	\begin{array}{l}
		h_1 :=
		P_{n-2}(z_1, \ldots, z_{n-2}) - u_3 P_2 (u_1,u_2)
		\ ,
		
		\\[8pt]
		
		h_2 := 
		u_3 u_4  - P_{n-3} (z_1,\ldots, z_{n-3}) - \lambda_{n-2}(\c)
		\ ,
		
		\\[8pt]
		
		h_3 := 
		P_{3} (u_1, u_2, u_5)  - P_{n-3} (z_1,\ldots, z_{n-3})
		\ . 
	\end{array} 
	\]
	for $ \lambda_{n-2}(\c) $ as defined in
	Definition~\ref{Def:ContPoly_lamda_n}(2)
	and $ P_\ell $ the continuant polynomials of Section~\ref{Subsec:Cont}.
\end{lemma}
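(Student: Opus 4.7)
The plan is to follow the same blueprint as in Proposition~\ref{Prop:An_hypersurface} and Lemmas~\ref{Lem:Bn_2_eq}, \ref{Lem:Cn_hypersurface}, \ref{Lem:Dn_2_eq}, adapting the bookkeeping to the tripod shape of the $E_n$-quiver: a long arm $1 \to 2 \to \cdots \to (n-3)$, a single pendant vertex $n-2$ attached to $n-3$, and a two-step arm $(n-3) \to (n-1) \to n$. First I would rescale the long-arm variables via $\widetilde{x}_k := \lambda_{k-1}(\c)\, x_k$ and $\widetilde{y}_k := \lambda_k(\c)\,\lambda_{k-1}(\c)^{-1}\, y_k$ for $k \in \{1,\ldots,n-3\}$, as in~\eqref{eq:new_var_lambda}. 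Up to an invertible factor, the exchange relations $g_1, \ldots, g_{n-4}$ become the coefficient-free form $\widetilde{x}_k \widetilde{y}_k - \widetilde{x}_{k-1} - \widetilde{x}_{k+1} = 0$ (with the convention $\widetilde{x}_0 = 1$). By Lemma~\ref{Lem:ContFacts}(3) these recursions let me eliminate $\widetilde{x}_2, \ldots, \widetilde{x}_{n-3}$ through the continuant substitutions $\widetilde{x}_k = P_k(\widetilde{x}_1, \widetilde{y}_1, \ldots, \widetilde{y}_{k-1})$. After renaming the surviving long-arm variables as $z_1, \ldots, z_{n-2}$ (in the order $\widetilde{x}_1, \widetilde{y}_1, \ldots, \widetilde{y}_{n-3}$), I would have $\widetilde{x}_{n-3} = P_{n-3}(z_1, \ldots, z_{n-3})$ and $\widetilde{y}_{n-3} = z_{n-2}$.

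The key observation for the branches is that $g_n$ is linear in $x_{n-1}$, so it can be solved as $x_{n-1} = c_n^{-1}(x_n y_n - 1)$. I would then rescale the remaining branch variables $x_{n-2}, y_{n-2}, x_n, y_n, y_{n-1}$ by suitable monomials in $\c$, renaming them $u_3, u_4, u_1, u_2, u_5$, in such a way that the rescaled $x_{n-1}$ becomes exactly $u_1 u_2 - 1 = P_2(u_1, u_2)$. Plugging $x_{n-1} = P_2(u_1, u_2)$ into $g_{n-3}$ and using the identity $P_{n-2}(z_1,\ldots,z_{n-2}) = z_{n-2} P_{n-3} - P_{n-4}$ coming from Lemma~\ref{Lem:ContFacts}(3) produces $h_1 = P_{n-2} - u_3\, P_2(u_1, u_2)$. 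A direct rescaling of $g_{n-2}$ yields $h_2 = u_3 u_4 - P_{n-3} - \lambda_{n-2}(\c)$. Finally, substituting $x_{n-1} = P_2(u_1, u_2)$ into $g_{n-1}$ and simplifying via $P_3(u_1, u_2, u_5) = u_1 u_2 u_5 - u_1 - u_5$ gives $h_3 = P_3(u_1, u_2, u_5) - P_{n-3}$.

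The dimension count agrees: from the $2n$ original generators I eliminate $x_2, \ldots, x_{n-3}$ using $n-4$ chain relations, together with $x_{n-1}$ using $g_n$, leaving $n+3$ generators modulo the three relations $h_1, h_2, h_3$. The main technical obstacle will be to pin down the exact $\KK_\c$-scalars absorbed into each of $u_1, \ldots, u_5$ so that all three rewritten relations land simultaneously in the stated normalised form. The asymmetry that $h_2$ retains the constant $\lambda_{n-2}(\c)$ while $h_1$ and $h_3$ are $\c$-free forces a careful distribution of the $\lambda_k(\c)$-factors among the branch variables, and verifying that the $z_{n-2}$-coefficient in the substituted $g_{n-3}$ produces precisely $P_{n-2}$ (rather than a scalar multiple thereof) is what determines the choice uniquely.
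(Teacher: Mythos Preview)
Your plan is correct and matches the paper's proof essentially step for step: the same rescaling $\widetilde{x}_k = \lambda_{k-1}(\c)\,x_k$, $\widetilde{y}_k = \lambda_k(\c)\lambda_{k-1}(\c)^{-1}\,y_k$ on the long arm, the same elimination of $x_{n-1}$ via $g_n$ to produce $P_2(u_1,u_2)$, and the same identification $(u_1,u_2,u_3,u_4,u_5)\leftrightarrow(\widetilde{x}_n,\widetilde{y}_n,\widetilde{x}_{n-2},\widetilde{y}_{n-2},\widetilde{y}_{n-1})$. The paper simply writes out the explicit branch rescalings you anticipate needing (e.g.\ $\widetilde{x}_{n-2}=c_n^{-1}\lambda_{n-3}(\c)\,x_{n-2}$, $\widetilde{x}_n=c_{n-1}^{-1}\lambda_{n-4}(\c)\,x_n$, etc.), which confirms that the ``careful distribution of the $\lambda_k(\c)$-factors'' you flag as the main technical point is indeed routine once one commits to it.
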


\begin{proof}
	We define
	\[
		\widetilde x_k := x_k \lambda_{k-1}(\c)
		\hspace{10pt}
		\mbox{ and } 
		\hspace{10pt}
		\widetilde y_k := y_k \lambda_k(\c) \lambda_{k-1}(\c)^{-1},
		\hspace{10pt}
		\mbox{for } 
		k \in \{ 1, \ldots, n-3 \} \ ,	
	\]
	so that the first $ k - 4 $ exchange relations in \eqref{eq:En_pres_gen} become 
	(after multiplication by an invertible factor)
	\begin{equation} 
		\label{eq:En_afterchange_1} 
		\widetilde x_1 \widetilde y_1  - 1  - \widetilde x_2 
		\ = \ 
		\widetilde x_{k} \widetilde y_{k}  
		- \widetilde x_{k-1}  
		- \widetilde x_{k+1} 
		\ = \ 
		0
		\ , 
		\hspace{10pt}
		\mbox{ for } k \in \{ 2, \ldots, n -4 \}
		\ . 
	\end{equation} 
	Furthermore, by setting 
	\[ 
	\begin{array}{ll} 
		\widetilde x_{n-2} := x_{n-2} c_{n}^{-1}\lambda_{n-3}(\c)
		\ , \ \
		&
		\widetilde y_{n-2} := y_{n-2} c_n \lambda_{n-2}(\c) \lambda_{n-3}(\c)^{-1}
		\ ,
		
		\\[8pt]
		\widetilde x_{n-1} := c_n x_{n-1} 
		\ , \ \ 
		& \widetilde y_{n-1} := y_{n-1} c_n^{-1} c_{n-1}^{-1}\lambda_{n-4}(\c)
		\ ,
		
		\\[8pt] 
		
		\widetilde x_n := x_n c_{n-1}^{-1} \lambda_{n-4}(\c)
		\ , \ \ 
		&
		\widetilde y_n := y_n c_{n-1} \lambda_{n-4}(\c)^{-1}
		\ ,
		
	\end{array} 
	\]  
	the remaining relations are equivalent to
	\[
		\begin{array}{l} 
		h_1 := \widetilde x_{n-3} \widetilde y_{n-3} - \widetilde x_{n-4} - \widetilde x_{n-2} \widetilde x_{n-1} = 0
		\ , 
		
		\\[8pt]
		
		h_2 :=  \widetilde x_{n-2} \widetilde y_{n-2} - \widetilde x_{n-3} - \lambda_{n-2} (\c)
		= 0 
		\ ,

		\\[8pt]
		
		h_3 :=  \widetilde x_{n-1} \widetilde y_{n-1} - \widetilde x_{n-3} - \widetilde x_n
		= 0 
		\ ,
		
		\\[8pt]
		
		\widetilde x_n \widetilde y_n - \widetilde x_{n-1} - 1 = 0 
		\ . 
		\end{array}  
	\]
	The last equation leads to the substitution $ \widetilde x_{n-1} = P_2 (\widetilde x_n, \widetilde y_n) $.
	On the other hand, \eqref{eq:En_afterchange_1}  provides the substitutions
	$ \widetilde x_k = P_k (\widetilde x_1, \widetilde  y_1, \ldots, \widetilde  y_{k-1}) $, 
	for $ k \in \{ 2, \ldots, n-3 \} $. 
	Performing these substitutions to $ h_1, h_2, h_3 $ shows the assertion.
\end{proof} 

Using the deduced presentation, we prove the following results on the singularities 
which implies Theorem~\ref{Thm:E} in the special case $ n \in \{ 6,7,8\} $.

\begin{Thm}
	\label{Thm:E_prin}
	The fibers $ \Spec ( \cA_\c^\prin (E_n) )_\eta $ of the family $\phi \colon \Spec ( \cA_\c^\prin (E_n) ) \to S $ is singular if and only if 
	$ n - 3 = 2m $ is even and $ \lambda_{n-2} (\eta)  = (-1)^{m+1} $.
	In the singular case, the singular locus is a regular irreducible surface $ Y $ 
	and locally at $ Y $, the variety $ \Spec ( \cA_\c^\prin (E_n) )_\eta $ is isomorphic to a cylinder over an $ (n-2) $-dimensional hypersurface singularity of type $ A_1 $. 
\end{Thm}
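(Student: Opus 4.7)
The strategy is to apply the Jacobian criterion to the presentation $\cA^\prin_\c(E_n) \cong \KK_\c[z_1, \ldots, z_{n-2}, u_1, \ldots, u_5]/\langle h_1, h_2, h_3 \rangle$ from Lemma~\ref{Lem:En_2_eq}. Since the quotient has $n+3$ variables and codimension $3$, I would analyze the $3 \times 3$ minors of $\Jac(h_1, h_2, h_3)$ with respect to $(z_1, \ldots, z_{n-2}, u_1, \ldots, u_5)$. The key structural observations are that $u_4$ appears only in $h_2$, with $\partial h_2/\partial u_4 = u_3$ and $\partial h_2/\partial u_3 = u_4$; that $u_5$ appears only in $h_3$, with $\partial h_3/\partial u_5 = P_2(u_1, u_2) = u_1 u_2 - 1$; that $u_1, u_2$ enter $h_1$ only through the factor $u_3 \cdot P_2(u_1, u_2)$; and that $\partial h_1/\partial z_{n-2} = P_{n-3}(z_1, \ldots, z_{n-3})$, which equals $-\lambda_{n-2}(\eta) \ne 0$ on the fiber once $u_3 u_4 = 0$. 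The $3 \times 3$ minors built from the column $z_{n-2}$ together with $u_3$ and one of $u_1, u_2, u_5$ produce the three relations $u_4 \cdot P_2(u_1, u_2) = u_4 \cdot P_2(u_2, u_5) = u_4 \cdot u_1 u_5 = 0$, which force $u_4 = 0$ since the system $u_1 u_2 = u_2 u_5 = 1$, $u_1 u_5 = 0$ has no solutions. The analogous argument with the column $(z_{n-2}, u_4, \ast)$ in place of $(z_{n-2}, u_3, \ast)$ then forces $u_3 = 0$.

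Once $u_3 = u_4 = 0$, equation $h_2 = 0$ becomes $P_{n-3}(z_1, \ldots, z_{n-3}) + \lambda_{n-2}(\eta) = 0$, and the remaining minors---such as $(z_i, z_{n-2}, u_2)$, whose determinant is $\pm u_1 u_5 \, \lambda_{n-2}(\eta) \, \partial P_{n-3}/\partial z_i$, with variants using $P_2(u_1, u_2)$ or $P_2(u_2, u_5)$ at the exceptional points where $u_1 u_5 = 0$---force all partial derivatives $\partial P_{n-3}/\partial z_i$ to vanish at the singular point. Hence the singular locus is non-empty precisely when $V(P_{n-3}(z_1, \ldots, z_{n-3}) + \lambda_{n-2}(\eta))$ is singular, which by Proposition~\ref{Prop:ContSing} happens if and only if $n-3 = 2m$ and $\lambda_{n-2}(\eta) = (-1)^{m+1}$. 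In the singular case one has $z_1 = \cdots = z_{n-3} = 0$, and then $h_1 = 0$ reduces to $-\lambda_{n-2}(\eta) z_{n-2} = 0$ (since $P_{n-4}$ has vanishing constant term when $n-4$ is odd), forcing $z_{n-2} = 0$, while $h_3 = 0$ cuts out the hypersurface $P_3(u_1, u_2, u_5) + \lambda_{n-2}(\eta) = 0$. This hypersurface is irreducible and smooth---its three partials $P_2(u_2, u_5)$, $u_1 u_5$, $P_2(u_1, u_2)$ cannot vanish simultaneously on it---so $\Sing(\Spec(\cA^\prin_\c(E_n))_\eta)$ identifies with a regular irreducible surface $Y$. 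The main obstacle will be the careful case-by-case elimination of the exceptional branches in the Jacobian analysis, in particular ruling out putative singular components where $u_3 \neq 0$ or where several $P_2$-factors entering the minor formulas vanish simultaneously.

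For the local structure along $Y$, fix $q \in Y$ and use Proposition~\ref{Prop:ContSing} to choose local coordinates $(w_1, \ldots, w_{2m})$ near $z = 0$ in which $P_{n-3}(z_1, \ldots, z_{n-3}) + \lambda_{n-2}(\eta) = \sum_{i=1}^m w_{2i-1} w_{2i}$. In these coordinates $h_2$ takes the form $u_3 u_4 - \sum_{i=1}^m w_{2i-1} w_{2i}$, an $A_1$-hypersurface singularity in the $n - 1$ variables $(u_3, u_4, w_1, \ldots, w_{2m})$. Since $\partial h_3/\partial u_5 = P_2(u_1, u_2)$ is a unit at $q$, the equation $h_3 = 0$ locally eliminates $u_5$; similarly $\partial h_1/\partial z_{n-2} = P_{n-3}(z_1, \ldots, z_{n-3})$ is a unit at $q$, so $h_1 = 0$ eliminates $z_{n-2}$. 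What remains is $h_2 = 0$ in the $n + 1$ variables $(u_1, u_2, u_3, u_4, w_1, \ldots, w_{2m})$, exhibiting $\Spec(\cA^\prin_\c(E_n))_\eta$ locally at $q$ as a cylinder in the two directions $(u_1, u_2)$ over the $(n-2)$-dimensional hypersurface singularity of type $A_1$ given by $u_3 u_4 = \sum_{i=1}^m w_{2i-1} w_{2i}$. Since all resulting local equations have integer coefficients, no $p$-basis considerations enter and the relative singular locus coincides with the absolute one.
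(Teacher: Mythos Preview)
Your approach is essentially the paper's, and the local analysis of the singularity type is correct. There is, however, one genuine gap and one minor oversight.

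\textbf{The gap.} Your minors $(z_{n-2}, u_3, u_j)$ and $(z_{n-2}, u_4, u_j)$ for $j \in \{1,2,5\}$ actually yield $P_{n-3} \cdot u_4 \cdot (\ast) = 0$ and $P_{n-3} \cdot u_3 \cdot (\ast) = 0$, not the relations you wrote with the factor $P_{n-3}$ dropped. So at a singular point you only get: either $P_{n-3}(z_1, \ldots, z_{n-3}) = 0$ or $u_3 = u_4 = 0$. You flag the first branch as an ``obstacle'' but never dispose of it; and it is not vacuous, since on the fiber $h_2 = 0$ gives $P_{n-3} = u_3 u_4 - \lambda_{n-2}(\eta)$, so $P_{n-3} = 0$ forces $u_3, u_4$ to be units. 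The paper closes this case by a different opening move: from the $u$-columns alone it first extracts $u_3\,P_2(u_1, u_2) = 0$, and in the sub-case $P_2(u_1,u_2)=0$ with $u_3$ a unit it obtains $P_{n-2}(z) = 0$ from $h_1$ and then $P_{n-3}(z) = 0$ from the minor $(u_\ell, u_4, z_{n-2})$, which is impossible by the continuant recursion $P_{n-2} = z_{n-2} P_{n-3} - P_{n-4}$ (iterated down to $P_1, P_0$). You need an argument of this kind; without it the putative component with $u_3, u_4$ units is not excluded.

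\textbf{The oversight.} In the local-structure paragraph you eliminate $u_5$ via $\partial h_3/\partial u_5 = P_2(u_1, u_2)$, but this is not a unit at every point of $Y$: the locus $u_1 u_2 = 1$ meets $Y$ in a curve. The fix (which the paper uses) is to note that the three partials $P_2(u_2, u_5),\, u_1 u_5,\, P_2(u_1,u_2)$ of $P_3$ never vanish simultaneously---you already used this fact---so at each point of $Y$ one of $u_1, u_2, u_5$ can be eliminated; since none of them appears in $h_2$, the conclusion is unchanged.
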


\begin{proof}
	In order to determine the singular locus, 
	consider the presentation deduced in Lemma~\ref{Lem:En_2_eq} 
	$ \cA^\prin_\c(E_n) \cong 
	\KK_\c[z_1, \ldots, z_{n-2}, u_1, \ldots, u_5 ]/ \langle h_1, h_2, h_3 \rangle $.
	\\
	Let $ \Jac(E_n) := \Jac (h_1, h_2, h_3; z_1, \ldots, z_{n-2}, u_1, \ldots, u_5 ) $ be the corresponding Jacobian matrix of the fiber above $ \eta $.
	The singular locus $ \Sing ( \cA_\c^\prin (E_n)_\eta ) $ is determined by the vanishing of the $ 3 $-minors of $ \Jac(E_n) $. 
	The sub-matrix whose columns are given by derivatives with respect to $ (u_1, \ldots, u_5) $ is 
	\[
		\begin{pmatrix}
			- u_2 u_3 & - u_1 u_3 & - P_2 (u_1, u_2) & 0 & 0 
			\\
			0 & 0 & u_4 & u_3 & 0 
			\\
			u_2 u_5 - 1 & u_1 u_5 & 0 & 0 & u_1 u_2 - 1 
		\end{pmatrix}
	\ . 
	\] 
	It is impossible that $ u_2 u_5 - 1 $ and $ u_1 u_5 $ and $ u_1 u_2 - 1  $ vanish at the same time. 
	Because of this, the last row has a non-zero entry.
	Hence, we must have $ u_3 P_2 (u_1, u_2) = 0 $ at a singular point.
	\\
	Suppose $ u_3 = 0 $. 
	Taking this into account in $ h_2 = 0 $, we obtain that the latter is equivalent to $ P_{n-3} (z_1, \ldots, z_{n-3} ) + \lambda_{n-2} (\eta) = 0 $.
	In particular, $ P_{n-3} (z_1, \ldots, z_{n-3} )  = \frac{\partial h_1}{\partial z_{n-2}} $ is invertible and taking $ u_3 = 0 $ into account, we see that $ h_1 = 0 $ is equivalent to $ z_{n-2} = 0 $.
	Furthermore, since $ z_{n-2} $ appears only in $ h_1 $, we get that at a singular point, we conditions 
	\[
		u_4 = 0 
		\ \ \mbox{ and } \ \
		\frac{\partial P_{n-3} (z_1, \ldots, z_{n-3})}{\partial z_k} = 0 
		\ , 
		\ \ \ 
		\mbox{ for } 
		k \in \{ 1, \ldots, n-3 \}
		\ ,
	\]
	have to hold. 
	This implies that the only component possibly appearing in the singular locus of $ \Spec (\cA_\c^\prin (E_n))_\eta $ is
	\[
		Y := V(z_{n-2}, u_3, u_4, P_{3} (u_1, u_2, u_5) + \lambda_{n-2} (\eta)) \cap \Sing ( V (P_{n-3} (z_1, \ldots, z_{n-3} ) + \lambda_{n-2} (\eta) ) ).
	\]
	
	Before getting more into details with $ Y $, 
	let us discuss the case where $ P_2 (u_1, u_2) = 0 $.
	As an immediate consequence, we see that $ h_1 = 0 $ is equivalent to $ P_{n-2} (z_1, \ldots, z_{n-2} ) = 0 $ under this additional hypothesis.
	In order to avoid detecting $ Y $ again, we may assume that $ u_3 $ is invertible. 
	\\
	Recall that there exists at least one $ \ell \in \{ 1, 2, 5 \} $  such that the last entry of the column corresponding to the derivatives by $ u_\ell $ is invertible. 
	By considering the minor coming from the derivatives by $ (u_\ell, u_4, z_{n-2} ) $,
	we obtain the condition
	$ P_{n-3}(z_1, \ldots, z_{n-3}) = 0 $
	has to hold in order to have a singularity (not lying on $ u_3 = 0 $).
	\\
	By the same argument as outlined at the end of the proof of Lemma~\ref{Lem:Dn_Sing},
	it is impossible to have   $ P_{n-2} (z_1, \ldots, z_{n-2} ) = 0 $
	and $ P_{n-3}(z_1, \ldots, z_{n-3}) = 0 $.
	Therefore, there is no other component than $ Y $ in the singular locus. 
	
	Let us take a closer look at $ Y $.
	First, $ \Sing ( V (P_{n-3} (z_1, \ldots, z_{n-3} ) + \lambda_{n-2} (\eta) ) ) $ is non-empty if and only if 
	by $ n - 3 = 2m $ is even and $ \lambda_{n-2} (\eta)  = (-1)^{m+1} $ by Proposition~\ref{Prop:ContSing}.
	In the singular case, $ V (P_{n-3} (z_1, \ldots, z_{n-3} ) + \lambda_{n-2} (\eta) ) $
	has an isolated singularity of type $ A_1 $ at $ V (z_1, \ldots, z_{n-3}) $.
	\\
	On the other hand, $ V ( P_{3} (u_1, u_2, u_5) + \lambda_{n-2} (\eta) ) $ is regular (by Proposition~\ref{Prop:ContSing}).
	Therefore, $ Y $ is non-empty if and only if  $ n - 3 = 2m $ is even and $ \lambda_{n-2} (\eta)  = (-1)^{m+1} $,
	and if $ Y \neq \varnothing $, then $ Y = V (z_1, \ldots, z_{n-2}, u_3, u_4, P_{3} (u_1, u_2, u_5) + \lambda_{n-2} (\eta)) $ is a regular surface. 
	
	In order to classify the singularity locally at $ Y $,
	observe that 
	\[  
		h_1 = z_{n-2} P_{n-3}(z_1, \ldots, z_{n-3}) - P_{n-4}(z_1, \ldots, z_{n-4}) - u_3 P_2 (u_1,u_2) 
	\] 
	and since $ P_{n-3}(z_1, \ldots, z_{n-3}) $ is invertible locally at $ Y $, 
	we may eliminate the variable $ z_{n-2} $ using $ h_1 = 0 $ without affecting the equations $ h_2 = h_3 = 0 $. 
	A similar argument applies for $ h_3 $ as the term $ P_3 (u_1, u_2, u_5 ) $ leads to a new local variable replacing one of $ \{ u_1, u_2, u_5 \} $
	and since none of the three appears in $ h_2 $, we may drop $ h_3 = 0 $ without any effects on the classification problem. 
	We are left with
	$ h_2 = 
	u_3 u_4  - P_{n-3} (z_1,\ldots, z_{n-3}) - \lambda_{n-2}(\eta) = 0 $
	and using Proposition~\ref{Prop:ContSing}, we see that this is a hypersurface singularity of type $ A_1 $.
	The assertion follows. 
	
	Notice that all coefficients in $ h_1, h_2, h_3 $ are contained in $ \{-1,0,1\} $ in the singular case. 
	Therefore, the considerations in the relative setting already provide the singular locus. 
\end{proof}

\subsection{Type $ \boldsymbol{F_4, G_2} $}

Finally, we come to the two missing finite cluster type cases $ F_4 $ and $ G_2 $. 

For type $ F_4 $, 
we choose the initial labeled seed $ \Sigma = (\x, B) $ 
with exchange matrix 
\[
	B = 
	\begin{pmatrix} 
	0 & 1 \\ 
	-1 & 0 &1 \\ 
	& -2 & 0 & 1\\  
	& & -1 & 0 
	\end{pmatrix} 
	\ ,
\]
as in 
Theorem~\ref{Thm:FinTypClass}.
This leads to the following presentation for $ \cA^\prin_\c(F_4) := \cA^\prin_\c(\Sigma) $ (using~Theorem~\ref{Thm:Presentation})
	\begin{equation}
\label{eq:F4_exchange} 
\cA^\prin_\c(F_4) \cong
\KK_\c [\x,  \y]
\hspace{-0.15cm}\raisebox{-0.2cm}{$\bigg/$}
\hspace{-0.2cm}\raisebox{-0.5cm}%
{$\bigg\langle
	\begin{minipage}{6.5cm}
	$ 
	\begin{array}{ll}
	x_1 y_1 - c_1 -  x_2
	\, 
	, 
	&
	x_2 y_2 - c_2 x_1 -  x_3^2 
	\,
	, 
	\\[2pt]
	x_3 y_3 - c_3 x_2 - x_4 
	\,
	, 
	&
	x_4 y_4 - c_4 x_3 -  1
	\end{array} 
	$
	\end{minipage}
	\, \bigg\rangle
	\ ,
	$} 
\end{equation} 	
where $ \c = (c_1, \ldots, c_4) , \x = (x_1, \ldots, x_4), \y = (y_1, \ldots, y_4 ) $. 

\begin{lemma}
	\label{Lem:F_4} 
	The cluster algebra 
	$ \cA^\prin_\c(F_4) $ is isomorphic to a trivial family over $ \KK_\c $,
	where each fiber is isomorphic to the corresponding cluster algebra
	$ \cA(F_4) $
	with trivial coefficients. 
\end{lemma}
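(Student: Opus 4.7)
The plan is to exhibit a monomial rescaling of the cluster variables, with scaling factors that are Laurent monomials in $\c$, which eliminates every coefficient $c_k$ from the four exchange relations of \eqref{eq:F4_exchange} and reduces them to the defining relations of $\cA(F_4)$ (i.e.~the $c_k = 1$ case). This is the same technique used in Proposition~\ref{Prop:An_hypersurface} and Lemmas~\ref{Lem:Bn_2_eq}, \ref{Lem:Cn_hypersurface}, \ref{Lem:Dn_2_eq}, and~\ref{Lem:En_2_eq}, and will directly establish the trivial-family isomorphism $\cA^\prin_\c(F_4) \cong \KK_\c \otimes_\KK \cA(F_4)$.

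Concretely, I would introduce new variables $\widetilde{x}_k := \alpha_k x_k$ and $\widetilde{y}_k := \beta_k y_k$ for $k \in \{1,2,3,4\}$, with $\alpha_k, \beta_k \in \KK_\c^\times$ to be determined. Substituting into the $k$-th exchange relation and multiplying the resulting equation by an appropriate Laurent monomial in $\c$, one demands that each of the (two or three) coefficients in the rescaled polynomial equals $1$. This produces a small, explicitly solvable system: working from the extremal relations inward, one obtains $\alpha_2 = c_1^{-1}$ from $x_1 y_1 = c_1 + x_2$ and $\alpha_3 = c_4$ from $x_4 y_4 = c_4 x_3 + 1$; these then force $\alpha_1 = c_2\alpha_3^2 = c_2 c_4^2$ (from the second relation) and $\alpha_4 = c_3^{-1}\alpha_2 = c_1^{-1}c_3^{-1}$ (from the third), and each $\beta_k$ is uniquely determined by the leading-coefficient constraint. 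Since every $\alpha_k$ and $\beta_k$ is a unit in $\KK_\c$, the map $x_k \mapsto \alpha_k^{-1}\widetilde{x}_k$, $y_k \mapsto \beta_k^{-1}\widetilde{y}_k$ is an automorphism of $\KK_\c[\x,\y]$ over $\KK_\c$ which carries the ideal of exchange relations of $\cA^\prin_\c(F_4)$ onto the scalar extension $\KK_\c \otimes_\KK I$, where $I$ is the defining ideal of $\cA(F_4)$.

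The only point requiring care is the second exchange relation, where the entry $-2$ of the exchange matrix produces the quadratic term $x_3^2$ and hence the constraint $\alpha_3^2 = c_2^{-1}\alpha_1$; this does not force any square-root extension, because $\alpha_1$ is still free at that stage and one simply defines $\alpha_1 := c_2\alpha_3^2$. In particular, the argument works over an arbitrary ground field $\KK$, with no characteristic or perfectness assumption. The second assertion of the proposition, that each fiber is isomorphic to a regular hypersurface in $\AA_\KK^5$, is then immediate from the trivial-family isomorphism together with the known description of $\Spec(\cA(F_4))$ from \cite{BFMS}.
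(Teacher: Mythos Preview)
Your proposal is correct and follows essentially the same approach as the paper: both exhibit the monomial rescaling $\widetilde{x}_k = \alpha_k x_k$, $\widetilde{y}_k = \beta_k y_k$ with the very scaling factors you derived ($\alpha_1 = c_2 c_4^2$, $\alpha_2 = c_1^{-1}$, $\alpha_3 = c_4$, $\alpha_4 = c_1^{-1}c_3^{-1}$), and then verify by direct substitution that each exchange relation becomes, up to a unit in $\KK_\c$, the corresponding trivial-coefficient relation. Your added remark on why the quadratic term $x_3^2$ causes no obstruction is a helpful clarification, but otherwise the arguments coincide.
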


\begin{proof}
	We introduce 
	\[
	\begin{array}{llll}
		\widetilde x_1 := c_2 c_4^2 x_1
		\, ,
		&
		\widetilde x_2 := c_1^{-1} x_2 
		\, ,
		&
		\widetilde x_3 := c_4 x_3 
		\, ,
		&
		\widetilde x_4 := c_1^{-1} c_3^{-1} x_4
		\, , 
		\\[2pt]
		
		\widetilde y_1 := c_1^{-1} c_2^{-1} c_4^{-2} y_1
		\, ,
		\ \ \ 
		&
		\widetilde y_2 := c_1 c_4^2 y_2 
		\, ,
		\ \ \ 
		&
		\widetilde y_3 := c_1^{-1} c_3^{-1} c_4^{-1} y_3 
		\, ,
		\ \ \ 
		&
		\widetilde y_4 := c_1 c_3 y_4
		\, .
	\end{array} 
	\]
	By substitution, 
	we get 
	\[
		\begin{array}{l}
		x_1 y_1 - c_1 -  x_2
		\, 
		= \,
		c_1 ( \widetilde x_1 \widetilde  y_1 - 1 -  \widetilde  x_2 )
		\, , 
		\\[5pt]
		x_2 y_2 - c_2 x_1 -  x_3^2 
		\,
		=
		\,
		c_4^{-2}
		( \widetilde x_2 \widetilde y_2 - \widetilde x_1 -  \widetilde x_3^2 )
		\, ,  
		\\[5pt]
		x_3 y_3 - c_3 x_2 - x_4 
		\,
		=
		\,
		c_1 c_3
		( \widetilde x_3 \widetilde y_3 - \widetilde x_2 - \widetilde x_4)
		\, ,
		\\[5pt]
		x_4 y_4 - c_4 x_3 -  1
		\,
		=
		\,
		 \widetilde x_4 \widetilde y_4 - \widetilde x_3 -  1
		\, .
		\end{array} 
	\]
	Hence, up to multiplication by invertible elements,
	the relations are the same as for 
	$ \cA(F_4) $.
	The assertion follows. 
\end{proof}

Proposition~\ref{Prop:F} is now an immediate consequence of Lemma~\ref{Lem:F_4} 
and \cite[Theorem~A(7), resp.~Lemma~5.7]{BFMS}.

\medskip 

Let us turn to 
the finite cluster type $ G_2 $.
By Theorem~\ref{Thm:FinTypClass}, we take as the corresponding exchange matrix 
\[
	B = 
	\begin{pmatrix} 
	0 & 1 \\ 
	-3 & 0 
	\end{pmatrix}
\]
and Theorem~\ref{Thm:Presentation} provides the presentation
\[
	 \cA^\prin_\c(G_2)
	 \cong
	 \KK_{(c_1,c_2)}[x_1, x_2, y_1, y_2]/
	 \langle 
	 \, 
	 x_1 y_1 - c_1 - x_2^3
	 \, 
	 , 
	 \
	 x_2 y_2 - c_2 x_1 - 1
	 \,
	 \rangle  
	 \ . 
\]

\begin{prop}[Proposition \ref{Prop:G}]
	\label{Prop:G_prin}
	There is an isomorphism 
	\[	
	\cA^\prin_\c(G_2)
	\cong
	\KK_{(c_1,c_2)}[x, y, z]/
	\langle 
	\, 
	xyz -
	y -  c_1 - x^3
	\,
	\rangle  
	\ .
	\] 
		A fiber above $ \eta \bsOUT{= (\eta_1, \eta_2)} \in S $ of the family defined by 
		$ \phi \colon \Spec(\cA_\c^\prin (G_2))\to S $ 
	is singular if and only if $ \car(\KK) = 3 $ and $\eta_1 \in \bs{\kappa(\eta)}^3 $ is a cubic element. 
		In the singular case, $ \Spec(\cA_\c^\prin (G_2))_\eta $  is isomorphic to a hypersurface with an isolated singularity of type $ A_2 $ at a closed point.
\end{prop}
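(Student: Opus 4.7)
The plan is to combine variable elimination with the Jacobian criterion applied to a small hypersurface equation.

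First, I would derive the claimed hypersurface presentation directly from the two exchange relations
\[
x_1 y_1 - c_1 - x_2^3 = 0, \qquad x_2 y_2 - c_2 x_1 - 1 = 0.
\]
Since $c_2$ is invertible, the second relation gives $x_1 = c_2^{-1}(x_2 y_2 - 1)$. Substituting this into the first and multiplying through by $c_2$ yields $(x_2 y_2 - 1) y_1 - c_2 c_1 - c_2 x_2^3 = 0$; dividing by $c_2$ again and renaming $x := x_2$, $y := c_2^{-1} y_1$, $z := y_2$ absorbs the remaining $c_2$'s and produces $xyz - y - c_1 - x^3 = 0$, which is the desired hypersurface.

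Next, I would apply the Jacobian criterion to $f := xyz - y - c_1 - x^3$ over the fiber $\kappa(\eta)$. The partial derivatives
\[
\partial_x f = yz - 3x^2, \qquad \partial_y f = xz - 1, \qquad \partial_z f = xy
\]
force $xz = 1$ (so $x$ is a unit), then $y = 0$, and then $3x^2 = 0$ at any singular point. If $\car(\KK) \neq 3$, this gives $x = 0$, contradicting $xz = 1$, so the fiber is regular. If $\car(\KK) = 3$, the condition $3x^2 = 0$ is vacuous, and $f = 0$ with $y = 0$ reduces to $x^3 = -\eta_1$. Since $-1 = (-1)^3$ in characteristic three, this has a solution in $\kappa(\eta)$ iff $\eta_1 \in \kappa(\eta)^3$; the injectivity of Frobenius makes that solution unique, so the singular locus is a single closed point.

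In the singular case, letting $x_0$ be the unique cube root of $-\eta_1$ and $z_0 := x_0^{-1}$, I would translate coordinates via $u := x - x_0$, $v := y$, $w := z - z_0$. Using $(x_0 + u)^3 = x_0^3 + u^3$ in characteristic three together with $x_0 z_0 = 1$, a direct expansion gives
\[
f \;=\; v\,(x_0 w + u z_0 + u w) \,-\, u^3.
\]
The factor $h := x_0 w + u z_0 + u w$ has $\partial h / \partial w = x_0 + u$, which is a unit at the origin, so $(u, v, h)$ is a regular system of parameters at the singular point and $f = v h - u^3$. After the sign change $u \mapsto -u$ this is the $A_2$ normal form $vh + u^3$, confirming that $\Spec(\cA^\prin_\c(G_2))_\eta$ has an isolated hypersurface singularity of type $A_2$ there.

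Finally, since $f$ has integer coefficients in the variables and involves $c_1$ only linearly, no $p$-basis of $\kappa(\eta)$ is needed and the relative computation already identifies the absolute singular locus. The only genuinely delicate step is the local normal-form computation in characteristic three, but the Frobenius identity $(x_0 + u)^3 = x_0^3 + u^3$ makes it essentially mechanical; everything else is elimination and bookkeeping.
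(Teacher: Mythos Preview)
Your proof is correct and follows essentially the same route as the paper's: both eliminate $x_1$ via the second exchange relation and rescale $y_1$ by $c_2^{-1}$ to reach the hypersurface $xyz - y - c_1 - x^3$, then apply the Jacobian criterion and do a local coordinate change in characteristic three. The only cosmetic difference is that the paper chooses the local coordinate $u_2 := xz - 1$ directly (which is exactly your $h = x_0 w + u z_0 + uw$ rewritten in the original variables), so their normal-form step is one line shorter; your expansion via $u,v,w$ reaches the same $A_2$ form.
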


\begin{proof}
	Defining $ \widetilde x_1 := c_2 x_1 $, 
	$ \widetilde y_1 := c_2^{-1} y_1 $, and substituting $ \widetilde x_1 = x_2 y_2 - 1 $ in the first relation
	provides the isomorphism
	\[
	\cA^\prin_\c(G_2)
	\cong
	\KK_{(c_1,c_2)}[x_2,  \widetilde y_1, y_2]/
	\langle 
	\, 
	x_2 \widetilde y_1 y_2 -
	 \widetilde y_1 -  c_1 - x_2^3
	\,
	\rangle  
	\ .
	\] 
	Consider the fiber of $ \phi $ above $ \eta \bsOUT{= (\eta_1, \eta_2)} \in S $.
	Applying the Jacobian criterion to the hypersurface $ X := V(x_2 \widetilde y_1 y_2 -
	\widetilde y_1 -  \eta_1 - x_2^3) $
	leads to 
	\[
		\Sing(X) = V (\, \widetilde y_1, \, x_2 y_2 - 1 , \, 3x_2 \, ) \cap X 
		= 
		V (\, \widetilde y_1, \, x_2 y_2 - 1 , \, 3x_2, \, x_2^3 + \eta_1  \, )
		\, .
	\]
	If $ \car(\KK) \neq 3 $, 
	then $ \Sing(X) = \varnothing $.	
	
	On the other hand, if $ \car(\KK) = 3 $,
	then we get 
	$ 
	\Sing(X) = 
	V (\, \widetilde y_1, \, x_2 y_2 - 1 , \, x_2^3 + \eta_1  \, ) $.
	Note that the latter is empty if $ \eta_1 \notin \bs{\kappa(\eta)}^3 $. 
	\\
	Suppose there is $ \delta_1 \in \bs{\kappa(\eta)} $ such that $\delta_1^3 = \eta_1  $.
	We get
	$ 
	\Sing(X) = 
	V (\, \widetilde y_1, \,  y_2 + \delta_1^{-1} , \, x_2 + \delta_1  \, ) $	
	which is a closed point, and in particular, regular. 
	Locally at the singular point, we may introduce the local variables 
	$ u_2 := x_2 y_2 - 1 $, and $ w_2 := x_2 + \delta_1 $ leads to 
	\[
		x_2 \widetilde y_1 y_2 -
		\widetilde y_1 -  \eta_1 - x_2^3
		= \widetilde y_1 u_2 - w_2^3
		\ . 
	\]
	In other words, 
	if $ \car(\KK) = 3 $ and $ \eta_1 \in \bs{\kappa(\eta)}^3 $,
	then $ \Spec(\cA_\c^\prin (G_2))_\eta $ has an isolated singularity of type $ A_2 $ at a closed point.
	\\
	Observe that the coefficients do not play a role in the computations and thus it is not necessary to consider a $ p $-basis of $ \bs{\kappa(\eta)} $.
	The result follows.  
\end{proof}

\section{Cluster algebras of rank two}
\label{Sec:Rk2}

We end by providing a glimpse into the case of cluster algebras which are not necessarily of finite cluster type.
We focus on cluster algebras of rank two over algebraically closed fields.

Recall that we impose the additional hypothesis that the base field $ \KK  $ is algebraically closed  in Theorem~\ref{Thm:rank2} (resp.~in Theorem~\ref{Thm:rk2_nonzero} below)
in order to avoid heavily technical statements. 
Nonetheless, we briefly address the differences that need to be taken into account for an arbitrary field in Remark~\ref{Rk:rk2_arbi_field}.

\subsection{Setup}
\label{Subsec:Rk2} 
The object of this section are cluster algebras of rank two with principal coefficients. 
Therefore, we consider an initial labeled seed $ \Sigma_2 = (\x, B) $ with an exchange matrix of the form 
\[
	B = \begin{pmatrix}
		0 & a \\ b & 0 
	\end{pmatrix}
\ ,
\]
where $ a, b \in \ZZ $ are integers such that either $ ab < 0 $ or $ a = b = 0 $  (cf.~\cite[Section~3.2]{FWZ2016}). 

By Theorem~\ref{Thm:Presentation}, the corresponding cluster algebra with principal coefficients $ \cA_\c^\prin ( \Sigma_2 ) $ has the presentation
\[ 
\cA_\c^\prin (\Sigma_2)
	\cong \KK_{(c_1, c_2)} [x_1, x_2,y_1, y_2] / \langle \, x_1 y_1 - c_1  - x_2^b  , \,  x_2 y_2 - c_2 x_1^a  - 1 \,  \rangle  \ , 
\]
resp.
\[ 
\cA_\c^\prin (\Sigma_2)
\cong \KK_{(c_1, c_2)} [x_1, x_2,y_1, y_2] / \langle \, x_1 y_1 - c_1 x_2^b - 1  , \,  x_2 y_2 - c_2 - x_1^a \,  \rangle  \ , 
\]
depending on the sign of $ a $ and $ b $.
Of course, $ \KK $ can be any field at this stage.  
\\
Since we always assume that $ c_1, c_2 $ are invertible, we can deduce the following presentation 
(clearly, abusing notation)
\begin{equation}
	\label{eq:rk2}	
	\cA_\c^\prin (\Sigma_2)
	\cong \KK_{(c_1, c_2)} [x_1, x_2,y_1, y_2] / \langle \, x_1 y_1 - c_1 - x_2^b   , \,  x_2 y_2 - c_2 - x_1^a \,  \rangle  \ .
\end{equation}

\begin{Bem}
	\label{Rk:non-polynomial}
	In the previous section, we implicitly classified the singularities of rank two cluster algebras of finite cluster type. 
	Hence, the new part of this section is on the non-finite cluster type. 
	\\
	Principal coefficients and universal coefficients are less similar in the non-finite cluster type cases. 
	For the universal coefficients (Remark~\ref{Rk:Univ}), 
	we have to determine all $ g $-vectors of the cluster algebra. 
	Since we have infinitely many cluster variables, 
	there are infinitely many $ g $-vectors. 
	Therefore, the cluster algebra with universal coefficients is determined by infinitely many polynomial equations in infinitely  many variables.
	In contrast to this, we have for principal and generic coefficients always polynomial exchange relations.  
	We also refer to \cite{Read} for a description of the cluster algebra with universal coefficients. 
\end{Bem}

\subsection{Theorem~\ref{Thm:rank2} and its proof}
We keep working with the description \eqref{eq:rk2}. 
Since we have only two exchange relations, we do not perform simplification steps as before,
even though it would be possible if $ a = 1 $ or $ b = 1 $.  

The geometry of $ \Spec ( \cA_\c^\prin (\Sigma_2 ))_\eta $ is rather simple from the perspective of singularities if $ a = b = 0 $. 
\bs{Depending on the images of $ c_1+1 $ and $ c_2+1$ in $ \kappa(\eta) $ the fiber is either regular, singular of type $ A_1 $, or singular of type $ A_1 \times A_1 $.}

\begin{lemma}
	\label{Lem:rk2_zero}
	Let $ \KK $ be an arbitrary field (not necessarily algebraically closed).
		Consider the fiber above $ \eta = (\eta_1, \eta_2) \in S $ of the family determined by $ \phi \colon \Spec (\cA_\c^\prin (\Sigma_2)) \to S $. 
	Using the notation of Section~\ref{Subsec:Rk2}, assume $ a = b = 0 $. 
	We have 
	\begin{enumerate}
		\item 
		If $ \eta_1 = \eta_2 = - 1 $, then 
		$ \Spec ( \cA_\c^\prin (\Sigma_2 ))_\eta $ is the union of four coordinate planes. 
		We obtain a desingularization by first blowing up the intersection of all four planes (which is the origin) followed by blowing up the strict transforms of the pairwise intersections of the planes. 
		
		\item 
		If $ \eta_1 = -1 $ and $ \eta_2 \neq - 1 $, or if $ \eta_1 \neq -1 $ and $ \eta_2 = - 1 $,
		then 
		$ \Spec ( \cA_\c^\prin (\Sigma_2 ))_\eta $ is the union of two regular surfaces intersecting in a regular curve $ C $. 
		Blowing up $ C $ separates the two surfaces and hence resolves the singularities. 
		
		\item 
		If $ \eta_1 \neq -1 $ and $ \eta_2 \neq -1 $,
		then $ \Spec ( \cA_\c^\prin (\Sigma_2 ))_\eta $ is a regular irreducible surface. 
	\end{enumerate}
\end{lemma}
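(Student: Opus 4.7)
The plan is to exploit the fact that when $a = b = 0$, the two exchange relations in \eqref{eq:rk2} decouple completely into one relation purely in the variables $(x_1, y_1)$ and another purely in $(x_2, y_2)$. Indeed, with $x_2^b = x_1^a = 1$, the fiber above $\eta = (\eta_1, \eta_2) \in S$ becomes
\[
\Spec ( \cA_\c^\prin (\Sigma_2) )_\eta \cong V ( x_1 y_1 - (1+\eta_1) ) \times_{\kappa(\eta)} V ( x_2 y_2 - (1+\eta_2) ) \subseteq \AA_{\kappa(\eta)}^4 \ .
\]
Hence, I would reduce the entire statement to a study of the two plane curves $C_i := V (x_i y_i - (1+\eta_i)) \subseteq \AA_{\kappa(\eta)}^2 $ and of the product $C_1 \times C_2$.

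First, I would observe that $ C_i $ is a smooth hyperbola if $ \eta_i \neq -1 $, while $ C_i = V (x_i) \cup V (y_i) $ is a union of two coordinate lines meeting transversally at the origin if $ \eta_i = -1 $. Using the Jacobian criterion componentwise, a product $ C_1 \times C_2 $ is regular at a point $(p_1,p_2)$ precisely when each $C_i$ is regular at $p_i$. From here, each of the three cases of the lemma follows directly:
\begin{itemize}
\item If $ \eta_1 \neq -1 $ and $ \eta_2 \neq -1 $, both curves are smooth of dimension one, so the product is a regular irreducible surface. This settles (3).
\item If exactly one $ \eta_i = -1 $, the corresponding curve splits as two coordinate lines while the other is smooth. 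The product is therefore the union of two copies of $\AA^1 \times C_j$ meeting along the smooth curve $\{0\} \times C_j \cong \AA^1$, yielding (2).
\item If $ \eta_1 = \eta_2 = -1 $, both curves are crossings of two lines, and the product decomposes into the four coordinate planes $V(x_1,x_2), V(x_1,y_2), V(y_1,x_2), V(y_1,y_2) $ of $\AA_{\kappa(\eta)}^4$, all meeting at the origin; pairwise intersections are coordinate axes. This is (1).
\end{itemize}

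For the desingularization statements I would argue as follows. In case (2), along the intersection curve $ C := V(x_1, y_1, x_2 y_2 - (1+\eta_2)) $, the local situation is the transverse meeting of two smooth surfaces along a smooth curve (namely $V(x_1 y_1)$ in $\AA^2$ times $C$), which is resolved by a single blowup with center $C$: the strict transforms of the two components become disjoint. In case (1), the intersection of all four planes is the origin; blowing it up separates the $4$-fold crossing into four surfaces pairwise meeting along the strict transforms of the six coordinate axes (which are now disjoint pairs of smooth curves), and blowing these up in turn separates the components completely, as each successive blowup is modeled on the local picture from case (2).

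The only genuinely delicate point will be verifying that blowing up the strict transforms of the six pairwise intersections in (1) can indeed be performed simultaneously (or in any order) without creating new singularities. Since these six curves are disjoint after the first blowup (as they meet only at the origin of the chart before blowing up), they can be blown up independently, and each local computation is the same as in case (2). Hence no new obstacle arises, and the desingularization statements follow.
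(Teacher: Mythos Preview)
Your approach is essentially the paper's: both start from the presentation $\cA_\c^\prin(\Sigma_2)\cong\KK_\c[x_1,x_2,y_1,y_2]/\langle x_1y_1-c_1-1,\;x_2y_2-c_2-1\rangle$ and read off the geometry directly; the paper simply says ``the result follows directly'' where you spell out the product decomposition $C_1\times C_2$.

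Two small slips to correct. First, in case~(2) the smooth factor $C_j=V(x_jy_j-(1+\eta_j))$ is isomorphic to $\mathbb{G}_m=\Spec\kappa(\eta)[x_j^{\pm 1}]$, not to $\AA^1$; this does not affect your blowup argument. Second, in case~(1) there are only four coordinate axes in $\AA^4$, not six: of the $\binom{4}{2}=6$ pairs of planes, four pairs (those sharing one defining variable) meet along an axis, while the two ``opposite'' pairs $V(x_1,x_2)\cap V(y_1,y_2)$ and $V(x_1,y_2)\cap V(y_1,x_2)$ meet only at the origin. After blowing up the origin these last two pairs of strict transforms are disjoint, so the second step consists of blowing up the strict transforms of the four coordinate axes (which are indeed pairwise disjoint after the first blowup, as you note). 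With this correction your desingularization argument goes through unchanged.
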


\begin{proof}
	Since 
	$ \cA_\c^\prin (\Sigma_2)
	\cong \KK_{(c_1, c_2)} [x_1, x_2,y_1, y_2] / \langle \, x_1 y_1 - c_1 - 1  , \,  x_2 y_2 - c_2 - 1 \,  \rangle  $,
	by \eqref{eq:rk2} for $ a = b = 0 $,  
	the results follows directly. 
\end{proof}

\begin{Bem} 
Let us have a brief look into the deformation theoretic background of Lemma~\ref{Lem:rk2_zero}.
We consider the intersection of the two-dimensional cylinders over the surface family $ \Spec(\KK_{\c} [x_i,y_i]/ \langle x_i y_i - t_i \rangle ) $,
where $ i = 1 $, resp. $ i = 2 $, 
and we abbreviate $ t_i := c_i + 1 $.
\\ 
Looking at the one-dimensional family $ \mathcal{X}_t :=  \Spec ( \KK_t [x,y] / \langle xy - t \rangle ) $, 
we observe that $ \mathcal X_t $ is a regular irreducible surface for every value of $ t $, expect when $ t =  0 $. 
For $ t = 0 $, we get that $ \mathcal{X}_0 $ is the union of the two coordinate axis.
\\
Combining the observations of the last two paragraphs leads to a different perspective on Lemma~\ref{Lem:rk2_zero}. 
\end{Bem}

 \begin{Thm}
 	\label{Thm:rk2_nonzero}
 	Let $ X_\eta := \Spec (\cA_\c^\prin (\Sigma_2))_\eta $ be the fiber above $ \eta = (\eta_1, \eta_2) \in S $ of the family determined by $ \phi \colon \Spec (\cA_\c^\prin (\Sigma_2)) \to S $. 
 	\bs{	Using the notation of Section~\ref{Subsec:Rk2}, assume $ a, b \neq 0 $.}
 	Assume that $ \KK $ is algebraically closed.
 	\begin{enumerate}
 		\item 
 		The singular locus of $ X_\eta  $ consists of two disjoint components (which are not necessarily irreducible and which are possibly empty),
 		$ \Sing (X_\eta) = Y_a \, \sqcup \,  Y_ b $,
 		where	
 		\[  
 		\begin{array}{l} 
 			Y_a :=  V (a, x_1^a + \eta_2 , x_2,  x_1 y_1 - \eta_1, y_2 )
 			\ ,
 			\\[3pt]
 			Y_b :=
 			V (b, x_1, x_2^b + \eta_1, y_1,  x_2 y_2 - \eta_2 ) 
 			\ .
 		\end{array}  
 		\] 
 		\bs{Note that, up to isomorphism, $Y_a$ and $ Y_b$ are independent of $ \eta $.}  		
 		Notice that $ Y_a $ or $ Y_b $ may be empty depending on the characteristic $ p := \car(\KK) $ of $ \KK $. 
 		For instance, we have
 		\[
 		a \not\equiv 0 \mod p  \Longrightarrow Y_a = \varnothing
 		\ \
 		\mbox{ and } 
 		\ \
 		b \not\equiv 0 \mod p  \Longrightarrow 
 		Y_b = \varnothing
 		\ .  
 		\]
 		
 		\item[(2)] 
 		$ \Spec (\cA_\c^\prin (\Sigma_2))  $ is isomorphic to a trivial family over $ S $,
 		where each fiber is isomorphic to the cluster algebra corresponding to $ \Sigma_2 $ with trivial coefficients.
 		Hence, fixing $ \eta = (\eta_1, \eta_2) \in S $, we have  
 		\[
 		\begin{array}{rcl}
 			X_\eta
 			& \cong 
 			& \Spec (\KK [x_1', x_2',y_1', y_2'] / \langle \, x_1' y_1' - 1 - x_2'^b  , \,  x_2' y_2' - 1 - x_1'^a \,  \rangle ) 
 			\\[3pt]
 			& \cong 
 			& \Spec (\KK [w_1, w_2, z_1, z_2] / \langle \, w_1 z_1 - 1 + w_2^b  , \,  w_2 z_2 - 1 + w_1^a \,  \rangle )		
 		\end{array} 
 		\]
 		\item[(3)]
 		Let $ \alpha, \beta \in \ZZ_+ $ be the largest positive integers such that $ \alpha \, | \, a $ and $ \alpha \not\equiv 0 \mod p $,
 		resp.~$ \beta \, | \, b $ and $ \beta \not\equiv 0 \mod p $.
 		We have: 
 		\[  
 		\begin{array}{l}
 			\displaystyle 
 			Y_a \cong \bigcup_{\zeta \in \mu_\alpha (\KK)}  
 			V (a, w_1 - \zeta , w_2, z_1 - \zeta^{-1}, z_2 ) 
 			\ ,
 			\\[5pt]
 			\displaystyle   
 			Y_b \cong
 			\bigcup_{\xi \in \mu_\beta (\KK)}  
 			V (b, w_1, w_2 - \xi , z_1,  z_2 - \xi^{-1} )
 			\ .
 		\end{array}
 		\] 
 		where the disjoint unions range over the $ \alpha $-th (resp.~$ \beta $-th) roots of unity $ \mu_\alpha (\KK) $ (resp.~$ \mu_\beta (\KK)) $ in $ \KK $. 
 		
 		\item[(4)] 
 		If $ \Sing ( X_\eta ) $ is non-empty, then the singularities classify as follows:
 		Let $ Y_{i,j} $ be a connected component of $ Y_i $, with $ i \in \{ a, b \} $.
 		Locally, along $ Y_{i,j} $,
 		the fiber $ X_\eta $ is isomorphic to a two-dimensional hypersurface singularity of type $ A_{p^m-1} $,
 		where $ m := m(i) $ is the positive integer such that $ |a| = \alpha p^m  $ (if $ i = a $), resp.~$ |b| = \beta p^m $ (if $ i = b $).
 	\end{enumerate}
 \end{Thm}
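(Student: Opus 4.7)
The plan has four interlocking steps, following the order of the statement. The main tool is the Jacobian criterion applied to the two generators of the defining ideal, combined with a multiplicative substitution that exploits the invertibility of $c_1, c_2$ and the algebraic closure of $\KK$.

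For part (1), I apply the Jacobian criterion to the presentation $f_1 := x_1 y_1 - c_1 - x_2^b$, $f_2 := x_2 y_2 - c_2 - x_1^a$. The Jacobian
\[
J \;=\; \begin{pmatrix} y_1 & -b x_2^{b-1} & x_1 & 0 \\ -a x_1^{a-1} & y_2 & 0 & x_2 \end{pmatrix}
\]
has six $2 \times 2$ minors; requiring all of them to vanish gives $x_1 x_2 = 0$, $x_1 y_2 = 0$, $y_1 x_2 = 0$, $a x_1^a = 0$, $b x_2^b = 0$, and $y_1 y_2 = ab\, x_1^{a-1} x_2^{b-1}$. The first equation forces either $x_1 = 0$ or $x_2 = 0$, and these alternatives are mutually exclusive on the singular locus because on the $x_1 = 0$ stratum the relation $f_1 = 0$ forces $x_2^b = -\eta_1 \neq 0$ (and symmetrically for $x_2 = 0$). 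Analyzing each stratum and using the invertibility of $c_1, c_2$ produces exactly $Y_a$ and $Y_b$ as stated, together with the characteristic obstruction ($a$ or $b$ must vanish modulo $p$).

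For part (2), since $\KK$ is algebraically closed, pick $\alpha_1, \alpha_2 \in \KK^\times$ satisfying $\alpha_1^a = \eta_2$ and $\alpha_2^b = \eta_1$, and set $\beta_i := \eta_i/\alpha_i$. The substitution $x_i = \alpha_i x_i'$, $y_i = \beta_i y_i'$ multiplies each relation by a unit and converts it into the trivial-coefficient relation $x_i' y_i' - 1 - (x_{3-i}')^{\bullet}$. The second isomorphism follows by a further sign change $x_i' = \zeta_i w_i$, with $\zeta_1^a = -1$ and $\zeta_2^b = -1$ (roots which exist in $\KK$). Globally, the full-rank exchange matrix $B$ produces a transitive torus action on $S$, so all fibers are isomorphic (cf.\ the remark after Proposition~\ref{Prop:G}). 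For part (3), I work in the trivial-coefficient model; the first component becomes $V(a, x_2, y_2, x_1^a - 1, x_1 y_1 - 1)$, and writing $|a| = \alpha p^m$ with $\gcd(\alpha, p) = 1$, the Frobenius identifies $x_1^a = 1$ with $x_1^\alpha = 1$, so $x_1$ runs over $\mu_\alpha(\KK)$ and $y_1 = x_1^{-1}$; the argument for $Y_b$ is symmetric.

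For part (4), fix a component of $Y_b$, say the one at $(w_1, w_2, z_1, z_2) = (0, \xi, 0, \xi^{-1})$ with $\xi \in \mu_\beta(\KK)$, and introduce the local coordinates $u_2 := w_2 - \xi$, $v_2 := z_2 - \xi^{-1}$. The second relation expands as $\xi v_2 + \xi^{-1} u_2 + u_2 v_2 + w_1^a$; since $\xi + u_2$ is a unit, this relation determines $v_2$ and can be eliminated. Writing $b = \beta p^{m}$ and using the Frobenius identity $(\xi + u_2)^b = (\xi^{p^{m}} + u_2^{p^{m}})^\beta$, expansion combined with $\xi^b = 1$ shows that the lowest-order term in $u_2$ of the first relation is a unit (namely $-\beta\,\xi^{(\beta-1)p^{m}}$) times $u_2^{p^{m}}$. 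Extracting the (unique) $p^{m}$-th root of this unit and absorbing it into $u_2$ puts the local equation in the standard form $w_1 z_1 - u_2^{p^{m}} = 0$, which is the hypersurface singularity of type $A_{p^{m}-1}$. The analysis on $Y_a$ is symmetric.

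The main obstacle is the Frobenius bookkeeping in step (4): one has to split $(\xi + u_2)^b$ via $b = \beta p^{m}$ with $\gcd(\beta, p) = 1$, confirm that the coefficient of the $u_2^{p^{m}}$-term is a unit in the local ring, and normalize this unit through a clean $p^{m}$-th root extraction. Once this is done, the $A_{p^{m}-1}$ model appears directly, and all other parts are either routine minor computations or elementary substitutions justified by invertibility of the coefficients.
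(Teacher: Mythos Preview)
Your proof is correct and follows essentially the same approach as the paper: Jacobian criterion for (1), multiplicative rescaling using $a$-th and $b$-th roots for (2), Frobenius factorization for (3), and local elimination plus normalization for (4).

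The one place where your argument diverges slightly is the final normalization in part (4). You absorb the local unit into $u_2$ via a $p^m$-th root extraction; this works, but only because your Frobenius identity $(\xi+u_2)^b=(\xi^{p^m}+u_2^{p^m})^\beta$ shows the unit is actually a polynomial in $u_2^{p^m}$ over the perfect field $\KK$, hence a genuine $p^m$-th power in the local ring. You should make that explicit, since in characteristic $p$ a general unit power series need not admit a $p^m$-th root. The paper sidesteps this by absorbing the unit into one of the \emph{linear} variables instead: writing $w_2^b-1=\epsilon\,u_2^{p^m}$ with $\epsilon$ a unit, it sets $v:=\epsilon^{-1}w_1$ (or the analogous variable on the $Y_a$ side) so that the equation becomes $\epsilon(v z_1+u_2^{p^m})$, avoiding any root extraction. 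Both routes are valid; the paper's is marginally cleaner. (Also, your leading coefficient should be $+\beta\,\xi^{(\beta-1)p^m}$, not $-\beta\,\xi^{(\beta-1)p^m}$, though this does not affect the argument.)
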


\begin{proof}
	Consider the presentation of \eqref{eq:rk2},
	$ \cA_\c^\prin (\Sigma_2)
	\cong
	\KK_\c [\x,\y]/ \langle g_1, g_2 \rangle $, 
	where we define
	$ g_1 := x_1 y_1 - c_1 - x_2^b  $
	and $ g_2 := x_2 y_2 - c_2 - x_1^a $.
	The Jacobian matrix of the fiber above $ \eta = (\eta_1, \eta_2 ) \in S $ is
	\[
		\Jac_2 := 
		\Jac (g_1, g_2; x_1, x_2, y_1, y_2) = 
		\begin{pmatrix}
			y_1 &  - b x_2^{b-1} & x_1 & 0 
			\\
			- a x_1^{a-1} & y_2 & 0 & x_2  
		\end{pmatrix}
		\ . 
	\]
	Since $ \KK$ is algebraically closed, the vanishing of the $ 2 $-minors of $ \Jac_2 $ leads to
	\[
	\Sing (\cA_\c^\prin (\Sigma_2)_\eta)
	= 
	V(\, x_1 x_2, \, ax_1^a , \, b x_2^b, \, x_1 y_2 , \, x_2 y_1 , \, y_1 y_2 \,  )
	\, \cap \,  \Spec(\cA_\c^\prin (\Sigma_2))_\eta \ .
	\]
	Taking into account that $ \eta_1, \eta_2 $ are invertible, 
	this implies the description of the singular locus in (1). 
	\\
	Due to the additional assumption on $ \KK $,
	there exist $ \gamma_a , \gamma_b \in \KK $ such that $ \gamma_a^a = \eta_2 $ and $ \gamma_b^b = \eta_1 $. 
	By  introducing 
	\[ 
		x_1' := \gamma_a^{-1} x_1
		\ , 
		\ \ 
		x_2' := \gamma_b^{-1} x_2
		\ , 
		\ \ 
		y_1' := \eta_1^{-1} \gamma_a  y_1
		\ ,
		\ \ 
		y_2' := \eta_2^{-1} \gamma_b y_2
		\ , 
	\]
	we obtain that $ g_1 = \eta_1 ( x_1' y_1' - 1 - x_2'^b ) $
	and $ g_2 = \eta_2 (x_2' y_2' - 1 - x_1'^b) $,
	which provides the first isomorphism of (2). 
	The second isomorphism emerges from the coordinate change 
	\[
		w_1 := \rho_a^{-1} x_1'
		\ , 
		\ \ 
		w_2 := \rho_b^{-1} x_2'
		\ , 
		\ \ 
		z_1 := \rho_a  y_1'
		\ ,
		\ \ 
		z_2 := \rho_b y_2'
		\ , 
	\]
	where $ \rho_a, \rho_b \in \KK $ are elements fulfilling $ \rho_a^a = - 1 $ and $ \rho_b^b = - 1 $. 
	\\
	Part (3) is an immediate consequence of (1) and (2), using that $ \KK $ is algebraically closed. 
	For example, $ w_1^a - 1 = (w_1^\alpha - 1 )^{p^m} = ( \prod_{\zeta \in \mu_\alpha (\KK)} (w_1 - \zeta) )^{p^m} $ for $ m \in \ZZ $ such that $ a = \alpha p^m $, where we assume without loss of generality $ a > 0 $. 
	\\
	Finally, let us come to (4). 
	As the situation is analogous for all irreducible components, 
	we consider $ Y := V(a, w_1 - \zeta, w_2 , z_1 - \zeta^{-1}, z_2) $ for a fixed root of unity $ \zeta \in \mu_\alpha (\KK) $.
	Locally at $ Y $, we have that $ w_1 $ is invertible 
	and thus we may take the first equation, 
	$  w_1 z_1 - 1 + w_2^b  = 0 $,
	to eliminate $ z_1 $. 
	Therefore, we are left with the hypersurface 
	$ w_2 z_2 - 1 + w_1^a = 0  $.
	Since we assume that $  Y $ is non-empty, we must have $ a = \alpha p^m $ for some $ m > 0 $ (still assuming $ a > 0 $ for simplicity -- if $ a < 0 $ multiply the equation by the invertible $ w_1^{-a} $ and modify $ z_2 $ appropriately). 
	This implies that
	\[
		w_2 z_2 - 1 + w_1^a  = 
		w_2 z_2 + \epsilon (w_1 - \zeta)^{p^{m}}
		= \epsilon (v_2 z_2 -  v_1^{p^m})
		\ ,  
	\]
	where $ \epsilon := \prod_{\zeta' \in \mu_\alpha(\KK) : \zeta' \neq \zeta } (w_1- \zeta')^{p^{m}} $
	is a unit locally at $ Y $,
	and for the second equality, 
	we introduce the local variables $ v_2 := \epsilon^{-1} w_2 $ and $ v_1 := w_1 - \zeta $.
	In conclusion, (4) follows. 
\end{proof}

 \begin{Bem}
 	\label{Rk:rk2_arbi_field}
 		Similar results as in Theorem~\ref{Thm:rk2_nonzero}~(3) and (4) are true over arbitrary perfect fields and can be seen using the same arguments. 
 		The only difference is that the formulation is getting more technical as one has to determine the factorization of
 		$ x_1^a + c_2 $ and $ x_2^b + c_1 $ over \bsOUT{$ \KK $} \bs{the residue field $ \kappa(\eta) $}. 
 		Since this does not provide significant new insights, we skip this technicality here. 
 		\\
 		For non-perfect fields, the situation is even more complex and further technical distinctions not promising interesting findings need to be made. 
 		Hence, we do not investigate this direction further.
  \end{Bem}
  
\noindent \textbf{Acknowledgments}:
All authors contributed to the conceptualization, methodology, project administration, development and writing of this paper.

\noindent \textbf{Data Access Statement}:
No data are associated with this paper.

\noindent \textbf{Rights Retention Statement}:
For the purpose of open access, the author has applied a Creative Commons Attribution (CC BY) license to any Author Accepted Manuscript version arising from this submission.

\noindent \textbf{Conflicts of interest}: The authors declare none.

\def\cprime{$'$}


\end{document}